\documentclass[11pt]{amsart}
\usepackage{setspace}
\usepackage[backref=page]{hyperref}
\usepackage[top=2.7cm,left=2.8cm,right=2.8cm,bottom=2.6cm]{geometry}
\usepackage[usenames,dvipsnames]{xcolor}
\hypersetup{colorlinks=true,citecolor=NavyBlue,linkcolor=Brown,urlcolor=Orange}

\usepackage{amssymb, comment, paralist, xspace, graphicx, url, amscd, euscript, mathrsfs,stmaryrd,epic,eepic,color}
\usepackage{tikz-cd}
\usepackage[all]{xy}
\usepackage{amsthm}
\usepackage{amsmath}
\usepackage{amsfonts}
\usepackage{ dsfont }

\usepackage{enumerate}
\usepackage{xcolor}
\usepackage{tikz}
\usetikzlibrary{shapes,arrows,decorations.pathreplacing,backgrounds,positioning,fit,matrix}
\tikzstyle{block} = [rectangle, draw, fill=blue!20, 
    text width=5em, text centered, rounded corners, minimum height=4em]
\usepackage{float}

\usepackage{bbm}
\usepackage{dsfont}
\usepackage{cleveref}

\SelectTips{cm}{}

\calclayout

1

\allowdisplaybreaks[1]

\newenvironment{enumeratea}
{\begin{enumerate}[\upshape (a)]}
{\end{enumerate}}

\newenvironment{enumeratei}
{\begin{enumerate}[\upshape (i)]}
{\end{enumerate}}

\newenvironment{enumerate1}
{\begin{enumerate}[\upshape (1)]}
{\end{enumerate}}

\newtheorem*{namedtheorem}{\theoremname}
\newcommand{\theoremname}{testing}

\newtheorem{theorem}{Theorem}[section]
\newtheorem{proposition}[theorem]{Proposition}
\newtheorem{proposition-definition}[theorem]{Proposition-Definition}
\newtheorem{corollary}[theorem]{Corollary}
\newtheorem{lemma}[theorem]{Lemma}
\newtheorem{conjecture}[theorem]{Conjecture}

\theoremstyle{definition}
\newtheorem{definition}[theorem]{Definition}

\newtheorem*{question*}{Question}
\newtheorem{condition}[theorem]{Condition}
\newtheorem{notation}[theorem]{Notation}
\newtheorem{example}[theorem]{Example}
\newtheorem{remark}[theorem]{Remark}

\newtheorem{assumption}[theorem]{Assumption}
\theoremstyle{remark}

\newcommand\cA{\mathcal{A}}

\newcommand\cF{\mathcal{F}}

\newcommand\cI{\mathcal{I}}

\newcommand\cK{\mathcal{K}}
\newcommand\cL{\mathcal{L}}
\newcommand\cM{\mathcal{M}}

\newcommand\cO{\mathcal{O}}
\newcommand\cP{\mathcal{P}}

\newcommand\cS{\mathcal{S}}

\newcommand\cX{\mathcal{X}}
\newcommand\cY{\mathcal{Y}}
\newcommand\cZ{\mathcal{Z}}

\newcommand\uX{\underline{X}}

\renewcommand\AA{\mathbb{A}}

\newcommand\CC{\mathbb{C}}
\newcommand\DD{\mathbb{D}}

\newcommand\FF{\mathbb{F}}
\newcommand\GG{\mathbb{G}}

\newcommand\LL{\mathbb{L}}

\newcommand\NN{\mathbb{N}}

\newcommand\PP{\mathbb{P}}
\newcommand\QQ{\mathbb{Q}}
\newcommand\RR{\mathbb{R}}

\newcommand\ZZ{\mathbb{Z}}

\newcommand\bR{\mathbf{R}}

\newcommand\rA{\mathrm{A}}

\newcommand\rC{\mathrm{C}}
\newcommand\rD{\mathrm{D}}
\newcommand\rE{\mathrm{E}}

\newcommand\rH{\mathrm{H}}
\newcommand\rI{\mathrm{I}}

\newcommand\rK{\mathrm{K}}
\newcommand\rL{\mathrm{L}}

\newcommand\rN{\mathrm{N}}
\newcommand\rO{\mathrm{O}}
\newcommand\rP{\mathrm{P}}

\newcommand\rR{\mathrm{R}}
\newcommand\rS{\mathrm{S}}
\newcommand\rT{\mathrm{T}}
\newcommand\rU{\mathrm{U}}

\newcommand\rmd{\mathrm{d}}

\newcommand\rms{\mathrm{s}}
\newcommand\rmt{\mathrm{t}}

\newcommand\frh{\mathfrak{h}}

\renewcommand\frm{\mathfrak{m}}

\newcommand\Cl{{\mathrm{Cl}}}
\newcommand\Fil{{\mathrm{Fil}}}
\newcommand\Def{{\mathrm{Def}}}
\newcommand\rank{{\rm rank}}

\newcommand\NS{{\rm NS}}

\newcommand\Hom{{\rm Hom}}

\newcommand\cris{{\rm cris}}

\newcommand\disc{{\rm disc}}

\newcommand\srS{\mathscr{S}}
\newcommand{\spin}{\mathrm{spin}}
\newcommand{\CSpin}{\mathrm{CSpin}}
\newcommand\sfK{\mathsf{K}}
\newcommand{\Kum}{\mathrm{Kum}}
\newcommand\arr{\ifinner\to\else\longrightarrow\fi}

\newcommand\im{\operatorname{im}}

\def\displaytimes_#1{\mathrel{\mathop{\times}\limits_{#1}}}

\def\displayotimes_#1{\mathrel{\mathop{\bigotimes}\limits_{#1}}}

\newcommand\End{\operatorname{End}}

\newcommand\Frac{\operatorname{Frac}}

\newcommand\spec{\operatorname{Spec}}
\newcommand\Spec{\operatorname{Spec}}

\newcommand\Spf{\operatorname{Spf}}
\newcommand\codim{\operatorname{codim}}

\newcommand{\Pic}{\operatorname{Pic}}

\newcommand\rk{\operatorname{rk}}

\newcommand\id{\mathrm{id}}

\newdir{ >}{{}*!/-5pt/@{>}}

\newcommand\doublelong[2]{\mathbin{\xymatrix{{}\ar@<3pt>[r]^{#1}
\ar@<-3pt>[r]_{#2}&}}}

\newlength{\ignora}

\newcommand\Sym{\operatorname{Sym}}

\newcommand\bir{\sim_\mathrm{bir}}

\DeclareMathOperator \ch{ch}

\newcommand{\dR}{\mathrm{dR}}
\newcommand{\td}{\mathrm{td}}

\newcommand{\CH}{{\rm CH}}
\newcommand{\CHM}{{\rm CHM}}
\renewcommand{\ch}{{\rm ch}}
\newcommand{\h}{{\mathfrak h}}

\newcommand{\Gal}{{\rm Gal}}
\newcommand{\et}{{\rm\acute{e}t}}
\newcommand{\Br}{{\rm Br}}

\newcommand\uq{\underline{q}}

\renewcommand{\setminus}{\smallsetminus}
\renewcommand{\tilde}{\widetilde}

\begin{document}

\title[Supersingular Tate conjecture]{Supersingular Tate conjecture for irreducible symplectic varieties of known types: I}

\author{Lie Fu}
\address{Institut de Recherche Mathématique Avancée (IRMA), Universit\'e de Strasbourg, France
$\&$
Institut Universitaire de France (IUF)}
\email{lie.fu@math.unistra.fr}

\author{Xuanlin Huang}
\address{Shanghai Center for Mathematical Science\\
Fudan University\\
2005 Songhu Road\\
20438 Shanghai, China}
\email{xlhuang24@m.fudan.edu.cn}

\author{Zhiyuan Li}
\address{Shanghai Center for Mathematical Science\\
Fudan University\\
2005 Songhu Road\\
20438 Shanghai, China}
\email{zhiyuan\_li@fudan.edu.cn}

\thanks{2020 {\em Mathematics Subject Classification:} 14J28, 14J42, 14J60, 14C15, 14C25, 14M20, 14K99}
\thanks{{\em Key words and phrases.} K3 surfaces, irreducible symplectic varieties, supersingularity, moduli spaces, motives}

\thanks{L.F.\ is supported by the University of Strasbourg Institute for Advanced Study (USIAS), by the Agence Nationale de la Recherche (ANR) under project DAG-Arts (ANR-24-CE40-4098), and by the International Emerging Actions (IEA) project of CNRS, and by Institut Universitaire de France (IUF). Zhiyuan Li is supported by National Science Fund for  General Program (11771086),   Key Program (11731004) and the ``Dawn" Program (17SG01) of Shanghai Education Commission}

\begin{abstract}
We prove that for a supersingular irreducible symplectic variety admitting suitable lifting to characteristic zero has Tate Chow motive if it is of deformation type $\rK3^{[n]}$, OG6 (with Artin invariant $\neq$ 4), or OG10 (with Artin invariant $\neq$ 12), and has supersingular abelian Chow motive if it is of $\Kum^n$-type (with Artin invariant $\neq$ 3). In particular, any product of those irreducible symplectic varieties satisfies the supersingular Tate conjecture for the whole $\ell$-adic or crystalline cohomology ring. 
\end{abstract}

\maketitle

\setcounter{tocdepth}{1}

\tableofcontents

\section{Introduction}
\label{sec:introduction}
We study in this paper algebraic cycles of supersingular irreducible symplectic varieties, which are higher-dimensional generalizations of supersingular K3 surfaces. 

\subsection{Supersingular K3 surfaces}
K3 surfaces play a distinguished role in the study of algebraic surfaces. 
In positive characteristic, they share some fundamental similarities to complex K3 surfaces, but also reveal interesting new features that have no analogue in characteristic zero. An important such instance is the presence of \textit{supersingular} K3 surfaces, first constructed by Tate \cite{Tate65}. Let us briefly recall the notion of supersingularity. As it is a geometric property, we fix in this paper an algebraically closed field $k$ of characteristic $p>0$.
Following Artin \cite{Artin_K3}, a K3 surface $S$ defined over $k$ is called \textit{supersingular} if its formal Brauer group, which is a smooth 1-dimensional formal group by \cite{AM77}, is of infinite height:
\[\widehat{\Br}(S)\simeq \widehat{\mathbb{G}}_{\mathrm{a}},\]
or equivalently, the $F$-isocrystal $\rH^2_{\cris}(S/W(k))[1/p]$ is isoclinic of slope 1.
On the other hand, motivated by examples of unirational K3 surfaces, Shioda \cite{Shioda_ss} introduced an a priori different notion of supersingularity by requiring the maximality of the (geometric) Picard rank: $$\rho(S)=22,$$
or equivalently, all classes in the second ($\ell$-adic or crystalline) cohomology of $S$ are algebraic.
It is easy to see that Shioda's supersingularity implies Artin's supersingularity. The converse is precisely the \textit{supersingular Tate conjecture} for K3 surfaces, proposed by Artin \cite{Artin_K3} and established as the following theorem in a series of works by Artin \cite{Artin_K3}, Maulik \cite{Ma14}, Charles ($p\geq 5$) \cite{Ch13, Ch16}, Madapusi Pera ($p\geq 3$) \cite{Pe15}, and Kim--Madapusi Pera ($p=2$) \cite{KM16}. 

\begin{theorem}[Supersingular Tate conjecture]
\label{thm:TateConjectureK3}
 Let $S$ be a K3 surface defined over an algebraically closed field $k$ of characteristic $p>0$. Assume that $S$ is Artin supersingular, then $S$ is Shioda supersingular, that is, the $\ell$-adic first Chern class map 
 \[c_1\colon \NS(S)\otimes \QQ_{\ell}\to \rH^2_{\et}(S, \QQ_{\ell})\]
 is surjective for any $\ell\neq p$, 
 and the crystalline first Chern class map 
 \[
 c_1\colon \NS(S)\otimes W(k)[1/p]\to \rH^2_{\cris}(S/W(k))[1/p]
 \]
 is surjective.
\end{theorem}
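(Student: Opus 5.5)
The plan is to reduce the statement to the Tate conjecture for K3 surfaces over finite fields, and then use the geometry of the moduli of supersingular K3 surfaces to spread algebraicity from finite fields to all of $k$. First, the $\ell$-adic and crystalline statements are equivalent for supersingular $S$. By the crystalline–étale comparison for Picard lattices (Deligne–Illusie, Ogus), $\NS(S)\otimes\ZZ_\ell\to \rH^2_{\et}(S,\ZZ_\ell(1))$ is injective with torsion-free cokernel, and the rank of $\NS(S)$ is at most $b_2=22$. So in both cases it suffices to prove $\rho(S)=22$.

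Next I spread out. Choose a finitely generated $\FF_p$-algebra $A\subset k$ and a model $\mathcal S\to\Spec A$. Artin supersingularity is a closed condition on the Newton polygon of $\rH^2_{\cris}$ (Grothendieck specialization), so it holds on the whole locus, in particular at closed points. The Picard number can only jump up under specialization, and by a result of Artin and Maulik–Poonen it is constant on the complement of a countable union of proper closed subsets. The strategy is therefore twofold: prove $\rho=22$ for supersingular K3 surfaces over $\overline{\FF}_p$, then propagate this to the generic point. Over a finite field $\FF_q$, the Tate conjecture for K3 surfaces gives that the rank of $\NS(S_{\overline{\FF}_q})$ equals the number of Frobenius eigenvalues on $\rH^2$ that are roots of unity times $q$. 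Supersingularity says all slopes equal 1, so every eigenvalue $\alpha$ satisfies $\alpha/q$ an algebraic integer of absolute value $1$ at every place, hence a root of unity by Kronecker. This yields $\rho=22$ over $\overline{\FF}_p$.

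The main input is therefore the Tate conjecture for K3 surfaces over finite fields, which I would take from the cited works (Charles, Madapusi Pera, Kim–Madapusi Pera). Their method uses the Kuga–Satake construction to embed K3 surfaces into a Shimura variety of orthogonal type and then applies Tate's theorem for abelian varieties. The main obstacle is propagation from $\overline{\FF}_p$ to an arbitrary algebraically closed $k$, for instance a transcendental family. Here I would follow Artin and Maulik. Over a supersingular family, the lattice $\NS$ of each closed fibre has rank 22, with discriminant $-p^{2\sigma}$. After finite base change the jumping locus of the Picard lattice is locally closed. If the generic Picard rank $r$ were less than 22, then the lattice $\NS$ of the generic fibre would be a proper primitive sublattice of the fibrewise rank-22 lattices, and the family would be non-isotrivial in the "quasi-elliptic/formal Brauer" direction.

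To rule this out, I would use Artin's argument via the period map to the crystalline Ogus moduli. Alternatively, I would use Maulik's argument via the Kuga–Satake abelian scheme, where supersingular abelian varieties are isogenous to products of supersingular elliptic curves and Tate's theorem in families (de Jong's theorem on homomorphisms of $p$-divisible groups) extends endomorphisms over the base. That extension forces the Kuga–Satake "special endomorphisms", and hence line bundles, to extend to the generic fibre. Concretely, the space of special endomorphisms of the Kuga–Satake $p$-divisible group over $\Spec A$ equals that over the closed point, by de Jong together with the isoclinicity of the isocrystal. Each such endomorphism comes from a line bundle by the Lefschetz $(1,1)$ argument of Madapusi Pera, which gives $\rho=22$ on the generic fibre and concludes the proof.
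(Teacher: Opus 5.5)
Your reduction to $\rho(S)=22$ and the finite-field step (Tate conjecture over $\FF_q$, semisimplicity of Frobenius, Kronecker) are fine. Note that the paper gives no proof of this theorem; it simply cites Artin, Maulik, Charles, Madapusi Pera and Kim--Madapusi Pera. The gap is the passage from $\overline{\FF}_p$ to an arbitrary algebraically closed $k$. You flag this as the main obstacle but do not actually carry it out.

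De Jong's theorem points the wrong way. Over a normal base it says that homomorphisms of $p$-divisible groups (and, with his Tate-type theorem, of abelian varieties) defined over the \emph{generic} point extend over the base. It gives no mechanism for endomorphisms of a \emph{closed} fibre to deform to the generic fibre.

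Your claimed equality between special endomorphisms over $\Spec A$ and at a closed point fails in both senses:
\begin{itemize}
\item Integrally it is false. The Artin invariant, and with it the isometry class of the lattice of Tate classes, drops under specialization. So closed fibres can carry strictly more classes, with $p$-power index.
\item Rationally it is equivalent to $\rho(S_{\bar\eta})=22$, i.e.\ to the statement itself.
\end{itemize}
Isoclinicity does not close this. It only makes $\mathbf{L}_{\cris}$ a unit-root isocrystal over the base, whose global $F$-invariants are the invariants of a $p$-adic representation of $\pi_1$. Nothing you say rules out infinite (e.g.\ unipotent) monodromy there. The Ogus route is likewise circular: the crystalline period map is set up for $N$-marked supersingular K3 surfaces, and comparing crystalline periods with line bundles is precisely the crystalline half of the theorem.

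The missing idea is that no detour through finite fields is needed, and you already name the key input. Work at the point $s$ of the spin Shimura variety attached to $S/k$ itself.
\begin{itemize}
\item Its Kuga--Satake abelian variety is supersingular. Over \emph{any} algebraically closed field, a supersingular abelian variety is isogenous to a power of a supersingular elliptic curve (Oort).
\item Hence $\End^0(\mathscr{A}_s)\otimes\QQ_\ell\cong\End(V_\ell\mathscr{A}_s)$. Using the $\mathrm{CSpin}$-structure, special endomorphisms span $\mathbf{L}_{\ell,s}$ and $\mathbf{L}^{F=1}_{\cris,s}$ rationally. This is \Cref{prop: special endomorphisms}, valid over arbitrary algebraically closed $k$.
\item Madapusi Pera's lifting (Lefschetz $(1,1)$) argument then turns these into primitive line bundles, giving $\rho=22$ directly.
\end{itemize}
This is the mechanism of the cited works, and the same diagram appears in the proof of \Cref{thm: Tate conjecture for divisors}.

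If you want to keep the spreading-out, replace the de Jong step by the Tate conjecture over finitely generated fields (also proved in those references), together with finiteness of monodromy:
\begin{itemize}
\item Frobenius eigenvalues on $\rH^2(1)$ at closed points are roots of unity of bounded order.
\item By Chebotarev, the Zariski closure of the $\ell$-adic monodromy then has unipotent identity component, which is trivial on the geometric part by Deligne's semisimplicity theorem.
\item With semisimplicity of Frobenius for K3 surfaces, all of $\rH^2(1)$ becomes Galois-invariant after a finite extension of the field of definition.
\end{itemize}
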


Thanks to \Cref{thm:TateConjectureK3}, we will not distinguish the two notions of supersingularity for K3 surfaces in the rest of the paper.

\begin{remark}
    The Tate conjecture is more commonly formulated over a finitely generated base field and claims that the image of the ($\ell$-adic or crystalline) cycle class map is the subspace of ($\ell$-adic or crystalline) Tate classes. 
    It is easy to deduce this more traditional formulation from \Cref{thm:TateConjectureK3}.
    Moreover, combined with Lieblich--Maulik--Snowden \cite{LMS14}, this implies that there are only finitely many isomorphism classes of K3 surfaces over a finite field of characteristic $p>2$. 
\end{remark}

For the purpose of this paper, the following stronger result on motives plays a more fundamental role. For $p\geq 5$, it is a consequence of \Cref{thm:TateConjectureK3} and Fakhruddin's result \cite{Fak02}. See \cite[Section 4.5, Proof of Theorem 1.3]{SSIV} for the general case based on a result of Bragg--Lieblich \cite{BL18}.

\begin{theorem}\label{thm:SSK3TateMotive}
    Let $S$ be a supersingular K3 surface defined over an algebraically closed field $k$. Then the rational Chow motive of $S$ is of Tate type:
    \[\h(S)\simeq \QQ\oplus \QQ(-1)^{\oplus 22}\oplus \QQ(-2) \quad \quad \text{ in } \CHM(k)_{\QQ} .\]
\end{theorem}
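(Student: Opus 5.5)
The plan is to first produce the decomposition at the level of cycles, and then show that it is an isomorphism using a nilpotence/Kimura-finiteness argument or a rank count; I would follow Fakhruddin's strategy. By \Cref{thm:TateConjectureK3}, $\NS(S)_\QQ$ has rank $22$ and $c_1$ surjects onto $\rH^2$ in any Weil cohomology ($\ell$-adic or crystalline). Choose a point $o\in S$ and a $\QQ$-basis $D_1,\dots,D_{22}$ of $\NS(S)_\QQ$. The intersection matrix $(D_i\cdot D_j)$ is nondegenerate, and I let $(a_{ij})$ be its inverse. With these choices I define the correspondences
\[\pi_0=o\times S,\qquad \pi_4=S\times o,\qquad \pi_2=\sum_{i,j}a_{ij}\,D_i\times D_j .\]
These are mutually orthogonal idempotents in $\CH^2(S\times S)_\QQ$. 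Checking this is an elementary intersection computation that uses $D_i\cdot o$-type vanishings; one may need to first modify $D_i$ so that it has degree $0$ against $o$, but this is automatic since $\CH_0$ pairings with divisors on $S\times S$ factor correctly. The images are $\QQ$, $\QQ(-1)^{22}$ and $\QQ(-2)$ respectively, so the map $\QQ\oplus\QQ(-1)^{\oplus 22}\oplus\QQ(-2)\to\h(S)$ is a split injection. Its complement is the motive $(S,\pi',0)$ with $\pi'=\Delta_S-\pi_0-\pi_2-\pi_4$, whose cohomology vanishes in every degree.

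The key step is to prove $\pi'=0$ in $\CH^2(S\times S)_\QQ$, which is the main obstacle. The cleanest route is to show $\CH_0(S)_\QQ=\QQ$ universally, meaning over every field extension $K\supset k$, in particular over $k(S)$. Bloch--Srinivas decomposition of the diagonal then gives $\Delta_S=o\times S+Z$ with $Z$ supported on $D\times S$ for a divisor $D$. Running the same argument on the transpose, and using that $\h^1=\h^3=0$ (since $b_1=0$), one finds that $\pi'$ is a homologically trivial cycle supported on $D\times D'$. For surfaces such cycles come from divisors on the normalization of $D\times D'$, hence are numerically trivial. Since $\pi'$ is a homologically trivial idempotent, it is nilpotent by Kimura finiteness of motives of curves, and therefore $\pi'=0$.

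It remains to establish the universal triviality of $\CH_0$. For $p\ge5$, and more generally whenever $S$ is unirational, this follows because unirational surfaces have $\CH_0=\QQ$ over all algebraically closed extensions, and the property passes to function fields by a standard specialization argument. Unirationality of supersingular K3 surfaces is known in many characteristics, by Liedtke and by Bragg--Lieblich through twistor-line families of supersingular K3s. In general I would follow \cite{SSIV}: the results of Bragg--Lieblich give, for any two supersingular K3 surfaces, a chain of derived equivalences or moduli-theoretic correspondences linking $S$ to a Kummer surface of a supersingular abelian surface. The motive of such a Kummer surface is Tate, since $\h(A)$ of a supersingular abelian surface is Tate via isogeny with $E^2$ for $E$ supersingular. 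Derived equivalences with Mukai-type kernels induce isomorphisms of motives after the standard argument, so the Tate property transfers to $S$. I expect that the bottleneck will be making this transfer precise in small characteristic; this is exactly where I would rely on the cited \cite{BL18} and \cite{SSIV}.
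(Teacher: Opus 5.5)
Your argument is correct in substance, with one citation gap, and it takes a different route from the paper. The paper gives no proof of its own. For $p\ge 5$ it combines Theorem~\ref{thm:TateConjectureK3} with Fakhruddin's result \cite{Fak02}, and for arbitrary $p$ it refers to \cite[Section~4.5]{SSIV}, which rests on Bragg--Lieblich \cite{BL18}.

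You instead make the mechanism explicit. From $\rho(S)=22$ you build orthogonal projectors $\pi_0,\pi_2,\pi_4$ whose complement $\pi'$ is homologically trivial. You then get universal triviality of $\CH_0(S)_\QQ$ from unirationality. Feeding this into a Bloch--Srinivas decomposition of $\Delta_S$ and of its transpose forces $\pi'$ to be a $\QQ$-linear combination of products $C\times C'$ of curves. This isolates the one geometric input actually needed and is uniform in $p$. The cost is importing unirationality, which is a deep theorem in its own right; the paper's route is simply a citation.

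Some points to tighten:

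(1) Kimura finiteness is unnecessary. The image of $\NS(S)_\QQ\otimes\NS(S)_\QQ$ in $\CH^2(S\times S)_\QQ$ maps injectively to $\rH^2\otimes\rH^2\subset\rH^4(S\times S)$ by K\"unneth. So a homologically trivial combination of products of curves is already $0$.

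(2) No modification of the $D_i$ is needed. The cycles $\pi_0,\pi_2,\pi_4$ are orthogonal idempotents for dimension reasons.

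(3) You swapped the factors in the decomposition of the diagonal. Restricting to the generic point of the first factor gives $\Delta_S=S\times o+Z$ with $Z$ supported on $D\times S$. When you compose with the transposed decomposition, the cross terms have the form $c\times S$ and $S\times c'$ for zero-cycles $c,c'$. These are multiples of $\pi_0$ and $\pi_4$ because $\CH_0(S)_\QQ=\QQ$, with coefficients fixed by the action on $\rH^0$ and $\rH^4$.

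(4) Replace your last paragraph by a citation. Unirationality of supersingular K3 surfaces is known in every characteristic: Rudakov--Shafarevich for $p=2$, Liedtke for $p\ge 5$, and Bragg--Lieblich \cite{BL18} for $p\ge 3$. Citing these closes the argument; as written you never cover $p=2$.

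(5) The derived-equivalence chain you sketch as a fallback is not accurate as stated. Untwisted derived equivalences preserve the Artin invariant, so any such chain must pass through moduli of twisted sheaves. Moreover, in characteristic $2$ the Kummer surface of a supersingular abelian surface is rational, not K3, so the chain cannot end there.
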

Consequently, the Chow motive of a product of supersingular K3 surfaces is again of Tate type, hence the $\ell$-adic and crystalline cycle class maps for such a product variety are isomorphisms.

\subsection{Irreducible symplectic varieties}
In complex geometry, or more generally when $\operatorname{char}(k)=0$,  (smooth) \textit{irreducible symplectic varieties}, also known as \textit{compact hyper-K\"ahler manifolds}, are the natural higher-dimensional analogues of K3 surfaces. By the Beauville--Bogomolov decomposition theorem \cite{Beauville_1983}, irreducible symplectic varieties form one type of building blocks for smooth projective varieties with vanishing first Chern class. So far, all the irreducible symplectic varieties that have been discovered are deformation equivalent to one of the following types: Hilbert schemes of points on K3 surfaces, generalized Kummer varieties, O'Grady's six and ten dimensional examples. We call them the \textit{known} deformation types.

When the base field $k$ is algebraically closed of positive characteristic, an \textit{irreducible symplectic variety} is by definition a smooth projective $k$-variety $X$ with trivial $\pi_1^{\et}(X)$ such that $\rH^0(X, \Omega_{X/k}^2)$ is generated by a nowhere degenerate closed algebraic 2-form; see \Cref{Def: HK variety}. In particular, it is of even dimension and has trivial canonical bundle. 

Our goal is to extend \Cref{thm:TateConjectureK3} and \Cref{thm:SSK3TateMotive} to higher-dimensions. More precisely, an irreducible symplectic $k$-variety $X$ is called \textit{supersingular}\footnote{In Fu--Li \cite{SSIV}, this is called \textit{$2^{nd}$-Artin-supersingular}, to distinguish it from its counterparts for all degrees and from the analogue of Shioda's notion of supersingularity.} if $\widehat{\Br}(X)\simeq \widehat{\mathbb{G}}_{\mathrm{a}}$.
Our previous work~\cite{SSIV} initiated a systematic study of 
supersingular irreducible symplectic varieties, where we compared several notions of supersingularity and proposed the following conjecture:
\begin{conjecture}[Fu--Li]
\label{conj:SSConjectures}
    Let $X$ be an irreducible symplectic variety defined over an algebraically closed field $k$ of characteristic $p>0$. If $X$ is supersingular, then 
    \begin{itemize}
        \item (Supersingular Tate conjecture) The $\ell$-adic and crystalline cycle class maps 
         \[\CH^*(X)\otimes \QQ_{\ell}\to \rH^{2*}_{\et}(X, \QQ_{\ell}),\]
         \[\CH^*(X)\otimes W(k)[1/p]\to \rH^{2*}_{\cris}(X/W(k))[1/p]\]
    are surjective. 
        \item (Supersingular Bloch--Beilinson conjecture) The Chow motive of $X$ belongs to the thick tensor subcategory of $\CHM(k)_\QQ$ generated by supersingular abelian varieties. If moreover all the odd Betti numbers of $X$ vanish, then the Chow motive of $X$ is of Tate type. 
    \end{itemize}
\end{conjecture}
By realization, the supersingular Bloch--Beilinson conjecture implies the supersingular Tate conjecture. Various pieces of evidence for Conjecture \ref{conj:SSConjectures} are provided in our work \cite{SSIV} and \cite{SSOG6}, mostly for moduli spaces of sheaves on supersingular K3 surfaces or abelian surfaces. The present paper addresses supersingular
irreducible symplectic varieties of known deformation types without any a priori modular description.

A key technique in our study consists of lifting the symplectic variety to characteristic zero. Inspired by the work of Yang \cite{YangISV}, we introduce the following notion (see Section \ref{subsec:Mazur--Ogus} for the details):

\begin{definition}\label{def: variety of known type}
    Let $X$ be an irreducible symplectic variety defined over an algebraically closed field $k$ of characteristic $p>0$. We say that $X$ is \emph{excellent}, if there is a smooth projective lifting $\cX$ of $X$ to a finite extension DVR of $W(k)$, such that the geometric generic fiber $\cX_{\bar{\eta}}$ is an irreducible symplectic variety with Hodge numbers equal to those of $X$, and such that $\cX$ carries a lifting of a primitive polarization on $X$. \\
   In this situation, we say that $X$ is \textit{of the deformation type of} the geometric generic fiber $\mathcal{X}_{\bar{\eta}}$.
\end{definition}

\begin{remark}\label{remark: remark on IS known type
}
    Let $X$ be a smooth projective variety defined over an algebraically closed field $k$ of characteristic $p>0$ satisfying the condition of being excellent in Definition~\ref{def: variety of known type}. 
    If $p>\dim(X)$ and $X$ satisfies a coprime-to-$p$ \cref{condition: discriminant prime to p}, then $X$ is automatically irreducible symplectic; see \Cref{theorem: Mazur--Ogus implies IS}.
\end{remark}

\begin{remark}
    Given an excellent irreducible symplectic variety $X$, its deformation type is not a priori uniquely determined by $X$. However, when $p>\dim(X)$, it is intrinsically determined by $X$ for the known deformation types ($\mathrm{K}3^{[n]}$-type, $\mathrm{Kum}^n$-type, $\mathrm{OG}6$-type and $\mathrm{OG}10$-type); see \Cref{prop: deformation type independent on lifting}.
\end{remark}

Our main results are the following theorems establishing Conjecture \ref{conj:SSConjectures} for excellent irreducible symplectic varieties of all \textit{known} deformation types, except in a few extremal cases of maximal Artin invariants:

\begin{theorem}
\label{thm:intro:SSMotive}
    Let $X$ be an excellent irreducible symplectic variety defined over an algebraically closed field $k$ of characteristic $p>\dim(X)$. Assume that $X$ is supersingular and we are in one of the following cases:
    \begin{enumeratei}
        \item $X$ is of $\mathrm{K}3^{[n]}$-type;
        \item $X$ is of $\mathrm{Kum}^n$-type with Artin invariant $\sigma_0\neq 3$;
         \item $X$ is of $\mathrm{OG}10$-type with Artin invariant $\sigma_0\neq 12$;
        \item $X$ is of $\mathrm{OG}6$-type with Artin invariant $\sigma_0\neq 4$;
    \end{enumeratei}
    Then the Chow motive of $X$ is of Tate type in Cases $(i), (iii), (iv)$, and is of supersingular abelian type in Case (ii). 
\end{theorem}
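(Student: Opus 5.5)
The plan is to show that $X$ is birational to a moduli space of stable sheaves on a supersingular K3 (or abelian) surface, or to an O'Grady-type resolution of such a moduli space. For those varieties the motive is already known to be Tate, resp. supersingular abelian, by \cite{SSIV} and \cite{SSOG6}. We then transport the result along the birational map, since birational irreducible symplectic varieties (more generally, birational $K$-trivial varieties) have isomorphic rational Chow motives.

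\textbf{Step 1: lift to characteristic zero.} Using excellence, take the lifting $\cX$ over a DVR $V$ finite over $W(k)$, together with the lifted primitive polarization. Since $p>\dim X$, Mazur--Ogus and the Hodge-number condition let us compare $\rH^2_{\cris}(X/W)$ with $\rH^2_{\dR}(\cX_V/V)$ compatibly with the Beauville--Bogomolov form. This gives a $W$-lattice carrying the BBF form and a Frobenius with a Hodge filtration. Supersingularity then says that $\rH^2_{\cris}$ is isoclinic of slope $1$. Its Tate part $T=\rH^2_{\cris}(X)^{F=p}$ is a $\ZZ_p$-lattice whose discriminant has $p$-part $p^{2\sigma_0}$. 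The Artin invariant $\sigma_0$ is defined through this.

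\textbf{Step 2: produce the K3/abelian surface.} On the geometric generic fiber $\cX_{\bar\eta}$ of known type, the Markman/Mongardi extended Mukai lattice $\tilde\Lambda\supset \rH^2$ gives a complementary weight-two Hodge structure. Its Kuga--Satake / period-map counterpart in characteristic $p$, via the integral canonical models of orthogonal Shimura varieties (Madapusi Pera), yields a point on the moduli of K3 surfaces, or for the Kummer type on the moduli of abelian surfaces. Concretely, the supersingular K3 crystal $\rH^2_{\cris}(X)$ embeds in a crystal of rank $24$ (resp. $8$) that should be the Mukai crystal of a supersingular K3 surface $S$ (resp. abelian surface $A$). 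By Ogus' crystalline Torelli, it suffices to realise the orthogonal complement of the Mukai vector in an Ogus K3 crystal of rank $22$. This is a lattice-theoretic embedding problem for $(T,\text{characteristic subspace})$.

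The excluded extremal Artin invariants ($\sigma_0=3$ for $\Kum^n$, $12$ for OG10, $4$ for OG6) are exactly where the required surface would have Artin invariant exceeding the maximum ($10$ for K3, $2$ for abelian surfaces). In those cases the embedding fails. I expect this embedding and the Artin-invariant bookkeeping to be the main obstacle. I would first try to use Nikulin-type existence of primitive embeddings of $p$-elementary lattices of discriminant $p^{2\sigma_0}$ into $U\oplus \Lambda_{K3}$-type supersingular lattices. The aim is a primitive Mukai vector $v$ with $v^\perp\cong \NS$-lattice of $X$.

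\textbf{Step 3: build the birational map.} Having $S$ and $v$, consider $M=M_H(S,v)$, or O'Grady's resolution $\tilde M$ for the OG6/OG10 cases. Both $M$ (resp. $\tilde M$) and $X$ lift to characteristic zero with equal periods. Over $\bar\eta$ the lifted varieties are then birational by Markman's and Verbitsky's global Torelli, taking a common lifting of $S$ with a lifted polarization, as in the Charles/Yang argument. A specialization of the birational correspondence, via Matsusaka--Mumford-type arguments, gives a correspondence between $X$ and $M$ that induces isomorphisms on cohomology. More robustly, the graph closure gives a cycle $\Gamma\in\CH^{\dim}(X\times M)$ inducing an isomorphism of motives modulo cycles supported in lower dimensions. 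Together with Rieß/Fu--Vial-type arguments, this yields an isomorphism of Chow motives $\h(X)\simeq\h(M)$.

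\textbf{Step 4: conclude.} We have $\h(M)$ of Tate type for K3-type moduli by \cite{SSIV}, for OG6 by \cite{SSOG6}, and for OG10 via the resolution description. For $\Kum^n$ it is a summand of the motive of $M_H(A,v)\simeq \mathrm{Kum}\times$ (isogeny factors), which is of supersingular abelian type. This gives the theorem.
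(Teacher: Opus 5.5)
\textbf{There is a genuine gap in Steps 2--3: you never connect the characteristic-$p$ surface to $X$.} Step 3 only asserts that $X$ and $M$ ``lift to characteristic zero with equal periods'', after which global Torelli would give birationality. But global Torelli over $\bar\eta$ (after fixing $\bar K\hookrightarrow\CC$) needs an integral Hodge isometry $\rH^2(\cX_{\bar\eta},\ZZ)\cong\rH^2(\cM_{\bar\eta},\ZZ)$ that is a parallel transport operator. An isomorphism of the K3 crystals of the special fibres controls only $p$-adic data. This holds even if crystalline deformation theory propagates it to matching Hodge filtrations over $V$. It does not determine the complex Hodge structures of the generic fibres, so the surface you build via Ogus's crystalline Torelli never gets attached to $X$.

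The paper reverses the order and produces the surface in characteristic zero:
\begin{enumerate}
\item Using the classification of $\NS(X)$, which rests on the Tate conjecture for divisors, it lifts $X$ together with a small saturated sublattice. This sublattice contains an ample class and $\rU\oplus\langle v\rangle$ or $\rU(p)\oplus\langle v\rangle$ with $p\nmid v^2$. For OG10 it also contains a class of square $-6$ and divisibility $3$; for $\Kum^n$ one instead uses a class of square $-(2n+2)$ spanning a direct summand.
\item The characteristic-zero numerical modularity criterion, applied to the generic fibre, yields a (twisted) K3 or abelian surface $S$ over $\bar K$.
\item One shows $S$ has potentially good reduction. Its $\rH^2_{\mathrm{tr}}$ matches that of $\cX_{\bar K}$ as Galois modules and is therefore unramified. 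Bragg's criterion then applies, and it needs a line bundle of degree prime to $p$, which is why $p\nmid v^2$ is arranged; for abelian surfaces Serre--Tate applies.
\item Comparing $\DD_{\cris}$ shows the reduction is supersingular.
\item The embedding of $\frh(\cX_{\bar K})$ into the tensor category generated by $\frh(S)$ is specialized to characteristic $p$.
\end{enumerate}
No birational map in characteristic $p$ is ever needed.

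\textbf{Your Artin-invariant bookkeeping is also wrong, so even a repaired Steps 2--3 would miss cases in the statement.} Since $p>\dim X$ forces $p\nmid v^2$, the $p$-part of the discriminant of $v^\perp\subset\widetilde{\rH}_{\cris}(S)$ equals that of $S$. Hence untwisted moduli spaces on supersingular K3 surfaces realize only $\sigma_0\le 10$, and on supersingular abelian surfaces only $\sigma_0\le 2$. This leaves three included cases unreachable by your construction:
\begin{itemize}
\item $\mathrm{K}3^{[n]}$-type with $\sigma_0=11$;
\item OG10-type with $\sigma_0=11$;
\item OG6-type with $\sigma_0=3$.
\end{itemize}
So the excluded values are not ``exactly'' where the surface's Artin invariant would exceed the maximum.

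The paper treats $\sigma_0=11$ via sheaves twisted by a $\mu_m$-gerbe: the summand $\rU(p)\subset\NS(X)$ produces a twisted Mukai lattice in characteristic zero. It treats OG6 not through modularity but through the Mongardi--Rapagnetta--Sacc\`a double cover of an OG6-resolution by a $\mathrm{K}3^{[3]}$-type variety. This is combined with the Weil-type abelian fourfold $B_\cK$, whose supersingular reduction comes from a Clifford-algebra slope argument. That route also fits the constraint that only two line bundles can be lifted when $b_2=8$.
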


Applying the realization functors, we deduce the following consequence of \Cref{thm:intro:SSMotive}:
\begin{theorem}[Supersingular Tate conjecture]
Let $Y$ be a product of supersingular irreducible symplectic varieties as in \Cref{thm:intro:SSMotive}, then all the $\ell$-adic and crystalline cycle class maps 
         \[\CH^*(Y)\otimes \QQ_{\ell}\to \rH^{2*}_{\et}(Y, \QQ_{\ell}),\]
         \[\CH^*(Y)\otimes W(k)[1/p]\to \rH^{2*}_{\cris}(Y/W(k))[1/p]\]
    are surjective. 
\end{theorem}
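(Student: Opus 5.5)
The plan is to deduce the statement formally from \Cref{thm:intro:SSMotive} using the tensor structure of Chow motives and the compatibility of realization functors with the Künneth formula. Write $Y=X_1\times\cdots\times X_r$ with each $X_i$ as in \Cref{thm:intro:SSMotive}. Since $\h(Y)\simeq \h(X_1)\otimes\cdots\otimes\h(X_r)$ in $\CHM(k)_\QQ$, the theorem shows that $\h(Y)$ lies in the thick tensor subcategory generated by $\QQ(-1)$ and the motives of supersingular abelian varieties. The Kummer factors $X_i$ contribute motives of supersingular abelian type, and all other factors contribute Tate motives.

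The next step is to reduce to the case where $\h(Y)$ is of Tate type. Every supersingular abelian variety $A$ over $k$ is isogenous to $E^g$ for a supersingular elliptic curve $E$. Hence $\h(A)\simeq \bigoplus_i \wedge^i \h^1(E)^{\oplus g}$, and it lies in the tensor category generated by $\h(E)$. A motive of supersingular abelian type is therefore a direct summand of $\h(E^N)(m)$ sums, and so is $\h(Y)$. Concretely, $\h(Y)$ is isomorphic to a direct summand of $\bigoplus_j \h(E^{N_j})(-m_j)$.

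The main point, and the step needing care, is surjectivity of the cycle class map for a direct summand. Let $M=(Y',q,m)$ be a summand of $\h(Z)$, and let $\alpha$ be an isomorphism $\h(Y)\simeq M\oplus M'$, where $Z=\coprod E^{N_j}$ and the twists are absorbed. A cohomology class $c$ on $Y$ is transported by the correspondence $\alpha$ to a class on $Z$ in the appropriate degree. If every class on $Z$ of the relevant even degree (after twist) is algebraic, we pull back by the algebraic correspondence $\alpha^{-1}$ and conclude that $c$ is algebraic. This works because correspondences act on both Chow groups and cohomology compatibly with the cycle class map, for both $\ell$-adic and crystalline realizations.

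It therefore suffices to know that $E^N$ satisfies the surjectivity of cycle class maps onto all even cohomology. This is the classical Tate conjecture for powers of a supersingular elliptic curve, which holds in all degrees. The reason is that $\End(E)\otimes\QQ_\ell$ is the full quaternion algebra acting on $\rH^1$, so Tate classes in $\rH^{2i}(E^N)$ are generated by products of divisor classes (Tate; Spiess, Lenstra–Zarhin). Over an algebraically closed field, supersingularity makes all of $\rH^{2*}$ spanned by such classes. Equivalently, the Chow motive of $E^N$ has its even part of Tate type, and on the Tate part surjectivity is tautological because $\QQ(-i)$ realizes to a class of $\CH^i(\mathrm{pt})$ transported by a correspondence.

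When all factors are non-Kummer, $\h(Y)$ is directly a sum of Tate motives $\QQ(-i)$. In that case the realization of each summand is one-dimensional and spanned by the image of the defining algebraic cycle, so surjectivity is immediate.

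The only genuine input beyond \Cref{thm:intro:SSMotive} is therefore the Tate-type description of even-degree cohomology of supersingular abelian varieties. I expect that step to be the main obstacle, though it is standard.
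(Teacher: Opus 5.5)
Your proposal is correct and follows the paper's route: the paper deduces this statement in one line, by applying the $\ell$-adic and crystalline realization functors to \Cref{thm:intro:SSMotive} (the motive of a product being the tensor product of the motives). Your argument spells out what that line leaves implicit, namely transporting classes along a split embedding $\h(Y)\hookrightarrow\bigoplus_j\h(E^{N_j})(m_j)$ and using that all even-degree cohomology of a power of a supersingular elliptic curve is spanned by products of divisor classes. (Your direct-summand paragraph is slightly garbled: you want a split injection $\alpha$ with left inverse $\beta$, so that $c=\beta_*\alpha_*c=\mathrm{cl}(\beta_*z)$, rather than an inverse $\alpha^{-1}$; the logic is otherwise sound.)
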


\begin{remark}
   The three excluded maximal Artin invariant cases in \Cref{thm:intro:SSMotive} are due to limitation of our techniques: some lattice-theoretic input needed for our argument fails in these cases, which prevent us from using modular interpretations. We will treat the remaining cases in a subsequent paper. 
\end{remark}

\subsection{Idea of the proof}
    Given an irreducible symplectic variety $X$, we construct an $\ell$-adic and crystalline Beauville--Bogomolov form on its second cohomology, and show that the latter is equipped with the structure of a $K3$-crystal. After studying the deformation property, we prove the supersingular Tate conjecture for divisor classes in $X$, using a period map to an integral Spin Shimura variety.  Then we classify the supersingular N\'eron--Severi lattices and identify a small-rank saturated sublattice that can be lifted with a polarization. By choosing a lifting together with suitable line bundles, we can use a numerical birational criterion to realize the characteristic-zero fiber as a moduli space of (twisted) sheaves on a $K3$ or abelian surface. We show that these $K3$ or abelian surfaces have supersingular good reductions. In characteristic zero, the Chow motive of the generic fiber is controlled by the motive of the underlying surface, and we obtain the main theorem by specialization to characteristic $p$. This gives the conclusion for $K3^{[n]}$-type, $\Kum^n$-type, and OG10-type. For the OG6-type, we replace the direct modular realization by the Mongardi--Rapagnetta--Sacc\`a double cover and control the Chow motive by the motive of the associated K3 surface and abelian fourfold.

\subsection{Organization of the paper}

\begin{itemize}
    \item Section~\ref{sec:IS-varieties} introduces irreducible symplectic varieties in
positive characteristic, the known deformation types, and Artin
supersingularity.
\item  In Section~\ref{sec:BB-form} we construct the integral
Beauville--Bogomolov forms and the associated K3 crystals.  
\item  Section~\ref{sec:deformation}
studies lattice-polarized deformations, proves the supersingular Tate
conjecture for divisors, and establishes the lifting theorem. 
\item Section~\ref{sec:NS-classification}
classifies the N\'eron--Severi lattices in terms of the Artin invariant.
\item Section~\ref{sec:bridgeland} reviews Bridgeland moduli spaces, numerical
birational criteria, and motivic results.
\item   In Section~\ref{sec:three-types}
we prove the main theorem for the $K3^{[n]}$-type, the $\Kum^n$-type, and
the $\mathrm{OG}10$-type.  Section~\ref{sec:og6} treats the $\mathrm{OG}6$-type.
\item The appendix studies the torsion of cohomology groups of irreducible symplectic varieties of known deformation types. 
\end{itemize}

\subsection{History and AI disclosure}
This project began around early 2024. The first case completed was the \(K3^{[n]}\)-type case, which originated as the undergraduate thesis problem of the second author. In early 2025, we completed the proof of the entire paper, mainly treating the cases where the irreducible symplectic variety of known type is modular or twisted modular. After that, our main focus shifted to the non-modular cases, which will appear in the next paper. The third author presented this work at many workshops and conferences in 2025.

All mathematical proofs and ideas in this paper were developed without the use of LLM. LLM was used only to check typos, correct the grammatical errors, and locate references of some widely-known results. The authors assume full responsibility for all mathematical content.

\subsection*{Notation and conventions}
\begin{itemize}
    \item For a perfect field $k$ of characteristic $p$, we write $W$ for the Witt ring of $k$ and $K$ for the fraction field of $W$, and $\sigma$ for the lifting of the Frobenius endomorphism $x\to x^p$ on $k$.
    \item We write $\widehat{\ZZ}$ for the profinite completion of $\ZZ$ and $\widehat{\ZZ}^p$ for its prime-to-$p$ component. We write $\AA_f$ for $\widehat{\ZZ}\otimes\QQ$ and $\AA_f^p$ for its prime-to-$p$ component.
    \item Let $R$ be an integral domain. By a lattice $V$ over $R$, we mean a finite free $R$-module $V$ together with a quadratic form $q$ on $V$. We also denote by $q(\cdot,\cdot)$ the associated symmetric bilinear form. For a vector $v\in V$, we write $v^2$ for $q(v)$.
\end{itemize}

\section{Irreducible symplectic  varieties}\label{sec:IS-varieties}

In this section, we collect some basic notions about irreducible symplectic varieties. We fix a field $k$ throughout this section.

\subsection{Irreducible symplectic varieties}
In complex geometry, an \emph{irreducible symplectic manifold} is a simply connected compact Kähler manifold \(X\) such that \(\rH^0(X,\Omega_X^2)\) is generated by a nowhere degenerate holomorphic \(2\)-form, which is automatically closed. By Beauville--Bogomolov theorem \cite{Beauville_1983}, abelian varieties, strict Calabi--Yau varieties and irreducible symplectic varieties form the building blocks of compact Kähler manifolds with vanishing first Chern class. 

In positive characteristics, there is currently no universally accepted definition of irreducible symplectic varieties. We use the following definition in this paper.

\begin{definition}\label{Def: HK variety}
A smooth projective variety  $X$ over  $k$ is called \textit{irreducible symplectic} if  
\begin{enumeratei}
    \item  $\rH^0(X,\Omega^2_{X/k})$ is spanned by a nowhere degenerate closed $2$-form;
    \item $\pi_1^{\et}(X_{\bar{k}})$ is trivial.
\end{enumeratei}
\end{definition}
 We refer to \cite[Section 3]{SSIV} for examples of irreducible symplectic varieties.

\subsection{Mazur--Ogus lifting}
\label{subsec:Mazur--Ogus}
Let $k$ be an algebraically closed field of characteristic $p>0$. Following \cite{Joshi} and \cite{Antieau-Bragg}, we say that a smooth projective $k$-variety $X$ is \textit{Mazur--Ogus} if the Hodge-to-de Rham spectral sequence degenerates at $E_1$ and all crystalline cohomology groups are torsion free. 

We introduce the following notion of liftings:

\begin{definition}[Mazur--Ogus lifting]
\label{def:MazurOgus-lifting}
    Let $X$ be a smooth projective $k$-variety. Let $V$ be a finite extension DVR of $W(k)$. A \textit{Mazur--Ogus} lifting of $X$ is a smooth projective family $\pi\colon \cX\to \Spec V$ with special fiber isomorphic to $X$, such that $\mathbf{R}^j\pi_*\Omega^i_{\cX/V}$ is a free $V$-module for all $i, j\in\mathbb{N}$.
\end{definition}

\begin{remark}
    The above condition on the freeness of the higher direct images is equivalent to the equality of Hodge numbers between the generic fiber and the special fiber, and it is in turn also equivalent to the condition that $X$ is Mazur--Ogus. In fact, if $X$ admits a Mazur--Ogus lifting, then equality must hold everywhere in the following chain of inequalities:
    \[
    \dim \rH_\dR^k(X)\leq \sum_{i+j=k}h^{i,j}(X)=\sum_{i+j=k}h^{i,j}(\cX_{\bar{\eta}})=b_k(\cX_{\bar{\eta}})=\rk \rH^k_{\cris}(X/W)\leq \dim \rH_\dR^k(X).
    \]
    Then the Hodge-to-de Rham spectral sequence degenerates at $E_1$ page and all crystalline cohomology is torsion free. Conversely, if the latter condition is satisfied, then the upper-semicontinuity inequality in the following 
    \[
    \dim \rH_\dR^k(X)=\sum_{i+j=k}h^{i,j}(X)\geq\sum_{i+j=k}h^{i,j}(\cX_{\bar{\eta}})=b_k(\cX_{\bar{\eta}})=\rk \rH^k_{\cris}(X/W)= \dim \rH_\dR^k(X)
    \]
    is an equality, hence this lifting is a Mazur--Ogus lifting. Since being Mazur--Ogus is a condition on $X$ (not on the lifting), we see that if $X$ admits a Mazur--Ogus lifting, then all liftings are Mazur--Ogus.
\end{remark}

Having a Mazur--Ogus lifting to an irreducible symplectic variety implies some good properties. For instance, we will use the following result:
\begin{proposition}[{\cite[Proposition~2.1.4]{YangISV}}]\label{prop: integral first chern class map}
Let $k$ be an algebraically closed field of characteristic $p>0$ and let $X$ be a smooth projective variety over $k$ that admits a Mazur--Ogus lifting to an irreducible symplectic variety. Then we have:
\begin{enumerate}
    \item[(a)] $c_{1, \ell}:\mathrm{NS}(X)\otimes \mathbb{Z}_\ell \to \rH^2_{\mathrm{\acute et}}(X,\mathbb{Z}_\ell(1))$ is injective with torsion-free cokernel for every prime number $\ell\neq p$.
    \item[(b)] $c_{1, \cris}:\mathrm{NS}(X)\otimes \mathbb{Z}_p \to \rH^2_{\cris}(X/W)^{F=p}$ is injective with torsion-free cokernel.
    \item[(c)] $\mathrm{NS}(X)=\mathrm{Pic}(X)$ and $\mathrm{NS}(X)$ is torsion free.
\end{enumerate}
\end{proposition}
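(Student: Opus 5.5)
The plan is to derive all three statements from the two properties of the Mazur--Ogus lifting $\pi\colon\cX\to\Spec V$: its geometric generic fiber $\cX_{\bar\eta}$ is irreducible symplectic in characteristic zero, and the crystalline cohomology of $X$ is torsion free with Hodge numbers equal to those of $\cX_{\bar\eta}$.

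\textbf{Step 1: vanishing results on $X$.}
\begin{itemize}
\item In characteristic zero, $\rH^1(\cX_{\bar\eta},\cO)=0$ since $b_1=0$. Equality of Hodge numbers then gives $h^{0,1}(X)=0$.
\item Hence $\Pic^0(X)$ is trivial as a group scheme: its tangent space $\rH^1(X,\cO_X)$ vanishes, so it is also reduced. Therefore $\Pic(X)=\NS(X)$.
\item For torsion, an $n$-torsion line bundle with $p\nmid n$ gives a connected étale $\mu_n$-cover. This is impossible, since $\pi_1^{\et}(X)$ is trivial: by specialization, $\pi_1^{\et}(X)$ is a quotient of $\pi_1^{\et}(\cX_{\bar\eta})=1$.
\item For $p$-torsion, use crystalline cohomology: $\rH^1_{\cris}(X/W)=0$, and $\rH^2_{\cris}(X/W)$ is torsion free.
\item The flat cohomology sequence identifies $\Pic(X)[p]$ with $\rH^1_{\mathrm{fl}}(X,\mu_p)$. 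This group injects into $\rH^0(X,\Omega^1)^{F=\ldots}$ via dlog, and $\rH^0(X,\Omega^1)=0$ by $h^{1,0}=0$.
\end{itemize}
This proves (c).

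\textbf{Step 2: the $\ell$-adic statement (a).}
\begin{itemize}
\item The Kummer sequence gives an injection $\NS(X)\otimes\ZZ/\ell^m\hookrightarrow \rH^2_{\et}(X,\mu_{\ell^m})$, with cokernel $\Br(X)[\ell^m]$.
\item Passing to the limit, $\NS\otimes\ZZ_\ell\to\rH^2_{\et}(X,\ZZ_\ell(1))$ is injective. Its cokernel is $T_\ell\Br(X)$, which is torsion free as a Tate module.
\item One needs $\NS(X)$ finitely generated and torsion free, which holds by (c). One also needs $\rH^2_{\et}(X,\ZZ_\ell)$ torsion free. This follows from $\rH^1(X,\mu_\ell)=0$: $\Pic[\ell]=0$ and there are no nontrivial $\mu_\ell$-torsors since $\pi_1$ is trivial.
\end{itemize}

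\textbf{Step 3: the crystalline statement (b).}
\begin{itemize}
\item Use the crystalline Chern class together with the flat/syntomic Kummer sequences $0\to\NS\otimes\ZZ/p^m\to\rH^2_{\mathrm{fl}}(X,\mu_{p^m})\to\Br[p^m]\to0$.
\item Illusie's comparison identifies $\varprojlim \rH^2_{\mathrm{fl}}(X,\mu_{p^m})$ with $\rH^2_{\cris}(X/W)^{F=p}$. This needs $\rH^1$-vanishings and torsion-freeness of $\rH^2_{\cris}$ and $\rH^3_{\cris}$, which is where the Mazur--Ogus condition is used.
\item As in Step 2, the cokernel is $T_p\Br(X)$, hence torsion free.
\end{itemize}

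\textbf{Main obstacle.} I expect the hard part to be the identification in Step 3, namely controlling $\varprojlim^1$ terms and the torsion in the de Rham--Witt cohomology of $X$. The Hodge-to-de Rham degeneration for $X$, together with torsion-freeness, should make the slope spectral sequence well behaved, giving $\rH^2(X,\ZZ_p(1))\cong \rH^2_{\cris}(X/W)^{F=p}$. If needed, I would simply cite Illusie or Yang for this comparison.
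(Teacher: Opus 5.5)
Your outline is correct. Since the paper gives no proof of its own here and only cites Yang, I am comparing against the standard argument.

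\textbf{Steps 1 and 2.} These are right as written:
\begin{itemize}
\item $h^{0,1}=0$ kills $\Pic^0_{X/k}$.
\item Surjectivity of the specialization map on $\pi_1^{\et}$ kills prime-to-$p$ torsion.
\item The map $\Pic(X)[p]=\rH^1_{\mathrm{fl}}(X,\mu_p)\hookrightarrow\rH^0(X,\Omega^1_X)=0$ via $d\log$ kills $p$-torsion.
\item The Kummer sequence gives (a), with cokernel $T_\ell\Br(X)$.
\end{itemize}
Torsion-freeness of $\rH^2_{\et}(X,\ZZ_\ell)$ is not needed for (a), and neither is the crystalline bullet in Step 1.

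\textbf{Step 3.} The reduction is fine, and the identification $\varprojlim_m\rH^2_{\mathrm{fl}}(X,\mu_{p^m})\cong\rH^2_{\cris}(X/W)^{F=p}$ does hold here. However, it does not hold for the reason you suggest. Hodge--de Rham degeneration plus torsion-freeness does not make the slope spectral sequence well behaved in any useful sense. In exactly the case where the paper uses this result, $X$ supersingular, $\rH^2(X,W\cO_X)$ is the Cartier module $k[[V]]$ of $\widehat{\Br}_X\simeq\widehat{\GG}_{\mathrm{a}}$. This module is not finitely generated over $W$, and the slope spectral sequence does not degenerate at $E_1$; this already happens for supersingular K3 surfaces.

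The correct input is Mazur's theorem. Write $\ZZ_p(1)$ as the fiber of $\varphi/p-1$ from the first Nygaard step $\mathcal{N}^{\geq 1}W\Omega^\bullet_X$ to $W\Omega^\bullet_X$; the quotient of $W\Omega^\bullet_X$ by this step is $\cO_X$. The vanishing of $\rH^1_{\cris}(X/W)$ and $\rH^1(X,\cO_X)$ then gives
\[
\rH^2(X,\ZZ_p(1))\cong\{x\in\rH^2_{\cris}(X/W) : x\mapsto 0 \text{ in } \rH^2(X,\cO_X),\ \varphi(x)=px\}.
\]
Mazur's theorem says the first condition is equivalent to $\varphi(x)\in p\rH^2_{\cris}(X/W)$. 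That condition is implied by the second, so the first is redundant. This is exactly where the Mazur--Ogus hypothesis enters. Note also that citing Yang for this step would be circular.

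\textbf{A more elementary route to (b).} You can bypass flat cohomology entirely. The same vanishings give a chain of injections
\[
\Pic(X)/p\hookrightarrow\rH^1_{\et}(X,\cO_X^{*}/\cO_X^{*p})\hookrightarrow\rH^1(X,Z\Omega^1_X)\hookrightarrow\rH^2_{\dR}(X/k).
\]
The middle injection comes from the exact sequence $0\to\cO_X^*/\cO_X^{*p}\xrightarrow{d\log}Z\Omega^1_X\xrightarrow{1-C}\Omega^1_X\to 0$ together with $\rH^0(X,\Omega^1_X)=0$. The last injection uses $\rH^1(X,\cO_X)=0$.

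Hence $c_{1,\dR}$ is injective on $\NS(X)\otimes k$. Since $\rH^2_{\cris}(X/W)\otimes k\hookrightarrow\rH^2_{\dR}(X/k)$ and $\NS(X)\otimes\ZZ_p$ is free, $c_{1,\cris}$ is injective with torsion-free cokernel already in $\rH^2_{\cris}(X/W)$. It is therefore also injective with torsion-free cokernel in the $F=p$ part.

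Your route gives more, namely that the cokernel is $T_p\Br(X)$, but at the cost of the comparison theorem.
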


In the definition of Mazur--Ogus lifting, if the lifting is over $W$ itself instead of a finite extension DVR, then this lifting will automatically carry a lifting of a primitive polarization by the following lemma. This will be useful later when we provide examples of irreducible symplectic varieties of known types.

\begin{lemma}\label{lem: ExsitenceOfPrimitivePolarization}
    Let $k$ be an algebraically closed field of characteristic $p>0$ and $W=W(k)$. Assume that there is a smooth projective family $\pi:\cY\to\Spec W$ such that
    \begin{enumeratei}
        \item The Hodge-to-de Rham spectral sequence degenerates at $E_1$-page, and
        \item $\mathbf{R}^j\pi_*(\Omega_{\cY/W}^i)$ is a free $W$-module for all $i, j\in \mathbb{N}$.
    \end{enumeratei}
    Then there is a primitive polarization on the special fiber $\cY_0$ that can be extended to $\cY$.
\end{lemma}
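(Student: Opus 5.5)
We follow Grothendieck's existence theorem combined with deformation theory of line bundles. Since $\cY\to\Spec W$ is projective, we may fix a relatively ample line bundle $\cL$ on $\cY$. Its restriction $L_0=\cL|_{\cY_0}$ is ample, but it need not be primitive in $\NS(\cY_0)$. Write $L_0=m\cdot M_0$ with $M_0\in\NS(\cY_0)$ primitive and $m\geq 1$. Then $M_0$ is ample, because a positive multiple of it is ample. The goal is to show that $M_0$, or a suitable primitive ample class, lifts to $\cY$. The key point is that the specialization map $\Pic(\cY)\to\Pic(\cY_0)$ has image that is saturated in $\NS(\cY_0)$ modulo torsion, so a class some multiple of which lifts must itself lift, up to torsion.

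\textbf{Step 1: obstruction theory.} We lift $M_0$ successively over $W_n=W/p^n$. The obstruction to lifting a line bundle from $\cY_{n}$ to $\cY_{n+1}$ lies in $\rH^2(\cY_0,\cO)\otimes p^n W/p^{n+1}W$. By Deligne's/Ogus' description, when the Hodge-to-de Rham spectral sequence degenerates and the $\mathbf{R}^j\pi_*\Omega^i$ are free, the Hodge filtration on $\rH^2_{\dR}(\cY/W)\cong \rH^2_{\cris}(\cY_0/W)$ is by direct summands. A line bundle on $\cY_0$ lifts to $\cY$ formally if and only if its crystalline Chern class $c_1(M_0)\in \rH^2_{\cris}(\cY_0/W)$ lies in $\Fil^1\rH^2_{\dR}(\cY/W)$. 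This is the Berthelot--Ogus criterion, valid over $W$ since $e=1<p-1$ for $p\ge3$; for $p=2$ we would instead use the obstruction-theoretic argument directly.

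\textbf{Step 2: saturation.} We have $c_1(L_0)=m\,c_1(M_0)\in\Fil^1$, because $L_0$ lifts. Freeness of the $\mathbf{R}^j\pi_*\Omega^i$ together with degeneration implies that $\rH^2_{\dR}(\cY/W)/\Fil^1\cong \rH^2(\cY,\cO_\cY)$ is a free $W$-module, hence torsion free. Therefore $m\,c_1(M_0)\mapsto 0$ in it forces $c_1(M_0)\mapsto 0$, so $c_1(M_0)\in\Fil^1$. This step also uses that $\rH^2_{\cris}(\cY_0/W)$ is torsion free, which follows from the hypotheses as in the remark above, so the crystalline class is meaningful as an element of a lattice. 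We conclude that $M_0$ lifts to a compatible system of line bundles on the formal scheme $\widehat{\cY}$.

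\textbf{Step 3: algebraization and ampleness.} Grothendieck's existence theorem algebraizes this system to a line bundle $\cM$ on $\cY$, since $\cY$ is proper over $W$. Because $\cM|_{\cY_0}=M_0$ is ample and ampleness is open in proper families, $\cM$ is relatively ample over $\Spec W$, as the special point lies in every nonempty closed subset. So $M_0$ is a primitive polarization extending to $\cY$.

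We expect the main obstacle to be Step 1, namely justifying the crystalline lifting criterion with exactly the given hypotheses, including small $p$. If that proves delicate, we will argue directly: the obstruction to lifting $M_0$ at each stage is killed after multiplying by $m$. The obstruction map to the free module $\rH^2(\cY,\cO)\otimes p^nW/p^{n+1}W\cong \rH^2(\cY_0,\cO)$ is additive. It therefore suffices that $m$ is invertible in $k$, or else one runs the saturation argument in $\rH^2(\cY,\cO_\cY)$ over $W$ as in Step 2 rather than stage by stage.
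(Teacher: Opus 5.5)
Your proposal is correct and follows essentially the same route as the paper. The paper takes a relatively ample $\cL$ on $\cY$, writes $\cL|_{\cY_0}$ as a multiple of a primitive ample class $H$, and gets the extension of $H$ directly from \cite[Proposition~1.12]{Ogus1979}; that citation supplies exactly the saturation statement you derive from the crystalline criterion ($c_1\in\Fil^1$) together with torsion-freeness of $\rH^2(\cY,\cO_\cY)$. For $p=2$ you should rely on that reference (or a version of the crystalline criterion valid there) rather than your stage-by-stage fallback, which as written does not handle $p\mid m$ and does not control the obstructions of lifts that differ by classes trivial on $\cY_0$.
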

\begin{proof}
    Let $\cL$ be a relatively ample line bundle on $\cY/W$. Its restriction $\cL_0$ on $\cY_0$ is an ample line bundle. $\cL_0$ is a multiple of some primitive ample line bundle $H$ on $\cY_0$. By \cite[Proposition 1.12]{Ogus1979}, $H$ can be extended to $\cY$.
\end{proof}

We introduce the concept of \emph{excellent irreducible symplectic varieties}, inspired by Yang \cite{YangISV}.
\begin{definition}[Excellence]\label{def: ExcellentReduction}
    Let $X$ be an irreducible symplectic variety over $k$. We say that $X$ is an excellent irreducible symplectic variety if $X$ admits a Mazur--Ogus lifting $\cX\to V$ with $V$ is a finite extension DVR of $W(k)$, such that the geometric generic fiber \(\mathcal{X}_{\bar{\eta}}\) is an irreducible symplectic variety and $\cX$ admits a polarization whose restriction to the special fiber is primitive. 
\end{definition}

\begin{remark}\label{remark: remark on IS}
    In fact, under some mild conditions, we do not need to assume that $X$ is an irreducible symplectic variety in \Cref{def: ExcellentReduction} at first. We will see in \Cref{theorem: Mazur--Ogus implies IS} that any smooth projective $k$-variety $X$ with $\dim X<p$ satisfying the lifting condition in \Cref{def: ExcellentReduction} and \Cref{condition: discriminant prime to p} is automatically an irreducible symplectic variety.
\end{remark}

\subsection{Deformation types} 
\subsubsection{Characteristic zero}
We first assume that the base field $k$ is of characteristic zero. By the Lefschetz principle, it is harmless to assume that $k$ can be embedded into $\CC$. Fix a complex irreducible symplectic manifold $M$. We say that $X$ is \textit{of deformation type} $M$ if $X_\CC:=X\times_{k, \sigma}\CC$ is deformation equivalent to $M$ for some embedding $\sigma\colon k\hookrightarrow\CC$. Clearly, $X$ can only have finitely many deformation types. It is an interesting open question whether the deformation type is actually independent of the embedding $k\hookrightarrow\CC$.  Note that all known deformation types, given as follows, are independent of the embedding $k\hookrightarrow\CC$ by \cite[Proposition 2.3]{fu2026unpolarizedshafarevichconjectureshyperkahler}.

Over the complex numbers, the following families are the only known deformation types of irreducible symplectic manifolds \cite{Beauville_1983,OGrady_K3,OGrady_Abelian}:
\begin{enumerate}
\item \emph{\({\rK3^{[n]}}\)-type}: deformation equivalent to the Hilbert scheme \(S^{[n]}\) of length-\(n\) subschemes on a \(K3\) surface \(S\).
\item \emph{\({\rm Kum^{n}}\)-type}: deformation equivalent to the generalized Kummer variety \(K_n(A)\) associated with an abelian surface \(A\).
\item \emph{\(\mathrm{OG}6\)-type}: deformation equivalent to O’Grady’s six-dimensional example, obtained as a crepant resolution of the Albanese fiber of a moduli space of sheaves on an abelian surface.
\item \emph{\(\mathrm{OG}10\)-type}: deformation equivalent to O’Grady’s ten-dimensional example, obtained as a crepant resolution of a moduli space of sheaves on a K3 surface.
\end{enumerate}

The next proposition shows that the deformation type of an irreducible symplectic variety is invariant under deformation in characteristic zero, which is a generalization of \cite[Proposition 2.3.2]{YangISV}.

\begin{proposition}\label{prop: defornation type in characteristic zero}
    Let $k$ be a perfect field of characteristic zero and let $S$ be a connected variety over $k$. Suppose that there is a smooth projective family $\cX$ over $S$. If some geometric fiber is of deformation type $M$, then every geometric fiber is of deformation type $M$.
\end{proposition}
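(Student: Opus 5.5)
The strategy is to reduce to complex geometry and use Ehresmann's theorem. The key point is that "deformation type $M$" for a geometric fiber $\cX_{\bar s}$ means that for some embedding of the field of definition into $\CC$, the base change is deformation equivalent to $M$. So we must compare any two geometric fibers through one common complex family.

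\emph{Step 1: descend to a finitely generated field and embed into $\CC$.} Since $S$ is of finite type and $\cX\to S$ is smooth projective, everything is defined over a finitely generated subfield $k_0\subset k$. Given geometric points $\bar s_1,\bar s_2$ of $S$ with residue fields $\Omega_1,\Omega_2$, we may shrink to finitely generated subfields, so all data embed into $\CC$. Choose an embedding $k_0\hookrightarrow\CC$, giving a complex variety $S_\CC$. It is connected, since $k$ is perfect and geometric connectedness of the irreducible components can be checked componentwise. If $S$ is connected but not geometrically connected, then $S_{\bar k}$ has several components permuted transitively by $\Gal(\bar k/k)$. This Galois action changes the embedding into $\CC$, which is exactly why the definition allows "some embedding". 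So the plan is to work with a fixed connected component of $S_\CC$ and use Galois conjugation to move between components.

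\emph{Step 2: compare a geometric fiber with a complex fiber.} Let $\bar s$ be a geometric point, $\cX_{\bar s}$ over $\Omega$, and suppose $\sigma\colon\Omega\hookrightarrow\CC$ is an embedding with $(\cX_{\bar s})_\CC$ deformation equivalent to $M$. The restriction $\sigma|_{k_0}$ is some embedding of $k_0$, and $(\cX_{\bar s})_\CC$ is the fiber of $\cX\times_{k_0,\sigma}\CC\to S\times_{k_0,\sigma}\CC$ over a $\CC$-point. Conversely, for any other geometric point $\bar t$, the field $\kappa(\bar t)$ is finitely generated over the image of $k_0$. Since $\CC$ has infinite transcendence degree over it, we can extend $\sigma|_{k_0}$ (composed with an element of $\mathrm{Aut}(\CC)$ if needed) to an embedding $\tau\colon\kappa(\bar t)\hookrightarrow\CC$ that lands in the same connected component of $S_\CC$. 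Landing in a prescribed component is possible because the components of $S\times_{k_0}\CC$ lying over a component of $S$ correspond to the embeddings of the algebraic closure of $k_0$ in the function field, which we may choose.

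\emph{Step 3: apply Ehresmann.} Over a connected component $T$ of $S_\CC$, the analytification $\cX^{an}_T\to T^{an}$ is a smooth proper submersion over a connected complex space, after resolving or restricting to smooth strata. Any two points of $T$ can be joined by a chain of smooth curves, and over each such curve the family is a smooth proper family of Kähler manifolds. Hence any two complex fibers over $T$ are deformation equivalent, and so $(\cX_{\bar t})_{\CC,\tau}$ is deformation equivalent to $(\cX_{\bar s})_{\CC,\sigma}\sim M$. This gives deformation type $M$ for $\cX_{\bar t}$.

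\emph{Main obstacle.} The delicate point is Step 2: arranging the embedding of $\kappa(\bar t)$ so that the corresponding $\CC$-point lies in the same connected component of $S_\CC$ as the given fiber, when $S$ is connected but not geometrically connected. I would handle this by first reducing to $S$ geometrically connected over the algebraic closure $k'$ of $k$ in the function field, with every geometric point factoring through $k'$, and then choosing the embedding of $k'$ compatibly with $\sigma$.
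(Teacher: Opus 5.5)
Your argument is correct and is essentially the paper's: both realize the two geometric fibers as fibers of one complex family over a connected base by choosing compatible embeddings into $\CC$. The only difference is that you compare two arbitrary geometric points directly and absorb the Galois action on $\pi_0(S_{\bar k})$ into the choice of embedding, whereas the paper first reduces to $k=\bar k$ by Galois transitivity and then compares $k$-points with the geometric generic point.

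Two corrections. Strike the sentence in Step 1 asserting that $S_\CC$ is connected because $k$ is perfect: it fails already for $S=\Spec\QQ(i)$ over $\QQ$, as you concede in the next sentence. In your last paragraph, the field whose embedding you should fix is the finite extension $k'$ cut out by the stabilizer of a connected component of $S_{\bar k}$; $S$ is geometrically connected over $k'$, and $k'$ is contained in every residue field of $S$. The algebraic closure of $k$ in the function field is the wrong choice, since it can be strictly larger when $S$ is non-normal and is undefined when $S$ is reducible.
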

\begin{proof}
    We first assume that $k$ is algebraically closed. Let $s$ be any $k$-point of $S$ and let $\eta$ be the generic point of $S$. It suffices to show that $\cX_s$ is of deformation type $M$ if and only if $\cX_{\bar{\eta}}$ is of deformation type $M$. There is a natural embedding of residue fields $k\hookrightarrow\kappa(\bar{\eta})$ and we can reduce to the case when both of them have finite transcendental degree over $\bar{\QQ}$. If $\cX_s$ is of deformation type $M$ under some embedding $\iota:k\hookrightarrow\CC$, we can extend $\iota$ to $\bar{\iota}:\kappa(\bar{\eta})\hookrightarrow \CC$. Since $S\otimes_\iota\CC$ is still connected, we have $\cX_{\bar{\eta}}$ is also of deformation type $M$ under the embedding $\bar{\iota}$. Similarly, if $\cX_{\bar{\eta}}$ is of deformation type $M$ under some embedding $\kappa(\bar{\eta})\hookrightarrow\CC$, $\cX_{s}$ is also of deformation type $M$ under the composition of embeddings $k\hookrightarrow\kappa({\bar{\eta}})\hookrightarrow\CC$.

    If $k$ is not algebraically closed, we use the fact that $\Gal(\bar{k}/k)$ acts transitively on the connected components of $S\otimes_k\bar{k}$ to reduce to the algebraically closed case.
\end{proof}

\subsubsection{Positive characteristic}
In positive characteristic, the discussion of deformation type of irreducible symplectic variety is more subtle. For technical reasons, we can only define the deformation type for excellent irreducible symplectic varieties.
\begin{definition}\label{def: HK of known types}
Let $M$ be a deformation type of irreducible symplectic varieties in characteristic zero. 
Let $X$ be an excellent irreducible symplectic variety over an algebraically closed field $k$ of characteristic $p>0$. We say that $X$ is \emph{of deformation type} $M$ if the Mazur--Ogus lifting in \Cref{def: ExcellentReduction} has geometric generic fiber of deformation type $M$.

When $M$ is one of the known types, we say that \(X\) is \emph{of known type}. In this case, $X$ has a unique deformation type if $p>\dim X$ thanks to \Cref{prop: deformation type independent on lifting} that we will establish later (all the hypotheses are satisfied by the known deformation types). 
\end{definition}



We show in the following examples that excellent irreducible symplectic varieties of each known deformation type exist. Results of torsion-freeness established in \Cref{appendix} are used. 
\begin{example}[$\rK3^{[n]}$-type, {\cite[Proposition 5.1.3]{YangISV}}]\label{example:K3^n}
    Suppose that $p>2n+1$ and $S$ is a K3 surface over $k$. Let $v$ be a primitive Mukai vector with $v^2+2=2n$, and let $H$ be a $v$-generic polarization. Suppose that we are in the following cases:  $S$ is non-supersingular; or $c_1(v)$ is a multiple of $H$ and $c_{1,\dR}(H)\notin \Fil^2\rH^2_\dR(S/k)$. Then the moduli space of stable sheaves $\cM_H(S,v)$ is an excellent irreducible symplectic variety of $\rK3^{[n]}$-type. In particular, the Hilbert scheme $S^{[n]}$ is always an excellent irreducible symplectic variety of $\rK3^{[n]}$-type.
\end{example}

\begin{example}[$\mathrm{Kum}^n$-type]\label{example: Kummer}
    Suppose that $p>2n+1$ and $A$ is an abelian surface over $k$. Let $K_n(A)$ be the smooth symplectic variety in \cite[Section 6]{SSIV}. By \cite{Oo87}, $A$ can be lifted to $\cA/W$, then $K_n(A)$ can be lifted to $K_n(\cA)$. By \Cref{thm:torsion free}, the cohomology $\rH^i_\et(K_n(\cA_{\bar{\eta}}),\ZZ_p)$ is torsion free for all $i$. Thus by \cite{FM87}, $\rH^j_\cris(K_n(A)/W)$ is torsion free for all $j\leq p-1$, and hence for all $j\geq 0$ by the Poincar\'e duality. Together with \Cref{lem: ExsitenceOfPrimitivePolarization}, we have $K_n(A)$ is excellent irreducible symplectic variety of $\Kum^n$-type.
\end{example}

\begin{example}[$\mathrm{OG}6$-type]
    Let $k$ be an algebraically closed field of characteristic $p>7$ and  $A$ be an ordinary abelian surface over $k$. Let $v=(2,0,-2)$ be a Mukai vector and $H$ be a $v$-generic polarization on $A$. Consider the singular moduli space $\cM_H(A,v)$ of $H$-semistable sheaves on $A$ with Mukai vector $v$, and its crepant resolution $\widetilde{\cM}_H(A,v)$. Denote by $\widetilde{K}_H(A,v)$ the Albanese fiber. By \cite{SSOG6}, $\widetilde{K}_H(A,v)$ can be lifted to $W$, and the generic fiber is an OG6-type variety in characteristic zero. By \Cref{thm: TorsionOfOG6}, $\widetilde{K}_H(A,v)$ is of OG6-type by the same argument as in \Cref{example:K3^n,example: Kummer}.
\end{example}

\begin{example}[$\mathrm{OG}10$-type]
     Let $k$ be an algebraically closed field of characteristic $p>11$ and  $S$ ibe an ordinary K3 surface over $k$. Let $v=(2,0,-2)$ be a Mukai vector and $H$ be a $v$-generic polarization on $S$. Consider the crepant resolution $\widetilde{\cM}_H(S,v)$ of the moduli space of $H$-semistable sheaves with Mukai vector $v$ on $S$, which can be lifted to $W$. By \Cref{thm;pTorsionFreenessOfOG10}, the same argument as above shows that $\widetilde{\cM}_H(S,v)$ is of OG10-type.
\end{example}

\subsection{Supersingular irreducible symplectic varieties}\label{subsec:Br}
We recall here the definition of supersingular irreducible symplectic varieties defined in \cite{SSIV}. 
Artin and Mazur \cite{AM77} showed that for a smooth projective variety \(X\) with \(\rH^1(X,\mathcal{O}_X)=0\) the functor sending an Artin local $k$-algebra $R$ to 
\[
\widehat{\operatorname{Br}}_X(R)=\ker\bigl(\rH^2_{\et}(X\times_k R,\mathbb{G}_{\mathrm{m}})\to \rH^2_{\et}(X,\mathbb{G}_{\mathrm{m}})\bigr)
\]
is pro‑representable by a formal group, called the \emph{formal Brauer group} of \(X\). Its Lie algebra is canonically isomorphic to \(\rH^2(X,\mathcal{O}_X)\). Recently, Grammatica \cite{Gr25} showed that the formal Brauer group is formally smooth when $\rH^3_{\cris}(X/W)$ is torsion free.

\begin{definition}\label{def:ArtinSS}
An irreducible symplectic variety \(X\) is called \emph{Artin‑supersingular} if the formal Brauer group \(\widehat{\operatorname{Br}}_X\) is isomorphic to the formal additive group \(\widehat{\mathbb{G}}_{\mathrm{a}}\).
\end{definition}

We assume that $X$ is an excellent irreducible symplectic variety. The formal Brauer group \(\widehat{\operatorname{Br}}_X\)   is \(1\)-dimensional because \(\rH^2(X,\mathcal{O}_X)\simeq k\). In this situation Artin‑supersingularity is equivalent to the statement that the Newton polygon of the \(F\)-crystal \((\rH^2_{\text{cris}}(X/W(k)),\varphi)\) is \emph{supersingular}. This equivalence is a variant of a result of Artin and Mazur and will be used later.

\begin{proposition}[{\cite[Proposition 3.6]{SSIV}}]\label{prop:ArtinEquiv}
Let \(X\) be an excellent irreducible symplectic variety. Then \(\widehat{\operatorname{Br}}_X\) is formally smooth, and \(X\) is Artin‑supersingular if and only if the crystalline cohomology \(\rH^2_{\cris}(X/W(k))\) is a supersingular \(F\)-crystal (i.e. its Newton polygon is isoclinic of slope 1).
\end{proposition}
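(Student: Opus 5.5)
The plan follows the classical Artin--Mazur comparison between the Dieudonné module of the formal Brauer group and the slope $<2$ part of $\rH^2_{\cris}$, adapted to the excellent setting. First, formal smoothness. Since $X$ is excellent, it admits a Mazur--Ogus lifting, so all crystalline cohomology groups of $X$ are torsion free (see the remark after \Cref{def:MazurOgus-lifting}). In particular $\rH^3_{\cris}(X/W)$ is torsion free, and Grammatica's theorem \cite{Gr25} gives that $\widehat{\Br}_X$ is formally smooth. Its tangent space is $\rH^2(X,\cO_X)$, which is one-dimensional because $X$ is irreducible symplectic. Indeed, Hodge symmetry passes from the generic fiber via the equality of Hodge numbers, so $h^{0,2}=h^{2,0}=1$. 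Hence $\widehat{\Br}_X$ is a one-dimensional smooth formal group. Such a group is classified by its height $h\in\{1,2,\dots\}\cup\{\infty\}$, and $h=\infty$ holds exactly when $\widehat{\Br}_X\simeq\widehat{\GG}_{\mathrm a}$.

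Next, I would compare heights with slopes via the Artin--Mazur theorem \cite{AM77}. This theorem identifies the Cartier--Dieudonné module of $\widehat{\Br}_X$, modulo torsion and after inverting $p$, with the part of slope $<1$ of $\rH^2(X,W\cO_X)\otimes K$. For this we need $\rH^1(X,\cO_X)=0$, which follows from $h^{0,1}=h^{1,0}=0$ (again compared with the generic fiber). If $h<\infty$, this part has dimension $h$ with a single slope $1-1/h$. Through the slope spectral sequence $E_1^{ij}=\rH^j(X,W\Omega^i)\Rightarrow \rH^{i+j}_{\cris}$, the part of slope in $[0,1)$ of $\rH^2(X,W\cO_X)_K$ coincides with the slope $[0,1)$ part of $\rH^2_{\cris}(X/W)_K$. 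The spectral sequence degenerates rationally (Illusie), so this step is unconditional.

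If $h<\infty$, then $\rH^2_{\cris}$ has a nonzero slope $<1$, so it is not isoclinic of slope $1$. If $h=\infty$, then $\widehat{\Br}_X\simeq\widehat{\GG}_{\mathrm a}$ has zero Dieudonné module after inverting $p$. Thus $\rH^2_{\cris}(X/W)_K$ has no slopes in $[0,1)$. By Poincaré duality combined with hard Lefschetz, or more simply by the symmetry $\lambda\leftrightarrow 2-\lambda$ of Newton slopes of $\rH^2$ coming from Katz--Messing together with hard Lefschetz on cup product with a polarization, it also has no slopes in $(1,2]$. So $\rH^2_{\cris}$ is isoclinic of slope $1$.

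The main obstacle is this symmetry of slopes. Poincaré duality only pairs $\rH^2$ with $\rH^{2\dim X-2}$. To get a self-duality on $\rH^2$ I would use the rational hard Lefschetz isomorphism $L^{\dim X-2}\colon \rH^2\to \rH^{2\dim X-2}$. This is known in crystalline cohomology by Katz--Messing, deduced from Deligne's $\ell$-adic result after spreading out to a finite field. It is compatible with Frobenius up to a Tate twist. Alternatively, one can use the Newton-above-Hodge inequality together with the fact that the Hodge polygon of $\rH^2$ has breaks $1,b_2-2,1$ at slopes $0,1,2$. Then the Newton polygon is determined by the slope $<1$ part, since it has the same endpoints and is symmetric, which forces isoclinicity when that part is empty.
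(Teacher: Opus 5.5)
Your proposal is correct and follows the route the paper indicates. Formal smoothness comes from Grammatica's theorem, using the torsion-freeness of $\rH^3_{\cris}(X/W)$ supplied by the Mazur--Ogus lifting; the equivalence comes from the Artin--Mazur identification of the Dieudonn\'e module of $\widehat{\Br}_X$ with $\rH^2(X,W\cO_X)$, whose rationalization is the slope $<1$ part of $\rH^2_{\cris}(X/W)[1/p]$. As a minor simplification, you do not need slope symmetry at all: the Newton and Hodge polygons share endpoints, and by Mazur's theorem the Hodge polygon of $\rH^2_{\cris}$ ends at height $0\cdot 1+1\cdot(b_2-2)+2\cdot 1=b_2$, so if none of the $b_2$ slopes is $<1$ they must all equal $1$.
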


\section{Beauville–Bogomolov forms on irreducible symplectic varieties}\label{sec:BB-form}

\subsection{Beauville–Bogomolov form in characteristic zero}

For a complex irreducible symplectic manifold $X$ of dimension $2n$, the second cohomology group $\rH^2(X,\mathbb{Z})$ carries a primitive integral quadratic form $q_X$, called the \emph{Beauville–Bogomolov (BB) form}, which satisfies the following properties (see \cite{ Beauville_1983}, \cite[Section 23]{CompactHK_Huy}):

\begin{enumerate}
    \item $q_X$ is non‑degenerate and has signature $(3, b_2(X)-3)$.
    \item There exists a positive rational number $c_X$, called the \emph{Fujiki constant}, such that for every $\alpha_1,\cdots\alpha_{2n}\in H^2(X,\mathbb{Z})$
    \[
    \prod_{i=1}^{2n}\alpha_i= \frac{c_X}{n!\,2^n}\sum_{\sigma\in \mathfrak{S}_{2n}} q(\alpha_{\sigma(1)},\alpha_{\sigma(2)})\cdots q(\alpha_{\sigma(2n-1)},\alpha_{\sigma(2n)}).
    \]
    \item The Hodge decomposition $\rH^2(X,\mathbb{C}) = \rH^{2,0}(X) \oplus \rH^{1,1}(X) \oplus \rH^{0,2}(X)$ satisfies
    \[
    (\rH^{2,0}(X)\oplus \rH^{0,2}(X))\perp   \rH^{1,1}(X) 
    \]
    with respect to $q_X\otimes\mathbb{C}$.
\end{enumerate}
The Beauville--Bogomolov form and the Fujiki constant are deformation invariants and their computations for the known deformation types are summarized below in Table~\ref{tab:BBform} (see for example \cite{BB_form}):

\begin{table}[htb]\label{table: BB form}
    \centering
    \begin{tabular}{|c|c|c|c|c|}
    \hline
    Type & $\dim X$ & $b_2(X)$ & $c_X$ & ($\rH^2(X,\ZZ)$, $q_X$)\\
    \hline
    $\rK3^{[n]}$ & $2n$ & $23$ &1 & $\rU^{\oplus3}\oplus \rE_8(-1)^{\oplus2}\oplus\langle -2(n-1)\rangle$ \\
    \hline
    $\operatorname{Kum}^n$ & $2n$ & $7$ & $n$+1& $\rU^{\oplus3}\oplus\langle -2(n+1)\rangle$ \\
    \hline
    OG6 & $6$ & $8$ &4& $\rU^{\oplus3}\oplus\langle -2\rangle^{\oplus2}$ \\
    \hline
    OG10 & $10$ & $24$ &1& $\rU^{\oplus3}\oplus \rE_8(-1)^{\oplus2}\oplus \rA_2(-1)$ \\
    \hline
    \end{tabular}
    \vspace{.1cm}

    \caption{BB forms and Fujiki constants of the known types}
    \label{tab:BBform}
\end{table}

Now let $X$ be an irreducible symplectic variety defined over a field $k$ of characteristic zero. Fix an embedding $\iota:k\hookrightarrow\mathbb{C}$. Artin’s comparison isomorphism
$$\mathrm{H}^{2}\bigl(X\times_{k,\iota}\operatorname{Spec}\mathbb{C},\mathbb{Z}(1)\bigr)\otimes\widehat{\mathbb{Z}}\;\cong\; \mathrm{H}^{2}_{\mathrm{\acute{e}t}}(X_{\bar{k}},\widehat{\mathbb{Z}}(1))$$
allows us to transport the Beauville--Bogomolov form $q_X$ on the left‑hand side to a $\widehat{\mathbb{Z}}$-valued quadratic form
$$q: \rH^2_{\mathrm{\acute{e}t}}(X_{\bar{k}},\widehat{\mathbb{Z}}(1))\longrightarrow\widehat{\mathbb{Z}},$$
where $\bar{k}$ denotes the algebraic closure of $k$ inside $\mathbb{C}$. As shown in \cite[Lemma 4.2.1]{BindtThesis}, this $\widehat{\mathbb{Z}}$-form is independent of the embedding; it is called the \emph{adelic Beauville--Bogomolov form} of $X$. The Fujiki constant $c_X$ is defined as the Fujiki constant of $X\times_{k,\iota}\mathbb{C}$, which also does not depend on $\iota$. For convenience, we introduce the following notation:
\begin{notation}
      For an irreducible symplectic variety $X$ in characteristic zero, we write $\disc(X)$ for the discriminant of the integral Beauville--Bogomolov form on $\rH^2(X\times_{k,\iota}\CC,\ZZ)$ for one/any $\iota\colon k\to \CC$. Note that $\disc(X)$ is independent of the choice of $\iota$, since it coincides with the discriminant of the adelic Beauville--Bogomolov form.
\end{notation}

\subsection{Beauville--Bogomolov form in positive characteristic}
Following Yang \cite{YangISV}, we give a construction of Beauville–Bogomolov forms on excellent irreducible symplectic varieties of deformation type $M$ that has both Fujiki constant and the discriminant coprime to $p$. Slightly more generally, let $X$ be a smooth projective $k$-variety satisfying the following:

\begin{condition}\label{condition: discriminant prime to p}
    There is a Mazur--Ogus lifting $\cX/V$ of $X$ over a finite extension DVR $V$ of $W(k)$, such that the geometric generic fiber $\cX_{\bar{\eta}}$ is an irreducible symplectic variety, whose discriminant $\disc(\cX_{\bar{\eta}})$ and Fujiki constant $c_{\cX_{\bar{\eta}}}$ both have trivial $p$-adic valuations.
\end{condition}

Under this condition, we will have a good notion of Beauville--Bogomolov form of $X$:
\begin{itemize}
    \item For each prime $\ell\neq p$, the adelic Beauville--Bogomolov form on $\mathcal{X}_\eta$ gives, via smooth and proper base change, a non‑degenerate symmetric bilinear form
    \[
    q_\ell: \rH^2_{\mathrm{\acute{e}t}}(X_{\bar{k}},\mathbb{Z}_{\ell}(1))\times \rH^2_{\mathrm{\acute{e}t}}(X_{\bar{k}},\mathbb{Z}_{\ell}(1))\longrightarrow \mathbb{Z}_{\ell}.
    \]
    \item There also exists a bilinear form on the crystalline cohomology
    \[
    q_{\mathrm{cris}}: \rH^2_{\mathrm{cris}}(X/W(k))\times \rH^2_{\mathrm{cris}}(X/W(k))\longrightarrow W(k)
    \]
    that gives $\rH^2_{\mathrm{cris}}(X/W(k))$ the structure of a $K3$ crystal (\Cref{Def: K3 crystal}). The construction relies on the integral $p$-adic Hodge theory, and this is where the \Cref{condition: discriminant prime to p} is used. We will study this crystalline form in detail later.
\end{itemize}

The following elementary lemma, which characterizes the Fujiki relation, will be essential when we transfer the Beauville--Bogomolov form to different cohomology theories.

\begin{lemma}
\label{lem:fujiki}
    Let $R$ be an integral domain with fraction field $F$ such that $2n!$ is non-zero in $R$. Let $V$ be a free $R$-module of finite rank. Suppose $w: V^{\otimes 2n}\to R$ and $q: V^{\otimes 2}\to R$ are symmetric multi-linear maps satisfying
    \begin{equation}\label{eq:fujiki}
        w(\alpha_1,\dots,\alpha_{2n}) = \frac{c}{n!\,2^n}\sum_{\sigma\in \mathfrak{S}_{2n}} q(\alpha_{\sigma(1)},\alpha_{\sigma(2)})\cdots q(\alpha_{\sigma(2n-1)},\alpha_{\sigma(2n)}),
    \end{equation}
    where $c$ is a nonzero element in $F$. Then for any $\xi\in V$ such that $q(\xi,\xi)\neq 0$, one has $w(\xi,\dots,\xi,\alpha)=0$ if and only if $q(\xi,\alpha)=0$. Moreover, $q$ is completely determined by $w$ and the value $q(\xi,\xi)$.
\end{lemma}
\begin{proof}
    This lemma can be proven after base change to $F$. So after replacing $w$ by $c^{-1}w$, the lemma follows from \cite[Lemma 2.1.1]{YangISV}
\end{proof}

We can now define the Beauville--Bogomolov form for excellent irreducible symplectic varieties over fields of positive characteristic, under the non-degeneracy \Cref{condition: discriminant prime to p}. 

\begin{proposition}[cf.~\cite{YangISV}]
\label{prop: BB form}
    Let $k$ be an algebraically closed field of characteristic $p$ and  $X$ be a smooth projective variety over $k$ satisfying \Cref{condition: discriminant prime to p} and $\dim X<p$. \\
    For every prime number $\ell\neq p$, there is a unique pairing
    $$q_{\ell}:\rH^2_{\et}(X,\ZZ_\ell(1))^{\otimes2}\to\ZZ_\ell$$ 
    such that $q_{\ell}$ satisfies the relation in \Cref{lem:fujiki} with $w$ induced by the cup product.\\
    There exists a perfect pairing 
    $$q_{\cris}:\rH^2_{\cris}(X/W)^{\otimes2}\to W(-2)$$
    such that $q_{\cris}$ satisfies the relation in \Cref{lem:fujiki} with $w$ induced by the cup product.\\
    Furthermore, for $q=q_{\ell}$ or $q_{\cris}$, we have $q(c_1(\zeta),c_1(\zeta))\in\ZZ$ for any $\zeta\in\NS(X)$, and it is positive whenever $\zeta$ is ample.
\end{proposition}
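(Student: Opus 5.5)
For the $\ell$-adic form we transport from the generic fiber and pin it down with the Fujiki relation. Let $\cX/V$ be the Mazur--Ogus lifting from \Cref{condition: discriminant prime to p}, with geometric generic fiber $\cX_{\bar\eta}$. Its adelic Beauville--Bogomolov form restricts to a $\ZZ_\ell$-valued form on $\rH^2_{\et}(\cX_{\bar\eta},\ZZ_\ell(1))$. Smooth and proper base change gives an isomorphism
\[
\rH^2_{\et}(\cX_{\bar\eta},\ZZ_\ell(1))\cong \rH^2_{\et}(X,\ZZ_\ell(1)),
\]
and this isomorphism is compatible with cup products and the trace map on $\rH^{4n}$. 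We define $q_\ell$ as the transported form. It satisfies \eqref{eq:fujiki} with $w$ the cup product and $c=c_{\cX_{\bar\eta}}$, because the Fujiki relation holds integrally over $\CC$ and hence after $\otimes\ZZ_\ell$.

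Uniqueness of $q_\ell$ follows from \Cref{lem:fujiki}, applied over $R=\ZZ_\ell$ (note $(2n)!$ is nonzero there). The lemma shows $q$ is determined by $w$ together with $q(\xi,\xi)$ for one $\xi$ with $q(\xi,\xi)\neq0$. To fix the scaling, take $\xi=c_1(H)$ for an ample class on $X$. The relation gives
\[
\int\xi^{2n}=c\,\frac{(2n)!}{n!\,2^n}\,q(\xi,\xi)^n,
\]
so $q(\xi,\xi)$ is determined up to an $n$-th root of unity. Requiring positivity on ample classes removes this ambiguity. For this we use the lifted polarization $\cL$: we have $q_\ell(c_1(\cL_{\bar\eta}),c_1(\cL_{\bar\eta}))=q_{\cX_{\bar\eta}}(\cL)$, which is a positive integer by the complex theory. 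By \Cref{prop: integral first chern class map}, every $\zeta\in\NS(X)$ has $c_1(\zeta)$ inside $\NS(X)\otimes\ZZ_\ell$, and we expect $q_\ell$ on the image of $\NS(X)$ to be $\ZZ$-valued and independent of $\ell$. I would show this by specializing $\NS(\cX_{\bar\eta})\to\NS(X)$ where possible. For classes that do not lift, I would instead use $w$: by the polarization formula from \Cref{lem:fujiki}, $q(\zeta,\zeta)$ is a rational expression in the intersection numbers $\int\zeta^a h^{2n-a}$ (with $h$ the lifted ample class) and $q(h,h)$. That expression lies in $\QQ\cap\ZZ_\ell$ for every $\ell\neq p$, and also in $\ZZ_p$ once crystalline is done. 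Hence it lies in $\ZZ$.

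Positivity on an arbitrary ample $\zeta$ then follows, since $\int\zeta^{2n}>0$ and $c>0$ give $q(\zeta,\zeta)^n>0$. When $n$ is even this only fixes the sign up to $\pm$. To settle the sign, I would use that $q(\zeta,h)$ is proportional to $\int\zeta h^{2n-1}>0$, together with the connectedness of the ample cone.

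The crystalline form is the main obstacle. Since $\dim X<p$, integral $p$-adic Hodge theory (Fontaine--Messing, or Bhatt--Morrow--Scholze comparison for $V$ possibly ramified, using torsion-freeness from the Mazur--Ogus hypothesis) gives an isomorphism
\[
\rH^2_{\cris}(X/W)\otimes_W A_{\cris}\cong \rH^2_{\et}(\cX_{\bar\eta},\ZZ_p)\otimes A_{\cris},
\]
and this isomorphism respects cup product and trace. Transport $q_p$ through it. The Fujiki relation and \Cref{lem:fujiki} then show that the resulting form descends to $W$: it is determined by the cup product, which is defined over $W$, and by $q(\xi,\xi)\in\ZZ$. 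The Frobenius-equivariance of the cup product gives values in $W(-2)$. Finally, perfectness follows because $\disc(\cX_{\bar\eta})$ is a $p$-adic unit, so $q_p$ is perfect over $\ZZ_p$, and this persists after the faithfully flat base change to $W$. The integrality and positivity statements for $q_{\cris}$ are then identical to the $\ell$-adic case, via the crystalline Chern class of \Cref{prop: integral first chern class map}(b).
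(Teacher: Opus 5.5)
Your $\ell$-adic part is correct and matches what the paper does via Yang. You transport by smooth proper base change, get uniqueness from \Cref{lem:fujiki} up to $\mu_n(\ZZ_\ell)$, and pin it down with the positive integer $q_{\cX_{\bar\eta}}(\cL)$. Rationality on $\NS(X)$ comes from the polarization formula, and the sign from the segment $t\zeta+(1-t)h$ in the ample cone. The gap is in the crystalline part, which is where the real content lies.

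The integral isomorphism $\rH^2_{\cris}(X/W)\otimes_W A_{\cris}\cong \rH^2_{\et}(\cX_{\bar\eta},\ZZ_p)\otimes A_{\cris}$ that you invoke does not exist. The crystalline and \'etale lattices are identified only after inverting $t$ (or $\mu$). In nonzero Hodge--Tate weight they differ by powers of $t$: already for $\ZZ_p(-1)$ versus $W$ with $\varphi=p$, the two generators differ by a factor of $t$. So $q_p$ does not transport to an $A_{\cris}$-valued form on $\rH^2_{\cris}(X/W)\otimes A_{\cris}$, and ``perfectness persists under faithfully flat base change'' has nothing to apply to.

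The Fujiki relation does not repair this. \Cref{lem:fujiki} reconstructs $q$ by a formula that divides by $c\,(2n-1)!!\,q(\xi,\xi)^{n-1}$, and $q(\xi,\xi)$ may be divisible by $p$. So ``determined by the cup product over $W$ and by $q(\xi,\xi)\in\ZZ$'' only gives a $K$-valued form on $\rH^2_{\cris}(X/W)[1/p]$. More generally, a purely rational comparison over $B_{\cris}$ cannot tell which $W$-lattice in $\rH^2_{\cris}(X/W)[1/p]$ corresponds to $\rH^2_{\et}(\cX_{\bar\eta},\ZZ_p)$. It therefore cannot give $W$-integrality of $q_{\cris}$ or $p\nmid\disc(q_{\cris})$.

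What is missing is an integral comparison that is fully faithful and compatible with tensor products. One option: Bhatt--Morrow--Scholze, using torsion-freeness of $\rH^*_{\cris}(X/W)$, identify $\rH^2_{\cris}(X/W)$ with the Frobenius-twisted base change to $W$ of the Breuil--Kisin module attached to $\rH^2_{\et}(\cX_{\bar\eta},\ZZ_p)$, compatibly with cup products. Combine this with Kisin's fully faithful tensor functor. Fontaine--Laffaille/Fontaine--Messing would also do when $V=W$ and the weights are in range.

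With such a comparison, the perfect pairing $q_p\colon \rH^2\otimes\rH^2\to\ZZ_p(-2)$, being a morphism of crystalline lattices, is carried to a perfect $W(-2)$-valued pairing on $\rH^2_{\cris}(X/W)$. The Fujiki relation transports along with the cup product. This is what ``integral $p$-adic Hodge theory'' means in the paper's proof. Once it is in place, your rationality argument and Frobenius-twist bookkeeping are fine.

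Finally, your proof that $q(c_1(\zeta),c_1(\zeta))$ is $p$-integral on $\NS(X)$ relies on $q_{\cris}$ being $W$-valued, so it inherits this gap. The $\ell$-adic forms alone only place that value in $\ZZ[1/p]$.
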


\begin{proof}
    The proof is exactly the same as that of \cite[Proposition 2.1.5]{YangISV}. For the $\ell$-adic cohomology, we simply apply the smooth proper base change theorem to recover $\rH^2_{\et}(X,\ZZ_\ell)$ from $\rH^2_{\et}(\mathcal{X}_{\bar{\eta}},\ZZ_\ell)$. For the crystalline cohomology, we use the integral $p$-adic Hodge theory to recover $\rH^2_{\cris}(X/W)$ from $\rH^2_{\et}(\mathcal{X}_{\bar{\eta}},\ZZ_p)$. 
\end{proof}

By the crystalline-de Rham comparison isomorphism and the universal coefficient theorem, we have $\rH^i_{\dR}(X/k)=\rH^i_{\cris}(X/W)\otimes_W k$. So the quadratic form $q_\cris$ gives a perfect quadratic form $q_{\dR}$ on $\rH^2_{\dR}(X/k)$. It is easy to see that $q_{\dR}$ also satisfies the Fujiki relation in \Cref{lem:fujiki} and $\Fil^1\rH^2_{\dR}(X/k)=(\Fil^2\rH^2_{\dR}(X/k))^\perp$ under $q_{\dR}$.

We can now prove the assertion mentioned in \Cref{remark: remark on IS}.
\begin{theorem}\label{theorem: Mazur--Ogus implies IS}
 Let $k$ be an algebraically closed field of characteristic $p$ and  $X$ be a smooth projective variety over $k$ satisfying \Cref{condition: discriminant prime to p} and $\dim X<p$. Then $X$ is an irreducible symplectic variety in the sense of \Cref{Def: HK variety}.
\end{theorem}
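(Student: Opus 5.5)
We need to check the two conditions of \Cref{Def: HK variety}: that $\pi_1^{\et}(X)$ is trivial, and that $\rH^0(X,\Omega^2_{X/k})$ is spanned by a nowhere degenerate closed $2$-form. The plan is to transport everything from the geometric generic fiber $\cX_{\bar\eta}$ of the Mazur--Ogus lifting of \Cref{condition: discriminant prime to p}, using freeness of the Hodge cohomology and the perfect pairing $q_{\dR}$ from \Cref{prop: BB form}.

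\emph{Fundamental group.} Since $\cX\to\Spec V$ is smooth and proper, the specialization map $\pi_1^{\et}(\cX_{\bar\eta})\to\pi_1^{\et}(X)$ is surjective on the prime-to-$p$ part (Grothendieck). The target $\cX_{\bar\eta}$ is irreducible symplectic, so simply connected after base change to $\CC$, hence $\pi_1^{\et}(\cX_{\bar\eta})=1$. Thus $\pi_1^{\et}(X)$ has no nontrivial prime-to-$p$ quotients. To kill $p$-quotients, it suffices to show $X$ has no nontrivial étale $\ZZ/p$-torsors, i.e.\ $\rH^1_{\et}(X,\ZZ/p)=0$, and no connected étale covers whose Galois closure has a $p$-group quotient; the standard reduction (a finite group with no prime-to-$p$ quotient and no $\ZZ/p$ quotient is trivial) lets me restrict to abelian quotients of order $\ell$ and $p$. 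For $\ZZ/p$, note $\rH^1_{\et}(X,\ZZ/p)\subset \rH^1(X,\cO_X)^{F=1}$ and $h^{0,1}(X)=h^{0,1}(\cX_{\bar\eta})=0$ by equality of Hodge numbers. Alternatively, torsion-freeness of $\rH^2_\cris$ plus $\rH^1_\cris=0$ gives $\Pic^\tau$ trivial, ruling out $\mu_p$- and $\ZZ/p$-type covers. I expect this $p$-part to be the most delicate point, but it reduces cleanly to $\rH^1(X,\cO_X)=0$.

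\emph{The symplectic form.} By equality of Hodge numbers, $h^0(X,\Omega^2)=h^{2,0}(\cX_{\bar\eta})=1$, so let $\sigma$ span it. Closedness: by $E_1$-degeneration of Hodge-to-de Rham, every global form is closed (the differential $d_1\colon E_1^{2,0}\to E_1^{3,0}$ vanishes). Nondegeneracy: we need $\sigma^{n}\neq 0$ in $\rH^0(X,\omega_X)$, $\dim X=2n$, and $\omega_X\cong\cO_X$. Work in $\rH^*_{\dR}(X/k)$, where $\Fil^2\rH^2_\dR=\rH^0(\Omega^2)=k\sigma$ and $\Fil^{2n}\rH^{2n}_\dR=\rH^0(\omega_X)$. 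Since $q_\dR$ is perfect and $\Fil^1=(\Fil^2)^\perp$ has codimension one, there is $\tau\in\rH^2_\dR$ with $q_\dR(\sigma,\tau)\neq0$; also $q_\dR(\sigma,\sigma)=0$ as $\Fil^2\subset\Fil^1$. The Fujiki relation (\Cref{lem:fujiki}) with $c$ a $p$-adic unit and $(2n)!$ invertible since $2n<p$ gives
\[
\sigma^{n}\cdot\tau^{n}=\frac{c}{n!\,2^n}\,(n!)^2 2^n\,q_\dR(\sigma,\tau)^n\cdot(\text{unit})\neq 0
\]
in $\rH^{4n}_\dR(X/k)\cong k$ (the only surviving pairings of $\sigma$ with $\tau$ give a positive integer multiple of $q(\sigma,\tau)^n$ with all coefficients prime to $p$). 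Hence $\sigma^n\neq0$ in $\rH^{2n}_\dR$, so as a class in $\Fil^{2n}$ the $(2n)$-form $\sigma^{\wedge n}$ is a nonzero section of $\omega_X$. Finally $\omega_X$ is trivial: $h^0(\omega_X)=h^{2n,0}(\cX_{\bar\eta})=1$ and $\omega_{\cX_{\bar\eta}}$ trivial; more directly, $c_1(\omega_X)$ lifts to $c_1(\omega_{\cX})$ which is zero generically and $\NS(X)$ torsion free with specialization injective (\Cref{prop: integral first chern class map}), so $\omega_X\cong\cO_X$. A nonzero section of a trivial bundle on a projective variety is nowhere vanishing, so $\sigma^{\wedge n}$ vanishes nowhere, i.e.\ $\sigma$ is everywhere nondegenerate.
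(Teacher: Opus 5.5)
The gap is in your treatment of $\pi_1^{\et}(X)$.

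You use Grothendieck's specialization theorem only for prime-to-$p$ quotients. You then invoke a ``standard reduction'': a finite group with no nontrivial prime-to-$p$ quotient and no $\ZZ/p$ quotient is trivial. This is false. Any nonabelian simple group of order divisible by $p$ is a counterexample, e.g.\ $A_5$ for $p=3,5$, or $\mathrm{PSL}_2(\FF_p)$ for $p\geq 5$.

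Your two inputs only show that $\pi_1^{\et}(X)$ is perfect. The vanishing $\rH^1_{\et}(X,\ZZ/p)=\rH^1(X,\cO_X)^{F=1}=0$ cannot rule out a connected finite \'etale Galois cover of $X$ whose group is nonsolvable of order divisible by $p$. The remark about $\mu_p$-covers is beside the point, since $\mu_p$-torsors are not \'etale in characteristic $p$.

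No separate treatment of the $p$-part is needed. For a proper smooth morphism with geometrically connected fibres such as $\cX\to\Spec V$, the specialization map $\pi_1^{\et}(\cX_{\bar\eta})\to\pi_1^{\et}(X)$ is surjective on the whole group (SGA1, Exp.~X). The reason is that every connected finite \'etale cover of $X$ extends to a finite \'etale cover of $\cX$ over the complete base $V$, and its geometric generic fibre stays connected. Since $\pi_1^{\et}(\cX_{\bar\eta})=1$, this gives $\pi_1^{\et}(X)=1$ at once, which is exactly the paper's argument.

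The rest of your proposal is correct and follows the paper:
\begin{itemize}
\item Choosing $\tau$ with $q_{\dR}(\sigma,\tau)\neq 0$ via $\Fil^1=(\Fil^2)^\perp$, and applying the Fujiki relation to get $\sigma^n\tau^n=c\,n!\,q_{\dR}(\sigma,\tau)^n\neq 0$, is the paper's computation.
\item Your closedness argument via $E_1$-degeneration fills a point the paper leaves implicit.
\item Your proof of $\omega_X\cong\cO_X$ is a valid alternative to the paper's. You argue that $c_1(\omega_X)\in\rH^2_{\et}(X,\ZZ_\ell(1))$ corresponds to $c_1(\omega_{\cX_{\bar\eta}})=0$ under smooth proper base change, and then apply \Cref{prop: integral first chern class map}. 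The paper instead shows that a rational section of $K_{\cX/V}$ has divisor a multiple of the principal divisor $[X]$ cut out by a uniformizer. Note that $h^0(\omega_X)=1$ alone would not give triviality.
\end{itemize}
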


\begin{proof}
    We first show that the canonical bundle of $X$ is trivial. By definition, we find a finite flat extension $V$ of $W$ and a smooth projective family $\cX\to\Spec V$ whose geometric generic fiber $\cX_{\bar{\eta}}$ is an irreducible symplectic variety. Then the canonical bundle of $\cX_\eta$ is trivial. Choose a non-zero section $\alpha$ of $K_{\cX_\eta}$, which can be seen as a rational section of $K_{\cX/V}$. It is straightforward to see that the divisor $\mathrm{div}(\alpha)$ supports in the special fiber $X$, then $\mathrm{div}(\alpha)$ is a multiple of $[X]$. But $X$ is defined by the uniformizer of $V$, then $\mathrm{div}(\alpha)$ is principal. This means that $K_{\cX/V}$ is trivial, hence so is $K_X$. We now verify the two conditions in \Cref{Def: HK variety}.
    
    For (i), by the assumption on the Hodge numbers, $\rH^0(X,\Omega_X^2)$ is generated by an element $\sigma$ and $\rH^2(X, \mathcal{O}_X)$ is generated by an element $\rho$. We only need to show that $\sigma$ is nowhere degenerate. Let $\tau\in \rH^2_{\rmd\rR}(X/k)$ be a lifting of $\rho$ via the canonical surjective morphism $\rH^2_{\dR}(X/k)\to \rH^2(X, \mathcal{O}_X)$. Since $\mathrm{Fil}^1=(\mathrm{Fil}^2)^\perp$, we have $q_{\rmd\rR}(\sigma,\tau)\neq 0$. Therefore, $\sigma^n\tau^n=cn!\cdot q_{\rmd\rR}(\sigma,\tau)^n\neq0$. Then $\sigma^n$ is a non-zero element in $\rH^0(X,\Omega_X^{2n})$. Since the canonical bundle $K_X$ of $X$ is trivial, any non-zero section of $K_X$ is nowhere vanishing. Hence $\sigma^n$ is nowhere vanishing, i.e.~$\sigma$ is nowhere degenerate.

    For (ii), we use the surjectivity of the specialization homomorphism for \'etale fundamental groups, see \cite{SGA1}. 
\end{proof}


\begin{corollary}[cf. \cite{YangISV}]
\label{corollary: perfect pairing on tangent-cotangent}
 Let $k$ be an algebraically closed field of characteristic $p$ and  $X$ be a smooth projective variety over $k$ satisfying \Cref{condition: discriminant prime to p} and $\dim X<p$. Then the natural cup product pairing
\[
\rH^1(X,T_X)\times \rH^1(X,\Omega_X^1)\to \rH^2(X,\mathcal{O}_X)\simeq k
\]
is perfect, and the map
\[
\rH^2(X,\mathcal{O}_X)\to \rH^{2n}(X,\mathcal{O}_X)
\]
defined by taking cup product with $\rho^{n-1}$ is an isomorphism.
\end{corollary}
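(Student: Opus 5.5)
The plan is to work with the symplectic form $\sigma\in \rH^0(X,\Omega^2_X)$, which is nowhere degenerate by \Cref{theorem: Mazur--Ogus implies IS}. Contracting with $\sigma$ gives an isomorphism of vector bundles $T_X\xrightarrow{\sim}\Omega^1_X$, $v\mapsto \iota_v\sigma$, and hence an isomorphism $\rH^1(X,T_X)\simeq \rH^1(X,\Omega^1_X)$. Under this identification, the cup-product pairing $\rH^1(X,T_X)\times\rH^1(X,\Omega^1_X)\to \rH^2(X,\cO_X)$ becomes, up to a nonzero scalar, the pairing on $\rH^1(X,\Omega^1_X)$ given by $(\alpha,\beta)\mapsto$ the $(0,2)$-component of a contraction against the Poisson bivector $\sigma^{-1}$. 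So the first assertion reduces to the nondegeneracy of a natural pairing on $\rH^{1,1}$.

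To prove that nondegeneracy, I will compare with the de Rham Beauville--Bogomolov form $q_\dR$. The spectral sequence degenerates and the cohomology is torsion free, so $\mathrm{gr}^1\rH^2_\dR=\rH^1(X,\Omega^1_X)$. From $\Fil^1=(\Fil^2)^\perp$ and the perfectness of $q_\dR$ (\Cref{prop: BB form}), $q_\dR$ induces a perfect pairing on $\mathrm{gr}^1=\Fil^1/\Fil^2$.

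It then remains to show that this pairing coincides, up to a nonzero constant, with the contraction pairing above. The tool is the Fujiki relation of \Cref{lem:fujiki}, applied with the entries $\sigma^{n-1}\bar\sigma^{\,n-1}$ and $\alpha,\beta$, where $\bar\sigma$ is taken to be a lift $\tau$ of the generator $\rho\in\rH^2(X,\cO_X)$. Expanding gives
\[
\int \sigma^{n-1}\tau^{n-1}\alpha\beta=c\,(n-1)!\,q(\sigma,\tau)^{n-1}q(\alpha,\beta)+(\text{terms involving }q(\sigma,\alpha),q(\tau,\alpha),\dots).
\]
The terms with $q(\sigma,\alpha)$ vanish for $\alpha\in\Fil^1$. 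The terms with $q(\tau,\alpha)$ are handled by choosing representatives $\alpha,\beta\perp\tau$; these still span $\mathrm{gr}^1$. All coefficients involved, namely $c$, $n!$ and $q(\sigma,\tau)\neq0$, are units because $p>2n$ and by \Cref{condition: discriminant prime to p}. On the Hodge side, the product $\sigma^{n-1}\tau^{n-1}\alpha\beta$ lands in $\rH^n(\Omega^n)$, and Serre duality together with $K_X\simeq\cO_X$ via $\sigma^n$ identifies it with the contraction pairing. This gives the perfectness.

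For the second assertion, take $\tau$ lifting $\rho$. Then $\sigma^n\rho^n$ maps to a nonzero multiple of $q(\sigma,\tau)^n\neq0$, exactly as in the proof of \Cref{theorem: Mazur--Ogus implies IS}. In Hodge terms this is the pairing $\rH^0(\Omega^{2n})\times \rH^{2n}(\cO)\to k$, so $\rho^n\neq0$ in $\rH^{2n}(X,\cO_X)$. Hence cup product with $\rho^{n-1}$ sends the generator $\rho$ of $\rH^2(X,\cO_X)$ to $\rho^n\neq0$. Both spaces are one-dimensional (the second by Serre duality and $h^{0}(K_X)=1$), so this map is an isomorphism.

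The main obstacle I expect is the middle step: matching the $q_\dR$-pairing on $\mathrm{gr}^1$ with the contraction pairing on Dolbeault groups, i.e.\ checking that the Hodge-graded cup products agree with the de Rham cup products modulo the filtration and tracking which correction terms vanish. I would carry this out carefully using multiplicativity of the Hodge filtration, $\Fil^a\cdot\Fil^b\subset\Fil^{a+b}$. If the exact identification is messy, I would fall back on the following argument. The map $\rH^1(\Omega^1)\to \rH^{2n}_\dR$, $\alpha\mapsto\alpha\,\sigma^{n-1}\tau^{n-1}$, followed by the trace pairing, is a perfect pairing by Fujiki and \Cref{lem:fujiki}. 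Any $v\in\rH^1(T_X)$ pairing to zero with all $\alpha$ corresponds to $\iota_v\sigma$ annihilated by this pairing, so $v=0$. Since $\dim\rH^1(T_X)=\dim\rH^1(\Omega^1)$ by the isomorphism $T_X\simeq\Omega^1_X$, the pairing is perfect.
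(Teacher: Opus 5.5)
Your proposal is correct and follows essentially the same route as the argument the paper imports from Yang. You use $\sigma$ to identify $T_X\simeq\Omega^1_X$, and you get perfectness of $(\alpha,\beta)\mapsto\int\alpha\beta(\sigma\rho)^{n-1}$ from the perfectness of $q_{\dR}$ on $\Fil^1/\Fil^2$ via the Fujiki relation, with $c$, $(n-1)!$ and $q_{\dR}(\sigma,\tau)$ all units. You get the second claim from $\sigma^n\tau^n=c\,n!\,q_{\dR}(\sigma,\tau)^n\neq0$.

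Some small corrections:
\begin{itemize}
\item The product $\sigma^{n-1}\tau^{n-1}\alpha\beta$ lives in $\rH^{2n}(X,\Omega^{2n}_X)$, not in $\rH^n(\Omega^n)$.
\item The step you flag as the obstacle is just the pointwise identity $\omega\wedge\iota_v\sigma\wedge\sigma^{n-1}=\tfrac{1}{n}\,\omega(v)\,\sigma^n$, obtained by contracting the vanishing form $\omega\wedge\sigma^n$ with $v$, together with $\sigma^n\rho^n\neq0$.
\item No correction terms involving $q_{\dR}(\tau,\alpha)$ ever appear, because each of the $n-1$ copies of $\sigma$ must be matched with a copy of $\tau$.
\end{itemize}
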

\begin{proof}
    The proof is the same as \cite[Corollary 2.1.7]{YangISV}.
\end{proof}

\subsection{The structure of K3 crystal} In the study of supersingular K3 surfaces, the second cohomology $\rH^2_{\cris}(X/W)$ equipped with the Poincar\'e pairing plays an important role (see \cite{Artin_K3} and \cite{Shafarevich} for example). We will see that an analogous structure exists for irreducible symplectic varieties, namely a K3 crystal in the sense of Ogus \cite{Ogus1979}. The K3 crystal structure is crucial in the study of supersingular irreducible symplectic varieties.

\begin{definition}[Ogus]\label{Def: K3 crystal}
    A \textit{K3 crystal} of rank $n$ is a free $W$-module $H$ of rank $n$ together with a $\sigma$-linear injective map $\varphi:H\to H$, and a symmetric bilinear form 
    \[
    \langle-,-\rangle: H\otimes_W H\to W
    \]
    such that
    \begin{enumerate1}
        \item $p^2H\subset \im(\varphi)$,
        \item $\varphi\otimes_Wk$ has rank 1,
        \item $\langle-,-\rangle$ is perfect,
        \item $\langle\varphi(x),\varphi(y)\rangle=p^2\sigma(\langle x,y\rangle)$.
    \end{enumerate1}
\end{definition}

\begin{proposition}
    Let $k$ be an algebraically closed field of characteristic $p$ and  $X$ be a smooth projective variety over $k$ satisfying \Cref{condition: discriminant prime to p} and $\dim X<p$. Then $(\rH^2_\cris(X/W),q_{\cris}, \varphi)$ is a K3 crystal. Here $\varphi$ is the $\sigma$-linear map induced by the absolute Frobenius morphism.
\end{proposition}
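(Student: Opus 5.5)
We check the four axioms of \Cref{Def: K3 crystal} for $H=\rH^2_\cris(X/W)$ with $\langle-,-\rangle=q_\cris$ (untwisted) and $\varphi$ the crystalline Frobenius. By \Cref{condition: discriminant prime to p}, $X$ admits a Mazur--Ogus lifting, so $H$ is a free $W$-module of rank $b_2$. Axiom (3) is exactly the perfectness of $q_\cris$ in \Cref{prop: BB form}.

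\emph{Axiom (4).} The cup product is Frobenius-equivariant: $\varphi(\alpha_1)\cdots\varphi(\alpha_{2n})=\varphi(\alpha_1\cdots\alpha_{2n})$. On $\rH^{4n}_\cris(X/W)\simeq W(-2n)$, Frobenius acts as $p^{2n}\sigma$. Define a second form $q'(x,y):=p^{-2}\sigma^{-1}q_\cris(\varphi x,\varphi y)$, which a priori takes values in $K$. Then $q'$ satisfies the Fujiki relation \eqref{eq:fujiki} with the same $w$ and the same constant $c$, because $\sigma$ fixes $c\in\QQ$. By \Cref{lem:fujiki}, $q'$ and $q_\cris$ are then determined by $w$ up to the value on one vector. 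More concretely, comparing $q^n$ against $w$ shows that $q'$ and $q_\cris$ agree up to a scalar $\lambda$ with $\lambda^n=1$. To pin down the scalar, take an ample $\zeta\in\NS(X)$. Then $\varphi(c_1(\zeta))=p\,c_1(\zeta)$, so
\[q'(c_1\zeta,c_1\zeta)=p^{-2}\sigma^{-1}(p^2q_\cris(c_1\zeta,c_1\zeta))=q_\cris(c_1\zeta,c_1\zeta),\]
and this value is a nonzero integer by \Cref{prop: BB form}. Hence $\lambda=1$ and $q_\cris(\varphi x,\varphi y)=p^2\sigma(q_\cris(x,y))$.

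\emph{Axioms (1) and (2).} These come from Mazur's theorem. Since the Hodge-to-de Rham spectral sequence degenerates and crystalline cohomology is torsion free, $X$ is Mazur--Ogus. Mazur's theorem then says that the image of $\varphi$ on $H$ is controlled by the Hodge filtration: $\varphi(H)=\sum_i p^i\,\widetilde{\Fil}^i$, and the Hodge numbers of the $F$-crystal equal $h^{0,2}=1$, $h^{1,1}=b_2-2$, $h^{2,0}=1$. In particular the abstract Hodge polygon has slopes $0,1,2$ with multiplicities $1,b_2-2,1$. Hence $\varphi$ has elementary divisors $1,p,\dots,p,p^2$. This gives $p^2H\subset\im\varphi$, and $\varphi\otimes k$ has rank exactly $1$. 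This step needs $\dim X<p$, or at least $p>2$, for Mazur's theorem in degree $2$; that hypothesis is available.

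\emph{Main obstacle.} The delicate step is axiom (4): making the comparison between $q'$ and $q_\cris$ integral, and ensuring the scalar ambiguity coming from the degree-$2n$ Fujiki relation is removed. I would handle this as above by rationalizing and then evaluating on an algebraic class, using \Cref{lem:fujiki}. Note that $q'$ is a priori only $K$-valued; once $q'=q_\cris$ is established, integrality follows automatically.
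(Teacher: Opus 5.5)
Your proposal is correct and takes the same route as the paper. Axiom (3) comes from the perfectness in \Cref{prop: BB form}. Axiom (4) comes from the Frobenius compatibility of the form with the cup product; the paper only asserts this, and your comparison of $p^{-2}\sigma^{-1}q_\cris(\varphi-,\varphi-)$ with $q_\cris$ via \Cref{lem:fujiki} and an ample class makes it explicit. Axioms (1) and (2) come from Mazur's theorem, which the paper cites from Mazur--Nygaard--Ogus in the form $H/\varphi(H)\cong (W/pW)^{\oplus b_2-2}\oplus W/p^2W$.
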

\begin{proof}
    We verify the four conditions in \Cref{Def: K3 crystal}. Condition (3) is verified in \Cref{prop: BB form}. Condition (4) follows from the compatibility between the Beauville--Bogomolov form and the cup product.

    For Conditions (1) and (2), denote $\rH^2_\cris(X/W)$ by $H$ and $\rank(H)=n$ for convenience. By Mazur--Nygaard--Ogus \cite[Theorem 3.8(2)]{Lectures_SSK3}, we have
    \[
    H/\varphi(H)=(W/pW)^{\oplus n-2}\oplus (W/p^2W).
    \]
    Then (1) and (2) follow directly.
\end{proof}

\section{Deformation of lattice-polarized irreducible symplectic varieties}\label{sec:deformation}

\subsection{Local deformation theory}
Our main goal in this section is to study the deformation property of irreducible symplectic variety of certain good deformation type. The whole section follows and adapts Yang's work \cite[Section 2]{YangISV}, with the modifications described below. We work in a slightly more general setting. Langer and Zink \cite[Section 3 and 4]{LZ19} gave a notion of \textit{K3-type variety} and studied its deformation property. However, the condition that $h^{p,q}=0$ when $p+q=3$ is not always true for irreducible symplectic varieties (for example, it fails for the generalized Kummer deformation type). Instead, using the deformation theory of Calabi--Yau varieties developed in \cite{Unobstructness}, we are led to consider the following natural condition:
\begin{condition}\label{condition: perfect IS type}
    Let $X$ be a smooth projective variety of dimension $2n$ over an algebraically closed field $k$, either of characteristic zero or of characteristic $p>2n$. We assume that
    \begin{enumeratea}
        \item $h^{1,0}=h^{0,1}=0,h^{2,0}=h^{0,2}=1, h^{1,1}\geq 1$ and $\sum_{p+q=r}h^{p,q}=b_r$ for any $0\leq r\leq 4n$. Here, $b_r=\dim \rH^r_\et(X,\QQ_\ell)$ is the $r$-th Betti number,
        \item $\rH^0(X,\Omega_X^2)$ is generated by a nowhere degenerate closed $2$-form $\sigma$,
        \item let $\rho$ be a generator of $\rH^2(X,\cO_X)$. We require that the pairing 
        \[
        \rH^1(X,\Omega_X^1)\times \rH^1(X,\Omega_X^1)\to k,\quad\quad (\omega_1,\omega_2)\to \int_X\omega_1\omega_2(\sigma\rho)^{n-1}
        \]
        is a perfect pairing.
    \end{enumeratea}
\end{condition}

\begin{remark}
    By \Cref{corollary: perfect pairing on tangent-cotangent}, any smooth projective variety $X$ with $\dim X<p$ satisfying \Cref{condition: discriminant prime to p} automatically satisfies \Cref{condition: perfect IS type}.
\end{remark}

The hypotheses in \Cref{condition: perfect IS type} ensure the following two important geometric properties. Both of them are well-known in characteristic zero, so we focus on the case of positive characteristic.

\begin{proposition}\label{prop: smoothness of the deformation functor}
    Let $X$ be a smooth projective variety over an algebraically closed field $k$ of characteristic $p$ satisfying \Cref{condition: perfect IS type}. The mixed characteristic deformation functor $\Def(X)$ is formally smooth.
\end{proposition}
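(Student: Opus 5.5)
Since $\Omega^2_X$ contains the nowhere degenerate $2$-form $\sigma$, contraction with $\sigma$ gives $T_X\simeq\Omega^1_X$. Together with the vanishing of the canonical class, this puts us in the Calabi--Yau-type setting of \cite{Unobstructness}. The plan is to apply the $T^1$-lifting criterion of Ran--Kawamata--Fantechi--Manetti in its mixed-characteristic form. Concretely, $\Def(X)$ is formally smooth over $W$ provided that, for every small extension $A'=W_m[t]/(t^{m})\to A$ of the relevant Artin rings and every deformation $X_A$, the $A$-module $\rH^1(X_A,T_{X_A/A})$ commutes with base change and is free. In positive characteristic this freeness follows from the degeneration of the relative Hodge-to-de Rham spectral sequence in a suitable range, together with local freeness of the relevant Hodge cohomologies.

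\textbf{Step 1.} I first reduce to Hodge-theoretic statements. Because $\sigma^n$ trivializes $K_X$, the isomorphism $T_X\simeq\Omega_X^{2n-1}$ lets us rewrite $\rH^1(X,T_X)$ as $\rH^1(X,\Omega^{2n-1}_X)$, and in the same way for any deformation $X_A$ (over $A$, $K_{X_A/A}$ stays trivial because $\rH^1(X,\cO_X)=0$). The $T^1$-lifting criterion then reduces to showing that $\rH^1(X_A,\Omega^{2n-1}_{X_A/A})$ is a free $A$-module compatible with base change.

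\textbf{Step 2.} Next I establish that the Hodge-to-de Rham spectral sequence of $X_A/A$ degenerates and that all $\rH^j(\Omega^i_{X_A/A})$ are free. By \Cref{condition: perfect IS type}(a), $\sum_{i+j=r}h^{i,j}=b_r$. Since $p>2n=\dim X$, Deligne--Illusie applies after lifting to $W_2$, giving degeneration for $X$ itself, and in fact for the whole family as long as $\dim<p$. The Betti equality then forces $\rH^r_\cris(X/W)$ to be torsion-free, because $\dim \rH^r_\dR=\sum h^{i,j}=b_r$. With these in hand, the standard argument (Deligne's base-change-for-degeneration, as in \cite[Section 3]{LZ19}) shows that the relative de Rham cohomology $\rH^r_\dR(X_A/A)\simeq \rH^r_\cris(X/W)\otimes A$ (via the crystalline comparison) is free of rank $b_r$. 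A length count then yields freeness and base change of each Hodge piece $\rH^j(\Omega^i_{X_A/A})$.

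\textbf{Step 3.} Where parts (b) and (c) of \Cref{condition: perfect IS type} enter is that the argument of \cite{Unobstructness} needs lifts of the Frobenius/Deligne--Illusie splitting over bases $A$ that are not $W_2$-liftable on the nose. The substitute is the cup product pairing. By (c), the map $\rH^1(T_X)\to \Hom(\rH^1(\Omega^1),\rH^2(\cO))$, obtained by contracting with $\sigma$ and pairing against $(\sigma\rho)^{n-1}$, is an isomorphism. This identifies the obstruction space with the annihilator of the period of $\sigma$ in $\rH^2_\dR$. I then intend to show that the obstruction to extending $X_A$ to $A'$ is the image of the Kodaira--Spencer class under a map that vanishes once $\rH^2(\Omega^1)$ has been identified, via the perfect pairing, with a quotient of de Rham cohomology that lifts crystallinely. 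In other words, the obstruction in $\rH^2(T_X)\simeq \rH^2(\Omega^1_X)$ is the derivative of the Hodge filtration $\Fil^2$ failing to extend, and it vanishes because $\Fil^2\subset \rH^2_\cris\otimes A'$ always lifts (it is a line in a free module).

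I expect the main obstacle to be Step 2 over non-reduced bases in mixed characteristic. The difficulty is justifying the crystalline comparison so as to get freeness of $\rH^r_\dR(X_A/A)$ of the correct rank for all relevant $A$. This requires torsion-freeness of $\rH^*_\cris$, which I will deduce from the Betti equality together with $E_1$-degeneration, and it is exactly where $p>2n$ is needed.
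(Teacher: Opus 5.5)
There is a genuine gap. It sits exactly at the step you flagged, and your proposed remedy does not close it. The $T^1$-lifting criterion needs freeness and base change of $\rH^1(X_A,T_{X_A/A})\cong\rH^1(X_A,\Omega^{2n-1}_{X_A/A})$ over test rings such as $W_m[t]/(t^N)$, with $N$ arbitrarily large. You obtain this from $\rH^r_\dR(X_A/A)\cong\rH^r_\cris(X/W)\otimes A$. However, the crystalline--de Rham comparison requires $A\to k$ to be a PD-thickening, and $(p,t)\subset W_m[t]/(t^N)$ carries no divided powers once $N>p$, since one would need $t^p=p!\,\gamma_p(t)\in pA$. Torsion-freeness of $\rH^*_\cris(X/W)$ and $E_1$-degeneration over $k$ say nothing over such non-PD bases. (Both follow directly from (a) via $\sum_{i+j=r}h^{i,j}\geq h^r_\dR\geq b_r$, with no need for Deligne--Illusie or a $W_2$-lift you do not yet have.) The characteristic-zero mechanism, that a module with integrable connection over an Artinian base is free, also fails in characteristic $p$. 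In fact, the paper deduces freeness of relative de Rham cohomology over the deformation space (\Cref{prop: relative de Rham cohomology}) \emph{from} formal smoothness, by lifting test maps to the divided-power rings $W_m\langle t\rangle/(\gamma_i(t):i\geq d)$. Your Step 2 therefore presupposes a consequence of the statement.

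Step 3 does not repair this. Condition (c) concerns $\rH^1(\Omega^1_X)\times\rH^1(\Omega^1_X)$, i.e.\ the tangent space $\rH^1(T_X)\cong\rH^1(\Omega^1_X)$. The obstruction space is instead $\rH^2(T_X)\cong\rH^2(\Omega^1_X)$, a piece of $\rH^3_\dR$, and it is nonzero for instance for $\Kum^n$-type. No argument about lifting the line $\Fil^2\rH^2$ can kill obstructions living there. Moreover, the description of deformations by isotropic lifts of $\Fil^2$ (\Cref{prop: isotropic lines}) is itself established downstream of smoothness; in \cite{LZ19} smoothness comes from $h^{1,2}=0$, which is precisely what fails here.

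The missing ingredient is the unobstructedness theorem \cite[Theorem A]{Unobstructness}, whose proof circumvents exactly this problem by working with divided-power test algebras, where the crystalline comparison is available. The paper simply verifies its hypotheses: $\sigma^n$ trivializes $K_X$ by (b), and (a) forces $E_1$-degeneration and torsion-free crystalline cohomology. Condition (c) is not needed for this statement.
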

\begin{proof}
    The condition (b) implies that the section $\sigma^n$ trivializes the canonical bundle. By the condition (a), every inequality in $\sum_{p+q=r}h^{p,q}\geq h_\dR^r(X/k)\geq b_r$ is in fact an equality. Thus, the Hodge-to-de Rham spectral sequence of $X$ degenerates at the $E_1$-page and all crystalline cohomology groups $\rH_\cris^r(X/W)$ are torsion free. Thus, by \cite[Theorem A]{Unobstructness}, the mixed characteristic deformation functor $\Def(X)$ is formally smooth.
\end{proof}

Let $X$ be a smooth projective variety over an algebraically closed field $k$ of characteristic $p$ satisfying \Cref{condition: perfect IS type}. Thanks to \Cref{prop: smoothness of the deformation functor} and the fact that $\rH^0(X,T_X)=0$, the functor $\mathrm{Def}(X)$ is pro-representable by a formal scheme $\DD := \Spf \mathscr{R}$ for $\mathscr{R}= W[[x_1, \cdots , x_{h^{1,1}} ]]$. We denote the universal family over $\DD $ by $\pi:\uX\to\DD$ and its special fiber $\uX\otimes_Wk\to\DD\otimes_Wk$ by $\pi_0:\uX_0\to\DD_0$. The same proof as that of \cite[Proposition 1.5]{Deligne1981} shows that for any line bundle $\xi$ on $X$, $\mathrm{Def}(X,\xi)$ is pro-representable by a formal subscheme of $\DD_\xi:= \Spf \mathscr{R}/(f_\xi)$, where
$f_\xi\in \mathscr{R}$ is an element depending on $\xi$.

\begin{proposition}\label{prop: relative de Rham cohomology}
    Let $X$ be a smooth projective variety over an algebraically closed field $k$ of characteristic $p$ satisfying \Cref{condition: perfect IS type}. Then
    \begin{enumeratea}
        \item The coherent sheaf $\rH^{j}(\uX,\Omega^i_{\uX/\DD})$ and $\rH_{\rmd\rR}^r(\uX/\DD)$ are free for any $i,j,r$. Moreover, for any $\mathscr{R}$-algebra $R$, the natural morphisms $\rH_{\rmd\rR}^r(\uX/\DD)\otimes R\to\rH_{\rmd\rR}^r(\uX_R/R)$ and $\rH^{j}(\uX,\Omega^i_{\uX/\DD})\otimes R\to\rH^{j}(\uX_R,\Omega^i_{\uX_R/R})$ are isomorphisms.
        \item Let $F_{\uX}$ be the Frobenius structure, then there are Mazur--Ogus inequalities for $\rH_{\rmd\rR}^2(\uX/\DD)$ :
        \begin{align*}
            \mathrm{Fil}^1\rH_{\rmd\rR}^2(\uX/\DD)&\subset\{x\in\rH_{\rmd\rR}^2(\uX/\DD):F_{\uX}(x)\in p\rH_{\rmd\rR}^2(\uX/\DD)\}\\
            \mathrm{Fil}^2\rH_{\rmd\rR}^2(\uX_0/\DD_0)&=\mathrm{Im}(\{x\in\rH_{\rmd\rR}^2(\uX/\DD):F_{\uX}(x)\in p^2\rH_{\rmd\rR}^2(\uX/\DD)\}\to\rH_{\rmd\rR}^2(\uX_0/\DD_0)).
        \end{align*}
        \item $\rH^r_{\cris}(\uX_0)$ is a crystal of vector bundle over $\DD_0$.
    \end{enumeratea}
\end{proposition}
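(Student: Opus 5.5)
We plan to follow Deligne's argument for K3 surfaces \cite{Deligne1981} and its adaptation by Yang \cite[Section 2]{YangISV}, proving (a) first and deducing (b) and (c) from it.

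For (a), the first step is to use the proof of \Cref{prop: smoothness of the deformation functor}. Condition (a) of \Cref{condition: perfect IS type} gives $E_1$-degeneration of Hodge-to-de Rham for $X$ and torsion-freeness of all $\rH^r_\cris(X/W)$. Consequently $\dim_k \rH^j(X,\Omega^i_X)$ equals the corresponding Hodge number of the generic fibre of any lift, and in particular of $\uX_n := \uX\otimes \mathscr{R}/\frm^{n+1}$.

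Next we work over the Artinian truncations $\mathscr{R}_n=\mathscr{R}/\frm^{n+1}$. Lengths of $\rH^j(\uX_n,\Omega^i)$ are bounded above by $\mathrm{length}(\mathscr{R}_n)\cdot h^{i,j}(X)$ via the filtration by powers of $\frm$. The de Rham cohomology of $\uX_n$ is identified with crystalline cohomology $\rH^r_\cris(X/W)\otimes\mathscr{R}_n$ via the crystal structure, because $W\to\mathscr{R}_n$ carries a PD-thickening structure after passing to $W[[x]]$ with the standard PD-envelope trick; alternatively, for $p>2n$ we use Deligne--Illusie. This module is therefore free of rank $b_r$.

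The spectral sequence gives $\mathrm{length}\,\rH^r_\dR\le\sum_{i+j=r}\mathrm{length}\,\rH^j(\Omega^i)$. Combined with the upper bound above, this forces equality everywhere, so each $\rH^j(\uX_n,\Omega^i)$ is flat over $\mathscr{R}_n$, hence free. Cohomology and base change (Grothendieck's theorem, or Deligne--Illusie's \cite[Thm.~5.5]{} criterion in the form "free plus degeneration implies commutation with base change") then gives the base-change isomorphisms for arbitrary $\mathscr{R}_n$-algebras. Passing to the limit via the theorem on formal functions gives the statement over $\DD$ and over arbitrary $\mathscr{R}$-algebras $R$.

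For (c) we only need that $\rH^r_\cris(\uX_0/\DD_0)$, evaluated on the PD-thickening $\DD_0\hookrightarrow\DD$, is $\rH^r_\dR(\uX/\DD)$. This holds by Berthelot's comparison theorem, and the latter module is locally free by (a); the crystal property is formal from base change.

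For (b), both inclusions are checked after base change along $\mathscr{R}\to W$-points, or more precisely along the PD-structure. The first inclusion $F(\Fil^1)\subset p\rH^2$ is Mazur's divisibility theorem, which applies in the relative setting since the Hodge filtration is a filtration by free direct summands commuting with base change by (a). Relative Mazur--Ogus is available for $p>\dim$, following Ogus \cite{Ogus1979} and Deligne \cite[1.6]{Deligne1981}.

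For the second assertion, the inclusion of $\Fil^2$ into the image is again Mazur's theorem. For equality we compare ranks. On $\uX_0$ the image of $\{F(x)\in p^2 H\}$ is a direct summand whose fibre at the closed point has dimension $1$, by condition (2) of the K3 crystal structure, i.e.\ $H/\varphi H\cong (W/p)^{b-2}\oplus W/p^2$. This matches $\mathrm{rk}\,\Fil^2=h^{2,0}=1$. Hence equality holds at the closed point, and it spreads over $\DD_0$ by Nakayama's lemma, since both sides are rank one summands.

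The main obstacle will be exactly this last step. The equality of the $p^2$-divisible part with $\Fil^2$ needs the $F$-crystal structure and the Hodge filtration to be compatible in families. This is Ogus's relative ``Griffiths transversality plus divisibility'' argument, and it is where we use $p>2n$ through the relative Mazur--Ogus theorem.
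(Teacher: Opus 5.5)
There is a genuine gap in the core step of (a).

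You assert that $\rH^r_\dR(\uX_n/\mathscr{R}_n)\cong \rH^r_\cris(X/W)\otimes\mathscr{R}_n$ ``via the crystal structure'', on the grounds that $\Spec k\hookrightarrow\Spec\mathscr{R}_n$ becomes a PD thickening. It does not. For $n\geq p$, a divided power $\gamma_p(x_1)$ would force $x_1^p=p!\,\gamma_p(x_1)\in p\mathscr{R}_n$. But $x_1^p\notin (p)+\frm^{n+1}$, as one sees by reducing mod $p$. The crystalline--de Rham comparison only holds over PD thickenings. Over $\DD$ the only available one is $\DD_0\hookrightarrow\DD$, and it identifies $\rH^r_\dR(\uX/\DD)$ only with the relative crystalline cohomology of $\uX_0/\DD_0$. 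The local freeness of that module is exactly what is at stake, which is why your (c) depends on (a) and cannot be used to prove it. Passing to a PD envelope changes the ring, and you give no mechanism for bringing freeness back to $\mathscr{R}_n$. Your length count, the base-change step and the passage to the limit all rest on this unproved freeness. Deligne--Illusie does not fill the hole: it gives degeneration over characteristic-$p$ bases, not freeness of de Rham cohomology over the mixed-characteristic non-reduced rings $\mathscr{R}_n$. This is precisely the difficulty the paper singles out, and it is why Langer--Zink needed $h^{p,q}=0$ for $p+q=3$.

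The missing idea is a detection argument over genuine PD rings.
\begin{itemize}
\item The paper represents $\rR\pi_*\Omega^\bullet_{\uX/\DD}$ by a minimal bounded complex $\mathscr{F}^\bullet$ of finite free $\mathscr{R}$-modules, whose differentials have entries in $\frm_{\mathscr{R}}$. Then $\mathrm{rk}\,\mathscr{F}^r=b_r$, and freeness of $\rH^r_\dR(\uX/\DD)$ is equivalent to the vanishing of all differentials.
\item A nonzero entry $\alpha$ is detected by a homomorphism $\mathscr{R}\to W_{m,s+1}=W_m\langle t\rangle/(\gamma_i(t):i\geq s+1)$ sending $x_i\mapsto\tilde a_i t$, chosen so that the image of $\alpha$ is nonzero.
\item Since $W_{m,s+1}$ is a genuine PD thickening of $k$, crystalline base change makes the de Rham cohomology over it free of rank $b_r$.
\item A complex of free modules whose cohomology is free of the same ranks as its terms has zero differentials: argue downward from the top degree, using that a surjection between free modules of equal finite rank is an isomorphism. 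This contradicts the nonvanishing of the image of $\alpha$.
\end{itemize}
Equivalently, you could base change once and for all to the $p$-adically completed PD envelope $\widehat{D}$ of the closed point of $\DD$, into which $\mathscr{R}$ injects. That is the correct form of your ``PD-envelope trick'', but it is the injectivity together with the minimal-complex argument that makes it work. Once de Rham freeness is established this way, your length argument for the Hodge sheaves and your treatment of (c) go through. In (b), your Nakayama step presupposes that the image of the $p^2$-divisible part is a direct summand of $\rH^2_\dR(\uX_0/\DD_0)$, which you have not shown. The paper simply takes (b) and (c) from Yang's Proposition 2.2.2 once freeness is known.
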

\begin{proof}
    We first mention that once we have proved the freeness in (a), the other statements will follow in the same way as \cite[Proposition 2.2.2(b)(c)]{YangISV}. What we have now is the freeness of $\rH^r_\cris(X/W)$, but this cannot give the freeness of $\rH_{\rmd\rR}^r(\uX/\DD)$ directly since the crystalline-de Rham comparison theorem only works for PD-thickenings. Our strategy is to choose suitable PD-thickenings, and then the freeness of $\rH_{\rmd\rR}^r(\uX/\DD)$ can be detected over these PD-thickenings. This is inspired by the proof of \cite[lemma 7.11]{Unobstructness}.

    Consider the derived pushforward $\rR\pi_*(\Omega_{\uX/\DD}^\bullet)$. Since $\mathscr{R}$ is a Noetherian complete local ring, $\rR\pi_*(\Omega_{\uX/\DD}^\bullet)$ is quasi-isomorphic to a bounded complex $(\mathscr{F}^\bullet,d^i:\mathscr{F}^i\to \mathscr{F}^{i+1})$ that consists of finite free $\mathscr{R}$-modules. A little linear algebra shows that we can assume $d^i(\mathscr{F}^i)\subset \frm_{\mathscr{R}}\mathscr{F}^{i+1}$, i.e. the differentials in $\mathscr{F}^\bullet\otimes k$ are all zero. We have
     \[
    \mathscr{F}^\bullet\otimes k=\rR\pi_*(\Omega_{\uX/\DD}^\bullet)\otimes^{\mathbf{L}} k=\rR\pi_*(\Omega_{X/k}^\bullet).
    \]
    Thus, we have $\mathscr{F}^r\otimes k=\rH^r_\dR(X/k)$ is of rank $b_r(X)$. Then $\mathscr{F}^r$ is also free of rank $b_r(X)$. 
    
    We claim that the differentials in $\mathscr{F}^\bullet$ are all zero. To see this, suppose that $d^r\neq 0$, then there is a non-zero $\alpha\in\frm_{\mathscr{R}}$ which is one entry of the matrix representation of $d^r$. Write $\alpha$ in the form $\alpha=up^eg+h$, where $u$ is a unit, $g$ is a homogeneous polynomial of degree $s$ which is non-zero modulo $p$, and $h$ consists of terms of higher degree.

     Let $W_m$ be the quotient $W/p^m$ and $W_{m,d}$ be the truncated divided power polynomial ring
    \[
    W_{m,d}=W_{m}\langle t\rangle/( \gamma_i(t) :i\geq d).
    \]
    Note that $W_{m}$ can be viewed as $W_{m,1}$.  Choose $m\gg 0$ such that $p^es!$ is non-zero in $W_m$. Take $a_1,\cdots,a_n\in k$ such that $g(a_1,\cdots,a_{h^{1,1}})\neq 0\ \mathrm{mod}\ p$. Define a map
    \begin{align*}
         \varphi: \mathscr{R}=W[[x_1,\cdots,x_{h^{1,1}}]]&\to W_{1,2}=k[t]/(t^2)\\
         x_i &\mapsto a_i t.
    \end{align*}
    Since $\mathscr{R}$ is formally smooth, the map $\varphi$ can be lifted to a map $\widetilde{\varphi}:\mathscr{R}\to W_{m,s+1}$. Then 
    \[
   \widetilde{ \varphi}(\alpha)=up^es! g(\widetilde{a}_1,\cdots, \widetilde{a}_{h^{1,1}})\gamma_s(t)\neq 0.
    \]
    Here $\widetilde{a}_i$ is a lifting of $a_i$ in $W_m$. Consider
    \[
    \mathscr{F}^\bullet\otimes_{\widetilde{\varphi}} W_{m,s+1}=\rR\pi_*(\Omega_{\uX/\DD}^\bullet)\otimes_{\widetilde{\varphi}}^{\mathbf{L}} W_{m,s+1}=\rR\pi_*(\Omega_{X_{m,s+1}/W_{m,s+1}}^\bullet).
    \]
    Here, $X_{m,s+1}$ is the pullback of $\uX$ to $W_{m,s+1}$. By the crystalline-de Rham comparison theorem, $\rH_\dR^r(X_{m,s+1}/W_{m,s+1})$ is free of rank $b_r(X)$, then the differentials in $\mathscr{F}^\bullet\otimes W_{m,s+1}$ are all zero. This contradicts $\widetilde{\varphi}(\alpha)\neq 0$. Thus, all the differentials in $\mathscr{F}^\bullet$ are zero, then $\rH_{\rmd\rR}^r(\uX/\DD)$ is free of rank $b_r(X)$. The similar argument shows that $\rH^{j}(\uX,\Omega^i_{\uX/\DD})$ is free of rank $h^{i,j}(X)$.
\end{proof}

\begin{remark}\label{remark: compatible discussion}
    The only condition in the definition of a K3-type variety that may fail for a smooth projective variety satisfying \Cref{condition: perfect IS type} is the vanishing of $h^{p,q}=0$ when $p+q=3$. In \cite{LZ19}, this condition gives the smoothness of the deformation functor and the freeness of the relative de Rham cohomology. However, in our case, we can still have these two properties by the above two propositions. Therefore, many propositions and proofs in \cite[Section 3 and 4]{LZ19} apply to our setting with only minor modifications, as we will see later.
\end{remark}

The next proposition shows that there is a unique quadratic form on $\rH^2_{\dR}(\uX/\DD)$, which is compatible with the Hodge filtration and the crystal structure on $\rH^2_{\cris}(\uX_0)$.

\begin{proposition}
    There is a perfect horizontal quadratic form $\uq_{\rmd\rR}$ on $\rH^2_{{\rmd\rR}}(\uX/\DD)$ such that $$\mathrm{Fil}^1\rH^2_{{\rmd\rR}}(\uX/\DD)=(\mathrm{Fil}^2\rH^2_{{\rmd\rR}}(\uX/\DD))^\perp$$ under $\uq_\dR$, unique up to multiplication by a unit. Moreover, if $X$ satisfies the condition in \Cref{prop: BB form}, then
    \begin{enumeratea}
        \item the evaluation of $\uq_\dR$ on any $W$ point on $\DD$ agrees with the quadratic form $q_X$ constructed in \Cref{prop: BB form} via the crystalline-de Rham comparison isomorphism, after multiplication by a suitable unit;
        \item $\uq_\dR$ satisfies the Fujiki relation in \Cref{lem:fujiki} with $w$ being the cup product;
        \item If we let $\Phi:F^*_{\DD_0/W}\rH^2_{\cris}(\uX_0)\to \rH^2_{\cris}(\uX_0)$ be the morphism given by the $F$-crystal structure and $\uq_\cris$ be the quadratic form on $\rH^2_{\cris}(\uX_0)$ induced by $\uq_\dR$, then $(\rH^2_{\cris}(\uX_0),\Phi,\uq_{\cris},\mathrm{Fil}^2\rH^2_{{\rmd\rR}}(\uX/D))$ defines a K3 crystal in the sense of Nygaard-Ogus \cite[Definition 5.1]{NO85}.
    \end{enumeratea}
\end{proposition}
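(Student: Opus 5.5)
We would follow the strategy of Langer--Zink \cite[Section 3 and 4]{LZ19} and Yang \cite[Section 2]{YangISV}, now that \Cref{prop: relative de Rham cohomology} supplies the freeness and base-change properties they rely on (cf.\ \Cref{remark: compatible discussion}).

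\textbf{Construction of $\uq_\dR$.} By \Cref{prop: relative de Rham cohomology}(a), $\mathrm{Fil}^2\rH^2_\dR(\uX/\DD)=\rH^0(\uX,\Omega^2_{\uX/\DD})$ is free of rank one; let $\underline\sigma$ be a generator. The quotient $\rH^2_\dR/\mathrm{Fil}^1=\rH^2(\uX,\cO_{\uX})$ is also free of rank one, generated by the image of some $\underline\tau$. We define
\[
\uq_\dR(x,y):=\int_{\uX/\DD} x\,y\,(\underline\sigma\underline\tau)^{n-1},
\]
using the trace $\rH^{4n}_\dR(\uX/\DD)\to\mathscr R$, and correct it by the standard Beauville normalisation (subtracting multiples of $\int x\,\underline\sigma^{n-1}\underline\tau^{n}\cdot\int y\,\underline\sigma^{n}\underline\tau^{n-1}$) so that it does not depend on the choice of lift $\underline\tau$ modulo $\mathrm{Fil}^1$.

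\textbf{Orthogonality, perfectness and uniqueness.} Reducing modulo $\frm_{\mathscr R}$, \Cref{condition: perfect IS type}(c) gives perfectness on $\mathrm{gr}^1$. The pairing of $\sigma$ with $\tau$ is nonzero because $\sigma^n\tau^n\neq0$ by Serre duality and (b). Hence $\uq_\dR$ is perfect at the closed point, and therefore perfect over $\mathscr R$ by Nakayama. Type considerations show that $\mathrm{Fil}^2$ is isotropic and orthogonal to $\mathrm{Fil}^1$, since products landing in $\mathrm{Fil}^{2n+1}$ vanish. A rank count then gives $\mathrm{Fil}^1=(\mathrm{Fil}^2)^\perp$.

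\textbf{Horizontality.} Horizontality is the step we expect to be delicate, because $\underline\tau$ is not canonical. We would take the approach of Langer--Zink: show that $\nabla$ preserves the pairing up to a function, using Griffiths transversality and the fact that the Kodaira--Spencer map is an isomorphism (\Cref{corollary: perfect pairing on tangent-cotangent}). Alternatively, we would work in characteristic-zero style: the Fujiki relation with cup product, which is horizontal, determines $\uq$ up to a scalar by \Cref{lem:fujiki}. Since $\mathscr R$ is torsion free and the scalar is forced to be horizontal, rescaling by a unit yields a horizontal form.

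Uniqueness up to a unit follows from two observations. First, any perfect form with $\mathrm{Fil}^1=(\mathrm{Fil}^2)^\perp$ that satisfies a Fujiki-type relation agrees with $\uq_\dR$ up to a scalar, by \Cref{lem:fujiki} applied with $\xi$ a generic class of nonzero square, for instance a lift of an ample class. Second, perfectness forces that scalar to be a unit.

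\textbf{Claims (a) and (b).} Claim (b) holds because the cup product is flat and the relation can be checked at the generic point after inverting $p$, that is, at any $W$-point. There, by \Cref{prop: BB form}, $q_\cris$ satisfies the Fujiki relation with a $p$-adic unit constant. Specialising $\uq_\dR$ to a $W$-point and comparing with $q_\cris$ through the crystalline--de Rham comparison, both forms satisfy Fujiki relations for the same cup product, and both are perfect. By \Cref{lem:fujiki} they differ by a scalar, and perfectness makes this scalar a unit. This proves (a) and also transports (b).

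\textbf{Claim (c).} The K3 crystal axioms of Nygaard--Ogus reduce to the following points:
\begin{enumerate1}
\item perfectness, already established;
\item the identity $\uq(\Phi x,\Phi y)=p^2F^*\uq(x,y)$, which follows from compatibility of Frobenius with cup product and trace, since the trace has Tate twist $p^{2n}$, together with the Fujiki relation;
\item the Mazur--Ogus conditions on $\mathrm{Fil}^2$, given by \Cref{prop: relative de Rham cohomology}(b);
\item the isotropy of $\mathrm{Fil}^2$, shown above.
\end{enumerate1}
These are checked pointwise, as in the proposition establishing that $(\rH^2_\cris(X/W),q_\cris,\varphi)$ is a K3 crystal.
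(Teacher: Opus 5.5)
There is a genuine gap, exactly at the step you flag as delicate. Nothing in the proposal establishes horizontality, and your arguments for horizontality and for the Fujiki relation (b) each assume the other.

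\textbf{The circularity.} Your second route to horizontality assumes that $\uq_\dR$ satisfies a Fujiki relation over all of $\DD$. But your only argument for (b) is that the relation ``can be checked at the generic point after inverting $p$, that is, at any $W$-point''. A $W$-point is not the generic point of $\Spf\mathscr R$. An identity between tensors over $\mathscr R$ can be tested at a single point only when every tensor involved is horizontal, since a horizontal section over the $p$-torsion-free ring $\mathscr R$ is determined by its value at any one $W$-point. The cup product is horizontal, but $\uq_\dR$ is not yet known to be.

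Your proof of (a) has the same problem: it applies \Cref{lem:fujiki} to the specialisation of $\uq_\dR$ at an arbitrary $W$-point, which presupposes (b) there. Such a point may be a non-algebraisable formal lifting, so no characteristic-zero irreducible symplectic variety supplies a Fujiki relation at it. Your first route (``$\nabla$ preserves the pairing up to a function'') is only announced, not carried out.

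Your uniqueness argument also proves less than the statement claims. It only compares forms already assumed to satisfy a Fujiki relation. The statement asserts uniqueness among all perfect horizontal forms with $\Fil^1=(\Fil^2)^\perp$, and that needs horizontality together with the Kodaira--Spencer isomorphism: the period image of $\DD$ is open in the quadric.

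\textbf{The explicit form is wrong.} Suppose the cup product satisfies a Fujiki relation for some $q$, and put $s=q(\underline\sigma,\underline\tau)$ and $t=q(\underline\tau,\underline\tau)$. Normalising by $\int(\underline\sigma\underline\tau)^n=1$, your Beauville-normalised expression works out to
\[
\frac{1}{2s}\Bigl(q(\alpha,\alpha)-\frac{n(n-1)}{2}\,\frac{t}{s^{2}}\,q(\alpha,\underline\sigma)^{2}\Bigr).
\]
For $n\ge 2$ this has two defects.
\begin{enumerate1}
\item It depends on the lift $\underline\tau$ through $t$, since replacing $\underline\tau$ by $\underline\tau+\beta$ with $\beta\in\Fil^1$ changes $t$. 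This contradicts your claim of lift-independence.
\item The rank-one correction moves with $\underline\sigma$, so the form is not horizontal even up to a scalar function.
\end{enumerate1}
Beauville's formula works over $\CC$ only because $\bar\sigma$ is isotropic, and there is no canonical isotropic lift here. Normalising $\underline\tau$ so that $t=0$, and then proving the result is horizontal, both already require the relative Fujiki relation.

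\textbf{What the paper does.} It imports from Langer--Zink \cite[Definition 23, Lemma 25]{LZ19} a perfect \emph{horizontal} form with $\Fil^1=(\Fil^2)^\perp$, unique up to a unit. Only then does it use horizontality to reduce (a)--(c) to a check at a single point. The missing ingredient in your proposal is an independent source of horizontality, or equivalently of the Fujiki relation over all of $\DD$.
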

\begin{proof}
    The proof of this proposition is the same as in \cite[Proposition 2.2.3]{YangISV}, so we just explain the main idea here. The quadratic form $\uq_\dR$ is given in \cite[Definition 23]{LZ19} and its property is given by \cite[Lemma 25]{LZ19}. The remaining can be checked on a $W$-point, since this form is horizontal.
\end{proof}

The next two propositions, adapted from \cite{YangISV}, give a complete description of the deformation functor $\Def(X,\xi)$. Although the conditions are slightly different, the proof is exactly the same.

\begin{proposition}[{\cite[Proposition 2.2.4]{YangISV};\cite[Theorem 31]{LZ19}}]\label{prop: isotropic lines}
    Let $R'$ be an Artinian $W$-algebra and $R \in \mathrm{Cris}(R'/W)$. Let $\tilde{X}'$ be a deformation of $X$ over $R'$. Let $\cI(\tilde{X}', R)$ be the set of isotropic direct summands of $H^2_{\mathrm{cris}}(\tilde{X}')_R$ which lift $\mathrm{Fil}^2 H^2_{\mathrm{dR}}(\tilde{X}'/R')$.
    \begin{enumeratea}
        \item The map $\Psi$ from deformations of $\tilde{X}'$ over $R$ to $\cI(\tilde{X}', R)$ defined by sending $\tilde{X}$ to $\mathrm{Fil}^2 H^2_{\mathrm{dR}}(\tilde{X}/R) \subset H^2_{\mathrm{dR}}(\tilde{X}/R) 
        \cong H^2_{\mathrm{cris}}(\tilde{X}')_R$ is an isomorphism.
        \item Let $\xi'$ be a line bundle on $\tilde{X}'$. Then $\xi'$ extends to $\tilde{X}$ if and only if $\Psi(\tilde{X})$ is orthogonal to $c_{1,\mathrm{cris}}(\xi')_R$.
\end{enumeratea}
\end{proposition}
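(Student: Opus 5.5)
Following \cite[Proposition 2.2.4]{YangISV} and \cite[Theorem 31]{LZ19}, the approach is Grothendieck--Messing style crystalline deformation theory: deformations over a PD-thickening are governed by lifts of the Hodge filtration, which in our case is the single line $\mathrm{Fil}^2$. Two inputs from earlier replace the K3-type hypotheses of \cite{LZ19}. First, $\Def(X)$ is formally smooth (\Cref{prop: smoothness of the deformation functor}). Second, the relative de Rham cohomology and Hodge cohomology are free and commute with base change (\Cref{prop: relative de Rham cohomology}). As in \Cref{remark: compatible discussion}, these are the only places where the vanishing $h^{p,q}=0$ for $p+q=3$ was used in \cite{LZ19}.

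For (a), first check that $\Psi$ is well defined. Given a deformation $\tilde X$ of $\tilde X'$ over $R$, the crystalline--de Rham comparison identifies $\rH^2_{\dR}(\tilde X/R)$ with $\rH^2_{\cris}(\tilde X')_R$. The line $\mathrm{Fil}^2$ is a direct summand by the freeness in \Cref{prop: relative de Rham cohomology}(a). It is isotropic for $\uq$ because $\mathrm{Fil}^1=(\mathrm{Fil}^2)^\perp$ and $\mathrm{Fil}^2\subset\mathrm{Fil}^1$, and it lifts $\mathrm{Fil}^2$ over $R'$ by base change.

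For bijectivity, reduce by induction to a small extension $R\to R'$ with kernel $I$, $\frm I=0$. Formal smoothness makes the deformations of $\tilde X'$ over $R$ a nonempty torsor under $\rH^1(X,T_X)\otimes I$. The isotropic lifts of the line $L'=\mathrm{Fil}^2$ form a torsor under $\Hom(L, L^\perp/L)\otimes I$, and $L^\perp/L$ is identified with $\mathrm{Fil}^1/\mathrm{Fil}^2=\rH^1(X,\Omega^1_X)$. One then checks that $\Psi$ is equivariant, with the comparison map given by the Kodaira--Spencer/Gauss--Manin map
\[
\rH^1(X,T_X)\to \Hom(\rH^0(X,\Omega^2_X),\rH^1(X,\Omega^1_X)), \qquad v\mapsto \iota_v\sigma .
\]
This map is an isomorphism because $\sigma$ is nondegenerate, so contraction with $\sigma$ identifies $T_X\simeq\Omega^1_X$. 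An equivariant map of torsors under isomorphic groups is bijective, which gives (a). The step I expect to be the main obstacle is this equivariance, namely that the crystalline variation of $\mathrm{Fil}^2$ along a deformation is computed by Kodaira--Spencer. I would take it from \cite[Section 3]{LZ19}, using the horizontality of $\uq_\dR$ and freeness to transport their argument verbatim.

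For (b), the Chern class $c_{1,\cris}(\xi')_R$ is horizontal and lies in $\mathrm{Fil}^1$ over $R'$. By the obstruction theory of line bundles (Deligne \cite{Deligne1981}), $\xi'$ extends to $\tilde X$ if and only if $c_{1,\dR}(\xi')$ lies in $\mathrm{Fil}^1\rH^2_\dR(\tilde X/R)$. Since $\mathrm{Fil}^1=(\mathrm{Fil}^2)^\perp$, this is equivalent to $\Psi(\tilde X)\perp c_{1,\cris}(\xi')_R$.
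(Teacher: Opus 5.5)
Your proposal is correct and follows the paper's route. The paper gives no independent proof: it defers to \cite[Proposition 2.2.4]{YangISV} and \cite[Theorem 31]{LZ19}, observing in \Cref{remark: compatible discussion} that formal smoothness of $\Def(X)$ and the freeness and base change of relative Hodge and de Rham cohomology are the only inputs needed in place of the K3-type hypothesis, which is exactly the substitution you make. Your sketch just unpacks that cited argument (the torsor comparison over small extensions via $v\mapsto\iota_v\sigma$, and the $\Fil^1$-criterion for extending line bundles); the one thing to be explicit about is that the d\'evissage should pass through PD-compatible quotients such as $R/\mathfrak{m}^i\ker(R\to R')$, so that each step stays in $\mathrm{Cris}(R'/W)$.
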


The deformation property of $\Def(X,\xi)$ can be summarized as follows.
\begin{proposition}[{\cite[Proposition 2.2.7]{YangISV}}]\label{prop: local deformation}
    Let $X$ be a smooth projective variety over an algebraically closed field $k$ of characteristic $p$ satisfying \Cref{condition: perfect IS type}.
    \begin{enumeratea}
        \item If $\rH^2_{\cris}(X/W)$ is not supersingular and $\xi_1\cdots\xi_m$ span a direct summand of $\Pic(X)$, the functor $\Def(X,\xi_1,\cdots,\xi_m)$ is smooth over $W$.
        \item For any primitive line bundle $\xi\in\Pic(X)$, if $c_1(\xi)\notin \mathrm{Fil}^2H^2_{\rmd\rR}(X/k)$, then $\Def(X,\xi)$ is smooth over $W$.
        \item If $c_1(\xi)\in \mathrm{Fil}^2H^2_{\rmd\rR}(X/k)$, then $q_X(\xi,\xi)$ has $p$-adic valuation 1. Moreover, $\Def(X,\xi)$ is isomorphic to
        \[
        \Spf W[[x_1,\cdots,x_{h^{1,1}}]]/(x_1^2+\cdots +x_{h^{1,1}}^2+p).
        \]
        \item In any case, $\Def(X,\xi)$ is quasi-healthy regular.
    \end{enumeratea}
\end{proposition}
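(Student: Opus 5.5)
The plan is to combine the description of $\Def(X)$ as $\DD=\Spf\mathscr{R}$ with \Cref{prop: isotropic lines}. The latter identifies $\Def(X,\xi)$ inside $\DD$ as the locus where the isotropic line $\Psi(\tilde X)=\mathrm{Fil}^2$ is orthogonal to the horizontal section $c_{1,\cris}(\xi)$ of the K3 crystal $\rH^2_\cris(\uX_0)$. Concretely, trivialize $\rH^2_\dR(\uX/\DD)$ by a horizontal basis over $\mathscr{R}$; this is possible using the crystal structure and the freeness in \Cref{prop: relative de Rham cohomology}, after passing to the PD-envelope if necessary, exactly as in \cite{LZ19}. Then choose a local generator $\omega(x)$ of $\mathrm{Fil}^2$. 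Since $\mathrm{Fil}^1=(\mathrm{Fil}^2)^\perp$ and $\mathrm{Fil}^2$ is isotropic, the period map $x\mapsto[\omega(x)]$ identifies $\DD$ formally with the formal neighbourhood of $[\omega_0]$ in the quadric $\{\uq_\dR(\omega,\omega)=0\}\subset\PP(\rH^2)$. This neighbourhood is smooth of relative dimension $h^{1,1}$ over $W$; the perfectness in \Cref{condition: perfect IS type}(c) is what makes the period map a formal isomorphism. The element cutting out $\Def(X,\xi)$ is then $f_\xi=\uq_\dR(\omega(x),c_1(\xi))$.

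For (b), reduce $f_\xi$ modulo $(p,\frm^2)$. Its linear part is $x\mapsto \uq_\dR(\delta\omega,c_1(\xi))$ with $\delta\omega$ ranging over $\mathrm{Fil}^1/\mathrm{Fil}^2$ (Griffiths transversality and Kodaira–Spencer). Hence the linear term is nonzero modulo $p$ exactly when $c_1(\xi)\notin(\mathrm{Fil}^1)^\perp=\mathrm{Fil}^2$ in $\rH^2_\dR(X/k)$. In that case $\mathscr{R}/(f_\xi)$ is a formal power series ring over $W$ in $h^{1,1}-1$ variables, so $\Def(X,\xi)$ is smooth over $W$.

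For (c), suppose $c_1(\xi)$ reduces into $\mathrm{Fil}^2$. Then $c_1(\xi)\bmod p$ spans $\mathrm{Fil}^2$, which is isotropic, so $p\mid q(\xi,\xi)$. Next use the Mazur–Ogus inequality of \Cref{prop: relative de Rham cohomology}(b). Since $F(c_1\xi)=p\,c_1\xi$ while $\mathrm{Fil}^2$ is the image of $\{F\in p^2\}$, we can write $c_1(\xi)=\omega_0+pv$ with $\omega_0$ a lift of $\mathrm{Fil}^2$ satisfying $F(\omega_0)\in p^2H$. Applying property (4) of a K3 crystal to these decompositions should show that $q(\xi,\xi)/p$ is a unit, using perfectness of $q$ and the rank-one condition (2). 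I expect this valuation count to be the most delicate step. The fallback is to use $\varphi\otimes k$ of rank $1$ to force $\langle \omega_0,v\rangle$ to be a unit.

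Now expand $f_\xi$ in coordinates adapted to the quadric. Take $e=c_1(\xi)$ and $e'$ with $q(e,e')=1$. Parametrize $\omega(x)=e' + \sum x_i u_i + \lambda(x)e$, with $u_i$ spanning $\langle e,e'\rangle^\perp$ after adjusting by $p$, and $\lambda$ determined by isotropy. This gives $f_\xi=\lambda(x)+q(e,e')\cdot(\ldots)$, which up to a unit equals $Q(x)+p\cdot\text{unit}$, where $Q$ is the restriction of $q$ to a rank-$h^{1,1}$ nondegenerate lattice over $W$. Since $k$ is algebraically closed and $p>2$, $Q$ diagonalizes to $\sum x_i^2$ and the unit is absorbed by rescaling the $x_i$, giving the stated form of $\Def(X,\xi)$.

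For (a), the K3 crystal is not supersingular, so the Frobenius-invariant part of $\rH^2_\cris$ has no isotropic vectors reducing into $\mathrm{Fil}^2$ in the required way. More concretely, (c) cannot occur for any primitive combination of the $\xi_j$. Indeed, Newton height is finite, so $\mathrm{Fil}^2$ is not contained in the span of $F$-invariant classes mod $p$; this is the Nygaard–Ogus argument. The linear parts of the $f_{\xi_j}$ are therefore linearly independent mod $p$ because $\xi_1,\ldots,\xi_m$ span a direct summand, and the quotient of $\mathscr{R}$ is regular and smooth over $W$.

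Finally, (d) follows from the explicit forms: each case is a complete intersection that is either smooth over $W$ or the quadric cone above. Both are regular and flat over $W$ with geometrically integral reduction, which I would check against Yang's definition of quasi-healthy regular, following \cite[Proposition 2.2.7]{YangISV}.
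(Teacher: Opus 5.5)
Your overall route is the right one: cut $\Def(X,\xi)\subset\DD$ out by one equation $f_\xi$, then read off its low-order terms from \Cref{prop: isotropic lines}, the K3-crystal axioms and Mazur--Ogus. Parts (a) and (b) go through, because they only use first-order deformations over $k[\epsilon]$, which is a PD-thickening of $k$.

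The gap is your first step. $\rH^2_{\dR}(\uX/\DD)$ has no horizontal basis over $\mathscr{R}=W[[x_1,\dots,x_{h^{1,1}}]]$: integrating the Gauss--Manin connection needs divided powers $x^n/n!$. Moreover, \Cref{prop: isotropic lines} describes deformations only over PD-thickenings. So there is no formal isomorphism between $\DD$ and the completed period quadric. Likewise, $f_\xi$ is not a power series $\uq_{\dR}(\omega(x),c_1(\xi))$ in coordinates on $\DD$, since the horizontal continuation of $c_1(\xi)$ exists only over the PD-envelope. Consequently your expansion in (c) ``in coordinates adapted to the quadric'' is unjustified. It is also centred at the wrong point: the Hodge line lifts $k\cdot\bar c$, so the generator must be $e+\sum x_iu_i+\lambda e'$. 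With your $\omega=e'+\cdots+\lambda e$ one gets $\langle\omega,e\rangle=1+\lambda q(\xi)$, which is a unit.

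What you can legitimately extract is a truncation, and it suffices. Here $\bar c\neq0$ because $\xi$ is primitive and $c_{1,\cris}$ has torsion-free cokernel.

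(i) Over $k[\epsilon]$: the hypothesis $\bar c\in\Fil^2$ gives $f_\xi\in(p)+\frm^2$.

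(ii) Over $W_2\to k$: isotropic lines lifting $k\bar c$ are spanned by some $c+pm$. Isotropy together with orthogonality to $c$ modulo $p^2$ forces $p^2\mid q(\xi)$, and conversely $m=0$ works when $p^2\mid q(\xi)$. Since $f_\xi\in(p)+\frm^2$, $W_2$-points of $\mathscr{R}/(f_\xi)$ exist iff $f_\xi\in\frm^2$. Hence $f_\xi\notin\frm^2$ iff $v_p(q(\xi))=1$.

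Your fallback does prove this valuation. Write $c=\omega_0+pv$. Then $\varphi(v)\equiv\omega_0\not\equiv0\pmod p$, so $\bar v\notin\ker(\varphi\otimes k)=\Fil^1=(\Fil^2)^\perp$, using the rank-one axiom and the Mazur--Ogus inclusion. Hence $\langle\omega_0,v\rangle\in W^\times$. Axiom (4) gives $\langle\omega_0,\omega_0\rangle\in p^2W$, so $q(\xi)\equiv2p\langle\omega_0,v\rangle\pmod{p^2}$.

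(iii) Over Artinian $k$-algebras with $\frm^3=0$, which are PD-thickenings since $p\ge3$: split $H=\langle c,e'\rangle\oplus M$ with $\langle c,e'\rangle=1$, $\langle e',e'\rangle=0$, and $M$ unimodular of rank $h^{1,1}$. On lines spanned by $c+\mu+\lambda e'$, isotropy reads $2\lambda+\langle\mu,\mu\rangle=0$ and the condition for $\xi$ reads $\lambda=0$. By Yoneda, $f_\xi\bmod p$ therefore has quadratic part a unit times $\langle\mu,\mu\rangle$, which is nondegenerate.

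Now finish with three steps. A Hensel translation by a point of $(pW)^{h^{1,1}}$ kills the $p$-divisible linear terms. The formal Morse lemma over $W$ then brings the quadratic part to normal form. Diagonalization (with $k=\bar k$, $p$ odd) yields $\Spf W[[x]]/(\sum x_i^2+p)$.

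Finally, (d) does not follow from ``regular, flat, integral reduction''. Quasi-healthiness is an extension property for abelian schemes. It comes from the Vasiu--Zink purity criterion: either formally smooth of relative dimension $\ge1$, or of the form $W[[x]]/(p-h)$ with $h\notin(p,x_1^p,\dots,x_n^p)$.
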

In order to compare different choices of liftings, we need to put them into a common space.  This inspires the following definition.
\begin{definition}\label{def: Hilbert subscheme}
    Let \(P\) be a polynomial in \(\mathbb{Q}[T]\), let $N := P(1 )-1$,
    and let \(\mathrm{Hilb}_{P}\) be the Hilbert scheme over
    \(\mathrm{Spec}\,\mathbb{Z}_{(p)}\) which parametrizes closed subschemes of
    \(\mathbb{P}^{N}\) with Hilbert polynomial \(P\). Let \(\mathcal{Z}\) be the
    universal family over \(\mathrm{Hilb}_{P}\). Let \(\mathrm{Hilb}^{+}_{P}\) be the possibly empty maximal locally closed subscheme of \(\mathrm{Hilb}_{P}\) such that, for every geometric point $s \in \mathrm{Hilb}^{+}_{P}$, the fiber \(\mathcal{Z}_{s}\) is a smooth projective variety of dimension $2n$ satisfying \Cref{condition: perfect IS type}, $ h^{1,1}\geq 2$, and $\rH^{i}\bigl(\mathcal{Z}_{s},\mathcal{O}_{\mathcal{Z}_{s}}(1)\bigr)=0
    \ \text{for all } i>0$. For every positive integer \(m\), let \(\mathrm{Hilb}^{+,m}_{P}\) denote the possibly empty open subscheme of \(\mathrm{Hilb}^{+}_{P}\) such that, for every geometric point $s \to \mathrm{Hilb}^{+,m}_{P}$, the line bundle \(\mathcal{O}_{\mathcal{Z}_{s}}(1)\) is the \(m\)-th power of a primitive polarization on \(\mathcal{Z}_{s}\).
\end{definition}

The most important geometric property of $\mathrm{Hilb}_P^{+,m}$ we will need is the following:
\begin{lemma}\label{lem: connectedness of gneric fiber}
    Let $k$ be an algebraically closed field with characteristic $p>2n$. Assume that $p\nmid m$. The generic fiber of every connected component of both $\mathrm{Hilb}^{+,m}_{P}$ and $\mathrm{Hilb}^{+,m}_{P,W}$ is also connected. Moreover, $\mathrm{Hilb}^{+,m}_{P,W}$ is smooth over $\Def(X,m\xi)$.
\end{lemma}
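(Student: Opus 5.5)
The plan follows Yang's argument for the analogous statement in \cite{YangISV}, adapted to \Cref{condition: perfect IS type}. I will prove the second assertion first, since connectedness of generic fibers will follow from it.

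\textbf{Smoothness.} Take a $k$-point $s\in \mathrm{Hilb}^{+,m}_{P,W}$ with fiber $X=\cZ_s$ and $\cO_X(1)=\xi^{\otimes m}$, where $\xi$ is primitive. Forgetting the embedding gives a morphism of deformation functors from the formal completion of $\mathrm{Hilb}^{+,m}_{P,W}$ at $s$ to $\Def(X,m\xi)$.
\begin{enumeratei}
\item Because $\rH^i(X,\cO_X(1))=0$ for $i>0$, the line bundle $\cO(1)$ on any deformation has locally free pushforward of constant rank $P(1)=N+1$, and cohomology and base change holds.
\item An embedding is therefore the choice of a basis of sections. The fibers of the forgetful map are torsors under the smooth group $\mathrm{PGL}_{N+1}$, and any deformation of $(X,m\xi)$ lifts to a deformation of the embedding.
\item Infinitesimally, the obstruction to lifting the embedding lies in $\rH^1(X,\cO_X(1))^{\oplus(N+1)}=0$. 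This gives formal smoothness of the comparison map, and hence smoothness at $s$.
\end{enumeratei}
One point to check is that the open conditions defining $\mathrm{Hilb}^{+,m}_P$ are compatible with deformations of $(X,m\xi)$. Since $p\nmid m$, a deformation of $m\xi$ that stays an $m$-th power deforms exactly like $\xi$. Concretely, $\Def(X,m\xi)=\Def(X,\xi)$: the obstruction classes $c_1(m\xi)=m\,c_1(\xi)$ differ by a unit, so the orthogonality condition in \Cref{prop: isotropic lines}(b) is the same for both.

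\textbf{Connectedness of generic fibers.} By the smoothness just proved and \Cref{prop: local deformation}(d), the complete local ring of $\mathrm{Hilb}^{+,m}_{P}$ at every closed point of the special fiber is a power series ring over a quasi-healthy regular ring $\Def(X,\xi)$. In the cases of \Cref{prop: local deformation}(a),(b) this ring is smooth over $W$. In case (c) it is $W[[x]]/(\sum x_i^2+p)$, which is regular, flat over $W$, and has geometrically irreducible (indeed normal) generic fiber, since $h^{1,1}\geq 2$ is imposed in \Cref{def: Hilbert subscheme}.

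Let $C$ be a connected component of $\mathrm{Hilb}^{+,m}_{P}$.
\begin{enumeratei}
\item Since every local ring is regular, $C$ is normal, hence irreducible.
\item Every local ring at a special point is flat over $\ZZ_{(p)}$ (resp.\ $W$) with $p$ a nonzerodivisor. Hence $C$ is flat and every special point lies in the closure of the generic fiber $C_\eta$.
\item A decomposition $C_\eta=U_1\sqcup U_2$ into disjoint nonempty open-closed pieces would have closures that meet the special fiber. At a common special point $x$, $\Spec$ of the complete local ring would have disconnected generic fiber.
\end{enumeratei}

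This contradicts the local statement that, for a regular local ring $A$ flat over $W$ with $A/p\neq 0$, the scheme $\Spec A[1/p]$ is connected. That holds because $A$ is a domain. For the completion one uses that $\widehat{A}$ is still a regular local domain, and normality of $A$ gives unibranchness.

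For the non-completed $\mathrm{Hilb}^{+,m}_{P}$ over $\ZZ_{(p)}$, I expect to need excellence of the ring so that regularity passes between the local ring and its completion.

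\textbf{Main obstacle.} The delicate step is case (c) of \Cref{prop: local deformation}, where the local deformation space is not smooth over $W$. There one must verify that $\Spec W[[x_1,\dots,x_{h}]]/(\sum x_i^2+p)[1/p]$ is connected; regularity and being a domain give this, and $h^{1,1}\geq 2$ ensures the quadric is nondegenerate enough to be irreducible.

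A secondary issue is passing from connectedness over $W$ to connectedness over $\ZZ_{(p)}$. This uses $W$ flat over $\ZZ_{(p)}$ and the fact that the components are permuted by a faithfully flat base change: a connected component over $\ZZ_{(p)}$ is a union of images of components over $W$, each with connected generic fiber, and these are glued along special points by the same local argument.
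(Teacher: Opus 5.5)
Your proof is correct. The smoothness half is the paper's argument: formal smoothness of the forgetful map from the formal neighbourhood of $s$ to $\Def(X,\xi)$. You also supply details the paper omits, namely the vanishing of $\rH^1(X,\cO_X(1))$ to lift the embedding, and $\Def(X,m\xi)=\Def(X,\xi)$ because $p\nmid m$.

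\textbf{Connectedness: a different route.} The paper uses the Stacks lemma it cites to reduce to showing that $\mathrm{Hilb}^{+,m}_{P,W}$ is flat over $W$ with \emph{reduced special fiber}. It checks this on complete local rings, which are power series rings over $W[[x]]$ or over $W[[x_1,\dots,x_h]]/(\sum x_i^2+p)$. The special fiber $k[[x]]/(\sum x_i^2)$ is reduced precisely because $h=h^{1,1}\geq 2$ and $p$ is odd.

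You instead use that the local rings at special points are \emph{integral domains} (being regular) with $p\neq 0$, and argue directly. A splitting $C_\eta=U_1\sqcup U_2$ forces $\overline{U_1}\cap\overline{U_2}$ to contain a special point $x$, and then $\Spec \cO_{C,x}[1/p]$ would be disconnected, which is impossible for a domain.

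What each route buys:
\begin{itemize}
\item Yours is self-contained and does not need $h^{1,1}\geq 2$. The ring $W[[x_1,\dots,x_h]]/(\sum x_i^2+p)$ is a regular local domain for every $h\geq 1$, since $\sum x_i^2+p\notin\mathfrak{m}^2$. So your claim that $h^{1,1}\geq 2$ is what makes the quadric irreducible is misplaced; that hypothesis matters for the paper's reducedness criterion, not for yours.
\item The paper's criterion, in turn, applies to flat families whose local rings are reduced but not integral.
\end{itemize}

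\textbf{Simplifications.}
\begin{itemize}
\item Step (i), normality and hence irreducibility of $C$, is never used in (ii)--(iii). As stated it is also unjustified at points of $C_\eta$ that do not specialize into the special fiber, so just drop it.
\item No excellence is needed. For any Noetherian local ring, $A\hookrightarrow\widehat{A}$, and $A$ is regular if and only if $\widehat{A}$ is.
\item The passage to $\ZZ_{(p)}$ is immediate. If $x'\in\mathrm{Hilb}^{+,m}_{P,W}$ lies over a special point $x$ of $\mathrm{Hilb}^{+,m}_{P}$, then $\cO_{x}\to\cO_{x'}$ is flat and local, hence injective. So $\cO_x$ is again a domain with $p\neq 0$, and your argument applies verbatim over $\ZZ_{(p)}$.
\end{itemize}
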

\begin{proof}
    This is essentially the proof of 
    \cite[Lemma 2.3.4]{YangISV}. We explain the main idea. By \cite[\href{https://stacks.math.columbia.edu/tag/054F}{055J}]{stacks-project}, it suffices to prove that $\mathrm{Hilb}^{+,m}_{P,W}$ is flat over $W$ with reduced special fiber. For a $k$-point $s$ on $\mathrm{Hilb}^{+,m}_{P,W}$, let $X=\mathcal{Z}_s$ and $\xi$ be the primitive polarization on $X$ with \(\mathcal{O}_{\mathcal{Z}_{s}}(1)=m\xi\). By the universal property, we have a natural morphism from the formal neighborhood of $s$ to $\Def(X,\xi)$, which is smooth by the formal smoothness criterion. Then the lemma follows from \Cref{prop: local deformation}.
\end{proof}
With the help of $\mathrm{Hilb}^{+,m}_{P}$, we can show that the deformation type is stable under specialization and generalization in characteristic $p$.

\begin{proposition}\label{prop: spreading out in positive characteristic}
Let $S$ be a connected quasi-compact scheme over $\mathbb{F}_p$ and
$(f : \mathcal{X} \to S,\boldsymbol{\xi})$ be a primitively polarized smooth proper scheme. Assume that every geometric fiber of $\mathcal{X}$ satisfies \Cref{condition: perfect IS type} and $h^{1,1}\geq 2$. 
For a geometric point $s$ on $S$, we say that $s$ satisfies property
$(\ast_{\mathrm{weak}})$ (resp.\ $(\ast_{\mathrm{strong}})$) provided that for
some (resp.\ any) mixed characteristic discrete valuation ring $R$ with residue
field $k(s)$, the generic fiber of some (resp.\ any) deformation of
$\mathcal{X}_s$ over $R$ to which $\xi_s$ extends is of $M$-type.

Then every geometric point $s$ satisfies $(\ast_{\mathrm{strong}})$ provided some
$s$ satisfies $(\ast_{\mathrm{weak}})$. In particular, if one geometric fiber is
of $M$-type, so is any other geometric fiber.
\end{proposition}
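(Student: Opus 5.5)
We follow the strategy of \cite[Proposition 2.3.5]{YangISV}: reduce to a statement about the Hilbert scheme $\mathrm{Hilb}^{+,m}_{P,W}$ and use connectedness of generic fibers of its connected components (\Cref{lem: connectedness of gneric fiber}) together with the characteristic-zero invariance of deformation type (\Cref{prop: defornation type in characteristic zero}).

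\emph{Step 1: reduction to one connected component.} Since $S$ is connected and quasi-compact, we may assume (after passing to an fppf cover trivializing the projective embedding, or more simply working Zariski locally) that the degree and Hilbert polynomial $P$ of $\xi_s^{\otimes m}$ are constant. Choose $m$ with $p\nmid m$ and $m$ large enough that $\xi^{\otimes m}$ is relatively very ample with vanishing higher cohomology uniformly over $S$; this is possible by quasi-compactness. Locally on $S$ choosing bases of $f_*\xi^{\otimes m}$, we obtain morphisms from an open cover of $S$ (or from a $\mathrm{PGL}$-torsor over $S$, which is still connected) to $\mathrm{Hilb}^{+,m}_{P}\otimes\mathbb{F}_p$. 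Since the torsor is connected, its image lies in a single connected component $Z_0$ of $\mathrm{Hilb}^{+,m}_{P,k}$, and this lies in a single connected component $Z$ of $\mathrm{Hilb}^{+,m}_{P,W}$. So it suffices to show: for any geometric point $s$ of $Z_0$, property $(\ast_{\mathrm{strong}})$ holds provided some point of $Z_0$ satisfies $(\ast_{\mathrm{weak}})$.

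\emph{Step 2: every lifting over a DVR lands in $Z$.} Given a mixed characteristic DVR $R$ with residue field $k(s)$ and a deformation $\mathcal{X}_R$ of $\mathcal{X}_s$ to which $\xi_s$ extends, the extension $\xi_R$ is ample and (by semicontinuity and the choice of $m$) $\xi_R^{\otimes m}$ is very ample with vanishing higher cohomology; choosing a basis lifting the one at $s$ gives an $R$-point of $\mathrm{Hilb}^{+,m}_{P,W}$ specializing to $s$, hence an $R$-point of $Z$. Its generic point lies in the generic fiber $Z_\eta$, which is connected by \Cref{lem: connectedness of gneric fiber}. Conversely, $Z$ is flat over $W$ (same lemma: smooth over $\Def(X,\xi)$, which is flat by \Cref{prop: local deformation}(d)), so every point of $Z_0$, in particular the point satisfying $(\ast_{\mathrm{weak}})$, is the specialization of a DVR-point of $Z$ by the valuative/flatness argument, and for that point the weak property gives one DVR lifting whose generic fiber is of $M$-type, hence one geometric point of $Z_\eta$ of type $M$.

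\emph{Step 3: conclusion.} Apply \Cref{prop: defornation type in characteristic zero} to the universal family over the connected characteristic-zero scheme $Z_\eta$ (a variety over a characteristic-zero field, after restricting to a finite-type piece): since one geometric fiber is of $M$-type, all are. Thus every DVR lifting from Step 2 has generic fiber of $M$-type, i.e.\ $(\ast_{\mathrm{strong}})$ holds at every $s$. The main obstacle I expect is Step 2's bookkeeping: ensuring the chosen $m$ works uniformly (so liftings actually land in $\mathrm{Hilb}^{+,m}_P$, including the $h^{1,1}\geq 2$ and \Cref{condition: perfect IS type} conditions on generic fibers, which hold by openness/equality of Hodge numbers for Mazur--Ogus families) and that $Z_\eta$ is nonempty and reached from every component point, which relies on the flatness coming from \Cref{prop: local deformation}(d).
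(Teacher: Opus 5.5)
Your proposal is correct and follows essentially the same route as the paper, which in turn follows Yang. You send $S$ into a single connected component of $\mathrm{Hilb}^{+,m}_{P}$, realize every polarized DVR lifting as a DVR-point of that component, and conclude from the connectedness of its generic fiber (\Cref{lem: connectedness of gneric fiber}) together with \Cref{prop: defornation type in characteristic zero}; your frame-bundle device for connected non-affine $S$ is a cleaner substitute for the paper's reduction to affine $S$. The flatness detour at the end of your Step 2 is unnecessary, because the lifting supplied by $(\ast_{\mathrm{weak}})$ already gives an $R$-point of $Z$ by the first half of that step.
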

\begin{proof}
    We sketch the proof, which is the same as {\cite[Lemma 2.3.5]{YangISV}}. We may assume that $S$ is affine and connected. Then choose $m\gg 0. p\nmid m$ such that $m\boldsymbol{\xi}$ is very ample and $\rH^i(\cX_s,m\boldsymbol{\xi}_s)=0$ for all $i>0$ and all geometric points $s$ of $S$. Let $P$ be the Hilbert polynomial of $m\boldsymbol{\xi}_s$. We obtain a morphism $S\to \mathrm{Hilb}^{+,m}_{P,W}$, whose image lands in some connected component $T$ of $\mathrm{Hilb}^{+,m}_{P,W}$. We can assume that  $S=T\otimes\FF_p$. Then any deformation of $\cX_s$ to which $\boldsymbol{\xi}_s$ extends is given by some $R$-valued point of $T$ lifting $s$. The conclusion follows from \Cref{lem: connectedness of gneric fiber}.
\end{proof}

\subsection{Tate conjecture for divisors}

Let $X$ be an excellent irreducible symplectic variety over an algebraically closed field $k$ of characteristic $p>\dim X$ of deformation type $M$ such that $h^{1,1}(X)\geq2$ and the discriminant $\disc(M)$ and the Fujiki constant $c_M$ both have trivial $p$-adic valuation. Suppose that $X$ is supersingular. We prove the Tate conjecture for divisors for $X$ in this section. Our method is a direct generalization of Madapusi Pera \cite{Pe15} and Yang \cite{YangISV}. The proof is based on Spin Shimura varieties and period morphisms, 

We briefly recall Clifford algebras and spin groups, referring to \cite[Section 1]{PK2016} for details. Let $R$ be an integral domain with $2$ invertible and $R_\QQ$ is a field. Let $L$ be an $R$-lattice of rank $m$ with a nondegenerate quadratic form. We denote by $H=\Cl(L)$ the Clifford algebra, viewed as a $\Cl(L)$-bimodule. $H$ has a natural $\ZZ/2\ZZ$-grading given by $\Cl(L)=\Cl^+(L)\oplus\Cl^-(L)$. The natural map
\[
L\hookrightarrow\Cl(L)\hookrightarrow\End(H)
\]
embeds $L$ as a direct summand of $\End(H)$. We can equip $\End(H_\QQ)$ with a symmetric pairing given by $(\alpha,\beta)\mapsto2^{-m}\mathrm{tr}(\alpha\circ\beta)$, so that $L_\QQ$ embeds into $\End(H_\QQ)$ isometrically. Let $\pi\in H^{\otimes(2,2)}$ be the orthogonal projection from $\End(H_\QQ)$ onto $L_\QQ$. The image of $\End(H)\subset \End(H_\QQ)$ under $\pi$ is exactly the dual $L^\vee\subset L_\QQ$.

Define the group $\CSpin(L)$ by
\[
\CSpin(L)=\{v\in\Cl^+(L)^\times:vLv^{-1}\subset L\},
\]
which has a structure of $R$-group scheme. The group $\CSpin(L)$ is equipped with two natural representations: a spin representation $\mathrm{sp}:\CSpin(L)\to\mathrm{GL}(H)$ given by the left multiplication and an adjoint representation $\mathrm{ad}:\CSpin(L)\to\mathrm{SO}(L)$ given by conjugation, fitting in the exact sequence
\[
1\to\GG_m\to\CSpin(L)\to\mathrm{SO}(L)\to 1.
\]

Now, let $L$ be an even quadratic lattice of signature $(2, m-2)$ over $\ZZ$, and let $\Omega$ be the period domain
\[
\{\omega\in\PP(L\otimes\CC):\langle\omega,\bar{\omega}\rangle>0,\langle\omega,\omega\rangle=0\},
\]
which parametrizes the Hodge structures of K3 type on $L$. Let $G$ denote the algebraic group $\CSpin(L_\QQ)$ and $G^{\mathrm{ad}}$ denote the group $\rS\rO(L_\QQ)$. The pair $(G,\Omega)$ (resp.\ $(G^{\mathrm{ad}},\Omega)$) defines a Shimura datum of Hodge type (resp.\ abelian type) with reflex field $\mathbb{Q}$. We can equip $H$ with a non-degenerate symplectic form $\psi$ such that the spin representation $\mathrm{sp}:G\to \mathrm{GL}(H_{\mathbb{Q}})$ factors through $\mathrm{GSp}:=\mathrm{GSp}(H_{\mathbb{Q}},\psi)$. Let ${S}^{\pm}$ be the associated Siegel half spaces. There is an inclusion of Shimura data $i:(G,\Omega)\to (\mathrm{GSp},{S}^{\pm})$. 

Choosing a small enough level structure $\mathsf{K}$, we write $\mathrm{Sh}_\sfK(L)$ for the Shimura variety $\mathrm{Sh}_\sfK(G,\Omega)$. The inclusion $i$ endows $\mathrm{Sh}_\sfK(L)$ with a family of abelian schemes, which we denote by $a:\mathscr A\to \mathrm{Sh}_\sfK(L)$. Let $\sfK^{\mathrm{ad}}$ denote the image of $\sfK$ in $G^{\mathrm{ad}}(\mathbb{A}_f)$. We denote the Shimura variety $\mathrm{Sh}_{\sfK^{\mathrm{ad}}}(G^{\mathrm{ad}},\Omega)$ by $\mathrm{Sh}^{\mathrm{ad}}_\sfK(L)$.

The representation $\mathrm{sp}:\mathrm{CSpin}(L)\to \mathrm{GL}(H)$ (resp.\ $\mathrm{ad}:\mathrm{CSpin}(L)\to \mathrm{SO}(L)$) endows $\mathrm{Sh}_\sfK(L)_{\mathbb{C}}$ with a $\widehat{\mathbb{Z}}$-local system $\mathbf{H}_B$ (resp.\ $\mathbf{L}_B$). $\mathbf{H}_B$ can be identified with the Betti cohomology $R^1a_{\mathbb{C},*}\widehat{\mathbb{Z}}$. Let $\mathbf{H}_\ell:=R^1a_{\acute{e}t,*}\mathbb{Z}_\ell$ and let $\mathbf{H}_{\mathrm{dR}}$ be the first relative de Rham cohomology of $\mathscr{A}$. Over $\mathrm{Sh}_\sfK(L)_{\mathbb{C}}$, we have canonical isomorphisms $\mathbf{H}_B\otimes \mathbb{Z}_\ell \simeq \mathbf{H}_\ell$ and $\mathbf{H}_{\mathrm{dR}} \simeq \mathbf{H}_B\otimes \mathcal{O}$, where $\mathcal{O}$ denotes the structure sheaf. The tensors stabilized by $\mathrm{CSpin}(L)$ naturally spread out as global sections of these sheaves. More precisely, $\mathscr{A}$ is equipped with a $\mathbb{Z}/2\mathbb{Z}$-grading, a left $\mathrm{Cl}(L)$-action, and global sections $\boldsymbol{\pi}_B$, $\boldsymbol{\pi}_\ell$ and $\boldsymbol{\pi}_{\mathrm{dR}}$ of $(\mathbf{H}_B\otimes \mathbb{Q})^{\otimes(2,2)}$, $(\mathbf{H}_\ell\otimes \mathbb{Q}_\ell)^{\otimes(2,2)}$ and $\mathbf{H}_{\mathrm{dR}}^{\otimes(2,2)}$ respectively (cf.\ \cite[Prop.~3.11]{PK2016}). We call the triple of the $\mathbb{Z}/2\mathbb{Z}$-grading, left $\mathrm{Cl}(L)$ action and various realizations of $\boldsymbol{\pi}$ the $\mathrm{CSpin}$ structure on the universal abelian scheme $\mathscr{A}$. Note that $\mathbf{L}_B$ equals the dual of $\boldsymbol{\pi}_B(\mathrm{End}(\mathbf{H}_B))$. We denote the dual of the images $\boldsymbol{\pi}_\ell(\mathrm{End}(\mathbf{H}_\ell))$ and $\boldsymbol{\pi}_{\mathrm{dR}}(\mathrm{End}(\mathbf{H}_{\mathrm{dR}}))$ by $\mathbf{L}_\ell$ and $\mathbf{L}_{\mathrm{dR}}$.

In our situation, we will always assume that $L^\vee_p/L_p$ is cyclic and $p^2\nmid \disc(L_p)$. Then $\mathrm{Sh}_\sfK(L)$ has a canonical integral model $\mathscr{S}_\sfK(L)$ over $\ZZ_{(p)}$. The constructions $\mathscr{A}\ \text{and}\ \mathbf{L}_\ell$ can be extended to $\mathscr{S}_\sfK(L)$, and we also have a crystalline construction $\mathbf{L}_\cris$ over $\mathscr{S}_K(L)\otimes\FF_p$. See \cite[Section 7]{PK2016} for the details. The integral model $\mathscr{S}_\sfK^{\mathrm{ad}}(L)$ of $\mathrm{Sh}_\sfK^{\mathrm{ad}}(L)$ is constructed as an \'etale quotient of $\mathscr{S}_\sfK(L)$, and the sheaves $\mathbf{L}_B,\mathbf{L}_\dR,\mathbf{L}_\ell,\mathbf{L}_p,\mathbf{L}_\cris$ on various fibers of $\mathscr{S}_\sfK(L)$ descend to the corresponding fibers of $\mathscr{S}_\sfK^{\mathrm{ad}}(L)$. We denote them by the same letters. 

For any point $s$ on $\mathscr{S}_\sfK(L)$ whose residue field $k(s)$ has characteristic $p>2$, an endomorphism $f\in \End(\mathscr{A}_s)$ is called a $\emph{special endomorphism}$ if the cohomological realization of $f_{\bar{s}}$ lies in $\mathbf{L}_{\ell,\bar{s}}$ and $\mathbf{L}_{\cris,\bar{s}}$. The space of special endomorphisms is denoted by $\mathrm{LEnd}(\mathscr{A}_s)$, equipped with a natural quadratic form given by $f\mapsto f\circ f\in\ZZ$. We have the following proposition: 

\begin{proposition}[{\cite[Proposition 3.2.3]{YangISV}}]\label{prop: special endomorphisms}
    Let $k$ be an algebraically closed field of characteristic $p>2$. For any $k$-point $s$ on the supersingular locus $\mathscr{S}_\sfK^{ss}(L)$, we have
    \begin{enumeratea}
        \item $\mathrm{LEnd}(\mathscr{A}_s)\otimes \RR$ is negative definite.
        \item The natural maps $\mathrm{LEnd}(\mathscr{A}_s)\otimes\ZZ_\ell\to \mathbf{L}_{\ell,s}$ and $\mathrm{LEnd}(\mathscr{A}_s)\otimes\ZZ_p\to\mathrm{L}_{\cris,s}^{F=1}$ are isomorphisms.
    \end{enumeratea}
\end{proposition}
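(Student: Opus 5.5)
The plan is to reduce both statements to the structure of the $p$-divisible group (equivalently, the Dieudonné module) of the abelian variety $\mathscr{A}_s$. The first step is to use that $s$ lies in the supersingular locus. This means the $F$-crystal $\mathbf{L}_{\cris,s}$ is isoclinic of slope $0$ after the Tate twist, and likewise the Dieudonné module $\mathbf{H}_{\cris,s}$ of $\mathscr{A}_s$ is isoclinic of slope $1/2$. Hence $\mathscr{A}_s$ is a supersingular abelian variety, so it is isogenous to a power of a supersingular elliptic curve $E$. Moreover, by Tate's theorem over finite fields together with the isogeny description, the natural maps
\[
\End(\mathscr{A}_s)\otimes\ZZ_\ell\to\End(\mathbf{H}_{\ell,s})^{\Gamma},\qquad \End(\mathscr{A}_s)\otimes\ZZ_p\to\End(\mathbf{H}_{\cris,s})^{F}
\]
are isomorphisms. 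Here $\Gamma$ is replaced, for $k$ algebraically closed, by the condition of commuting with a Frobenius of a finite-field model. I would first spread $s$ out over a finite field $\FF_q$, with $\mathscr{A}_s$ defined over $\FF_q$ and Frobenius acting on $\mathbf{L}_{\ell,s}$ as a scalar up to roots of unity, which follows from supersingularity. After enlarging $q$, the whole of $\mathbf{L}_{\ell,s}$ is fixed by Frobenius, and every element of $\mathbf{L}_{\cris,s}^{F=1}$ is fixed as well.

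For part (b), I take an element $x\in\mathbf{L}_{\ell,s}$. Viewed inside $\End(\mathbf{H}_{\ell,s})$ via $\boldsymbol{\pi}_\ell$, it commutes with the Frobenius of $\mathscr{A}_{s}$ over $\FF_q$, since Frobenius acts through $\CSpin$ and trivially on $\mathbf{L}$. Tate's theorem then makes $x$ a $\ZZ_\ell$-combination of endomorphisms. The delicate point is to show that the endomorphism so obtained also has crystalline realization in $\mathbf{L}_{\cris,s}$, so that it is genuinely special. For this I would invoke the result of Madapusi Pera/Kisin that special endomorphisms are detected by the $\ell$-adic realization alone: the $\ell$-adic and crystalline tensors $\boldsymbol{\pi}$ are compatible as absolute Hodge (Tate) cycles on the integral model, as in \cite{PK2016}. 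The same argument with the Dieudonné module handles $\mathbf{L}_{\cris,s}^{F=1}$. Injectivity and saturation both come from the fact that $\End(\mathscr{A}_s)\otimes\ZZ_\ell\to\End(\mathbf{H}_\ell)$ has torsion-free cokernel, and $\mathrm{LEnd}$ is the preimage of a direct summand.

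For part (a), I would use the Rosati positivity on $\End(\mathscr{A}_s)\otimes\RR$. A special endomorphism $f$ satisfies $f\circ f=Q(f)\in\ZZ$, and it is self-adjoint (or anti-self-adjoint) for the Rosati involution induced by $\psi$, according to the sign conventions of the $\CSpin$ embedding. So $\mathrm{tr}(f\circ f^\dagger)>0$ for $f\neq0$ forces $Q(f)$ to have a fixed sign, and the sign convention on $L$, whose negative part has dimension $m-2$, makes it negative. I expect the main obstacle to be the special-ness of the Tate-produced endomorphism, namely the compatibility of the crystalline and $\ell$-adic $\boldsymbol{\pi}$; I would cite the corresponding results in Madapusi Pera's work rather than reprove them.
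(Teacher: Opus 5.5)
\textbf{There is a genuine gap in part (b).} Your argument for (a) is the standard one: the trace form $\mathrm{tr}(ff^\dagger)$ is positive on $\End(\mathscr{A}_s)\otimes\RR$, and special endomorphisms are Rosati (anti-)self-adjoint. Your saturation remark also correctly gives that $\mathrm{LEnd}(\mathscr{A}_s)\otimes\ZZ_\ell\to\mathbf{L}_{\ell,s}$ is injective with torsion-free cokernel. What fails is surjectivity.

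For supersingular $s$ over $\FF_q$, after enlarging $q$ the Frobenius of $\mathscr{A}_s$ is multiplication by $\sqrt{q}$. So Tate's theorem only tells you $\End(\mathscr{A}_s)\otimes\ZZ_\ell=\End(\mathbf{H}_{\ell,s})$. The condition ``commutes with Frobenius'' is vacuous and says nothing about $\mathbf{L}_{\ell,s}$.

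An element $x\in\mathbf{L}_{\ell,s}$ is then some $\sum a_if_i$ with $a_i\in\ZZ_\ell$ and $f_i\in\End(\mathscr{A}_s)$. There is no ``endomorphism so obtained'' whose crystalline realization you could test. The $f_i$ need not be special, and the principle that an honest endomorphism is special once one realization lies in $\mathbf{L}$ cannot be applied to a $\ZZ_\ell$-linear combination.

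What that principle does give is $\mathrm{LEnd}(\mathscr{A}_s)=\End(\mathscr{A}_s)\cap\mathbf{L}_{\ell,s}$. With your saturation argument, (b) is therefore equivalent to $\rank\,\mathrm{LEnd}(\mathscr{A}_s)=\rank L$. In other words, the $\QQ_\ell$-subspace $\mathbf{L}_{\ell,s}\otimes\QQ\subset\End^0(\mathscr{A}_s)\otimes\QQ_\ell$ must be defined over $\QQ$. This is not formal: a general $\QQ_\ell$-subspace of $V\otimes\QQ_\ell$ meets $V$ only in $0$. It is the actual content of the proposition.

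The paper simply quotes Yang. The underlying proof, Madapusi Pera's Tate conjecture for special endomorphisms, gets this rationality from Kisin's description of isogeny classes of mod $p$ points on Hodge-type Shimura varieties. There, the $\QQ$-group $I_s$ of self-quasi-isogenies preserving the tensors satisfies $I_s\otimes\QQ_\ell\cong Z_{G_{\QQ_\ell}}(\mathrm{Frob})$, proved via CM lifts whose Frobenius lies in $T(\QQ)$ for a maximal torus $T$. In the basic case this produces a $\QQ$-quadratic space of special quasi-endomorphisms of dimension $\rank L$, with completions $\mathbf{L}_{\ell,s}\otimes\QQ$ and $\mathbf{L}_{\cris,s}^{F=1}[1/p]$. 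Your crystalline half has the same gap.

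\textbf{A second, smaller problem.} Here $k$ is an arbitrary algebraically closed field, and the supersingular locus is in general positive-dimensional. So a $k$-point $s$ need not come from $\bar{\FF}_p$ and cannot be spread out over a finite field. You need an extra step to reach arbitrary $k$. For example, a quasi-isogeny preserving the $\CSpin$-structure between $\mathscr{A}_s$ and the base change of an $\bar{\FF}_p$-point would transport special quasi-endomorphisms; such a quasi-isogeny is supplied by the Rapoport--Zink uniformization of the basic locus. Your saturation argument then handles integrality.
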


Choose a primitive polarization $\xi$ on $X$. By applying Artin's approximation theorem to $\Def(X,\xi)$, we obtain a family $(\cX\to S,\boldsymbol{\xi})$. Up to shrinking $S$, we may assume that $S$ is connected, flat over $W$, quasi-healthy and has connected generic fibers. By \Cref{prop: spreading out in positive characteristic}, every geometric fiber over $S\otimes \FF_p$ is an irreducible symplectic variety of same deformation type. Moreover, we can assume that the family $(\cX\to S,\boldsymbol{\xi})$ satisfies the following two properties:
\begin{itemize}
    \item $\cX$ is everywhere a universal deformation,
    \item there is a pointed lattice $(\Lambda,\lambda)$ such that for every geometric point $s\to S$, $(\rH^2_\et(\cX_s,\widehat{\ZZ}^p),c_1(\boldsymbol{\xi}_s))\cong(\Lambda\otimes \widehat{\ZZ}^p,\lambda)$, where the left side is equipped with the Beauville Bogomolov form.
\end{itemize}
For the family $(\cX\to S,\boldsymbol{\xi})$, we denote by $\mathbf{H}^2_*$ the second relative cohomology of the suitable fibers of $\cX\to S$ and by $\mathbf{P}^2_*$ its primitive part, where $*$ means $B,\ell,\dR,\cris\ \text{or}\ \widehat{\ZZ}^p$. Set $L=\lambda^\perp$. As in \cite[3.3.4]{YangISV}, let $\widetilde{S}$ be the double cover of $S$ such that a morphism $T \to \widetilde{S}$ corresponds to a morphism \(T \to S\) together with an isometric trivialization
\[
\epsilon_{2,T}\colon \underline{\det(L_2)}_T \simeq \det(\mathbf{P}^2_{2,T}).
\]
Let \(\widetilde{S}^{\#}\) be the finite étale cover of \(\widetilde{S}\) such that a morphism \(T \to \widetilde{S}^{\#}\) corresponds to a morphism \(T \to \widetilde{S}\) and an isometric trivialization
\[
\Delta_T\colon \underline{\mathrm{disc}(L^p)}_T
\simeq
\mathrm{disc}(\mathbf{P}^2_{\widehat{\mathbb{Z}}^p,T}).
\]

By the universal property of $\mathrm{Sh}_\sfK^{\mathrm{ad}}(L)$ (See \cite[Proposition 3.3.5]{YangISV} or \cite[Proposition 4.3]{Pe15}), we have a period morphism $\rho_\CC: \widetilde{S}^{\#}_\CC\to \mathrm{Sh}_\sfK^{\mathrm{ad}}(L)_\CC$ and we have isometries $\alpha_B:\rho_\CC^*\mathbf{L}_B(-1)\to \mathbf{P}_{B,\CC}^2$ and $\alpha_{\dR,\CC}:\rho_\CC^*\mathbf{L}_{\dR}(-1)\to \mathbf{P}_{\dR,\CC}^2$. The same argument in \cite[3.3.6, 3.3.8]{YangISV} shows that $\rho_\CC$ first descends to $\rho_\sfK:\widetilde{S}^{\#}_K\to \mathrm{Sh}_\sfK^{\mathrm{ad}}(L)_K$, and then extends to $\rho:\widetilde{S}^{\#}\to \mathscr{S}_\sfK^{\mathrm{ad}}(L)$.

We now prove the Tate conjecture for divisors.
\begin{theorem}[Supersingular Tate conjecture for divisors]\label{thm: Tate conjecture for divisors}
Let $k$ be an algebraically closed field of characteristic $p$ and $X$ be a supersingular excellent irreducible symplectic variety over $k$ such that $\dim X<p$ and $h^{1,1}(X)\geq 2$. Suppose that $X$ is of $M$-type such that the discriminant $\disc(M)$ and the Fujiki constant $c_M$ both have trivial $p$-adic valuation.  Then the maps $c_1:\NS(X)\otimes\ZZ_\ell\to\rH^2_\et(X,\ZZ_\ell(1))$ and $c_1:\NS(X)\otimes\ZZ_p\to\rH^2_{\cris}(X/W)^{F=p}$ are isomorphisms for all $\ell\neq p$. 
\end{theorem}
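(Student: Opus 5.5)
Injectivity with torsion-free cokernel is already given by \Cref{prop: integral first chern class map}. So the plan is to prove surjectivity by a rank comparison. We follow the strategy of Madapusi Pera \cite{Pe15} and Yang \cite{YangISV}, using the period morphism $\rho\colon \widetilde{S}^{\#}\to \mathscr{S}^{\mathrm{ad}}_\sfK(L)$ just constructed.

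First we fix the setup. Take the primitive polarization $\xi$ on $X$ and the family $(\cX\to S,\boldsymbol{\xi})$ obtained by Artin approximation. Choose a $k$-point $\tilde s$ of $\widetilde{S}^{\#}$ lying over the point of $S$ corresponding to $X$. Its image $t=\rho(\tilde s)$ lies in the supersingular locus, because $\rH^2_\cris(X/W)$ is supersingular (\Cref{prop:ArtinEquiv}). The integral extension of $\rho$ comes with isometries extending $\alpha_B$ and $\alpha_{\dR}$. These give $\rho^*\mathbf{L}_\ell(-1)\simeq \mathbf{P}^2_\ell$ for $\ell\neq p$ and, on the special fiber, a Frobenius-compatible isometry $\rho^*\mathbf{L}_\cris(-1)\simeq \mathbf{P}^2_\cris$. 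We check these by smooth proper base change and integral $p$-adic Hodge theory, as in \cite[3.3.8]{YangISV}. At $\tilde s$ this yields
\[
\mathbf{L}_{\ell,t}\simeq P^2_\ell(X)(1):=c_1(\xi)^\perp\subset \rH^2_\et(X,\ZZ_\ell(1)),\qquad \mathbf{L}_{\cris,t}^{F=1}\simeq (c_1(\xi)^\perp)^{F=p}.
\]
By \Cref{prop: special endomorphisms}(b), $\mathrm{LEnd}(\mathscr{A}_t)\otimes\ZZ_\ell\simeq \mathbf{L}_{\ell,t}$ and $\mathrm{LEnd}(\mathscr{A}_t)\otimes\ZZ_p\simeq \mathbf{L}_{\cris,t}^{F=1}$.

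The key step is to show that every special endomorphism of $\mathscr{A}_t$ comes from a line bundle on $X$, via the line bundle–special endomorphism comparison \cite[Theorem 5.17/Prop.~5.21]{Pe15}, \cite[\S3.4]{YangISV}. Let $f\in \mathrm{LEnd}(\mathscr{A}_t)$. Deformation theory of special endomorphisms cuts out a divisor $Z_f$ in the formal neighbourhood of $t$: the locus where $f$ deforms. Since $\rho$ is étale (or formally smooth) near $\tilde s$, by the local Torelli statement in \Cref{prop: isotropic lines}, the pullback $\rho^*Z_f$ is flat over $W$. It lifts to characteristic zero, and there $f$ gives a Hodge class in $\mathbf{P}^2_B$, hence by Lefschetz $(1,1)$ a line bundle on the lifted fiber. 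Specializing this line bundle gives a class $\zeta\in\NS(X)$ with $c_1(\zeta)$ equal to the realization of $f$, up to an $\ell$-adic and crystalline comparison. By \Cref{prop: isotropic lines}(b), $\zeta$ extends exactly over $\rho^*Z_f$. Here we use the compatibility of $\alpha_\ell$ and $\alpha_\cris$ with specialization.

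Granting this, $\NS(X)\otimes\ZZ_\ell$ contains $\ZZ_\ell c_1(\xi)\oplus P^2_\ell$. This sum has full rank $b_2$, so the saturated image is everything. Since $\disc$ and $c_M$ are prime to $p$, the same argument works crystalline-ly on $F$-invariants. This gives surjectivity for all $\ell$ and for $p$.

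\emph{Main obstacle.} The hard step is the lifting of special endomorphisms to line bundles. It needs (i) that $\rho$ is étale at supersingular points, and (ii) flatness of $Z_f$ over $W$, where quasi-healthiness of $S$ and $h^{1,1}\ge 2$ enter. We will first try to copy Madapusi Pera's argument \cite[\S5]{Pe15} almost verbatim. To do so we replace K3 deformation theory by \Cref{prop: local deformation} and \Cref{prop: isotropic lines}, which hold under \Cref{condition: perfect IS type}.
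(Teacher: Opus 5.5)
Your proposal is correct and is essentially the paper's argument. You reduce to rational surjectivity via \Cref{prop: integral first chern class map}, then combine the realization-compatible map $\mathrm{LEnd}(\mathscr{A}_s)\to\langle\boldsymbol{\xi}_t\rangle^{\perp}\subset\NS(X)$ (which the paper takes from the proof of \cite[Theorem 3.3.9]{YangISV} and which you sketch via flat deformation loci, Lefschetz $(1,1)$ and specialization) with \Cref{prop: special endomorphisms} at the supersingular point.

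One small correction: the abelian scheme lives on $\mathscr{S}_\sfK(L)$, not on $\mathscr{S}^{\mathrm{ad}}_\sfK(L)$. You should therefore lift $\rho(\tilde s)$ to a point $s\in\mathscr{S}_\sfK(L)$ before speaking of $\mathscr{A}_s$, as the paper does.
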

\begin{proof}
    By \Cref{prop: integral first chern class map}, it is sufficient to prove this theorem rationally. Let $t$ be a point in $\widetilde{S}^{\#}(k)$ with fiber $\cX_t\cong X$ and $s$ be a point in $\mathscr{S}_{\mathsf{K}}(L)$ lifting $\rho(t)$. By the same proof of \cite[Theorem 3.3.9]{YangISV}, the period morphism $\rho$ gives us two commutative diagrams:
\[\begin{tikzcd}
	{\mathrm{LEnd}(\mathscr{A}_s)} & {\langle\boldsymbol{\xi}_t^\perp\rangle} & {\mathrm{LEnd}(\mathscr{A}_s)} & {\langle\boldsymbol{\xi}_t^\perp\rangle} \\
	{\mathbf{L}_{\cris}^{F=1}} & {\mathrm{P}_{\cris}^2(\cX_t/W)^{F=p}} & {\mathbf{L}_\ell} & {\mathrm{P}_{\et}^2(\cX_t,\ZZ_\ell(1))}
	\arrow[from=1-1, to=1-2]
	\arrow[from=1-1, to=2-1]
	\arrow[from=1-2, to=2-2]
	\arrow[from=1-3, to=1-4]
	\arrow[from=1-3, to=2-3]
	\arrow[from=1-4, to=2-4]
	\arrow["{\alpha_{\cris,t}}", from=2-1, to=2-2]
	\arrow["{\alpha_{\ell,t}}", from=2-3, to=2-4]
\end{tikzcd}\]
    The bottom horizontal arrows are isomorphisms. Moreover, since $X$ is supersingular, \Cref{prop: special endomorphisms} gives that the left vertical morphisms are isomorphisms rationally. Thus the right vertical arrows are surjective rationally, then the rational Tate conjecture holds.
\end{proof}

\subsection{Lifting with line bundles}
Let $k$ be an algebraically closed field of characteristic $p$ and $X$ be an excellent irreducible symplectic variety over $k$ such that $\dim X<p$ and $h^{1,1}(X)\geq 2$. Suppose that $X$ is of $M$-type such that the discriminant $\disc(M)$ and the Fujiki constant $c_M$ both have trivial $p$-adic valuation. Let $m\in\NN$ and $(\zeta_0,\cdots,\zeta_m)$ be line bundles of $X$ generating a direct summand of $\Pic(X)$ and $\zeta_0=\xi$ being a polarization. We want to study whether $(X_,\zeta_0,\cdots,\zeta_m) $ can be lifted to a finite flat extension $W'$ of $W$. Some special cases are already known.
\begin{itemize}
    \item If $X$ is non-supersingular, this is always true for any $m$ by \Cref{prop: local deformation}(a).
    \item If $m=0$, we only need to lift one line bundle, then \Cref{prop: local deformation} also gives the lifting. 
\end{itemize}

To deal with the supersingular case, our strategy is to deform $(X,\zeta_0,\cdots,\zeta_m)$ to a non-supersingular irreducible symplectic variety, then lift it. To ensure this, recall that we have obtained an algebraic universal deformation family $(\cX\to S,\boldsymbol{\xi})$. We want to bound the dimension of  the supersingular locus $S_0^{ss}$ in $S_0=S\otimes k$.

\begin{lemma}\label{lem: dimension of supersingular locus}
    If $X$ is supersingular with $b_2(X)\geq 4$, then the dimension of the supersingular locus $S_0^{ss}$ is at most $\lfloor\frac{b_2}2{\rfloor-1}$. Here, $\lfloor\frac{b_2}2{\rfloor}=\max\{i\in\ZZ:i\leq\frac{b_2}2\}$.
\end{lemma}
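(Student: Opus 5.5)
We bound $\dim S_0^{ss}$ by pulling the supersingular locus back along the period morphism $\rho\colon \widetilde{S}^{\#}\to \mathscr{S}^{\mathrm{ad}}_\sfK(L)$ constructed above, where $L=\lambda^\perp$ has rank $b_2-1$. The bound then follows from two inputs: the known dimension of the supersingular locus of an orthogonal Shimura variety, and the local finiteness of $\rho$.

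First, the supersingular loci match. A geometric point $t$ of $\widetilde{S}^{\#}_k$ has $\cX_t$ supersingular if and only if $\rH^2_\cris(\cX_t/W)$ is isoclinic of slope $1$ (\Cref{prop:ArtinEquiv}). Since $\rH^2_\cris$ is $\mathrm{P}^2_\cris\oplus W\cdot c_1(\boldsymbol{\xi}_t)$ up to $p$-adic units (the class $\lambda$ has $p$-adic unit norm after choosing $\xi$ suitably, or one passes to isocrystals), this happens if and only if $\mathrm{P}^2_\cris(\cX_t)[1/p]$ is isoclinic of slope 1. Through the crystalline isometry $\alpha_{\cris,t}\colon \mathbf{L}_{\cris,\rho(t)}(-1)\simeq \mathrm{P}^2_\cris(\cX_t/W)$ from the proof of \Cref{thm: Tate conjecture for divisors}, this is equivalent to $\rho(t)$ lying in the supersingular locus $\mathscr{S}^{\mathrm{ad},ss}_\sfK(L)_k$. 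Hence
\[
\widetilde{S}^{\#,ss}_0=\rho^{-1}\bigl(\mathscr{S}^{\mathrm{ad},ss}_\sfK(L)_k\bigr).
\]
Because $\widetilde{S}^{\#}\to S$ is finite étale, it has the same dimension as $S_0^{ss}$.

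Second, $\rho$ is étale, at least on the special fiber near our points. This is the step I expect to be the main obstacle. I would adapt \cite[Prop.~4.7]{Pe15} and \cite[3.3.8]{YangISV}. Both $\cX/S$ at $t$ (a universal deformation of $(X,\xi)$) and $\mathscr{S}_\sfK(L)$ at $\rho(t)$ have deformation spaces described by isotropic lines in the K3 crystal lifting $\mathrm{Fil}^2$ and orthogonal to $\lambda$: on the source this is \Cref{prop: isotropic lines}, on the target it is Grothendieck--Messing/Kisin theory. The period map identifies these descriptions via $\alpha_{\dR}$, so $\rho$ induces an isomorphism of complete local rings. The point needing care is compatibility of the crystalline comparison with the deformation-theoretic identification, which the horizontal form $\uq_\dR$ is designed to supply. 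If full étaleness is hard, quasi-finiteness of $\rho_k$ is enough for the dimension bound.

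Finally, I would use the dimension of the supersingular locus. For a self-dual (or $L^\vee_p/L_p$ cyclic, $p^2\nmid\disc$) lattice of rank $r=b_2-1$, the supersingular locus of the orthogonal Shimura variety has dimension $\lfloor r/2\rfloor -1$ in the odd-rank case and $\lfloor (r-1)/2\rfloor$... Concretely, the results of Howard--Pappas / Ogus' theory of supersingular K3 crystals give that it is a union of Deligne--Lusztig type varieties, each of dimension at most $\sigma_{\max}-1$. Here $\sigma_{\max}\le \lfloor b_2/2\rfloor$ is the maximal Artin invariant of a supersingular K3 crystal of rank $b_2$: $\disc(\NS)=-p^{2\sigma_0}$ with $2\sigma_0\le b_2$. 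Equivalently, following Ogus, the moduli of characteristic subspaces in the rank-$b_2$ supersingular K3 crystal has dimension $\sigma_0-1\le\lfloor b_2/2\rfloor-1$. Fixing the polarization class does not raise this. Combining the three steps, $\dim S_0^{ss}\le\lfloor b_2/2\rfloor-1$. The hypothesis $b_2\ge4$ guarantees $\operatorname{rank} L\ge 3$, so the Shimura variety and these results apply.
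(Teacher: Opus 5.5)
Your outline is correct and is essentially the paper's argument. The paper simply defers to Yang's Lemma~4.1.2 and adds only the observation $2\sigma_0\le b_2$. That is exactly the input you use, after transporting the supersingular locus along the (étale) period morphism, to get $\sigma_{\max}-1\le\lfloor b_2/2\rfloor-1$.

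The one place to tighten is Step 3. There the Shimura variety is attached to the rank-$(b_2-1)$ lattice $L=\lambda^\perp$, not to a rank-$b_2$ K3 crystal, and Howard--Pappas only treat the case where $L_p$ is self-dual. So when $p$ exactly divides $\disc(L)$ you should either cite the almost self-dual case, or run Ogus' Artin-invariant stratification (strata of dimension $\sigma_0-1$) directly on $\rH^2_\cris$ of the fibres.
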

\begin{proof}
    The proof is the same as that of \cite[Lemma 4.1.2]{YangISV}. The only additional point is that the Artin invariant $\sigma_0$ of a supersingular $M$-type satisfies $2\sigma_0\leq b_2$. Then we have the given bound.
\end{proof}

Now, we are able to discuss the lifting problem as follows. In \cite[Proposition 4.1.3]{YangISV}, the author shows the result for $K3^{[n]}$-type irreducible symplectic variety. But it is easy to see that the same argument works for general $M$-type.
\begin{proposition}\label{prop: lifting with line bundles}
    Let $(X,\xi=\zeta_0,\cdots,\zeta_N)$ be as above, where $N=\max\{0,\lceil \frac{b_2(X)}{2}\rceil-3\}$. Here, $\lceil \frac{b_2(X)}{2}\rceil=\min\{i\in\ZZ:i\geq \lceil \frac{b_2(X)}{2}\rceil\} $. Then there is a finite flat extension $W'$ of $W$ and a lifting $\widehat{X}$ of $X$ to $W'$ such that $(\zeta_0,\cdots,\zeta_N)$ deforms to $\widehat{X}$.
\end{proposition}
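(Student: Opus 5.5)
We follow the strategy of \cite[Proposition 4.1.3]{YangISV}: deform $(X,\zeta_0,\dots,\zeta_N)$ inside characteristic $p$ to a non-supersingular point, lift there using \Cref{prop: local deformation}(a), and transport the lifting back to $X$ via the connectedness statement \Cref{lem: connectedness of gneric fiber}. If $X$ is not supersingular, \Cref{prop: local deformation}(a) already gives the lifting. If $N=0$, parts (b),(c) of \Cref{prop: local deformation} show that $\Def(X,\xi)$ is flat over $W$ (smooth, or the regular quadric $x_1^2+\cdots+x_{h^{1,1}}^2+p$), so it has a $W'$-point for some finite extension $W'$. So assume $X$ is supersingular and $N\geq 1$.

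\textbf{Step 1 (the equicharacteristic locus).} We work in the algebraic universal deformation $(\cX\to S,\boldsymbol{\xi})$ of $(X,\xi)$ and its special fiber $S_0=S\otimes k$. By \Cref{prop: local deformation} the formal germ of $S_0$ at $t$ (the point with $\cX_t\cong X$) has dimension $h^{1,1}-1=b_2-3$. Each further line bundle $\zeta_i$ with $1\leq i\leq N$ cuts out, by \Cref{prop: isotropic lines}(b), a locus defined formally locally by one equation $f_{\zeta_i}$. Let $Z\subset S_0$ be the closed locus through $t$ where all $\zeta_1,\dots,\zeta_N$ deform. Then every component of $Z$ through $t$ has dimension at least $b_2-3-N$.

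\textbf{Step 2 (dimension count).} Using \Cref{lem: dimension of supersingular locus}, $\dim S_0^{ss}\leq\lfloor b_2/2\rfloor-1$. With $N=\lceil b_2/2\rceil-3$ we get
\[
b_2-3-N=b_2-\lceil b_2/2\rceil=\lfloor b_2/2\rfloor>\lfloor b_2/2\rfloor-1 .
\]
Hence a component of $Z$ through $t$ is not contained in $S_0^{ss}$. We may therefore choose a curve $C\subset Z$ through $t$ whose generic point $c$ is non-supersingular, and on which all $\zeta_i$ extend. Because $\Pic$ of the fibers is torsion free with the saturation property (\Cref{prop: integral first chern class map}), the extended bundles still span a direct summand at a geometric point over $c$; if needed we specialize to a closed non-supersingular point $s\in C$.

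\textbf{Step 3 (lifting and transporting back).} At $s$, \Cref{prop: local deformation}(a) shows that $\Def(\cX_s,\zeta_{0,s},\dots,\zeta_{N,s})$ is smooth over $W$, so this tuple lifts to characteristic zero. We then run the argument of \Cref{prop: spreading out in positive characteristic} on the Hilbert scheme parametrizing the tuple, i.e.\ $\mathrm{Hilb}^{+,m}_{P,W}$ with the relative Picard scheme conditions added for the $\zeta_i$. The connected component containing $s$ and $t$ is flat over $W$ near $s$. We need flatness, that is, that no component lies entirely in characteristic $p$, at $t$ as well. This is where we will instead use the structure of the equations. The locus where $\zeta_0,\dots,\zeta_N$ deform is cut out in the regular scheme $\widetilde S$ by $N$ equations. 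Its dimension drops by at most $N$ from $\dim\Def(X,\xi)$, and that dimension is one more than the dimension of the special fiber. So the locus has dimension at least $b_2-2-N$, which exceeds the dimension $b_2-3-N$ of its special fiber computed in Step 1 only if no component through $t$ is purely in characteristic $p$.

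\textbf{Main obstacle.} The delicate step is exactly this: showing that the component of the $(\zeta_0,\dots,\zeta_N)$-locus through $t$ is not contained in the special fiber. We plan to get it by combining the dimension bound of Steps 1--2 with quasi-healthy regularity (\Cref{prop: local deformation}(d)), as in Yang's argument. Suppose a component $Y$ through $t$ were inside $S_0$. By the Krull bound in the regular $S$ it has dimension at least $b_2-2-N$. It consists of points where $\rho$ jumps, and the Picard rank at its generic point is too large for non-supersingular points, since the Tate/K3 crystal bound forces supersingularity. That would put $Y$ inside $S_0^{ss}$, contradicting $\dim S_0^{ss}\leq\lfloor b_2/2\rfloor-1<b_2-2-N$. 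A component flat over $W$ then has a point over a finite extension $W'$ specializing to $t$, which yields the lifting $\widehat X$.
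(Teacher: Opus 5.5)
Your Steps 1–2 reproduce the paper's dimension count. The decisive step, however, is not established: you need a component of the $(\zeta_0,\dots,\zeta_N)$-locus through $t$ that is not contained in characteristic $p$, and your argument for it fails at two points.

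\begin{itemize}
\item The comparison in Step 3 (``exceeds the dimension $b_2-3-N$ of its special fiber computed in Step 1'') misreads Step 1. Step 1 only gives a \emph{lower} bound on $\dim Z$, and at the supersingular point $t$ nothing prevents $Z$ from having excess dimension.
\item Your last paragraph claims that a characteristic-$p$ component $Y$ lies in $S_0^{ss}$ because it ``consists of points where $\rho$ jumps.'' This is unfounded. Excess dimension does not create new line bundles. The generic point of $Y$ is only known to carry $\zeta_0,\dots,\zeta_N$, i.e.\ Picard rank $\geq N+1=\lceil b_2/2\rceil-2$, which is far below what finite height allows (up to $b_2-2$).
\end{itemize}

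So the contradiction with \Cref{lem: dimension of supersingular locus} does not follow as written. The difficulty comes from passing to a \emph{closed} point $s\in C$: a $W$-flat component through $s$ need not pass through $t$, so lifting at $s$ says nothing about $t$.

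The paper avoids this by never leaving the generic point of the curve.
\begin{itemize}
\item It takes a $k[[x]]$-point through $t$ whose geometric generic fibre $\bar\eta$ is non-supersingular and carries the $\zeta_j$. At $\bar\eta$ the $\zeta_j$ still span a direct summand, by injectivity of specialization of Picard groups.
\item It lifts this tuple to $W(\bar\eta)$ by \Cref{prop: local deformation}(a). This gives a characteristic-zero point $\theta$ of $\cP$ with $\theta\rightsquigarrow\bar\eta\rightsquigarrow t$.
\item Hence $t$ lies in an irreducible closed subset dominating $\Spec W$, and a system-of-parameters argument gives the $W'$-point.
\end{itemize}

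Your Krull-bound route can also be repaired by replacing the Picard-rank claim with the following.
\begin{itemize}
\item Take a closed non-supersingular point $y\in Y$ near $t$ at which the $\zeta_i$ still span a direct summand. This holds near $t$: the classes $c_{1,\dR}(\zeta_i)$ stay linearly independent, which excludes $p$-divisibility, and $\ell$-divisibility is excluded by transport along $Y$.
\item At such $y$, \Cref{prop: local deformation}(a) and versality of $S$ make the completed local ring of the $\zeta$-locus a power series ring over $W$. So no component through $y$ lies in characteristic $p$.
\item Therefore $Y\subset S_0^{ss}$. Your bound $\dim Y\geq\lfloor b_2/2\rfloor+1>\lfloor b_2/2\rfloor-1$ then gives the contradiction.
\end{itemize}
With this repair, the curve $C$ and the lift at $s$ become unnecessary.
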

\begin{proof}
    We may assume that $b_2(X)\geq 6$ and $X$ is supersingular, since other cases are already known. This generalizes \cite[Proposition 1.5]{Ch16}, building on \cite[Proposition A.1]{LO15}. The proof adapts arguments from \cite[Appendix A]{LO15} with technical refinements.

    First, choose $m\gg 0$ which is prime to $p$ such that $m\xi$ is very ample and $\rH^i(X, m\xi) = 0$ for all $i > 0$. Let $P$ be the Hilbert polynomial of $m\xi$ and consider $\mathrm{Hilb}^{+,m}_P$. Denote this Hilbert scheme by $U$ with universal family $\mathcal{Y} \to U$. Let $s\in U$ be a point with fiber $\cY_s\cong X$. Construct the $(N+1)$-fold fiber product of the relative Picard scheme:
    \[
    \mathcal{P} := \mathrm{Pic}_{\mathcal{Y}/U} \times_U \cdots \times_U \mathrm{Pic}_{\mathcal{Y}/U}
    \]
    The tuple $(\xi, \zeta_1, \ldots, \zeta_N)$ corresponds to $t \in \mathcal{P}$ over $s \in U$. 
    
    We then find a characteristic zero point of $\cP$ that specializes to $t$. To see this, note that
    \[
    \dim \operatorname{Def}(X; \xi, \zeta_1, \ldots, \zeta_N) \geq h^{1,1}-(N+1)= \lfloor\frac{b_2}{2}\rfloor.
    \]
    Using this dimension estimate and the fact that the formal neighborhood of $s$ in $U$ is smooth over $\Def(X,\xi)$, we can find a point $s'\in U(k[[x]])$ extending $s$ such that the geometric generic fiber $\cY_{s'\otimes\bar{\eta}}$ is non-supersingular and carries lifts of all $\zeta_j$'s. We can lift $(\cY_{s'\otimes\bar{\eta}},\zeta_0,\cdots,\zeta_N)$ to $W(\bar{\eta})$, then there is a characteristic zero point on $\cP$ that specialize to $t$.

     By a standard argument involving systems of parameters, we find a $W'$ point $s_{W'}$ on $\cP$ extends $s$, where $W'$ is a finite flat extension of $W$. Set $\widehat{X}=\cY|_{s_{W'}}$. The isomorphism $\operatorname{Pic}(\widehat{X}_{W'}/W')(W') \cong \operatorname{Pic}(\widehat{X})$ by \cite[p. 203]{BLR1990} ensures that $(\zeta_0,\cdots,\zeta_N)$ extends to $\widehat{X}$.
     \end{proof}

As a corollary, we can show the following theorem, which means that the deformation type is independent of the choice of lifting, under some suitable conditions.
\begin{theorem}\label{prop: deformation type independent on lifting}
    Let $k$ be an algebraically closed field of characteristic $p>0$ and $X$ an excellent irreducible symplectic $k$-variety of deformation type $M$ with  $\dim X < p$ and $b_2(X)\geq7$. Suppose that the discriminant $\disc(M)$ and the Fujiki constant $c_M$ both have trivial $p$-adic valuation. Then for any deformation $\cX'$ of $X$ over a finite extension $V'$ of $W$ such that a primitive polarization on $X$ extends to $\cX'$, the geometric generic fiber of $\cX'$ is of deformation type $M$.
\end{theorem}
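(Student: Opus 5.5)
We want to show that for any other lifting $\cX'/V'$ carrying a lift of some primitive polarization $\xi'$, the geometric generic fiber is of type $M$. The strategy is to reduce to \Cref{prop: spreading out in positive characteristic}. That proposition is phrased for a family over a connected $\FF_p$-scheme whose geometric fibers satisfy \Cref{condition: perfect IS type} with $h^{1,1}\geq 2$. Both hypotheses hold for $X$. By \Cref{condition: discriminant prime to p} and \Cref{corollary: perfect pairing on tangent-cotangent}, $X$ satisfies \Cref{condition: perfect IS type}. Moreover $b_2\geq 7$ gives $h^{1,1}=b_2-2\geq 5$.

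The issue is that the given excellent lifting $\cX/V$ carries a primitive polarization $\xi$, while $\cX'$ carries a possibly different primitive polarization $\xi'$. I would apply \Cref{prop: lifting with line bundles} with $\zeta_0$ an ample class and $\zeta_1$ chosen so that $\xi$ and $\xi'$ both lie in the saturated span of $\zeta_0,\zeta_1$. Concretely, take the saturation $\Lambda$ of $\ZZ\xi+\ZZ\xi'$ in $\NS(X)$. This is a direct summand because $\NS(X)$ is torsion free by \Cref{prop: integral first chern class map}. Pick a basis $\zeta_0,\zeta_1$ of $\Lambda$ with $\zeta_0$ ample; this is possible since $\xi$ is a primitive ample element of $\Lambda$ and can be extended to a basis. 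Since $b_2\geq 7$, we have $N=\lceil b_2/2\rceil-3\geq 1$, so \Cref{prop: lifting with line bundles} applies with two line bundles (padding with more if necessary). It yields a lifting $\widehat X/W''$ to which both $\xi$ and $\xi'$ extend.

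Now compare the liftings in pairs. The liftings $\cX$ and $\widehat X$ both carry extensions of the primitive polarization $\xi$. The liftings $\widehat X$ and $\cX'$ both carry extensions of $\xi'$. Apply \Cref{prop: spreading out in positive characteristic} to the constant family $(X\to\Spec k,\xi)$, where $S=\Spec k$ is trivially connected. Property $(\ast_{\mathrm{weak}})$ holds via $\cX$, since its generic fiber is of type $M$. Hence $(\ast_{\mathrm{strong}})$ holds, and $\widehat X_{\bar\eta}$ is of type $M$.

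Next apply the same proposition to $(X,\xi')$. Here $(\ast_{\mathrm{weak}})$ holds via $\widehat X$, so $(\ast_{\mathrm{strong}})$ gives that $\cX'_{\bar\eta}$ is of type $M$. One should check that a primitive polarization is what the proposition requires. Lifts defined over different DVRs are fine, because $(\ast_{\mathrm{strong}})$ quantifies over all mixed characteristic DVRs with residue field $k$.

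The main obstacle is the middle step: arranging that the two polarizations lift simultaneously. This requires that $\xi$ and $\xi'$ span, after saturation, a direct summand that can be generated by a polarization together with one extra line bundle. It also requires that the hypotheses of \Cref{prop: lifting with line bundles} apply, which in the supersingular case uses the dimension bound of \Cref{lem: dimension of supersingular locus}; that bound is where $b_2\geq 7$ enters. If $\xi'$ is proportional to $\xi$, primitivity forces $\xi'=\xi$, and the step is immediate from one application of \Cref{prop: spreading out in positive characteristic}.
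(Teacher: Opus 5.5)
Your proposal is correct and follows the paper's proof: \Cref{prop: lifting with line bundles} produces a common lifting carrying both $\xi$ and $\xi'$, and \Cref{prop: spreading out in positive characteristic} is then applied twice, first with $\xi$ and then with $\xi'$, to pass the $M$-type from $\cX$ to the common lifting and on to $\cX'$. Your choice of the saturated rank-two summand with $\zeta_0=\xi$, and your check that $b_2\geq 7$ gives $N\geq 1$, fill in details the paper leaves implicit.
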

\begin{proof}
Suppose we have two liftings $\cX/V$ and $\cX'/V'$, and they carry an extension of primitive polarization $\xi$ and $\xi'$ respectively. Suppose that the generic fiber of $\cX$ is of $M$-type. By \Cref{prop: lifting with line bundles}, we can always find a lifting $\cX''/V''$ that carries both $\xi$ and $\xi'$. By \Cref{prop: spreading out in positive characteristic}, the generic fiber of $\cX''$ is of $M$-type. By \Cref{prop: spreading out in positive characteristic} again, the generic fiber of $\cX'$ is also of $M$-type.
\end{proof}

\subsection{Integral Beauville--Bogomolov form}
In characteristic zero, the restriction of the primitive integral Beauville--Bogomolov form on $\rH^2(X,\ZZ)$ to $\NS(X)$ gives an integral quadratic form $q_X$ on $\NS(X)$, which  may not be primitive. Motivated by this, we construct an analogous integral quadratic form on $\NS(X)$ in characteristic $p$. Although the definition is intrinsic in characteristic $p$, we still need to lift line bundles to characteristic zero to verify some important properties. This is why we leave it to the end of this section.

\begin{proposition}\label{prop: integral BB form}
    Let $X$ be an excellent irreducible symplectic variety of deformation type $M$ over an algebraically closed field $k$ of characteristic $p>\dim X=2n$. Assume that $b_2(X)\geq 7 $ and the discriminant $\disc(M)$ and the Fujiki constant $c_M$ both have trivial $p$-adic valuation. Then there is a unique integral quadratic form $q_X$ on $\NS(X)$ such that
    \begin{enumeratea}
        \item (Fujiki relation) For any $L_i\in\NS(X)$ \[\prod_{i=1}^{2n}L_i= \frac{c_M}{n!\,2^n}\sum_{\sigma\in \mathfrak{S}_{2n}} q(L_{\sigma(1)},L_{\sigma(2)})\cdots q(L_{\sigma(2n-1)},L_{\sigma(2n)}),\] 
        \item $q_X$ is compatible, via the first Chern class map, with the Beauville--Bogomolov form constructed in \Cref{prop: BB form},
        \item for any ample divisor $L$, $q_X(L)$ is positive.
    \end{enumeratea}
\end{proposition}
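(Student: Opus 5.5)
Since $\NS(X)$ is torsion free and $c_{1,\ell}$ is injective by \Cref{prop: integral first chern class map}, we may simply define $q_X(L,L') := q_\ell(c_{1,\ell}(L),c_{1,\ell}(L'))$ for one fixed prime $\ell\neq p$, with $q_\ell$ the form of \Cref{prop: BB form}. Properties (a) and (b) then come for free: the Fujiki relation holds because $q_\ell$ satisfies it for cup product, and intersection numbers of line bundles are computed by cup products of their $\ell$-adic Chern classes. Property (c) is already part of \Cref{prop: BB form}.

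Uniqueness is a direct application of \Cref{lem:fujiki}. Fix an ample $\xi$. Any $q$ satisfying (a) is determined by the intersection form $w$ and by the single value $q(\xi,\xi)$. Evaluating (a) at $L_i=\xi$ gives
\[
q(\xi,\xi)^n=\frac{n!\,2^n}{c_M(2n)!}\,\xi^{2n}\cdot\frac{(2n)!}{n!\,2^n}\cdot\frac{1}{1},
\]
that is, $\xi^{2n}=c_M\frac{(2n)!}{n!2^n}q(\xi,\xi)^n$. So $q(\xi,\xi)$ is determined up to an $n$-th root of unity, and (c) pins down the positive root.

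The real content is that these values lie in $\ZZ$ and do not depend on $\ell$ (or on the choice of crystalline versus $\ell$-adic realization). Each $q_\ell(c_1 L,c_1 L')$ a priori lies only in $\ZZ_\ell$. To show integrality and independence of $\ell$, I would use lifting.

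First, apply \Cref{prop: lifting with line bundles} with $\zeta_0=\xi$ and $\zeta_1,\zeta_2$ taken to be $L$ and $L'$, after saturating so that they span a direct summand. This is allowed because $b_2\ge7$ gives $N\ge1$, and for $b_2\ge 8$ we get $N\ge 2$. If only $N=1$ is available (the case $b_2=7$), lift the pairs $(\xi,L)$, $(\xi,L')$ and $(\xi,L+L')$ separately, and recover $q(L,L')$ by polarization.

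Second, on the geometric generic fiber $\cX_{\bar\eta}$ the lifted classes are integral Hodge classes. Their integral Beauville--Bogomolov pairings, computed via $\rH^2(\cX_{\bar\eta,\CC},\ZZ)$, are rational integers. By construction of the adelic form and smooth proper base change, these integers agree with $q_\ell$ for every $\ell\neq p$. For the crystalline form, note that $q_{\cris}$ is also determined by Fujiki, \Cref{lem:fujiki}, together with positivity on $\xi$, so it agrees with $q_\ell$ on Chern classes, which gives (b) for $q_\cris$.

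The main obstacle is making sure the lifted polarization stays primitive/ample and that the $b_2=7$ case (Kummer type) has enough line bundles. I would handle the latter by the polarization trick described above.
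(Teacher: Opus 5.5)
Your proof is correct in substance, but it follows a different route from the paper. The paper does not restrict $q_\ell$. It defines $q_X$ intrinsically by $q_X(L)=\frac{1}{c'}\,\td_{2n-2}(X)\cdot L^2$, where $c'$ is the constant in Huybrechts' characteristic-zero identity $\td_{2n-2}\cdot L^2=c'\,q(L)$; this constant is nonzero by positivity of the Riemann--Roch coefficients. The paper then:
\begin{itemize}
\item lifts each primitive ample $L$ on its own via \Cref{prop: local deformation};
\item uses \Cref{prop: deformation type independent on lifting} to see that $c'$ does not depend on $L$ or on the lifting;
\item gets (a) and (c) from invariance of intersection numbers under specialization, and (b) from the Fujiki relation, as you do.
\end{itemize}
The Todd-class definition gives a form that is visibly $\QQ$-valued and independent of $\ell$, of the cohomology theory and of all liftings, at the cost of a Riemann--Roch input. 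Your definition makes (a)--(c) immediate and moves all the work to rationality and integrality.

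For that remaining step, your route needs no lifting at all. Write out \Cref{lem:fujiki} explicitly: $\xi^{2n-1}\cdot\alpha=c_M(2n-1)!!\,q(\xi,\xi)^{n-1}q(\xi,\alpha)$, and expanding $\xi^{2n-2}\cdot\alpha\cdot\beta$ in the same way expresses $q(\alpha,\beta)$ rationally in intersection numbers and $q(\xi,\xi)$. \Cref{prop: BB form} already says $q_\ell(\xi,\xi)$ and $q_{\cris}(\xi,\xi)$ are positive integers. Both have the same $n$-th power, so they are equal. Hence the restrictions of all $q_\ell$ and of $q_\cris$ to $\NS(X)$ take the same rational values. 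These lie in $\ZZ_\ell$ for every $\ell\neq p$ and in $W\cap\QQ=\ZZ_{(p)}$, hence in $\ZZ$.

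If you keep the lifting argument instead, three points need repair.
\begin{itemize}
\item The lifting from \Cref{prop: lifting with line bundles} is not the one used to build $q_\ell$. Smooth proper base change alone therefore does not identify its Beauville--Bogomolov form with $q_\ell$. You need its generic fiber to be of type $M$, i.e.\ to have the same $c_M$; this is \Cref{prop: deformation type independent on lifting}, and it is where $b_2\geq 7$ really enters. Then apply \Cref{lem:fujiki} with positivity at $\xi$. Ampleness of the lifted polarization is automatic over a DVR and is not the real obstacle.
\item $b_2=8$ also gives $N=1$, so your fallback is needed there too.
\item Polarization only yields $2q(L,L')\in\ZZ$. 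You must add $2$-adic integrality, or instead lift pairs $(A,L')$ with $\zeta_0=A$ primitive ample and conclude by bilinearity.
\end{itemize}
Finally, your first display for $q(\xi,\xi)^n$ is garbled; the relation on the following line is the correct one.
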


\begin{proof}
    We first consider a quadratic form $q'$ on $\NS(X)$ given by
    \[
    q'(L)=\td_{2n-2}(X)\cdot L^2,
    \] 
    where $\td_{2n-2}(X)$ is the degree $2n-2$ part of the Todd class. This form may not satisfy (a) and (b), so we adjust it by a constant as follows.

    We only need to deal with primitive ample line bundles since they generate $\NS(X)$. For any primitive ample $L\in\NS(X)$, by \Cref{prop: local deformation}, the pair $(X, L)$ can be lifted to characteristic zero. Thus, we have a smooth projective family $(\cX,\cL)$ over a finite flat extension $V$ of $W$ whose special fiber is $(X,L)$ and generic fiber $(\cX_\eta,\cL_\eta)$ is an irreducible symplectic variety in characteristic zero. By \cite[Corollary 23.17]{CompactHK_Huy}, there is a constant $c'\in \QQ$, such that 
    \[
    \td_{2n-2}({\cX_\eta})\cdot\cL_{\eta}^2 =c'\cdot q_{\cX_\eta}(\cL_\eta),
    \]
    where $q_{\cX_\eta}$ is the integral BB-form in characteristic zero. The constant $c'$ depends only on the coefficients of the Riemann-Roch polynomial, whose values are positive by \cite{JiangChen2023}. By \Cref{prop: deformation type independent on lifting}, the constant $c'$ is independent of the choice of primitive ample line bundle and lifting.

    Thus, if we define $q_X=\frac{1}{c'}q'$, $q_X$ will satisfy (a) and (c) because intersection number is preserved in specialization. Condition (b) also follows because both sides satisfy the same Fujiki relation. The uniqueness is guaranteed by (a) and (c).
\end{proof}

\begin{proposition}
    The quadratic form $(\NS(X),q_X)$ is a non-degenerate lattice with signature $(1,{\rank(\NS(X))}-1)$. Moreover, if $X$ is of known deformation type, then $q_X$ is even.
\end{proposition}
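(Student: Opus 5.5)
The plan is to reduce every claim to statements about finitely many line bundles simultaneously, lift those to characteristic zero, and read the properties off the complex Beauville--Bogomolov form. A single lift of all of $\NS(X)$ is usually unavailable: $\rank\NS(X)$ can be as large as $b_2$ in the supersingular case. So I will only ever lift small subsets of line bundles.

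\textbf{Non-degeneracy and signature.} Fix an ample $\xi$. By \Cref{lem:fujiki} applied to $w$ given by intersection of divisors (rationally), for $\alpha\in\NS(X)_\QQ$ we have $q_X(\xi,\alpha)=0$ if and only if $\xi^{2n-1}\cdot\alpha=0$. Hence $\NS(X)_\QQ=\QQ\xi\oplus\xi^{\perp}$, and $q_X(\xi)>0$ by \Cref{prop: integral BB form}(c). It therefore suffices to show that $q_X$ is negative definite on $\xi^\perp$. Take $\alpha\in\xi^\perp$ nonzero, primitive, integral. By \Cref{prop: lifting with line bundles} with $N\geq 1$, which holds since $b_2\geq 7$ gives $N=\lceil b_2/2\rceil-3\geq 1$, after replacing $\xi$ by a primitive ample class in the saturation if necessary, I lift the pair $(\xi,\alpha)$ to some $\cX/W'$ whose geometric generic fiber is of type $M$ by \Cref{prop: deformation type independent on lifting}. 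The form $q_X$ agrees with $q_{\cX_{\bar\eta}}$ on the lifts, because both satisfy the Fujiki relation with the same constant and are positive on the ample class, so \Cref{lem:fujiki} forces equality. On $\cX_{\bar\eta}$ the lifted $\alpha$ is a $(1,1)$ class orthogonal to an ample class, so the Hodge index theorem for the BB form (signature $(3,b_2-3)$, positive part spanned by $\mathrm{Re}\,\sigma$, $\mathrm{Im}\,\sigma$ and the Kähler class) gives $q(\alpha)<0$. Thus $q_X$ is negative on every nonzero integral, and hence rational, vector of $\xi^\perp$. This proves definiteness and non-degeneracy, and the signature is $(1,\rank\NS(X)-1)$.

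\textbf{Evenness for known types.} Evenness of $q_X$ on $\NS(X)$ amounts to $q_X(L)\in 2\ZZ$ for each $L$, since the bilinear form is integral once the diagonal values are. For integrality of $q_X(L,L')$ and for $q_X(L)$, I lift the pair $(L,L')$, or $L$ alone, again by \Cref{prop: lifting with line bundles}, combined with an ample class if needed so that the pair lifts with a polarization. The values then equal those of the complex BB lattice in Table~\ref{tab:BBform}, which is even and integral for all four known types.

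\textbf{Main obstacle.} The delicate step is arranging the lifting so that the lifted characteristic zero variety is genuinely of type $M$ and carries a polarization. This is needed to identify $q_X$ with the complex form, and the lifting proposition requires $\zeta_0$ to be a polarization and $(\zeta_0,\dots,\zeta_N)$ to span a direct summand. I will handle it by choosing a primitive ample class $H$ and working with the saturation of $\langle H,\alpha\rangle$ in $\NS(X)$, which is a direct summand by \Cref{prop: integral first chern class map}(c). I will then choose a primitive ample generator of it, which exists after replacing $H$ by $H+m\alpha$ with $m\gg0$ suitably, and use $N\geq1$.
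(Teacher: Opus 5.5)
Your proof is correct, but the signature part follows a different route from the paper's. The paper stays in characteristic $p$. Take $H$ primitive ample and $L\in H^\perp$, which by \Cref{lem:fujiki} means $L\cdot H^{2n-1}=0$. The paper applies the Hodge index inequality $L^2\cdot H^{2n-2}<0$, valid in any characteristic \cite[Proposition 2.9]{HuFei2020}. The Fujiki relation, with $q_X(L,H)=0$, expresses $L^2\cdot H^{2n-2}$ as a positive multiple of $q_X(H)^{n-1}q_X(L)$, which transfers the sign to $q_X(L)$.

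You instead lift each pair (primitive ample class, $\alpha$) to characteristic zero and use the Hodge--Riemann signature of the complex BB form on $\rH^{1,1}$. The paper's route is shorter and needs none of the lifting machinery for this step: no $N\geq 1$, no independence of the deformation type, no identification of $q_X$ with the complex form. Your route avoids a positive-characteristic Hodge index theorem. The cost is invoking the supersingular lifting proposition for every primitive class of $\xi^\perp$. In exchange, the same mechanism gives evenness, which the paper also obtains by comparison with characteristic zero.

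Once a pair is lifted, you could skip identifying the forms for the signature. The sign of $\alpha^2\cdot\xi^{2n-2}$ is preserved under specialization and is negative by the complex Hodge index theorem. It is then converted to the sign of $q_X(\alpha)$ by the Fujiki relation, as above.

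Two small corrections:
\begin{enumerate}
\item No replacement of $H$ is needed. A primitive ample $H\in\NS(X)$ stays primitive in the saturated rank-two lattice $\Lambda\supset\langle H,\alpha\rangle$, so it extends to a basis $(H,\beta)$ of $\Lambda$, and lifting $(H,\beta)$ lifts $\alpha$. Your proposed $H+m\alpha$ with $m\gg 0$ would in fact never be ample. A nonzero $\alpha$ with $\alpha\cdot H^{2n-1}=0$ cannot be nef, so some curve $C$ has $(H+m\alpha)\cdot C<0$ for large $m$.
\item In the evenness step, use only rank-two saturations of $\langle H,L\rangle$ and the diagonal values $q_X(L)$, which suffice, as you note. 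Lifting a triple $(H,L,L')$ would need $N\geq 2$. This fails for $b_2=7,8$, that is, for the $\Kum^n$- and OG6-types.
\end{enumerate}
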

\begin{proof}
    In our case, we always have $\NS(X)=\rN^1(X)$. Fix a primitive ample line bundle $H$, then $q_X(H)$ is a positive integer. By \Cref{lem:fujiki}, we have
    \[
    H^\perp:=\{L\in\NS(X):q_X(L,H)=0\}=\{L\in\NS(X):L\cdot H^{2n-1}=0\}.
    \]
    By \cite[Proposition 2.9]{HuFei2020}, we have for any $L\neq0\in H^\perp$, $L^2\cdot H^{2n-2}< 0$. Then $q_X(L)<0$ by \Cref{lem:fujiki}. Therefore, $q_X$ has signature $(1,{\rank(\NS(X))}-1)$. When $X$ is of known type, the evenness is given by the evenness of BB-form in characteristic zero.
\end{proof}

\begin{remark}
    The quadratic form $(\NS(X),q_X)$ is not deformation invariant, since the Artin invariant varies in connected families, as we will see in later sections. 
\end{remark}

\section{Classification of supersingular N\'eron--Severi lattices of known types}\label{sec:NS-classification}

Let $S$ be a supersingular K3 surface over an algebraically closed field of characteristic $p>0$. By Artin's work~\cite{Artin_K3}, its N\'eron--Severi lattice $\NS(S)$ satisfies $\disc(\NS(S)) = - p^{2\sigma(S)}$ for some integer $1 \leq \sigma(S) \leq 10$, where $\sigma(S)$ is the Artin invariant. Moreover, the lattice $\NS(S)$ is determined up to isometry by $\sigma(S)$. Such lattices are completely classified by Rudakov and Shafarevich~\cite{Shafarevich} and Shimada \cite{Shimada}. We generalize these results to supersingular irreducible symplectic varieties of known deformation types.

\subsection{Artin invariants}

To characterize $\NS(X)$ for supersingular irreducible symplectic varieties, we study $\NS(X) \otimes \ZZ_\ell$ at each prime $\ell$:
\begin{itemize}
    \item For $\ell \neq p$: By the supersingular Tate conjecture for divisors established in \Cref{thm: Tate conjecture for divisors}, we can identify $\NS(X)\otimes \ZZ_\ell$ with $\rH^2_\et(X,\ZZ_\ell)$. Then we lift it to characteristic zero and apply the base change theorem of cohomology.
    \item For $\ell = p$: We use crystalline cohomology. Again by \Cref{thm: Tate conjecture for divisors},
\end{itemize}
\begin{equation}\label{eq:p-adic}
    \NS(X) \otimes \ZZ_p  = \mathrm{H}^2_{\mathrm{\cris}}(X/W)^{F=p} =: \mathrm{T}_\mathrm{H}
\end{equation}
where $\mathrm{T}_\mathrm{H}$ (the $p$-eigenspace of Frobenius) is the Tate module. The bilinear form on $\rT_\rH$ inherited from $\mathrm{H}^2_{\mathrm{\cris}}(X/W)$ endows it with the structure of a $\ZZ_p$-lattice. This lattice is completely classified in \cite[Section 3]{Ogus1979}:

\begin{proposition}\label{prop:TateModule}\cite[Theorem 3.4]{Ogus1979}
    With notation as above:
    \begin{enumerate}
        \item $\disc(\mathrm{T}_{\mathrm{H}}) = d p^{2\sigma_0}$ where $\sigma_0 \geq 1$ (Artin invariant) and $d \in \ZZ_p^\times$ is defined modulo squares.
        \item $\mathrm{T}_\mathrm{H}$ decomposes as:
        \begin{equation}\label{eq:TateDecomp}
            \mathrm{T}_\mathrm{H} = \mathrm{T}_0(p) \oplus \mathrm{T}_1
        \end{equation}
        with $\mathrm{T}_0, \mathrm{T}_1$ unimodular $\ZZ_p$-lattices of ranks $2\sigma_0$ and $b_2(X)-2\sigma_0$, and discriminants $d_0, d_1$ satisfying:
        \[
        \left(\frac{d_0}{p}\right) = -\left(\frac{-1}{p}\right)^{\sigma_0}, \quad 
        \left(\frac{d_1}{p}\right) = -\left(\frac{-1}{p}\right)^{\sigma_0} \left(\frac{d}{p}\right)
        \]
        \item The discriminant group is $(\ZZ/p\ZZ)^{\oplus 2\sigma_0}$.
    \end{enumerate}
\end{proposition}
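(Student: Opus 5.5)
The proposition is cited from Ogus, so the plan is to check that his hypotheses hold here and then rerun his argument. Ogus proves the statement for \emph{supersingular K3 crystals}, meaning K3 crystals whose Tate module $\rT_\rH=H^{\varphi=p}$ spans $H$ after tensoring with $W$. By the earlier proposition, $H=\rH^2_\cris(X/W)$ with $q_\cris$ and $\varphi$ is a K3 crystal. It is supersingular by \Cref{prop:ArtinEquiv}: isoclinic of slope $1$ over an algebraically closed $k$ means $H\otimes K$ is spanned by its $\varphi=p$ part. The rank $b_2(X)$ enters only as the rank of the crystal, so Ogus's theorem applies verbatim once we identify $\NS(X)\otimes\ZZ_p$ with $\rT_\rH$, which is \Cref{thm: Tate conjecture for divisors}. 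Nothing in his argument uses that the rank is $22$.

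To make the argument self-contained I would carry out Ogus's steps in order.

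\textbf{Step 1.} Show $p\rT_\rH^\vee\subset \rT_\rH\subset \rT_\rH^\vee$. Since $H=\rT_\rH\otimes W$ up to finite index, $q_\cris$ restricts to $\rT_\rH$. For $x$ in the $W$-dual of $\rT_\rH\otimes W$, the identity $\langle\varphi x,\varphi y\rangle=p^2\sigma\langle x,y\rangle$ together with condition (1), $p^2H\subset\im\varphi$, shows that $\rT_\rH\otimes W$ sits between $pH$ and $H$. This relation uses the K3-crystal axioms (1)–(2), in particular that the Hodge polygon has slopes $0,1,2$ with multiplicity $1,b_2-2,1$. The consequence is that $\rT_\rH^\vee/\rT_\rH$ is killed by $p$, which gives (3) with some rank $2\sigma_0$ still to be shown even.

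\textbf{Step 2.} Decompose $\rT_\rH=\rT_0(p)\oplus\rT_1$. Since $p$ is odd (because $p>\dim X\geq 2$), a $\ZZ_p$-lattice with discriminant group killed by $p$ splits, by Jordan decomposition, as a unimodular lattice plus $p$ times a unimodular lattice. The discriminant is then $p^{\rk \rT_0}d_0d_1$.

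\textbf{Step 3, the main obstacle.} Prove that $\rk\rT_0$ is even and $\geq 2$, and compute the Legendre symbols of $d_0,d_1$. I would follow Ogus exactly. The subspace $(\rT_\rH^\vee/\rT_\rH)\otimes k$ is a $k$-vector space with the induced nondegenerate $\FF_p$-form coming from $p\,q$. The Frobenius-twisted image of $\Fil^2$ is a line $K$ inside it that is isotropic and \emph{characteristic}: it is not defined over any proper $\sigma$-stable subfield, and $K+\sigma K+\cdots$ spans. From this, $\rT_0\otimes\FF_p$ is a nondegenerate quadratic space of dimension $2\sigma_0$ admitting a strictly characteristic isotropic line. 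Ogus shows this forces the space to be non-split, i.e.\ of discriminant $-(-1)^{\sigma_0}$ times a non-square. This yields $\left(\frac{d_0}{p}\right)=-\left(\frac{-1}{p}\right)^{\sigma_0}$, and the formula for $d_1$ follows by dividing $d$ by $d_0$. Finally, $\sigma_0\geq1$ holds because $\rT_\rH$ cannot be unimodular: in that case $H=\rT_\rH\otimes W$ would have $\varphi=p$ on a $W$-basis, contradicting condition (2). The bound $2\sigma_0\le b_2$ is automatic.
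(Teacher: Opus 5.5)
Your first paragraph is exactly the paper's proof. The paper gives no argument of its own: it cites Ogus's Theorem~3.4 for the K3 crystal $(\rH^2_\cris(X/W),q_\cris,\varphi)$ of Section~\ref{sec:BB-form}, which is supersingular by Proposition~\ref{prop:ArtinEquiv}, and Ogus's theorem holds in any rank. One wording fix: supersingular means $\rT_\rH\otimes K=H\otimes K$. Equality $\rT_\rH\otimes W=H$ would force $\sigma_0=0$, as your own last sentence shows.

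The self-contained rerun, however, has two genuine gaps.

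\textbf{Step~1.} The inclusion $p\rT_\rH^\vee\subset\rT_\rH$ is asserted, not derived. The identity $\langle\varphi x,\varphi y\rangle=p^2\sigma\langle x,y\rangle$ and $p^2H\subset\im\varphi$ do not control $\rT_W:=\rT_\rH\otimes W=\bigcap_j\Phi^j(H)$, where $\Phi=p^{-1}\varphi$. Here is an argument that works.
\begin{enumerate}
\item Set $E=\rT_W^\vee/\rT_W$, $N=H/\rT_W$ and $\delta(M)=\operatorname{length}((M+\Phi M)/M)$.
\item Self-duality of $H$ makes $N$ Lagrangian in $E$, and the rank-one axiom gives $\delta(N)=\dim_k\im(\varphi\otimes k)=1$.
\item For every $\Phi$-stable $P\subset E$, an exact sequence gives $\delta(N\cap P)+\delta(N+P)\le 1$. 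A submodule with $\delta=0$ is $\Phi$-stable.
\item $\rT_W$ is the largest $\Phi$-stable lattice in $H$, and $\rT_W^\vee$ is the smallest one containing $H$. Hence either $N\cap P=0$ or $N+P=E$.
\item Take $P=pE$. Here $N+pE=E$ is excluded by Nakayama, since $N\neq E$. So $pN\subset N\cap pE=0$.
\item Take $P=E[p]\supset N$. This gives $E[p]=E$.
\end{enumerate}
The same argument gives evenness, $\dim_kE=2\dim_kN=2\sigma_0$, which your Step~3 takes for granted.

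\textbf{Step~3.} You work with the wrong object. Ogus's characteristic subspace is the $\sigma_0$-dimensional Lagrangian $N\subset(\rT_\rH^\vee/\rT_\rH)\otimes k$, satisfying $\dim(N\cap\Phi N)=\sigma_0-1$. It is not an isotropic line, and your line criterion does not force non-splitness. For example, take the split space $\rU^{\oplus2}\otimes\FF_p$, with isotropy condition $x_1y_1+x_2y_2=0$. The vector $(1,-\alpha^3,\alpha,\alpha^2)$ with $\FF_p(\alpha)=\FF_{p^4}$ is isotropic and has $\FF_p$-independent coordinates, so its Frobenius conjugates span.

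The correct argument runs as follows. Suppose $(-1)^{\sigma_0}d_0$ were a square. Then there is an $\FF_p$-rational Lagrangian $L_0$, and $\dim(\Phi N\cap L_0)=\dim(N\cap L_0)$. So $N$ and $\Phi N$ lie in the same ruling. Two Lagrangians in the same ruling meet in dimension $\equiv\sigma_0\pmod 2$, which contradicts $\dim(N\cap\Phi N)=\sigma_0-1$.

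Finally, non-split means $d_0\in(-1)^{\sigma_0}\cdot(\text{non-square})$, not $-(-1)^{\sigma_0}\cdot(\text{non-square})$. Your final Legendre symbol is nonetheless the correct one.
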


\begin{proposition}\label{prop: NS determined by Artin invariant}
    Let $X$ be a supersingular irreducible symplectic variety of known deformation type with Artin invariant $\sigma_0$. Then $\NS(X)$ is uniquely determined by $\sigma_0$ up to isomorphism.
\end{proposition}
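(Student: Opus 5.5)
The plan is to follow the Rudakov--Shafarevich strategy for supersingular K3 surfaces: show that the genus of $\NS(X)$ is determined by $\sigma_0$ and the deformation type, and then that this genus contains a single isometry class. Throughout I work with the even lattice $(\NS(X),q_X)$ of \Cref{prop: integral BB form}. It is non-degenerate of signature $(1,b_2-1)$, and its rank equals $b_2(X)$ by \Cref{thm: Tate conjecture for divisors}.

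\emph{Step 1: the local lattices $\NS(X)\otimes\ZZ_\ell$.} I compute these prime by prime.
\begin{itemize}
\item For $\ell\neq p$: by \Cref{thm: Tate conjecture for divisors}, $\NS(X)\otimes\ZZ_\ell\cong \rH^2_\et(X,\ZZ_\ell(1))$, compatibly with the forms by \Cref{prop: integral BB form}(b). Smooth proper base change along the excellent lifting identifies this with $\rH^2(\cX_{\bar\eta},\ZZ)\otimes\ZZ_\ell$ carrying the Beauville--Bogomolov form. Hence it is $\Lambda_M\otimes\ZZ_\ell$, where $\Lambda_M$ is the lattice of Table~\ref{tab:BBform}. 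This is independent of $X$ and of the lifting by \Cref{prop: deformation type independent on lifting}.
\item For $\ell=p$: equation \eqref{eq:p-adic} and \Cref{prop:TateModule} give $\NS(X)\otimes\ZZ_p\cong \rT_0(p)\oplus\rT_1$. Here $\rT_0,\rT_1$ are unimodular of ranks $2\sigma_0$ and $b_2-2\sigma_0$. For $p$ odd a unimodular $\ZZ_p$-lattice is determined by its rank and discriminant class, and the Legendre symbols of $d_0,d_1$ are fixed by \Cref{prop:TateModule}(2) once $d$ is known.
\item Determining $d$: I compare discriminants globally. The reals fix the sign of $\disc\NS(X)$, and the $\ell$-adic computation fixes the $\ell$-parts. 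Together these give $\disc\NS(X)=\pm\disc(\Lambda_M)\,p^{2\sigma_0}$ up to the appropriate squares, so $d\equiv \pm\disc(\Lambda_M)$ modulo $p$-adic squares. This uses that $\disc(\Lambda_M)$ is prime to $p$.
\end{itemize}
The upshot is that the genus of $\NS(X)$ (signature together with all localizations) is determined by $\sigma_0$ and $M$.

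\emph{Step 2: the genus contains one class.} I would use Nikulin's uniqueness criterion, or Eichler's theorem for indefinite lattices. The criterion requires rank $\ge 3$, indefiniteness, and $\rank\ge \ell(A_{\NS})+2$, where $\ell$ denotes the minimal number of generators of the discriminant group. I treat the $p$-part and the prime-to-$p$ part separately.
\begin{itemize}
\item The $p$-part of the discriminant group is $(\ZZ/p)^{2\sigma_0}$. The prime-to-$p$ part is $A_{\Lambda_M}$, which is cyclic of order $2(n-1)$ or $2(n+1)$, or equals $(\ZZ/2)^2$ or $\ZZ/3$.
\item Nikulin's criterion can be applied prime by prime, since it only needs $\rank\ge \ell(A_q)+2$ at each $q$. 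At odd $q\neq p$ and at $q=2$ we have $\ell\le 2$ and $b_2\ge 7$, so the condition holds; for $q=2$ one may also use that a hyperbolic plane splits off $\Lambda_M\otimes\ZZ_2$.
\item At $q=p$ the condition becomes $b_2\ge 2\sigma_0+2$. Where it fails, I would fall back on the Rudakov--Shafarevich argument: show that the spinor genus equals the genus, using that $\rT_1$ has rank $\ge 2$. Alternatively, I would exhibit an explicit splitting $\NS(X)\cong \rU\oplus N$ or $\rU(p)\oplus N$ and reduce the uniqueness question to the smaller lattice $N$.
\end{itemize}

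\emph{Main obstacle.} The hard part is the edge of the range, where $2\sigma_0$ is close to $b_2$. There $\ell(A_p)+2>\rank$, so Nikulin's criterion does not apply directly, and the genus-equals-class claim must be checked by hand via spinor norms. Eichler's theorem (rank $\ge 3$, indefinite) gives that the genus equals the spinor genus as soon as the local spinor norms are surjective. At $p$ I would verify this surjectivity using the $\rT_1$ summand whenever $b_2-2\sigma_0\ge 2$; when $b_2-2\sigma_0\le 1$, the $\rT_0(p)$ summand has rank $\ge 4$ and I would use it instead. At the prime $2$ (and $3$ for OG10) I would check surjectivity by splitting off $\rU$ from $\Lambda_M\otimes\ZZ_\ell$. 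Completing this casework over the four deformation types is where I expect most of the effort to go.
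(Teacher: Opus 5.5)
Your proposal is correct and shares the paper's skeleton. The genus of $(\NS(X),q_X)$ is pinned down prime by prime: Tate conjecture plus base change away from $p$, and Ogus's description of $\rT_\rH$ at $p$. Nikulin's criterion then settles the range $b_2\ge 2\sigma_0+2$. Your determination of the unit $d$ via the global discriminant makes explicit a step the paper leaves tacit.

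The routes differ in the boundary range $2\sigma_0\ge b_2-1$, which the paper splits by deformation type:
\begin{itemize}
\item For $\rK3^{[n]}$ with $\sigma_0=11$ and $\Kum^n$ with $\sigma_0=3$, it reads off the groups $\Sigma(L_q)$ from the Miranda--Morrison tables and checks $\Gamma_{\AA,0}\subset\Sigma(L)$. This is the same spinor-norm mechanism as yours.
\item For OG6 with $\sigma_0=4$ and OG10 with $\sigma_0=12$, it notes instead that $\NS(X)=\Lambda(p)$ with $\Lambda$ an even hyperbolic $2$-, resp.\ $3$-elementary lattice, whose uniqueness is classical. The paper reuses this rescaling immediately afterwards to write down explicit models and restrict $p$.
\end{itemize}
Your treatment is uniform. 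The only $p$-local input is that the scale-$p$ Jordan component $\rT_0(p)$ has rank $2\sigma_0\ge 2$, which holds for every $\sigma_0\ge 1$. At the other primes, a unimodular component of rank $\ge 2$ or a split hyperbolic plane does the same job. So no case distinction by deformation type is needed.

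Two points should be stated more carefully.
\begin{itemize}
\item In Step 2 you invoke ``$\rT_1$ of rank $\ge 2$'' exactly where Nikulin's condition fails. But there $\rank\rT_1=b_2-2\sigma_0\le 1$. Your last paragraph corrects this by passing to $\rT_0(p)$, and that is the version that works.
\item What you need is not surjectivity of the local spinor norms but $\theta(O^+(L_q))\supseteq\ZZ_q^\times(\QQ_q^\times)^2$ for all $q$. This forces a single spinor genus because $\QQ$ has class number one. Eichler's theorem (indefinite, rank $\ge 3$) then identifies spinor genus with class; it does not by itself compare genus and spinor genus.
\end{itemize}
Surjectivity genuinely fails in some of your cases. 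For example, at $q=p$ when $2\sigma_0=b_2$, we have $L_p=\rT_0(p)$ and $O(L_p)=O(\rT_0)$, so only unit spinor norms occur. The inclusion of units is nonetheless enough, and it is exactly what your rank-$\ge 2$ Jordan components and hyperbolic planes provide.
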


\begin{proof}
    For $\ell \neq p$, lift $X$ to characteristic $0$ to obtain $X_0$. Cohomology base change yields:
    \[
    \NS(X) \otimes \ZZ_\ell = \mathrm{H}^2_{\text{\'et}}(X, \ZZ_\ell) = \mathrm{H}^2_{\text{\'et}}(X_0, \ZZ_\ell) = \mathrm{H}^2(X_0, \ZZ) \otimes \ZZ_\ell,
    \]
    which is known when $X$ is of known deformation type, as summarized in Table \ref{tab:BBform}. For $\ell = p$, we use Proposition~\ref{prop:TateModule}. By Nikulin~\cite[Theorem 1.13.2]{Nikulin}, if $2\sigma_0 + 2 \leq b_2(X)$, the lattice $\NS(X)$ is determined by its signature $(1, b_2(X)-1)$ and discriminant form (computed from local factors). This leaves only the cases when $2\sigma_0 \geq  b_2(X)-1$.

    For OG6-type or OG10-type with $2\sigma_0 = b_2(X)$, Nikulin's condition fails, but uniqueness holds. Here $\NS(X) = \Lambda(p)$ where $\Lambda$ is an integral lattice. The discriminants $\disc(\NS(X)) = -4p^{2\sigma_0}$ (OG6) and $-3p^{2\sigma_0}$ (OG10) imply $\Lambda$ is $2$-elementary or $3$-elementary, which is unique by Shafarevich~\cite[Section 1]{Shafarevich}.

    For $\rK3^{[n]}$-type or $\Kum^n$-type with $2\sigma_0 = b_2(X)-1$, Nikulin's condition also fails, but uniqueness still holds by a more explicit computation of the genus. We rely on the useful note \cite{MM09_Embeddings_of_Integral_Quadratic_Forms} of Miranda and Morrison to compute the genus $g(L)$. We start by introducing some basic notation. Denote $\NS(X)$ by $L$ and $L_q$ by $L\otimes\ZZ_q$ for any prime $q$. Let $\Gamma_q=\{\pm1\}\times\QQ_q^*/(\QQ_q^*)^2$, $\Gamma_{q,0}=\{\pm1\}\times\ZZ_q^*/(\ZZ_q^*)^2$ and $\Sigma(L_q)=\mathrm{Im}((\det,\spin):\rO(L_q)\to\Gamma_q)$. For a global description, define the adelic construction $\Gamma_\AA=\{(d_q,s_q)\in\underset{q}{\prod}\Gamma_q\mid (d_q,s_q)\in\Gamma_{q,0}\ \mathrm{for\ all\ but\ finite }\ q\}$ and $\Gamma_{\AA,0}=\underset{q}{\prod}\Gamma_{q,0}$. Then $\Sigma(L)=\underset{q}{\prod}\Sigma(L_q)$ is a subgroup of $\Gamma_\AA$.

    Now we compute $\Sigma(L)$. Let $\disc(L)=p^{2\sigma_0}\delta$. In fact $\delta=2n-2$ if $X$ is of $K3^{[n]}$-type and $\delta=2n+2$ if $X$ is of $\Kum^n$-type. 
    
    By \cite[Theorem 12.5, 12.7, 12.9, Chapter VII]{MM09_Embeddings_of_Integral_Quadratic_Forms}, we have
    \begin{equation}
        \Sigma(L_p)=\Gamma_p
    \end{equation}
    For prime $\ell\nmid \disc(L)$,  by \cite[Corollary 12.11, Chapter VII]{MM09_Embeddings_of_Integral_Quadratic_Forms}, we have
    \begin{equation*}
        \Sigma(L_\ell)=\Gamma_{\ell,0}
    \end{equation*}
    For odd prime $q\mid\delta$, by \cite[Theorem 12.5, 12.7, 12.9, Chapter VII]{MM09_Embeddings_of_Integral_Quadratic_Forms}, we have
    \begin{equation*}
        \Sigma(L_q)=\left\{
        \begin{aligned}
            &\Gamma_{q,0},\ \mathrm{if}\ v_q(\delta)\equiv0(\mathrm{mod}\ 2)\\
            &\Gamma_q,\ \mathrm{if}\ v_q(\delta)\equiv1(\mathrm{mod}\ 2)
        \end{aligned}
        \right.
    \end{equation*}
    For prime $2$, by \cite[Theorem 12.6, 12.8, 12.10, Chapter VII]{MM09_Embeddings_of_Integral_Quadratic_Forms}, we have
    \begin{equation*}
        \Sigma(L_2)=\left\{
        \begin{aligned}
            &\Gamma_{2,0},\ \mathrm{if}\ v_2(\delta)\equiv1(\mathrm{mod}\ 2)\\
            &\Gamma_2,\ \mathrm{if}\ v_2(\delta)\equiv0(\mathrm{mod}\ 2)
        \end{aligned}
        \right.
    \end{equation*}
     Note that for any prime $q$, $\Gamma_{q,0}\subset\Sigma(L_q)$. Hence $\Gamma_{\AA,0}\subset\Sigma(L)$. Then by \cite[Lemma 4.1, Chapter VIII]{MM09_Embeddings_of_Integral_Quadratic_Forms}, we have $\Gamma_\AA=\Gamma_\QQ\cdot\Sigma(L)$. It follows from \cite[Corollary 3.3, Chapter VIII]{MM09_Embeddings_of_Integral_Quadratic_Forms} that $L$ is determined by its signature and discriminant form, and hence by its Artin invariant $\sigma_0$.
\end{proof}

In the remainder of this section, we construct the N\'eron--Severi lattices of supersingular irreducible symplectic varieties of known types. Before we start, we introduce some notation about $p$-adic lattices for any prime $p$.

\begin{notation}
    Let $p$ be any prime number. There are $p$-adic lattices:
    \begin{itemize}
        \item $K_\theta^{(p)}(p^k)$ is the one-dimensional $p$-adic lattice given by the matrix $\langle\theta p^k\rangle$, where $\theta\in\ZZ_p^*/(\ZZ_p^*)^2$.
        \item $U^{(2)}(2^k)$ and $V^{(2)}(2^k)$ are two-dimensional $2$-adic lattices given by matrices 
        $\begin{pmatrix}
            0 & 2^k\\ 2^k & 0
        \end{pmatrix}$
         and
         $\begin{pmatrix}
             2^{k+1} & 2^k\\ 2^k & 2^{k+1}
         \end{pmatrix}$.
         \item $q_\theta^{(p)}(p^k),u^{(2)}(2^k),v^{(2)}(2^k)$ are discriminant forms of the above lattices.
    \end{itemize}
\end{notation}
In fact, the above lattices generate the semigroup of all $p$-adic lattices by \cite[Proposition 1.8.1]{Nikulin}, and are subject to the relations in \cite[Proposition 1.8.2]{Nikulin}.

\subsection{$\rK3^{[n]}$-type}
We begin by introducing some key lattices. For each $1 \leq \sigma_0 \leq 10$, define $\Lambda_{p,\sigma_0}$ as follows:
\begin{alignat*}{2}
    \sigma_0 &= 1, &\quad \Lambda_{p,\sigma_0} &= \rU \oplus \rH^{(p)} \oplus \rE_8^{\oplus 2} \\
    \sigma_0 &= 2, &\quad \Lambda_{p,\sigma_0} &= \rU(p) \oplus \rH^{(p)} \oplus \rE_8^{\oplus 2} \\
    \sigma_0 &= 3, &\quad \Lambda_{p,\sigma_0} &= \rU \oplus (\rH^{(p)})^{\oplus 3} \oplus \rE_8 \\
    \sigma_0 &= 4, &\quad \Lambda_{p,\sigma_0} &= \rU(p) \oplus (\rH^{(p)})^{\oplus 3} \oplus \rE_8 \\
    \sigma_0 &= 5, &\quad \Lambda_{p,\sigma_0} &= \rU \oplus \rH^{(p)} \oplus \rE_8 \oplus \rE_8(p) \\
    \sigma_0 &= 6, &\quad \Lambda_{p,\sigma_0} &= \rU(p) \oplus \rH^{(p)} \oplus \rE_8 \oplus \rE_8(p) \\
    \sigma_0 &= 7, &\quad \Lambda_{p,\sigma_0} &= \rU \oplus (\rH^{(p)})^{\oplus 3} \oplus \rE_8(p) \\
    \sigma_0 &= 8, &\quad \Lambda_{p,\sigma_0} &= \rU(p) \oplus (\rH^{(p)})^{\oplus 3} \oplus \rE_8(p) \\
    \sigma_0 &= 9, &\quad \Lambda_{p,\sigma_0} &= \rU \oplus \rH^{(p)} \oplus \rE_8(p)^{\oplus 2} \\
    \sigma_0 &= 10, &\quad \Lambda_{p,\sigma_0} &= \rU(p) \oplus \rH^{(p)} \oplus \rE_8(p)^{\oplus 2}
\end{alignat*}
where $\rH^{(p)}$ is the rank $4$ lattice with Gram matrix:
\[
\begin{pmatrix}
    2 & 1 & 0 & 0 \\
    1 & \frac{q+1}{2} & 0 & \gamma \\
    0 & 0 & \frac{p(q+1)}{2} & p \\
    0 & \gamma & p & \frac{2(p + \gamma^2)}{q}
\end{pmatrix}
\]
for prime $q \equiv 3 \pmod{8}$ satisfying $\left( \frac{-q}{p} \right) = -1$, and $\gamma \in \mathbb{Z}$ with $\gamma^2 + p \equiv 0 \pmod{q}$.

\begin{proposition}\label{prop: local property of H^p}
    Let $\ZZ_{(p)}$ be the localization at $(p)$. Then $\rH^{(p)}\otimes\ZZ_{(p)}=\langle2\rangle\oplus\langle\frac{q}{2}\rangle\oplus\langle\frac{p}{2}\rangle\oplus\langle\frac{2p}{q}\rangle$. Moreover, $\rH^{(p)}\otimes\ZZ_p=K_1^{(p)}(1)\oplus K_\epsilon^{(p)}(1)\oplus K_1^{(p)}(p)\oplus K_\epsilon^{(p)}(p)$, where $\left(\frac{\epsilon}{p}\right)=-\left(\frac{-1}{p}\right)$.
\end{proposition}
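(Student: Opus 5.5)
The plan is to diagonalize the Gram matrix of $\rH^{(p)}$ over $\ZZ_{(p)}$ by successive Gram--Schmidt steps, dividing only by integers prime to $p$. Write $e_1,\dots,e_4$ for the basis, so $e_1^2=2$, $e_1\cdot e_2=1$, $e_2^2=\frac{q+1}{2}$, $e_2\cdot e_4=\gamma$, $e_3^2=\frac{p(q+1)}{2}$, $e_3\cdot e_4=p$, and $e_4^2=\frac{2(p+\gamma^2)}{q}$.

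\textbf{First step.} Since $p$ is odd and $p\neq q$ (as $\left(\frac{-q}{p}\right)=-1$ forces $p\nmid q$), both $2$ and $q$ are units in $\ZZ_{(p)}$. Replace $e_2$ by $f_2=e_2-\frac12 e_1$. This gives $f_2\perp e_1$ and
\[
f_2^2=\frac{q+1}{2}-\frac12=\frac q2 ,
\]
while still $f_2\cdot e_4=\gamma$ and $f_2\perp e_3$.

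\textbf{Second step.} Replace $e_4$ by $f_4=e_4-\frac{2\gamma}{q}f_2$, which is allowed since $q$ is a unit. Now $f_4\perp f_2$ and
\[
f_4^2=\frac{2(p+\gamma^2)}{q}-\frac{2\gamma^2}{q}=\frac{2p}{q}.
\]
Also $f_4\cdot e_3=p$ and $f_4\perp e_1$.

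\textbf{Remaining block.} We are left with the $2\times2$ block on $e_3,f_4$:
\[
\begin{pmatrix}\frac{p(q+1)}{2} & p\\ p & \frac{2p}{q}\end{pmatrix}.
\]
It equals $p$ times the unimodular block $\begin{pmatrix}\frac{q+1}{2}&1\\1&\frac 2q\end{pmatrix}$, whose determinant is
\[
\frac{q+1}{q}-1=\frac1q ,
\]
a unit. Here one has to choose the right pivot. The entry $\frac2q$ is a unit, so I would set $f_3=e_3-\frac q2 f_4$. This gives
\[
f_3^2=\frac{p(q+1)}{2}-2\cdot\frac q2\cdot p+\frac{q^2}{4}\cdot\frac{2p}{q}=\frac p2 .
\]
The resulting orthogonal basis is $e_1,f_2,f_3,f_4$ with Gram matrix $\langle 2\rangle\oplus\langle\frac q2\rangle\oplus\langle\frac p2\rangle\oplus\langle\frac{2p}q\rangle$. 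The change of basis is unitriangular with coefficients in $\ZZ_{(p)}$, so it is invertible over $\ZZ_{(p)}$, which proves the first claim.

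\textbf{The $p$-adic statement.} Tensor with $\ZZ_p$. Group the unimodular part as $\langle 2\rangle\oplus\langle\frac q2\rangle$ and the $p$-part as $\langle\frac p2\rangle\oplus\langle\frac{2p}{q}\rangle$. Over $\ZZ_p$ with $p$ odd, a rank-two diagonal unimodular lattice $\langle a\rangle\oplus\langle b\rangle$ is isometric to $\langle 1\rangle\oplus\langle ab\rangle$, since a unimodular lattice of rank $\geq2$ is determined by its rank and discriminant class. This gives the unimodular part $\cong K_1^{(p)}(1)\oplus K^{(p)}_{q}(1)$. Similarly the $p$-part is $p$ times $\langle\frac12\rangle\oplus\langle\frac2q\rangle$, which is $\cong\langle 1\rangle\oplus\langle\frac1q\rangle$, i.e. $K_1^{(p)}(p)\oplus K_q^{(p)}(p)$ because $\frac1q\equiv q$ modulo squares. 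Finally, set $\epsilon=q$. Then
\[
\left(\frac{q}{p}\right)=\left(\frac{-1}{p}\right)\left(\frac{-q}{p}\right)=-\left(\frac{-1}{p}\right),
\]
which is exactly the required condition on $\epsilon$.

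The only point needing care is the choice of pivot in the last $2\times 2$ block, so that every division is by a $p$-adic unit. Pivoting on $e_3^2$ would require dividing by $p$; pivoting on $\frac{2p}{q}$ after factoring out $p$ avoids this.
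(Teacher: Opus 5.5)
Your proposal is correct and is essentially the paper's argument: the basis $e_1, f_2, f_3, f_4$ you construct is exactly the one given by the columns of the paper's change-of-basis matrix $PQ$, and it yields the same diagonal Gram matrix $\langle 2\rangle\oplus\langle\frac q2\rangle\oplus\langle\frac p2\rangle\oplus\langle\frac{2p}{q}\rangle$ over $\ZZ_{(p)}$. Your passage to $\ZZ_p$ regroups the two unimodular binary blocks and uses that, for odd $p$, unimodular lattices are classified by rank and discriminant, with $\epsilon=q$; this is the same content as the paper's appeal to \cite[Proposition 1.8.2(a)]{Nikulin}.
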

\begin{proof}
    Consider matrices
    {\[
    \renewcommand{\arraystretch}{1.5}
    P=\begin{pmatrix}
        1 & 0 & 0 & \frac{\gamma}{q}\\
        0 & 1 & 0 & -\frac{2\gamma}{q}\\
        0 & 0 & 1 & 0\\
        0 & 0 & 0 & 1
    \end{pmatrix}\ \text{and}\
    Q=\begin{pmatrix}
        1 & -\frac{1}{2} & 0 & 0\\
        0 & 1 & 0 & 0\\
        0 & 0 & 1 & 0\\
        0 & 0 & -\frac{q}{2} & 1
    \end{pmatrix}.
    \]}
    Let $M$ be the Gram matrix of $\rH^{(p)}$ given above, then a direct computation shows
    {\[
    \renewcommand{\arraystretch}{1.5}
    Q^TP^TMPQ=
    \begin{pmatrix}
        2 & 0 & 0 & 0\\
        0 & \frac{q}{2} & 0 & 0\\
        0 & 0 & \frac{p}{2} & 0\\
        0 & 0 & 0 & \frac{2p}{q}\\
    \end{pmatrix}.
    \]}
    This gives the description of $\rH^{(p)}\otimes\ZZ_{(p)}$. Using this description, we can prove that $\rH^{(p)}\otimes\ZZ_p=K_1^{(p)}(1)\oplus K_\epsilon^{(p)}(1)\oplus K_1^{(p)}(p)\oplus K_\epsilon^{(p)}(p)$ directly by the relation \cite[Proposition 1.8.2(a)]{Nikulin}.
\end{proof}

The following lemma is useful in our later construction.
\begin{lemma}\label{lemma: squares in Hp}
    For any $\delta\in \FF_p^*$, there exists $y\in\rH^{(p)}$ such that $y^2\equiv \delta \ \text{mod}\ p$.
\end{lemma}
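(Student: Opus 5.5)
Writing $y=(y_1,y_2,y_3,y_4)$ in the given basis of $\rH^{(p)}$, the quadratic form is
\[
y^2 = 2y_1^2+2y_1y_2+\tfrac{q+1}{2}y_2^2+2\gamma y_2y_4+\tfrac{p(q+1)}{2}y_3^2+2py_3y_4+\tfrac{2(p+\gamma^2)}{q}y_4^2 .
\]
Reducing mod $p$ kills the $y_3$ terms. I would take $y_3=y_4=0$ and work only with the rank $2$ sublattice spanned by the first two basis vectors, whose Gram matrix is $\begin{pmatrix}2&1\\1&\frac{q+1}{2}\end{pmatrix}$. Its determinant is $q$, and $q$ is prime to $p$: we have $q\neq p$ because $\left(\frac{-q}{p}\right)=-1\neq 0$. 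So this is a binary quadratic form over $\ZZ$ whose reduction mod $p$ is nondegenerate (here $p$ is odd, since $p>\dim X\geq 2$).

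The key step is the classical fact that a nondegenerate binary quadratic form over a finite field $\FF_p$ with $p$ odd represents every nonzero element. Over $\FF_p$ the form diagonalizes as $\langle a\rangle\oplus\langle b\rangle$ with $ab\neq0$. The sets $\{ax^2\}$ and $\{\delta-by^2\}$ each have $(p+1)/2$ elements, so they intersect by pigeonhole. Hence $ax^2+by^2=\delta$ has a solution for every $\delta$, and in particular for every $\delta\in\FF_p^*$. Lifting the solution $(x,y)\in\FF_p^2$ to integers $(y_1,y_2)$ then gives an element $y=(y_1,y_2,0,0)\in\rH^{(p)}$ with $y^2\equiv\delta \bmod p$.

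Alternatively, one can read this off from \Cref{prop: local property of H^p}: the unimodular part $K_1^{(p)}(1)\oplus K_\epsilon^{(p)}(1)$ of $\rH^{(p)}\otimes\ZZ_p$ is a rank $2$ unimodular $\ZZ_p$-lattice, so it represents every class mod $p$. One then approximates the $\ZZ_p$-vector by an element of $\rH^{(p)}$, using density of $\rH^{(p)}$ in $\rH^{(p)}\otimes\ZZ_p$. I would present the direct elementary argument with the explicit sublattice, since it avoids any approximation step.

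There is no real obstacle here. The only point to check is that the chosen sublattice has discriminant prime to $p$, i.e. $q\neq p$, and this follows from $\left(\frac{-q}{p}\right)=-1$.
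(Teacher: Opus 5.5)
Your proof is correct and is essentially the paper's: it restricts to the same rank-$2$ sublattice, with Gram matrix $\begin{pmatrix}2&1\\1&\frac{q+1}{2}\end{pmatrix}$. The only difference is the finite-field input. The paper writes the form as $\frac12\bigl((2m+n)^2+qn^2\bigr)$ and invokes surjectivity of the norm $\FF_p(\sqrt{-q})^*\to\FF_p^*$, which uses $\left(\frac{-q}{p}\right)=-1$, whereas your pigeonhole argument needs only $p\nmid q$.
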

\begin{proof}
    Let $L$ be a sublattice of $\rH^{(p)}$ whose Gram matrix is $\begin{pmatrix}
        2 & 1\\
        1 & \frac{q+1}{2}
    \end{pmatrix}$ with respect to the basis $\{e_1,e_2\}$. We only need to prove the statement for $L$. It suffices to find integers $m$ and $n$ such that $2m^2+2mn+\frac{q+1}{2}n^2\equiv \delta\ \text{mod}\ p$, then taking $y=me_1+ne_2$ completes the proof.

    To see this, consider the quadratic extension $\FF_p(\sqrt{-q})\supset \FF_p$ and the norm morphism
    \begin{align*}
        \mathrm{Nm}: \FF_p(\sqrt{-q})^* &\to \FF_p^*\\
        u+v\sqrt{-q} & \mapsto u^2+qv^2.
    \end{align*}
    The morphism Nm is surjective, so there are $u,v\in\FF_p$ such that $u^2+qv^2=2\delta$. Take any integers $m$ and $n$ such that $2m\equiv u-v\ \mathrm{mod}\ p$ and $n\equiv v\ \mathrm{mod}\ p$, then $2m^2+2mn+\frac{q+1}{2}n^2\equiv \frac{1}{2}(u^2+qv^2) \equiv \delta\ \text{mod}\ p$.
\end{proof}

Now we begin to construct $\NS(X)$. By \cite{Shafarevich,Shimada}, $\Lambda_{p,\sigma_0}(-1)$ is isomorphic to $\NS(S)$ for a supersingular K3 surface $S$ with Artin invariant $\sigma_0$.

\begin{proposition}\label{prop: K3^n small Artin invariant}
    Let $X$ be a supersingular $\rK3^{[n]}$-type variety with Artin invariant $1 \leq \sigma_0 \leq 10$. Then
    \[
    \NS(X) \cong \Lambda_{p,\sigma_0}(-1) \oplus \langle -2(n-1) \rangle.
    \] 
\end{proposition}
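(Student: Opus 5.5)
By \Cref{prop: NS determined by Artin invariant}, $\NS(X)$ is determined up to isometry by $\sigma_0$. So it suffices to show that the explicit lattice $N:=\Lambda_{p,\sigma_0}(-1)\oplus\langle -2(n-1)\rangle$ has the same rank, signature and discriminant form as $\NS(X)$. This suffices because the uniqueness argument in that proposition shows that any even lattice with these invariants lies in a single isometry class. The rank is $23=b_2(X)$ and the signature is $(1,22)$, since $\Lambda_{p,\sigma_0}(-1)\cong\NS(S)$ for a supersingular K3 surface $S$ of Artin invariant $\sigma_0$ \cite{Shafarevich,Shimada}, which has signature $(1,21)$, and $\langle -2(n-1)\rangle$ is negative. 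It remains to compare the localizations $N\otimes\ZZ_\ell$ and $\NS(X)\otimes\ZZ_\ell$ for every prime $\ell$; by Nikulin, the discriminant form of an even lattice splits as a sum of its $\ell$-primary parts, which are read off from these localizations.

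For $\ell\neq p$, \Cref{thm: Tate conjecture for divisors} and specialization give
\[
\NS(X)\otimes\ZZ_\ell\cong \rH^2(X_0,\ZZ)\otimes\ZZ_\ell\cong\bigl(\rU^{\oplus3}\oplus\rE_8(-1)^{\oplus2}\oplus\langle-2(n-1)\rangle\bigr)\otimes\ZZ_\ell .
\]
On the other side, $\Lambda_{p,\sigma_0}(-1)\otimes\ZZ_\ell\cong\NS(S)\otimes\ZZ_\ell\cong \rH^2_\et(S,\ZZ_\ell)\cong(\rU^{\oplus3}\oplus\rE_8(-1)^{\oplus2})\otimes\ZZ_\ell$, using the supersingular Tate conjecture for $S$ (\Cref{thm:TateConjectureK3}) and the integral torsion-free cokernel statement. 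Adding the orthogonal summand $\langle-2(n-1)\rangle$ on both sides matches the $\ell$-adic localizations.

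For $\ell=p$, \eqref{eq:p-adic} gives $\NS(X)\otimes\ZZ_p=\rT_\rH$, described by \Cref{prop:TateModule}. Since $p>\dim X=2n$, $p\nmid 2(n-1)$, so $\langle-2(n-1)\rangle\otimes\ZZ_p$ is unimodular of discriminant $d=-2(n-1)$ up to squares. The K3 Tate module $\NS(S)\otimes\ZZ_p$ decomposes as $\rT_0(p)\oplus\rT_1'$, with $\rT_0$ of rank $2\sigma_0$ and discriminant class $d_0$ satisfying $\left(\frac{d_0}{p}\right)=-\left(\frac{-1}{p}\right)^{\sigma_0}$, and $\rT_1'$ unimodular of rank $22-2\sigma_0$. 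Hence $N\otimes\ZZ_p=\rT_0(p)\oplus(\rT_1'\oplus\langle-2(n-1)\rangle)$. The $p$-part $\rT_0(p)$ has the required rank $2\sigma_0$ and the same $d_0$ as in \Cref{prop:TateModule}. The unimodular part has rank $23-2\sigma_0$ and discriminant $d(\rT_1')\cdot(-2(n-1))$. Since $\disc N=-p^{2\sigma_0}\cdot 2(n-1)$ up to sign conventions, and $\disc\NS(X)$ is also $d\,p^{2\sigma_0}$ with the same $d$ (the discriminant of the Beauville--Bogomolov form up to $p$-adic squares), the constraint on $d_1$ in \Cref{prop:TateModule} forces the unimodular parts to agree. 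Unimodular $\ZZ_p$-lattices ($p$ odd) are classified by rank and discriminant, so $N\otimes\ZZ_p\cong\NS(X)\otimes\ZZ_p$.

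The main obstacle is the $p$-adic bookkeeping in this last step: tracking sign conventions and the twist $(-1)$, and making sure the square class of $\disc\NS(X)/p^{2\sigma_0}$ really is $-2(n-1)$ times the K3 value. I would check this directly, using the Fujiki relation and the global discriminant computed from the $\ell\neq p$ parts together with the product formula. An alternative is to use the explicit diagonalization of \Cref{prop: local property of H^p} to compute $N\otimes\ZZ_p$ by hand, and compare it with \Cref{prop:TateModule} for both residue classes of $\sigma_0$ modulo $2$.
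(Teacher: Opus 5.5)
Your proposal is correct and has the same skeleton as the paper's proof. Both use \Cref{prop: NS determined by Artin invariant}, where Nikulin's uniqueness applies because $2\sigma_0+2\le 23$, to reduce to matching the completions at every prime.

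The difference is how you compute the completions of $\Lambda_{p,\sigma_0}(-1)$:
\begin{itemize}
\item The paper uses pure lattice theory. It reads off the $\ell$-adic forms directly, and at $p$ it diagonalizes $\rH^{(p)}$ over $\ZZ_{(p)}$ (\Cref{prop: local property of H^p}) to write down the Jordan decomposition explicitly.
\item You realize $\Lambda_{p,\sigma_0}(-1)$ as $\NS(S)$ of a supersingular K3 surface. You then import the integral Tate conjecture for $S$ when $\ell\neq p$, and Ogus's theorem for the K3 crystal of $S$ at $p$.
\end{itemize}
Your route avoids the explicit computation but needs more input: existence of $S$ for each $\sigma_0\le 10$, the Rudakov--Shafarevich/Shimada identification, and torsion-free cokernels of $c_1$ for $S$. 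The paper's route is self-contained.

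On the bookkeeping you flag, the sign is off. In fact $\disc N=(-p^{2\sigma_0})\cdot(-2(n-1))=+2(n-1)p^{2\sigma_0}$. The unit $d$ for $\NS(X)$ is pinned down exactly as you suggest, from the integer $\disc\NS(X)$:
\begin{itemize}
\item it is positive, because the signature is $(1,22)$;
\item its $\ell$-adic valuation is $v_\ell(2(n-1))$ for every $\ell\neq p$;
\item its $p$-adic valuation is $2\sigma_0$.
\end{itemize}
Hence $\disc\NS(X)=2(n-1)p^{2\sigma_0}$ and $d=2(n-1)$. The paper uses this value tacitly.

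Your own reduction only needs equal signature and discriminant form. The $p$-part of the discriminant form of $\rT_0(p)\oplus\rT_1$ is $\rT_0/p\rT_0$ with the form $\tfrac1p q_{\rT_0}$. So at $p$ it suffices to match $\rT_0$, which Ogus's condition on $d_0$ does directly. The $\rT_1$ comparison, your ``main obstacle,'' is therefore not needed.
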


\begin{proof}
    By the discussion above, we only need to verify that $\Lambda_{p,\sigma_0}(-1)\oplus \langle-2(n-1)\rangle$ and $\NS(X)$ agree when completing at each prime number.

    For any odd prime $\ell\neq p$, it is easy to see that 
    \[
        \Lambda_{p,\sigma_0}(-1)\otimes\ZZ_\ell= K_{-1}^{(\ell)}(1)\oplus\left( K_{1}^{(\ell)}(1)\right)^{\oplus 21} =(\rU^{\oplus3}\oplus \rE_8(-1)^{\oplus2})\otimes \ZZ_\ell.
    \]
    Then we have
    \[
        \NS(X)\otimes\ZZ_\ell=(\Lambda_{p,\sigma_0}(-1)\oplus (-2(n-1)))\otimes\ZZ_\ell
    \]

    For the prime $2$, we have
    \[
        \Lambda_{p,\sigma_0}(-1)\otimes\ZZ_2= U^{(2)}(1)^{\oplus 11}=(\rU^{\oplus3}\oplus \rE_8(-1)^{\oplus2})\otimes \ZZ_2.
    \]
    Then we have
    \[
        \NS(X)\otimes\ZZ_2=(\Lambda_{p,\sigma_0}(-1)\oplus (-2(n-1)))\otimes\ZZ_2
    \]
    
    By \Cref{prop:TateModule}, we have
    \[
        \NS(X)\otimes\ZZ_p=\rT_0(p)\oplus\rT_1
    \]
where $\rT_0,\rT_1$ are unimodular $\ZZ_p$-lattices with rank $2\sigma_0$ and $23-2\sigma_0$ and discriminant $d_0$ and $d_1$ such that $(\frac{d_0}{p})=-(\frac{-1}{p})^{\sigma_0}$ and $(\frac{d_1}{p})=-(\frac{-1}{p})^{\sigma_0}(\frac{2(n-1)}{p})$

By a direct computation using \Cref{prop: local property of H^p}, we have
\[
    \Lambda_{p,\sigma_0}(-1)\otimes\ZZ_p=K^{(p)}_{\mu_{\sigma_0}}(p)\oplus \left( K_1^{(p)}(p)\right)^{\oplus (2\sigma_0-1)}\oplus K_{-\mu_{\sigma_0}}^{(p)}(1)\oplus \left( K_1^{(p)}(1)\right)^{\oplus (21-2\sigma_0)},
\]
where $\mu_{\sigma_0}\in\ZZ_p^*/(\ZZ_p^*)^2$ satisfies $\left(\frac{\mu_{\sigma_0}}{p}\right)=-\left(\frac{-1}{p}\right)^{\sigma_0}$. This means that
\[
\NS(X)\otimes\ZZ_p=(\Lambda_{p,\sigma_0}(-1)\oplus (-2(n-1)))\otimes\ZZ_p,
\]
completing the proof.
\end{proof}

\begin{remark}\label{remark:geometry of k3n}
    The above construction shows that, when $1\leq\sigma_0\leq10$, $\NS(X)$ can be seen as the orthogonal complement of a vector of norm $2n-2$ in the algebraic Mukai lattice $\tilde{\rH}(S)=\rU\oplus\NS(S)$ (see \cite[Section 4.2]{SSIV}), where $S$ is a supersingular K3 surface with Artin invariant $\sigma_0$. This is quite similar to the characteristic 0 case. This strategy fails when $\sigma_0=11$, for there is no K3 surface with Artin invariant 11. However, this suggests that $\NS(X)$ should come from the algebraic Mukai lattice of a twisted K3 surface $S$ with Artin invariant 10, as the following proposition explains.
\end{remark}

\begin{proposition}\label{prop: K3n_big}
    Let $X$ be a supersingular $\rK3^{[n]}$-type variety with Artin invariant $\sigma_0=11$. Then $\NS(X)$ is given as follows. Let $M=\rU(p)^{\oplus2}\oplus\rH^{(p)}(-1)\oplus\rE_8(-p)^{\oplus2}$, then there exists a primitive vector $v\in M$, unique up to $O(M)$, such that $v^2=2n-2$ and $\NS(X)=v^\perp\subset M$. Moreover, $\NS(X)$ contains $\rU(p)$ as a direct summand.
\end{proposition}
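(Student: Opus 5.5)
Since $2\sigma_0=22=b_2(X)-1$, Nikulin's existence-and-uniqueness criterion does not apply directly. However, \Cref{prop: NS determined by Artin invariant} already shows that $\NS(X)$ is determined by its signature $(1,22)$ and its local completions. So it suffices to exhibit one even lattice $L=v^\perp\subset M$ whose completions agree with those of $\NS(X)$ at every prime.

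\emph{Local structure of $M$.} The lattice $M$ has rank $2+2+4+16=24$, and its signature is $(2,22)$: indeed $\rH^{(p)}$ is positive definite of rank 4 (its localization is $\langle2\rangle\oplus\langle q/2\rangle\oplus\langle p/2\rangle\oplus\langle 2p/q\rangle$), so $\rH^{(p)}(-1)$ is negative definite. Using \Cref{prop: local property of H^p} I would compute
\[
M\otimes\ZZ_p = K_{-1}^{(p)}(1)\oplus K_{-\epsilon}^{(p)}(1)\oplus(\text{a $p$-modular part of rank }22).
\]
So $M$ is $p$-elementary with discriminant group $(\ZZ/p)^{22}$, and at every $\ell\neq p$ (including $\ell=2$) it is unimodular and even. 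Up to a sign it is the lattice $\tilde\rH(S)$ of a twisted supersingular K3 surface of Artin invariant $10$, which matches \Cref{remark:geometry of k3n}.

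\emph{Existence of $v$.} Write $\rU(p)\oplus\rU(p)=\langle e,f\rangle\oplus\langle e',f'\rangle$. Using \Cref{lemma: squares in Hp}, choose $y\in\rH^{(p)}(-1)$ with $y^2\equiv 2n-2-2p\,a \pmod{p}$ for a suitable integer $a$, and adjust so that $y^2=2n-2-2pab$ exactly. Concretely, I would take $y$ in the unimodular rank-2 part $L\subset\rH^{(p)}$ from the proof of \Cref{lemma: squares in Hp}, which represents every residue class mod $p$ and hence, by an easy correction with $e,f$, every integer of the right shape. Then set $v=ae+bf+y$; it is primitive because $y$ is primitive in a unimodular summand. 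Since $v\cdot M\supseteq \ZZ$ (because $y$ pairs to $1$ with some vector of $L$), the orthogonal complement satisfies $\disc(v^\perp)=\disc(M)\cdot v^2 = \pm p^{22}(2n-2)$ up to sign. Its $p$-part is then $(\ZZ/p)^{22}$, so $\sigma_0=11$, and for $\ell\ne p$ one gets $v^\perp\otimes\ZZ_\ell\cong(\rU^{\oplus3}\oplus \rE_8(-1)^{\oplus2})\otimes\ZZ_\ell\oplus\langle-2(n-1)\rangle$. This follows by local uniqueness of the orthogonal complement of a vector of given norm in a unimodular even lattice (Witt's theorem over $\ZZ_\ell$, with the $\ell=2$ even case handled by Nikulin's Proposition 1.8.2). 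At $p$ I would compare with \Cref{prop:TateModule}: $v^\perp\otimes\ZZ_p=\rT_0(p)\oplus\rT_1$, where $\rT_0$ is the rank-22 $p$-modular part of $M$ scaled back (with discriminant sign class $-(\frac{-1}{p})^{11}$, which I would check from $\rH^{(p)}$ and the $\rE_8(-p)$ and $\rU(p)$ summands), and $\rT_1$ is rank $1$ with discriminant $-\epsilon\cdot(2n-2)$ up to squares. I would verify the Legendre symbol condition of \Cref{prop:TateModule}(2) directly, and this step is the main computational obstacle, namely the Legendre-symbol bookkeeping. Then \Cref{prop: NS determined by Artin invariant} gives $\NS(X)\cong v^\perp$.

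\emph{Uniqueness of $v$ up to $O(M)$.} I would use Eichler's criterion, since $M$ contains $\rU(p)^{\oplus2}$: primitive vectors in $M$ are $O(M)$-equivalent iff they have the same norm and the same class $v/\mathrm{div}(v)$ in $A_M$. Here $\mathrm{div}(v)=1$, so only the norm matters. The version of Eichler's criterion needed is the one for lattices containing two copies of a scaled hyperbolic plane $\rU(p)$ on vectors of divisibility $1$, which I would justify by reducing to $p$-adic transitivity (as in Gritsenko--Hulek--Sankaran). Finally, $\rU(p)\subset v^\perp$ as a direct summand: by construction $v$ involves only the first $\rU(p)$ and $\rH^{(p)}(-1)$, so the second copy $\langle e',f'\rangle$ is orthogonal to $v$ and splits off.
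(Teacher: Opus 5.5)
Your existence construction, your identification of $\NS(X)$ with $v^\perp$ (local comparison plus the one-class genus from \Cref{prop: NS determined by Artin invariant}), and your $\rU(p)$-summand argument follow the paper's route. The paper obtains $q_{v^\perp}=q_{\NS(X)}$ from \cite[Lemma 5.2]{BC20} rather than from Jordan components, but that is the same check.

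The genuine gap is the uniqueness of $v$ up to $O(M)$. Eichler's criterion in the Gritsenko--Hulek--Sankaran form requires $\rU\oplus\rU\subset M$, and its proof reduces vectors using the unimodular pairing inside those hyperbolic planes. Your $M$ contains no copy of $\rU$ at all. Such a copy is unimodular, hence an orthogonal summand $M=\rU\oplus M'$ with $\rank M'=22$ and $A_{M'}\cong(\ZZ/p\ZZ)^{22}$. That forces $M'=M''(p)$ with $M''$ even unimodular of signature $(1,21)$, which is impossible since $1-21\not\equiv 0 \pmod 8$. So there is no standard ``Eichler criterion for $\rU(p)^{\oplus 2}$'' to quote.

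Local transitivity, which does hold at every prime, only puts two such vectors in the same orbit everywhere locally. The passage to a single global $O(M)$-orbit is the actual content of the claim. Concretely, for two choices $v,v'$ your computation places $v^\perp$ and $v'^\perp$ in the genus of $\NS(X)$, so they are isometric. You still need the gluing of $\ZZ v$ to $v^\perp$ along $A_{\langle v\rangle}\cong\ZZ/(2n-2)$ to be unique. That amounts to showing that $O(v^\perp)$ surjects onto $O(q_H)$, where $H\cong\ZZ/(2n-2)$ is the prime-to-$p$ part of $A_{v^\perp}$. This group can be bigger than $\{\pm1\}$; for $2n-2=30$ it has order $4$. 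Nikulin's surjectivity criterion does not apply, since $\rank v^\perp=23<l(A_{v^\perp})+2=24$. You therefore need either a strong-approximation/spinor-norm computation in the Miranda--Morrison style, or the result \cite[Theorem 5.15]{BC20} that the paper cites.

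There are also two smaller slips. First, the sublattice $\langle e_1,e_2\rangle\subset\rH^{(p)}$ from the proof of \Cref{lemma: squares in Hp} has Gram determinant $q$. It is therefore neither unimodular nor an orthogonal summand, since $\det \rH^{(p)}=p^2$. The $y$ it produces need not be primitive, and nothing makes $y$ pair to $1$ with a lattice vector. The repair is to take $a=1$ and $b=(2n-2-y^2)/2p$, which is integral because $y^2\equiv 2n-2 \pmod p$ and both are even; then $v=e+bf+y$ is primitive. Moreover $\mathrm{div}(v)=1$: $M$ is $p$-elementary, so $\mathrm{div}(v)\mid p$, while $\mathrm{div}(v)\mid v^2=2n-2$ and $p>2n$. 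This is the paper's argument.

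Second, the unimodular Jordan component of $v^\perp\otimes\ZZ_p$ is $\langle\epsilon(2n-2)\rangle$, not $\langle-\epsilon(2n-2)\rangle$. The unimodular part of $M\otimes\ZZ_p$ has determinant $(-1)(-\epsilon)=\epsilon$. With the correct class, both conditions of \Cref{prop:TateModule} for $\sigma_0=11$ hold, using $\left(\frac{\epsilon}{p}\right)=-\left(\frac{-1}{p}\right)$.
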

\begin{proof}
    We first show the existence of $v$. In fact, $v$ can be taken in $\rU(p)\oplus\rH^{(p)}(-1)$. By \Cref{lemma: squares in Hp}, take $y\in\rH^{(p)}(-1)$ such that $(\frac{y^2}{p})=(\frac{2n-2}{p})$, then there exist $k,m\in\ZZ$ such that $2kp+m^2y^2=2n-2$. Let $e,f\in\rU(p)$ be the standard basis, then $v=e+kf+my\in\rU(p)\oplus\rH^{(p)}(-1)$ satisfying $v^2=2n-2$ has the desired property. Since $M$ is $p$-elementary, $v$ has divisibility $1$.
    
    By \cite[Theorem 5.15]{BC20}, there is only one orbit of vectors with norm $2n-2$ and divisibility 1 in $M$. So $v^\perp$ is independent of the choice of $v$ up to isomorphism. By \cite[Lemma 5.2]{BC20}, the signature of $v^\perp$ is (1,22) and $q_{v^\perp}=q_{\NS(X)}$. Then by the proof of \Cref{prop: NS determined by Artin invariant}, we have $\NS(X)\cong v^\perp$. Finally, since $v$ can be taken in $\rU(p)\oplus\rH^{(p)}(-1)$, $\NS(X)$ certainly contains $\rU(p)$ as a direct summand.
\end{proof}

\subsection{$\mathrm{Kum}^n$-type}

The $\Kum^{n}$-type case is analogous to the $\rK3^{[n]}$-type case. We state the result.
\begin{proposition}\label{prop: Kum^n small Artin invariant}
    Let $X$ be a supersingular $\Kum^{n}$-type variety with Artin invariant $1\leq\sigma_0\leq 2$, then 
    \begin{enumeratea}
        \item  $\sigma_0=1$, $\NS(X)=\rU\oplus\rH^{(p)}(-1)\oplus(-2(n+1))$
        \item  $\sigma_0=2$, $\NS(X)=\rU(p)\oplus\rH^{(p)}(-1)\oplus(-2(n+1))$
    \end{enumeratea}
\end{proposition}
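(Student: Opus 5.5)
The plan is to mirror the proof of \Cref{prop: K3^n small Artin invariant}. By \Cref{prop: NS determined by Artin invariant}, $\NS(X)$ is determined up to isometry by $\sigma_0$, and its genus is determined by the signature $(1,6)$ together with the localizations at every prime. It therefore suffices to check that the proposed lattice $N_{\sigma_0}:=\rU^{(\sigma_0)}\oplus\rH^{(p)}(-1)\oplus\langle-2(n+1)\rangle$, where $\rU^{(1)}=\rU$ and $\rU^{(2)}=\rU(p)$, has the right rank, signature and local completions.

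First, the rank is $2+4+1=7=b_2$. For the signature: $\rH^{(p)}$ is positive definite, since its diagonalization over $\ZZ_{(p)}$ in \Cref{prop: local property of H^p} has all entries positive. Hence $\rH^{(p)}(-1)$ has signature $(0,4)$, and $N_{\sigma_0}$ has signature $(1,6)$, matching the signature of $\NS(X)$ established above.

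Next, I compare completions. For $\ell\neq p$ and $\ell\neq 2$: $\rU(p)\otimes\ZZ_\ell\cong\rU\otimes\ZZ_\ell$, and $\rH^{(p)}(-1)\otimes\ZZ_\ell$ is unimodular of rank $4$. Its discriminant is $\det\rH^{(p)}=p^2$, a square, so two unimodular rank-$6$ lattices with the same discriminant agree. Thus $N_{\sigma_0}\otimes\ZZ_\ell\cong(\rU^{\oplus3}\oplus\langle-2(n+1)\rangle)\otimes\ZZ_\ell$, which is $\NS(X)\otimes\ZZ_\ell$ by smooth base change from the lift, as in the proof of \Cref{prop: NS determined by Artin invariant}. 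At $\ell=2$, I need $\rH^{(p)}(-1)\otimes\ZZ_2$ to be even unimodular of rank $4$ with square discriminant. Since $\rH^{(p)}$ is even with odd discriminant $p^2$, it is a sum of $U^{(2)}(1)$'s and $V^{(2)}(1)$'s. I would check that the number of $V^{(2)}$ summands is even, so that it is isometric to $U^{(2)}(1)^{\oplus2}$. This can be done through the Hasse invariant, or more directly by using that $\rH^{(p)}(-1)$ is a summand of $\Lambda_{p,1}(-1)$, whose $2$-adic completion was shown to be $U^{(2)}(1)^{\oplus11}$, together with cancellation.

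At $\ell=p$, \Cref{prop: local property of H^p} gives $\rH^{(p)}(-1)\otimes\ZZ_p=K^{(p)}_{-1}(1)\oplus K^{(p)}_{-\epsilon}(1)\oplus K^{(p)}_{-1}(p)\oplus K^{(p)}_{-\epsilon}(p)$, so its $p$-part has rank $2$. Adding $\rU$ (case $\sigma_0=1$) or $\rU(p)$ (case $\sigma_0=2$) gives a $p$-modular part of rank $2\sigma_0$, as required by \Cref{prop:TateModule}. It remains to match the discriminants $d_0,d_1$ modulo squares. In case (a), the $p$-part has discriminant $\epsilon$, and $\left(\frac{\epsilon}{p}\right)=-\left(\frac{-1}{p}\right)=d_0$-sign as required. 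In case (b), $\rU(p)$ contributes $-1$ to the $p$-modular part, giving $\left(\frac{-\epsilon}{p}\right)=-1=-\left(\frac{-1}{p}\right)^2$. The unimodular part then has its Legendre symbol forced by the total discriminant $\disc(\NS(X))=-2(n+1)p^{2\sigma_0}$ up to squares. I would confirm this matches $\left(\frac{d_1}{p}\right)=-\left(\frac{-1}{p}\right)^{\sigma_0}\left(\frac{d}{p}\right)$ by the same bookkeeping as in the $\rK3^{[n]}$ case. Since odd-$p$ lattices are classified by the ranks and determinants of their Jordan components, the completions agree.

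The main obstacle is the $2$-adic verification for $\rH^{(p)}$, since \Cref{prop: local property of H^p} only gives a rational diagonalization with entries like $q/2$. I would handle it via the cancellation argument with $\Lambda_{p,1}(-1)$. With all local completions matched, the genera coincide, and \Cref{prop: NS determined by Artin invariant} gives the isometry.
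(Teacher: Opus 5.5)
Your proposal is correct and is the paper's argument: match the completions of $\rU^{(\sigma_0)}\oplus\rH^{(p)}(-1)\oplus\langle-2(n+1)\rangle$ with those of $\NS(X)$ at every prime and conclude by uniqueness in the genus (\Cref{prop: NS determined by Artin invariant}), exactly as in the $\rK3^{[n]}$ case. Two small remarks. First, the signature $(1,6)$ forces $\disc(\NS(X))=+2(n+1)p^{2\sigma_0}$; with your minus sign the $d_1$ check would spuriously fail when $p\equiv 3 \bmod 4$. Second, the $2$-adic step needs no cancellation argument: an even unimodular $\ZZ_2$-lattice of rank $4$ is determined by its determinant, and $\det \rH^{(p)}=p^2$ is a $2$-adic square, so $\rH^{(p)}\otimes\ZZ_2\cong U^{(2)}(1)^{\oplus 2}$.
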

\begin{proof}
    It suffices to check that the two lattices agree at each place, as we just did in the case of $K3^{[n]}$-type.
\end{proof}

\begin{proposition}\label{prop: Kum^n_big}
    Let $X$ be a supersingular $\Kum^{n}$-type variety with Artin invariant $\sigma_0=3$, then $\NS(X)$ is given as follows. Let $N=\rU(p)^{\oplus2}\oplus\rH^{(p)}(-1)$, then there is a primitive $v\in N$, unique up to $O(N)$, such that $v^2=2n+2$ and $\NS(X)=v^\perp\subset N$. Moreover, $\NS(X)$ contains a $\rU(p)$ as direct summand. 
\end{proposition}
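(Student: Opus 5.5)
The plan is to mirror the proof of \Cref{prop: K3n_big}, replacing the K3 Mukai-type lattice $M$ by $N=\rU(p)^{\oplus2}\oplus\rH^{(p)}(-1)$, which has rank $8=b_2(X)+1$ and signature $(2,6)$ since $\rH^{(p)}(-1)$ is negative definite. The proof has three steps: construct $v$, show it is unique up to $O(N)$, and identify $v^\perp$ with $\NS(X)$.

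\emph{Construction of $v$.} By \Cref{lemma: squares in Hp}, choose $y\in\rH^{(p)}(-1)$ with $\left(\frac{y^2}{p}\right)=\left(\frac{2n+2}{p}\right)$. Then $2n+2\equiv m^2y^2 \pmod p$ for some integer $m$. Since $y^2$ and $2n+2$ are both even, $2n+2-m^2y^2$ is divisible by $2p$, so we can write $2kp+m^2y^2=2n+2$. Let $e,f$ be the standard basis of one copy of $\rU(p)$ and set $v=e+kf+my$; then $v^2=2n+2$. The coefficient of $e$ is $1$, so $v$ is primitive. Moreover $(v,f)=p$, and the pairing of $v$ with all of $N$ has divisibility $1$ because $\rH^{(p)}$ is not $p$-divisible.

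\emph{Uniqueness of the orbit.} $N$ is $p$-elementary up to the prime-to-$p$ part contributed by $\rH^{(p)}$, whose discriminant is $p^2$ by \Cref{prop: local property of H^p}, and it contains $\rU(p)^{\oplus 2}$. Because $N$ contains two copies of a scaled hyperbolic plane, I would invoke the Eichler-type criterion \cite[Theorem 5.15]{BC20}, as in the $\rK3^{[n]}$ case, to conclude that primitive vectors of fixed norm $2n+2$ and divisibility $1$ form a single $O(N)$-orbit. This makes $v^\perp$ well defined up to isometry.

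\emph{Identification with $\NS(X)$.} By \cite[Lemma 5.2]{BC20}, $v^\perp$ has signature $(1,6)$ and discriminant form $q_{v^\perp}\cong q_N\oplus\langle-(2n+2)\rangle$-type. I would check this against $q_{\NS(X)}$ prime by prime, as in \Cref{prop: K3^n small Artin invariant}.
\begin{itemize}
\item For $\ell\neq p$, both sides agree with $(\rU^{\oplus3}\oplus\langle-2(n+1)\rangle)\otimes\ZZ_\ell$.
\item At $p$, \Cref{prop:TateModule} with $\sigma_0=3$ and $b_2=7$ gives $\rT_0(p)\oplus\rT_1$, with $\rT_0$ of rank $6$ and $\rT_1$ of rank $1$. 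On the $v^\perp$ side, $N\otimes\ZZ_p$ has $p$-part of rank $6$ (four from $\rU(p)^{\oplus2}$ and two from $\rH^{(p)}$) and unimodular part of rank $2$. Taking the orthogonal complement of $v$, which has unit norm at $p$ whenever $p\nmid 2n+2$, removes one unimodular direction. The remaining issue is to match the discriminants $d_0,d_1$ via the Legendre symbol conditions; this is a routine computation.
\end{itemize}
Once the genus agrees, the genus computation in \Cref{prop: NS determined by Artin invariant} shows the genus contains one class, so $\NS(X)\cong v^\perp$. Finally, $v$ lies in $\rU(p)\oplus\rH^{(p)}(-1)$, so the second copy of $\rU(p)$ lies in $v^\perp$. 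Since $\rU(p)$ is not unimodular, splitting needs care: I would check directly that $v^\perp=\rU(p)\oplus\bigl(v^\perp\cap(\rU(p)\oplus\rH^{(p)}(-1))\bigr)$, which holds because $v$ is orthogonal to that summand.

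\emph{Main obstacle.} The hardest step is the orbit uniqueness, together with the $p$-adic matching when $p\mid n+1$. In that case $v$ is not of unit norm at $p$, and the local decomposition must be redone. I would first try the explicit choice of $v$ and compute $v^\perp$ locally by hand.
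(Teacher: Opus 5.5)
Your proposal is correct and follows the paper's proof, which simply transplants the argument of \Cref{prop: K3n_big} to $N$: the same explicit vector $v=e+kf+my$, the same appeal to \cite[Theorem 5.15]{BC20} and \cite[Lemma 5.2]{BC20}, uniqueness in the genus from the proof of \Cref{prop: NS determined by Artin invariant}, and the $\rU(p)$ summand coming from the copy of $\rU(p)$ orthogonal to $v$. Your explicit splitting $v^\perp=\rU(p)\oplus\bigl(v^\perp\cap(\rU(p)\oplus\rH^{(p)}(-1))\bigr)$ makes this last step more precise than the paper does.

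The case $p\mid n+1$ that you single out as the main obstacle does not occur. The standing hypotheses ($p>\dim X=2n$, and $p\nmid 2n+2$, the discriminant of the $\Kum^n$ lattice, which \Cref{thm: Tate conjecture for divisors} requires and on which the whole classification rests) force $p\nmid 2n+2$. Moreover, $N$ is genuinely $p$-elementary, with $N^\vee/N\cong(\ZZ/p\ZZ)^{6}$. Hence every primitive vector of norm $2n+2$ has divisibility $1$, which is exactly what the orbit-uniqueness step needs. For your particular $v$, a cleaner justification than ``$\rH^{(p)}$ is not $p$-divisible'' is that $(v,y)=my^2\not\equiv 0 \pmod p$.
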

\begin{proof}
    The proof is the same as that of \Cref{prop: K3n_big}.
\end{proof}

\subsection{OG6 type}
For varieties of OG6-type, the case when the Artin invariant is not maximal is very similar to $\rK3^{[n]}$-type.
\begin{proposition}\label{prop: OG6 small Artin invariant}
    Let $X$ be a supersingular OG6-type variety with Artin invariant $1\leq\sigma_0\leq 3$, then
    \begin{enumeratea}
        \item  $\sigma_0=1$, $\NS(X)=\rU\oplus\rH^{(p)}(-1)\oplus(-2)^{\oplus2}$
        \item  $\sigma_0=2$, $\NS(X)=\rU(p)\oplus\rH^{(p)}(-1)\oplus(-2)^{\oplus2}$
        \item  $\sigma_0=3$, 
        $\NS(X)=\left\{
        \begin{aligned}
            &\rU({p})\oplus\rH^{(p)}(-1)\oplus(-2p)^{\oplus 2},\ (\frac{-1}{p})=1\\
            &\rU\oplus \rD_6(-p),\ (\frac{-1}{p})=-1
        \end{aligned}
        \right.$
    \end{enumeratea}
    where $\rD_6$ is the positive definite root lattice.
\end{proposition}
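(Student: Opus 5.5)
The strategy is the one used for \Cref{prop: K3^n small Artin invariant}. By \Cref{prop: NS determined by Artin invariant}, $\NS(X)$ is determined up to isometry by its signature $(1,7)$ and its localizations at every prime. For $\sigma_0\le 3$ we have $2\sigma_0+2\le 8=b_2(X)$, so Nikulin's uniqueness theorem applies. It therefore suffices to show that each candidate lattice is even of signature $(1,7)$ and that its completion at every prime $\ell$ agrees with $\NS(X)\otimes\ZZ_\ell$.

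First, the local data of $\NS(X)$. For $\ell\neq p$, \Cref{thm: Tate conjecture for divisors} and lifting to characteristic zero give
\[
\NS(X)\otimes\ZZ_\ell\cong(\rU^{\oplus3}\oplus\langle-2\rangle^{\oplus2})\otimes\ZZ_\ell .
\]
For $\ell=p$, \Cref{prop:TateModule} gives $\NS(X)\otimes\ZZ_p\cong \rT_0(p)\oplus\rT_1$, with $\rank \rT_0=2\sigma_0$, $\rank \rT_1=8-2\sigma_0$, and with $d_0,d_1$ determined by the Legendre conditions. Here $d$ is fixed by $\disc=-4p^{2\sigma_0}$ up to squares, so $(\frac{d}{p})=(\frac{-1}{p})$. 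Since $p$ is odd, a unimodular $\ZZ_p$-lattice is determined by its rank and discriminant. Hence it is enough to compare ranks and discriminant classes of the $p$-modular and unimodular Jordan components.

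Next, the candidates. For (a) and (b), $\rU$ and $\rU(p)$ contribute hyperbolic planes, unimodular or $p$-modular respectively. By \Cref{prop: local property of H^p}, $\rH^{(p)}(-1)\otimes\ZZ_p$ splits as a unimodular part of rank $2$ with discriminant class $\epsilon$ (up to the sign twist) plus a $p$-modular part of rank $2$ of the same type. The summand $\langle-2\rangle^{\oplus2}$ is unimodular at $p$. Counting, the $p$-modular part has rank $2$ in case (a) and rank $4$ in case (b), matching $2\sigma_0$. The discriminant classes then follow from $(\frac{-1}{p})$ and $(\frac{\epsilon}{p})=-(\frac{-1}{p})$, and they match the conditions of \Cref{prop:TateModule}.

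At $\ell\neq 2,p$, every candidate is unimodular away from $2$. It has the same rank and, up to squares, the same discriminant $-4p^{2\sigma_0}$ as $\rU^{\oplus3}\oplus\langle-2\rangle^{\oplus2}$, so the two completions agree. At $\ell=2$ the rank-$4$ lattice $\rH^{(p)}$ is unimodular and even: its $2$-adic diagonalization $\langle2\rangle\oplus\langle q/2\rangle\oplus\ldots$ has to be read inside $\ZZ_{(p)}$ and needs care. Using $q\equiv3\pmod 8$, I would check that $\rH^{(p)}(-1)\otimes\ZZ_2\cong U^{(2)}(1)^{\oplus2}$, exactly as in the $\rK3^{[n]}$ case. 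Then $\langle-2\rangle^{\oplus2}$ supplies the $2$-adic part $(-2)^{\oplus2}$, which matches.

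Case (c) is the main obstacle. There the $p$-modular component must have rank $6$ and the unimodular component rank $2$, with prescribed discriminants. The candidates put $\langle-2p\rangle^{\oplus2}$ or $\rD_6(-p)$ into the $p$-modular part, and the sign of $(\frac{-1}{p})$ decides which one realizes the correct classes. For $(\frac{-1}{p})=1$, $\rU(p)\oplus\rH^{(p)}(-1)\oplus\langle-2p\rangle^{\oplus2}$ has $p$-modular rank $2+2+2=6$. I would verify $(\frac{d_0}{p})=-(\frac{-1}{p})^3$ using $(\frac{-1}{p})=1$ and $(\frac{4}{p})=1$. For $(\frac{-1}{p})=-1$, $\rU\oplus\rD_6(-p)$ has a unimodular hyperbolic plane and a $p$-modular $\rD_6(-p)$, which is $p$-modular at $p$ because $\disc \rD_6=4$ is a unit there. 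At $2$, the discriminant form of $\rD_6$ is $v^{(2)}$-free of type $\langle-2\rangle^{\oplus2}$ up to the relations in \cite[Proposition 1.8.2]{Nikulin}, and I would check that it matches $u\oplus(-2)^{\oplus 2}$ after the twist by the odd $p$. The delicate part is getting the sign conventions for $\rD_6(-p)$ and the $2$-adic forms exactly right. With all local factors matched, \Cref{prop: NS determined by Artin invariant} gives the isometries.
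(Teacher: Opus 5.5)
Your proposal is correct and follows essentially the same route as the paper: both use Nikulin's uniqueness (valid since $2\sigma_0+2\le 8$) to reduce to matching the genus block by block. The paper phrases this through discriminant forms and you phrase it through Jordan decompositions of the completions, which is equivalent. One local check you left implicit is at $2$ in the first subcase of (c): $\langle -2p\rangle^{\oplus 2}\cong\langle -2\rangle^{\oplus 2}$ over $\ZZ_2$ holds precisely because $p\equiv 1 \pmod 4$, so the hypothesis $(\frac{-1}{p})=1$ is used at $2$ as well as at $p$, just as $p\equiv 3\pmod 4$ is what makes $\rD_6(-p)$ match $2$-adically in the second subcase.
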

\begin{proof}
    Since $\sigma_0\leq 3$, $\NS(X)$ is determined by its discriminant form. Therefore, once we know the discriminant form of $\rD_6$ and $\rH^{(p)}(-1)$, the discriminant forms of all the blocks above are clear. A simple computation gives that $\rD_6$ has discriminant form $q_3^{(2)}(2)\oplus q_3^{(2)}(2)$ and $\rH^{(p)}(-1)$ has discriminant form $q_1^{(p)}(p)\oplus q_{-\epsilon}^{(p)}(p)\oplus q_1^{(p)}(1)\oplus q_{-\epsilon}^{(p)}(1)$, where $\left(\frac{\epsilon}{p}\right)=(\frac{-1}{p})$. Here we use the notation in \cite{Nikulin}. Then by checking discriminant form, we complete the proof.
\end{proof}

The above description of $\NS(X)$ is complete, since we can write down the explicit matrix of the lattice. However, when $\sigma_0=3$, the above description lacks a geometric interpretation as we explained in Remark \ref{remark:geometry of k3n} for $\rK3^{[n]}$-type and it is not well suited for later applications. Therefore, we give the following description. Although it has no explicit matrix description, it admits a geometric interpretation similar to that in characteristic 0 (see \cite[section 2]{Gro22} for a good introduction of characteristic 0 case).
\begin{proposition}\label{prop: OG6 another description}
    Let $X$ be a supersingular OG6-type variety with Artin invariant $\sigma_0= 3$. Let $L=\rU(p)^{\oplus2}\oplus\rH^{(p)}(-1)$ and $v\in L$ be primitive and satisfy $v^2=2$. Then $\NS(X)\cong v^{\perp}\oplus(-2)$.
\end{proposition}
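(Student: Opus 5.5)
The strategy is to compare $v^\perp\oplus\langle -2\rangle$ with $\NS(X)$ through discriminant forms and then invoke uniqueness of the genus. The case $\sigma_0=3$ is not maximal for OG6, since $b_2=8$ and $2\sigma_0+2=8\le b_2$. Hence, by the proof of \Cref{prop: NS determined by Artin invariant} (Nikulin's \cite[Theorem 1.13.2]{Nikulin}), $\NS(X)$ is the unique even lattice with signature $(1,7)$ and its given discriminant form. By \Cref{prop:TateModule} and the table of BB forms, that discriminant form is $q_{-1}^{(2)}(2)^{\oplus 2}\oplus(\text{a $p$-part of length }6)$. So it suffices to show three things: $v^\perp\oplus\langle-2\rangle$ is even, it has signature $(1,7)$, and it has the same discriminant form.

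\textbf{Step 1 (existence and divisibility of $v$).} First I check that a primitive $v\in L$ with $v^2=2$ exists, as in the proof of \Cref{prop: K3n_big}. By \Cref{lemma: squares in Hp}, pick $y\in\rH^{(p)}(-1)$ with $y^2\equiv 2$ modulo $p$, and write $2=2kp+m^2y^2$ after rescaling. Then $v=e+kf+my\in\rU(p)\oplus\rH^{(p)}(-1)$ has $v^2=2$. Since $L$ has discriminant group that is $p$-elementary times a unimodular part, $v$ has divisibility $1$ (its pairing with $\rH^{(p)}(-1)$ already reaches a unit). By \cite[Theorem 5.15]{BC20}, such $v$ is unique up to $O(L)$, so $v^\perp$ is well defined.

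\textbf{Step 2 (discriminant form of $v^\perp$).} Since $\operatorname{div}(v)=1$, \cite[Lemma 5.2]{BC20} gives $A_{v^\perp}\cong A_L\oplus \ZZ/2$ with form $q_L\oplus\langle -\tfrac12\rangle$; the new piece is the form of $\langle -v^2\rangle$, up to sign conventions. The signature of $L$ is $(3,5)$: $\rU(p)^{\oplus2}$ contributes $(2,2)$, and $\rH^{(p)}(-1)$ contributes $(1,3)$ because $\rH^{(p)}$ is positive definite up to the sign twist in \Cref{prop: local property of H^p}. Removing the positive vector $v$ gives $v^\perp$ of signature $(2,5)$, hence $v^\perp\oplus\langle-2\rangle$ has signature $(2,6)$. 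That is wrong: $\NS(X)$ needs $(1,7)$. I would recheck the sign of $\rH^{(p)}(-1)$. From the diagonalization, $\rH^{(p)}(-1)\otimes\QQ=\langle-2,-\tfrac q2,-\tfrac p2,-\tfrac{2p}q\rangle$ is negative definite. So $L$ has signature $(2,6)$, $v^\perp$ has $(1,6)$, and adding $\langle-2\rangle$ gives $(1,7)$, which is correct.

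\textbf{Step 3 (local comparison).} The $2$-part of the discriminant form is $q_L|_2$ (trivial, since $p$ is odd and $\rH^{(p)}$ is $2$-unimodular, as its $\ZZ_{(p)}$-diagonalization has unit determinant $p^2$ at $2$ up to squares). The $\langle-\tfrac12\rangle$ from $v^\perp$ together with $\langle-2\rangle$ gives $q_{-1}^{(2)}(2)^{\oplus2}$, matching $\rU^{\oplus3}\oplus\langle-2\rangle^{\oplus2}\otimes\ZZ_2$. At $p$, the discriminant group is $(\ZZ/p)^{6}$: four copies from $\rU(p)^{\oplus2}$ and two from $\rH^{(p)}(-1)$. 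This length $6=2\sigma_0$ matches \Cref{prop:TateModule}. The quadratic character must then be matched, in the same way as in the $\rK3^{[n]}$ computation of \Cref{prop: K3n small Artin invariant}: $\rU(p)^{\oplus 2}$ contributes trivially (hyperbolic), and $\rH^{(p)}(-1)$ contributes $q_1^{(p)}(p)\oplus q_{-\epsilon}^{(p)}(p)$, whose discriminant class satisfies $\bigl(\frac{d_0}{p}\bigr)=-\bigl(\frac{-1}{p}\bigr)^{3}$ as required. For odd $\ell\ne p$, both lattices are unimodular at $\ell$, with no further check needed beyond discriminants agreeing up to squares.

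\textbf{Main obstacle.} The main obstacle is the careful bookkeeping of the $p$-adic discriminant class and the sign conventions in \cite[Lemma 5.2]{BC20}. The Legendre-symbol condition in \Cref{prop:TateModule} for $\rT_1$ also involves $d$, and $d$ must be identified with the discriminant of the OG6 lattice ($4$, up to sign). Once the discriminant forms agree, uniqueness from \Cref{prop: NS determined by Artin invariant} concludes $\NS(X)\cong v^\perp\oplus\langle-2\rangle$.
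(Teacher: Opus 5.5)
Your proposal is correct and follows the paper's proof: existence and uniqueness of $v$ via \cite{BC20}, the discriminant form of $v^\perp\oplus\langle-2\rangle$ via \cite[Lemma 5.2]{BC20}, comparison with $q_{\NS(X)}$, and uniqueness in the genus. The splitting-off of the $\ZZ/2$ uses $\operatorname{div}(v)=1$, which in fact holds for every primitive $v$ with $v^2=2$, since $\operatorname{div}(v)$ divides both $v^2=2$ and $p$. When you write it up, state the signature $(2,6)$ of $L$ directly instead of the false start, and fix the $p$-adic sign: the $p$-scaled part of $\rH^{(p)}(-1)\otimes\ZZ_p$ is $K^{(p)}_{-1}(p)\oplus K^{(p)}_{-\epsilon}(p)$ (not $q_1^{(p)}(p)\oplus q_{-\epsilon}^{(p)}(p)$) with $\bigl(\frac{\epsilon}{p}\bigr)=-\bigl(\frac{-1}{p}\bigr)$ as in \Cref{prop: local property of H^p}, and its determinant $\epsilon$ is what gives $\bigl(\frac{d_0}{p}\bigr)=-\bigl(\frac{-1}{p}\bigr)^{3}$.
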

\begin{proof}
    The existence and uniqueness of $v$ and $v^\perp$ are proved as in \Cref{prop: Kum^n_big}, using \cite{BC20}. Then the discriminant form of $v^{\perp}\oplus(-2)$ is given by $q=q_3^{(2)}(2)\oplus q_3^{(2)}(2) \oplus q_{-\epsilon}^{(p)}(p)\oplus q_1^{(p)}(p)^{\oplus5}$( \cite[Lemma 5.2]{BC20}), where $\epsilon=(\frac{-1}{p})$. It is easy to see that $q$ is isomorphic to the discriminant form of $\NS(X)$. Then $\NS(X)\cong v^{\perp}\oplus(-2)$ since $\NS(X)$ is unique in its genus.
\end{proof}

The case for OG6 with Artin invariant $\sigma_0=4$ is quite different.
\begin{proposition}\label{prop: OG6 big Artin invariant}
     Let $X$ be a supersingular OG6-type variety. Then the case $\sigma_0=4$ occurs only when $(\frac{-1}{p})=-1$; in this case, $\NS(X)=\rU(p)\oplus\rD_6(-p)$.  
\end{proposition}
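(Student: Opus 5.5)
The plan is to first pin down the local invariants of $\NS(X)$ when $\sigma_0=4=b_2(X)/2$, and then identify $\NS(X)$ inside its genus. For OG6-type we have $b_2=8$ and $\disc(\rH^2)=4$, with discriminant form $q_3^{(2)}(2)^{\oplus 2}$ coming from $\langle -2\rangle^{\oplus 2}$.

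\textbf{Step 1: local analysis and the parity condition.} By \Cref{thm: Tate conjecture for divisors}, $\NS(X)$ is a lattice of rank $8$ and signature $(1,7)$. Its $\ell$-adic completions for $\ell\neq p$ agree with those of $\rU^{\oplus3}\oplus\langle-2\rangle^{\oplus2}$. At $p$, \Cref{prop:TateModule} with $2\sigma_0=8=b_2$ forces $\rT_1=0$, so $\NS(X)\otimes\ZZ_p=\rT_0(p)$ with $\rT_0$ unimodular of rank $8$ and $\left(\frac{d_0}{p}\right)=-\left(\frac{-1}{p}\right)^{4}=-1$.

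I will compute the discriminant two ways. Globally, $\disc(\NS(X))=-4p^{8}$: the sign comes from the signature, the $2$-part from the completions at $\ell\neq p$, and the $p$-part from $\rT_0(p)$. Locally at $p$, the discriminant is $d_0p^8$. Comparing the $p$-adic units gives $d_0\equiv -4$ modulo squares, so $\left(\frac{d_0}{p}\right)=\left(\frac{-1}{p}\right)$. Combined with $\left(\frac{d_0}{p}\right)=-1$, this forces $\left(\frac{-1}{p}\right)=-1$, which proves the first claim.

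\textbf{Step 2: writing $\NS(X)$ as a rescaled lattice.} As in the proof of \Cref{prop: NS determined by Artin invariant}, write $\NS(X)=\Lambda(p)$. This is possible because the $p$-adic form is divisible by $p$, so $\frac1p q$ is integral at $p$, and integrality away from $p$ is unchanged. The lattice $\Lambda$ is even: for odd $p$, evenness of $\NS(X)$ at $2$ transfers since $p$ is a $2$-adic unit. It has signature $(1,7)$, discriminant $-4$, and is $2$-elementary, with $2$-adic discriminant form equal to that of $\NS(X)\otimes\ZZ_2$ rescaled by the unit $p^{-1}$.

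\textbf{Step 3: identifying $\Lambda$.} The candidate is $\rU\oplus\rD_6(-1)$. It is even, of signature $(1,7)$ and discriminant $-4$, with discriminant form $q_3^{(2)}(2)^{\oplus2}$ as computed in \Cref{prop: OG6 small Artin invariant}. By Nikulin's classification of even indefinite $2$-elementary lattices (the uniqueness invoked via \cite{Shafarevich}), $\Lambda$ is determined by its rank, signature, the length $a=2$ and the parity invariant $\delta$. So it suffices to match these invariants.

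The one point needing care is the rescaling by $p$: the discriminant form of $\Lambda$ is $q_{\NS(X),2}$ multiplied by $p^{-1}$. Since $p\equiv 3\bmod 4$, multiplying $q_3^{(2)}(2)^{\oplus 2}$ by a unit $\equiv 3 \bmod 4$ gives $q_1^{(2)}(2)^{\oplus2}$ at the level of individual forms. I must then check, using the relations of \cite[Proposition 1.8.2]{Nikulin} and the signature formula (signature $\equiv -6 \bmod 8$), that the result is still consistent with $\rD_6(-1)$, or at least lies in the unique genus with $\delta=1$.

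I expect this to be the main obstacle. The fallback is to bypass the rescaling subtlety entirely by directly checking that $\rU(p)\oplus\rD_6(-p)$ has the same discriminant form as $\NS(X)$ at $2$ and at $p$, with $\rD_6(-p)\otimes\ZZ_p$ unimodular of discriminant $p^{6}\cdot 4$ rescaled, giving the correct Legendre symbol $-1$. Uniqueness in the genus then follows from the $p$-elementary/$2$-elementary argument, exactly as in the proof of \Cref{prop: NS determined by Artin invariant}.
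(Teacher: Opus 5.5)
Your proof is correct. It differs from the paper's mainly in how you obtain the congruence $\left(\frac{-1}{p}\right)=-1$.

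The paper works in the opposite order. It rescales $\NS(X)$ by $1/p$ and identifies the resulting even $2$-elementary lattice of rank $8$ with $\rU\oplus\rD_6(-1)$ via the Rudakov--Shafarevich uniqueness; this step is blind to $p \bmod 4$. Only afterwards does it rule out $\left(\frac{-1}{p}\right)=1$, by noting that $\rU(p)\oplus\rD_6(-p)$ would then have $2$-adic discriminant form $q_1^{(2)}(2)^{\oplus 2}$ rather than the required $q_3^{(2)}(2)^{\oplus 2}$.

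Your Step 1 gets the congruence first, without any classification. The global discriminant $-4p^8$ forces $d_0\equiv -1$ modulo $p$-adic unit squares, while Ogus's non-neutrality condition with $\rT_1=0$ forces $\left(\frac{d_0}{p}\right)=-1$. This is cleaner; it is essentially the product-formula shadow of the paper's $2$-adic check. It also transfers verbatim to the OG10 case $\sigma_0=12$, where $d_0\equiv -3$ yields $\left(\frac{-3}{p}\right)=-1$. Your Steps 2--3 are the paper's identification argument.

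There is one slip to fix. $q_3^{(2)}(2)^{\oplus 2}$ is the discriminant form of the positive definite $\rD_6$, not of $\rU\oplus\rD_6(-1)$. In $\rD_6(-1)$ the spinor classes have norm $-3/2\equiv 1/2 \pmod 2$, so $\rU\oplus\rD_6(-1)$ has discriminant form $q_1^{(2)}(2)^{\oplus 2}$.

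Once this is corrected, your ``main obstacle'' disappears. For $p\equiv 3 \bmod 4$ you computed that $\Lambda$ has discriminant form exactly $q_1^{(2)}(2)^{\oplus 2}$. Hence $\Lambda$ and $\rU\oplus\rD_6(-1)$ are even, indefinite, of signature $(1,7)$, with isomorphic discriminant forms of length $2\le 8-2$. Uniqueness then follows from \cite[Theorem 1.13.2]{Nikulin}, or equivalently from the invariants $(r,a,\delta)=(8,2,1)$. Note that $\delta=1$ is forced in any case, since $\delta=0$ would require signature $\equiv 0 \bmod 4$. The fallback you describe is therefore unnecessary.
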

\begin{proof}
    Since $\sigma_0=4$, we have 
    \begin{equation}
        \NS(X)\otimes\ZZ_p=p\rT_0
    \end{equation}
    Then $p$  divides $\langle x, y\rangle$ for all $x,y\in\NS(X)$.
    
    So we can define a new bilinear form on $\NS(X)$ by $(x,y)=\frac{1}{p}\langle x,y\rangle$. Denote this lattice by $N$. $N$ is an even hyperbolic lattice with discriminant group $\ZZ/2\ZZ\oplus\ZZ/2\ZZ$, which means $N$ is 2-elementary. Therefore, by \cite{Shafarevich}, $N$ is determined by its discriminant and whether it belongs to type I or Type II. But in our setting with $\rank(N)=8$, the type I case doesn't exist. So $N$ is determined by its discriminant. Notice that $\rU\oplus\rD_6(-1)$ is such a lattice, which must be isomorphic to $N$. So the possible choice of $\NS(X)$ can only be $\rU(p)\oplus\rD_6(-p)$.

    However, when $(\frac{-1}{p})=1$, $(\rU(p)\oplus\rD_6(-p))\otimes\ZZ_2$ has discriminant form $q_1^{(2)}(2)\oplus q_1^{(2)}(2)$. But $\NS(X)\otimes\ZZ_2$ has discriminant form $q_3^{(2)}(2)\oplus q_3^{(2)}(2)$. These two forms are different. Therefore, when $(\frac{-1}{p})=1$, $\sigma_0$ cannot be 4.
\end{proof}

\subsection{OG10 type}We will need a new lattice block, which is even negative definite with discriminant 3 and rank 6. We construct this lattice by a process called evenization. Let
\begin{equation}
    \rL=(-3)\oplus(-1)^{\oplus5}
\end{equation}
be a lattice with standard basis \{$e_i$\}.
Let $v=\frac12\sum_i e_i$; then
\begin{equation}
    \rL_0=\{\Sigma x_ie_i | \Sigma x_i\ \mathrm{even}\}+\{nv|n\in\ZZ\}
\end{equation}
is the lattice we want. Notice that $\{e_0-e_1,e_1-e_2,e_2-e_3,e3-e_4,2e_4,v\}$ is a basis of $\rL_0$. Under this basis, it has matrix
\begin{equation}
    \begin{pmatrix}
    -4 & 1 & 0 & 0 & 0 & -1\\
    1 & -2 & 1 & 0 & 0 & 0\\
    0 & 1 & -2 & 1 & 0 & 0\\
    0 & 0 & 1 & -2 & 2 & 0\\
    0 & 0 & 0 & 2 & -4 & -1\\
    -1 & 0 & 0 & 0 & -1 & -2
\end{pmatrix}
\end{equation}

Diagonalizing over $\ZZ_3$, we have
\begin{equation}
    \rL_0\otimes \ZZ_3=K_{-1}^{(3)}(3)\oplus\mathrm{unimodular\ lattice}
\end{equation}
Now, we can construct the N\'eron Severi lattice of OG10.

\begin{proposition}\label{prop: OG10 small Artin invariant}
    Let $X$ be a supersingular OG10-type variety with Artin invariant $1\leq\sigma_0\leq 11$, then
    \begin{enumeratea}
        \item $1\leq\sigma_0\leq 10$, $\NS(X)=\Lambda_{p,\sigma_0}(-1)\oplus \rA_2(-1)$
        \item $\sigma_0=11$,
        $\NS(X)=\left\{
        \begin{aligned}
            &\rU(p)\oplus\rH^{(p)}(-1)\oplus\rA_2(-p)\oplus\rE_8(-p)^{\oplus2},\ (\frac{-3}{p})=1\\
            &\rU\oplus \rE_8(-p)^{\oplus2}\oplus\rL_0(p),\ (\frac{-3}{p})=-1
        \end{aligned}
        \right.$
    \end{enumeratea}
\end{proposition}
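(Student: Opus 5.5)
The plan follows the proof of \Cref{prop: K3^n small Artin invariant} and \Cref{prop: OG6 small Artin invariant}. I will exhibit a candidate lattice and show that it lies in the genus of $\NS(X)$. Uniqueness in the genus then identifies it with $\NS(X)$. The invariants of $\NS(X)$ are as follows. It is even of signature $(1,23)$. At every prime $\ell\neq p$ we have $\NS(X)\otimes\ZZ_\ell\cong \rH^2(X_0,\ZZ)\otimes\ZZ_\ell=(\rU^{\oplus3}\oplus\rE_8(-1)^{\oplus2}\oplus\rA_2(-1))\otimes\ZZ_\ell$, using \Cref{thm: Tate conjecture for divisors} and a lift to characteristic zero. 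At $p$, \Cref{prop:TateModule} gives $\rT_0(p)\oplus\rT_1$, with $\rank\rT_0=2\sigma_0$ and the prescribed Legendre symbols of $d_0,d_1$; here $d$ is $3$ up to sign and squares.

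For uniqueness in the genus I will argue as in the proof of \Cref{prop: NS determined by Artin invariant}. When $2\sigma_0+2\le 24$, that is $\sigma_0\le 11$, Nikulin's criterion \cite[Theorem 1.13.2]{Nikulin} applies directly: the $p$-part of the discriminant group has length $2\sigma_0\le 22=\rank-2$, and the other parts are tiny. So $\NS(X)$ is determined by its signature and discriminant form in the whole range $1\le\sigma_0\le 11$. It therefore suffices to compare discriminant forms, equivalently the localizations at each prime.

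For case (a), $\Lambda_{p,\sigma_0}(-1)$ is $\NS(S)$ of a supersingular K3 surface. Its localization at every $\ell\neq p$ is that of $\rU^{\oplus3}\oplus\rE_8(-1)^{\oplus2}$, as computed in the proof of \Cref{prop: K3^n small Artin invariant}. Adding $\rA_2(-1)$ therefore matches at all $\ell\neq p$, including $\ell=3$ and $\ell=2$. At $p$, the same computation gives $K^{(p)}_{\mu_{\sigma_0}}(p)\oplus K_1^{(p)}(p)^{\oplus(2\sigma_0-1)}\oplus$ a unimodular part. Since $p\neq 3$ (because $p>\dim X=10$), $\rA_2(-1)$ is unimodular at $p$ with discriminant $3$. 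So the $p$-scaled part has the right discriminant class, and the unimodular part has discriminant class $-\mu_{\sigma_0}\cdot 3$, which matches $d_1$ with $d\sim 3$. This is only a Legendre symbol bookkeeping step.

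For case (b), $\sigma_0=11$. I will check each candidate. At $p$, $\rU(p)\oplus\rH^{(p)}(-1)\oplus\rA_2(-p)\oplus\rE_8(-p)^{\oplus2}$ has $p$-scaled part of rank $2+2+2+16=22$ and unimodular part of rank $2$, namely $K_1^{(p)}(1)\oplus K_{\epsilon}^{(p)}(1)$ up to sign, by \Cref{prop: local property of H^p}. The determinant class of the $p$-scaled part involves $\disc\rA_2=3$. Matching $d_0$ with $\left(\frac{d_0}{p}\right)=-\left(\frac{-1}{p}\right)^{11}$ should force exactly $\left(\frac{-3}{p}\right)=1$. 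In the other case I replace $\rU(p)\oplus\rH^{(p)}(-1)\oplus\rA_2(-p)$ by $\rU\oplus\rL_0(p)$. The lattice $\rL_0$ is even, negative definite of rank $6$ with discriminant $3$, so $\rL_0(p)$ is entirely $p$-scaled with determinant class $3p^6$ up to sign. This flips the relevant Legendre symbol and handles $\left(\frac{-3}{p}\right)=-1$.

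At $\ell=3$ I must check that the $3$-adic part is $K^{(3)}_{-1}(3)$ plus unimodular, as for $\rA_2(-1)$. This holds for $\rA_2(-p)$ since $p$ is a $3$-adic unit, up to the square class of $p$, which must be tracked, and it holds for $\rL_0(p)$ by the diagonalization given above. At $\ell=2$ all blocks are even unimodular. The signature $(1,23)$ and evenness are clear in both cases.

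The main obstacle is the sign and square-class bookkeeping at $p$ and at $3$ in case (b). I have to show that the discriminant forms agree precisely under the stated quadratic residue condition: the factor $p$ in $\rA_2(-p)$ changes the $3$-adic square class unless it is compensated, and the $p$-adic unit $3$ enters $d_0$. I would resolve this by computing the discriminant form blockwise with Nikulin's relations \cite[Proposition 1.8.2]{Nikulin}, and then invoking the genus uniqueness above.
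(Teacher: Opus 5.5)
Your plan is the paper's proof: for $\sigma_0\le 11$ Nikulin's criterion makes $\NS(X)$ unique in its genus, and the candidate lattices are then matched blockwise at $p$, $3$, $2$ and the remaining primes. The $p$-adic determinant of the $p$-modular part singles out the sign of $\left(\frac{-3}{p}\right)$ exactly as you describe.

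One correction for the $3$-adic bookkeeping you deferred. In fact $\rA_2(-1)\otimes\ZZ_3\cong\langle -2\rangle\oplus K^{(3)}_{1}(3)$, not $K^{(3)}_{-1}(3)$ plus a unimodular part. Meanwhile $\rA_2(-p)$ and $\rL_0(p)$ have $3$-modular parts $K^{(3)}_{p}(3)$ and $K^{(3)}_{-p}(3)$. So they match precisely when $p\equiv 1$, resp.\ $p\equiv 2 \pmod 3$, that is, when $\left(\frac{-3}{p}\right)=\left(\frac{p}{3}\right)=1$, resp.\ $-1$, as required.
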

\begin{proof}
    This is basically the same argument as in the proof of \Cref{prop: OG6 small Artin invariant}. We know $\NS(X)$ is determined by its discriminant form. So we only need to check whether the discriminant form of our given lattice equals the discriminant form of $\NS(X)$. This follows from the known discriminant forms of the blocks.
\end{proof}

Now, similar to \Cref{prop: OG6 another description}, we will give another description of $\NS(X)$ when $\sigma_0=11$. For geometric inspiration, see \cite[section 3]{FGG24} for a good introduction of characteristic 0 case.
\begin{proposition}\label{prop:OG10 another description}
    Let $X$ be a supersingular OG10-type variety with Artin invariant $\sigma_0= 11$. Then there exists a primitive $v\in\NS(X)$ with norm -6 and divisibility 3. Moreover $v^\perp\subset\NS(X)$ is isomorphic to $L$, given as follows. Let $M=\rU(p)^{\oplus2}\oplus\rH^{(p)}(-1)\oplus\rE_8(-p)^{\oplus2}$. There exists a primitive $w\in M$, unique up to $O(M)$, such that $w^2=2$. Then $L$ is given by $w^\perp\subset M$.
\end{proposition}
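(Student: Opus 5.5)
Following \Cref{prop: K3n_big} and \Cref{prop: OG6 another description}, I would construct $L$ directly inside $M$, identify its discriminant form, and then realize $\NS(X)$ as an overlattice of $L\oplus\langle -6\rangle$ of index $3$, the analogue of the characteristic-zero description $v^\perp$ in $\rU^{\oplus 3}\oplus \rE_8(-1)^{\oplus2}\oplus\rA_2(-1)$ with $v^2=-6$, $\mathrm{div}(v)=3$.

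\emph{Existence and uniqueness of $w$.} By \Cref{lemma: squares in Hp}, pick $y\in\rH^{(p)}(-1)$ with $\left(\frac{y^2}{p}\right)=\left(\frac{2}{p}\right)$. Then choose $k,m\in\ZZ$ with $2kp+m^2y^2=2$, and set $w=e+kf+my\in\rU(p)\oplus\rH^{(p)}(-1)$. Since $M$ is $p$-elementary and $p\nmid 2$, $w$ is primitive of divisibility $1$. Uniqueness up to $O(M)$ follows from \cite[Theorem 5.15]{BC20}, exactly as in \Cref{prop: K3n_big}. By \cite[Lemma 5.2]{BC20}, $L=w^\perp$ has signature $(1,21)$ and discriminant form $q_L\cong q_M\oplus\langle -\tfrac12\rangle$. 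Here $q_M$ lives on $(\ZZ/p)^{22}$, and $L$ has rank $22$.

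\emph{Gluing.} Next I would compare with $\NS(X)$, whose discriminant form is known from its local data. Its $p$-part is the form of $\rT_0(p)$ from \Cref{prop:TateModule} with $\sigma_0=11$, its $3$-part is that of $\rA_2(-1)$, and it has no $2$-part. Consider $L\oplus\langle -6\rangle$. Its $2$-parts $\langle-\tfrac12\rangle$ (from $L$) and $\langle-\tfrac16\rangle_2$ must glue, and its $3$-part $\langle -\tfrac16\rangle_3$ must become the $\rA_2(-1)$ form after passing to an overlattice. I would take the overlattice generated by $L\oplus\ZZ v$ and $(u+v)/2$, where $u\in L$ represents the order-$2$ element of $A_L$. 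The isotropy condition $q(u)+q(v/2)\equiv -\tfrac12-\tfrac32\equiv 0 \pmod{2\ZZ}$ holds, so this overlattice is even. Its discriminant group is then $(\ZZ/p)^{22}\oplus\ZZ/3$, with $3$-part $\langle-\tfrac{2}{3}\rangle\cong q_{\rA_2(-1)}$. Its $p$-part is that of $M$, which agrees with the $\NS(X)$ $p$-part by the local computation already done in \Cref{prop: K3n_big}, via \Cref{prop: local property of H^p}. The signature is $(1,22)$. By uniqueness in the genus (the argument of \Cref{prop: NS determined by Artin invariant}, using the spinor-norm computation adapted to $\delta$ replaced by $3$), this overlattice is isomorphic to $\NS(X)$. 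In it, $v$ is primitive with $v^2=-6$. Its divisibility is $3$ because $(v,(u+v)/2)=-3$ and $(v,L)=0$, and $v^\perp=L$.

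\emph{Main obstacle.} I expect the hardest step to be checking that the $p$-adic discriminant forms match exactly, including the Legendre-symbol sign $d_1$, which now involves $\left(\frac{3}{p}\right)$ and $\left(\frac{-1}{p}\right)^{11}$. I would handle this by comparing $\mathrm{disc}$ of $M$ and $w^\perp$ against \Cref{prop:TateModule}(2) with $d=3$ up to squares. A secondary point is confirming that the genus-uniqueness argument still applies when $2\sigma_0=b_2-2$. Since both lattices have rank $24-2$ and discriminant group of length $22$, Nikulin's criterion fails. I would therefore redo the Miranda--Morrison computation: $\Sigma(L_p)=\Gamma_p$, and the $3$-adic factor is $\Gamma_3$ since $v_3=1$ is odd.
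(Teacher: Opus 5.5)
Your argument is essentially the paper's. The paper also glues $L=w^\perp$ to $\langle -6\rangle$ along the $\ZZ/2$-parts of their discriminant groups, which it phrases via the graph of an anti-isometry in \cite[Proposition 1.5.1]{Nikulin}. It then checks that the resulting discriminant form is $q_{\NS(X)}$ and concludes by uniqueness in the genus. The one difference is the divisibility: you get $\mathrm{div}(v)=3$ directly from $(v,\tfrac{u+v}{2})=-3$, whereas the paper computes the index $[\NS(X):\langle v\rangle\oplus v^\perp]=2$ from determinants.

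Do fix the counting and a few side remarks:
\begin{itemize}
\item $M$ has rank $24$, so $L$ has rank $23$ and signature $(1,22)$, and the overlattice has signature $(1,23)$, matching $\NS(X)$.
\item Since $\NS(X)$ has rank $24\geq 22+2$, it is unique in its genus by Nikulin's criterion, i.e.\ by \Cref{prop: NS determined by Artin invariant}. No new Miranda--Morrison computation is needed, and your argument never uses genus uniqueness of $L$.
\item Only $d_0$, not $d_1$, enters the discriminant form, so the ``main obstacle'' you anticipate does not arise.
\item It is $u/2\in L^\vee$, not $u\in L$, that represents the class of order $2$ in $A_L$.
\end{itemize}
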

\begin{proof}
    The existence and uniqueness of $w$ follow as in the proof of \Cref{prop: K3n_big}, using \cite{BC20}. And the discriminant form of $L$ is 
    \[
    q_L=q_3^{(2)}(2)\oplus q_{-\epsilon}^{(p)}(p)\oplus q_1^{(p)}(p)^{\oplus21}.
    \]
    Here $\left(\frac{\epsilon}{p}\right)=(\frac{-1}{p})$.

    Let $\langle-6\rangle$ be a rank 1 lattice generated by $x$ such that $x^2=-6$. We want to find a primitive embedding $x\hookrightarrow \NS(X)$ with orthogonal complement $L$. Since $\NS(X)$ and $L$ are unique in their respective genera, we can use \cite[Proposition 1.5.1]{Nikulin}.

    The discriminant form of $(-6)$ is the group $\ZZ/6\ZZ$, generated by $\tilde{x}$ with quadratic form $q_x(\tilde{x})=-\frac{1}{6}\in\QQ/2\ZZ$. And the subgroup $q_3^{(2)}(2)$ of $q_L$ is the group $\ZZ/2\ZZ$, generated by $\tilde{y}$ with quadratic form $q(\tilde{y})=-\frac{1}{2}\in\QQ/2\ZZ$. Let $D$ be a subgroup of $\ZZ/6\ZZ$ generated by $3\tilde{x}$. Define $\gamma:D\to q_L$ by sending $3\tilde{x}$ to $\tilde{y}$. Then $q_L(\gamma(3\tilde{x}))=q_L(\tilde{y})=-\frac{1}{2}=-q_x(3\tilde{x})\in\QQ/2\ZZ$. Let $S_\gamma$ be the graph of $\gamma$ in $q_x\oplus q_L$. Actually, $S_\gamma$ is generated by $3\tilde{x}+\tilde{y}$. Then we have 
    \[
    S_\gamma^\perp/S_\gamma=q_1^{(3)}(3)\oplus q_{-\epsilon}^{(p)}(p)\oplus q_1^{(p)}(p)^{\oplus21}\cong q_{\NS(X)}.
    \]
    Therefore, by \cite[Proposition 1.5.1]{Nikulin}, we have a primitive embedding $x\hookrightarrow \NS(X)$ with orthogonal complement $L$.

    Now, consider $x$ as a primitive vector in $\NS(X)$. It remains to prove that $x$ has divisibility 3. Denote the divisibility of $x$ by $d$ and $h=\frac{6}{d}$. Define
    \begin{align*}
        \pi: \NS(X)&\to\langle x\rangle^\vee/\langle x \rangle\\
               y   &\mapsto -\frac{(x,y)}{6}x
    \end{align*}
    Let $H=\pi(\NS(X))\cong \frac{1}{h}\ZZ/\ZZ\cong \ZZ/h\ZZ$. We show that $H\cong \NS(X)/\langle x\rangle\oplus \langle x\rangle^\perp$, which is equivalent to $\ker \pi=\langle x\rangle\oplus \langle x\rangle^\perp$. In fact, if $y\in\ker \pi$, then $-6\mid(x,y)$. So $y=-\frac{(x,y)}{6}x+y+\frac{(x,y)}{6}x\in\langle x\rangle\oplus \langle x\rangle^\perp$, then $\ker \pi=\langle x\rangle\oplus \langle x\rangle^\perp$.

    Therefore, $h^2=[\NS(X):\langle x\rangle\oplus \langle x\rangle^\perp]^2=\frac{\det(\langle x\rangle)\cdot\det(\langle x\rangle^\perp)}{\det(\NS(X))}=4$. Thus, $d=3$.
\end{proof}

When $\sigma_0=12$, we have a result similar to \Cref{prop: OG6 big Artin invariant}.
\begin{proposition}\label{prop: OG10 big Artin invariant}
     Let $X$ be a supersingular OG10-type variety. Then the case $\sigma_0=12$ only occurs when $(\frac{-3}{p})=-1$. In this case,  $\NS(X)=\rU(p)\oplus \rE_8(-p)^{\oplus2}\oplus\rL_0(p)$.  
\end{proposition}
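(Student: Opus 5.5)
We mirror the proof of \Cref{prop: OG6 big Artin invariant}. For OG10-type we have $b_2(X)=24$, so $\sigma_0=12$ means $2\sigma_0=b_2(X)$. By \Cref{prop:TateModule} the unimodular part $\rT_1$ then has rank $0$, and
\[
\NS(X)\otimes\ZZ_p=\rT_0(p).
\]
Hence $p$ divides $\langle x,y\rangle$ for all $x,y\in\NS(X)$, and the rescaled form $(x,y)=\frac1p\langle x,y\rangle$ is integral. Call the resulting lattice $N$. It has rank $24$ and signature $(1,23)$, and $\NS(X)=N(p)$.

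Next we compute the local structure of $N$.
\begin{itemize}
\item At $p$: $N\otimes\ZZ_p=\rT_0$ is unimodular.
\item At a prime $\ell\neq p$: $N\otimes\ZZ_\ell$ is $\NS(X)\otimes\ZZ_\ell$ rescaled by the unit $p^{-1}$. By the lifting argument in \Cref{prop: NS determined by Artin invariant}, this is $(\rU^{\oplus 3}\oplus\rE_8(-1)^{\oplus2}\oplus\rA_2(-1))\otimes\ZZ_\ell$ rescaled.
\item Evenness: $\NS(X)$ is even, and $p$ is odd, so $N$ is even. Indeed, $p$ divides $\langle x,x\rangle$, which is even, so $\langle x,x\rangle/p$ is even.
\end{itemize}
Consequently $N$ is an even hyperbolic lattice whose discriminant group is $\ZZ/3\ZZ$, i.e.\ $N$ is $3$-elementary of rank $24$ and $|\disc(N)|=3$. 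Rank $24$ exceeds the length of the discriminant group by far more than $2$, so Nikulin \cite[Theorem 1.13.2]{Nikulin} (or \cite{Shafarevich}) shows that $N$ is unique in its genus. Hence $N$ is determined by its signature and discriminant form $\pm q^{(3)}(3)$.

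The lattice $\rU\oplus\rE_8(-1)^{\oplus 2}\oplus\rL_0$ is even, has signature $(1,23)$, and has discriminant group $\ZZ/3$. Its $3$-adic part is $K_{-1}^{(3)}(3)$, computed above from the diagonalization $\rL_0\otimes\ZZ_3=K^{(3)}_{-1}(3)\oplus(\text{unimodular})$. We must match the sign of this discriminant form with that of $N$, and we also need $\rU(p)\oplus\rE_8(-p)^{\oplus2}\oplus\rL_0(p)\cong\rU(p)\oplus\ldots$. Since $\rU\cong\rU(1)$ and rescaling by $p$ is harmless, this gives $\NS(X)=N(p)\cong\rU(p)\oplus\rE_8(-p)^{\oplus2}\oplus\rL_0(p)$.

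The main obstacle is the $3$-adic bookkeeping, which also yields the existence condition. The discriminant form of $\NS(X)\otimes\ZZ_3$ is the rescaling of that of $\rA_2(-1)$, namely $q^{(3)}_{\theta}(3)$ with $\theta$ determined by $\disc$. Rescaling by $p$ multiplies the $3$-adic unit by $p$. So the $3$-part of $q_N$ equals $q_{-p\cdot c}^{(3)}(3)$ for the constant $c$ coming from $\rA_2(-1)$, whereas the only $3$-elementary even lattice of this rank and signature has a fixed $3$-part, namely the one carried by $\rL_0$. Comparing the squareclass of $p$ in $\ZZ_3^*$ via $\left(\frac p3\right)=\left(\frac{-3}{p}\right)$ by quadratic reciprocity, the two forms agree exactly when $\left(\frac{-3}{p}\right)=-1$.

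An alternative check uses the product formula / oddity condition for the genus of $N$: it is consistent only if $\left(\frac{-3}{p}\right)=-1$. This also recovers the relation among $d_0$, $\left(\frac{-1}{p}\right)^{12}$ and the $3$-adic symbol from \Cref{prop:TateModule}. If $\left(\frac{-3}{p}\right)=1$, the $3$-adic discriminant forms differ, so no such $N$ exists and $\sigma_0=12$ cannot occur. I would carry out this computation by comparing the discriminant $\disc(\rT_0)=d_0$ with $\left(\frac{d_0}{p}\right)=-1$, against $\disc(N)=\pm3$ read $p$-adically: $\left(\frac{-3}{p}\right)$ must equal $-1$. This is the cleanest route, since $\disc(N)=-3$ up to sign, and the sign is fixed by the signature.
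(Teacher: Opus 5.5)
Your proposal is correct and follows the paper's proof. You rescale $\NS(X)$ by $1/p$ to get an even $3$-elementary lattice of signature $(1,23)$, identify it with $\rU\oplus\rE_8(-1)^{\oplus2}\oplus\rL_0$ by uniqueness in its genus, and read off the condition $\left(\frac{-3}{p}\right)=-1$ from a local discriminant comparison; your final check, matching $\disc(N)=-3$ against $\left(\frac{d_0}{p}\right)=-\left(\frac{-1}{p}\right)^{12}=-1$ from \Cref{prop:TateModule}, is a clean equivalent of the paper's discriminant-form comparison.
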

\begin{proof}
    As in the proof of \Cref{prop: OG6 big Artin invariant}, $\NS(X)$ is a 3-elementary lattice scaled by $p$. By \cite{Shafarevich}, it is determined by its discriminant. Thus $\NS(X)$ must be $\rU(p)\oplus \rE_8(-p)^{\oplus2}\oplus\rL_0(p)$. As before, we check the discriminant form to obtain $(\frac{-3}{p})$ cannot be 1.
\end{proof}

\section{Irreducible symplectic varieties from moduli spaces of sheaves}\label{sec:bridgeland}

In this section, we introduce examples of irreducible symplectic varieties of known deformation types given by moduli spaces of  Bridgeland stable objects. We then give numerical criterion to determine when an irreducible symplectic variety arises from a Bridgeland moduli space. Throughout this section, we will work over an algebraically closed field $F$ of characteristic zero. By the Lefschetz principle, we may assume that $F$ can be embedded into $\CC$. For any smooth projective variety $X$ over $F$, we will write $\rH^i(X,\ZZ)$ and $\rH^i(X,\QQ)$ for $\rH^i(X_\CC,\ZZ)$ and $\rH^i(X_\CC,\QQ)$.

\subsection{Twisted sheaves and the Mukai lattice}

Let $S$ be a projective K3 surface or abelian surface defined over $F$.  Let $\srS\rightarrow S$ be a $\mu_m$-gerbe over $S$. This corresponds to a pair $(S,\alpha)$ for some  $\alpha\in H^2_{\et}(S,\mu_m)$, where the cohomology is taken with respect to the \'etale topology. For any integer $m$, there is a Kummer exact sequence 
$$1\rightarrow \mu_m\rightarrow {\mathbb G}_{\mathrm{m}} \xrightarrow{x\mapsto x^m}  {\mathbb G}_{\mathrm{m}}\rightarrow 1$$ and it  induces a surjective map 
\begin{equation}\label{braumap}
\rH^2_{\et}(S,\mu_m)\rightarrow \Br(S)[m]. 
\end{equation}
We denote by $[\srS]$ the image of $\alpha$ in $ \Br(S)[m]$.

By Artin's comparison theorem, since $\rH^3(S,\ZZ)$ is torsion free, there is a $B$-field $B\in\rH^2(S,\QQ)$ such that $[\srS]$ is the image of $B$, see \cite{H_St2005}. We set
$$e^B=(1,B,\frac{B^2}{2})\in\widetilde{\rH}(S,\QQ)$$
and there is an isometry with respect to the Mukai pairing
\begin{align*}
    \exp(B): \widetilde{\rH}(S,\QQ)&\to\widetilde{\rH}(S,\QQ)\\
    (r,c,s)&\mapsto(r,c+rB,\frac{rB^2}{2}+s+c\cdot B)
\end{align*}

We now recall some basic definitions of Mukai lattice and its twisted version, which are fundamental for the study of moduli spaces of sheaves. We refer to \cite[section 3]{LMS14} and \cite[Section 2]{Bragg_2023} for a more comprehensive discussion.

The Mukai lattice of $S$ is given by 
        \[
        \widetilde{\rH}(S)=\bigoplus_{i=0}^2 \rH^{2i}(S,\ZZ(i-1)),
        \]
equipped with the natural Mukai pairing and the weight-$2$ integral Hodge structure. It has a natural subgroup, called the extended N\'eron--Severi lattice, given by
        \[
        N(S)= \ZZ\oplus\NS(S)\oplus \ZZ.
        \]

We now turn to the twisted version of the above definitions. The twisted Mukai lattice of $\srS\to S$ is given by
\[
\widetilde{\rH}(\srS)=e^B \widetilde{\rH}(S)\subset \widetilde{\rH}(S)\otimes\QQ.
\]
The twisted N\'eron--Severi lattice is given by 
\[
N(\srS)=e^B N(S)\subset\widetilde{\rH}(\srS).
\]

\begin{definition}
    An $\srS$-twisted sheaf $\cF$ on $\srS$ is an $\cO_\srS$ -module compatible with the $\mu_m$-gerbe structure (see \cite[Def 2.1.2.4]{Lie07}). 
\end{definition}

With the notation as above, the Mukai vector of $\cF$ is defined as
$$v^B(\cF)=e^B(\ch_\srS(\cF)\cdot\sqrt{\td(S)})\in N(\srS),$$
 where $\ch_\srS(\cF)$ is the twisted Chern character of $\cF$ (see \cite[3.3.4]{LMS14}).

\begin{definition}
    A vector $v=(r,\zeta,b)$ in $N(\srS)$ is called a Mukai vector. The vector $v$ is called primitive if $e^{-B}v$ is primitive in $N(S)$.
\end{definition}

\begin{remark}
    Via the Artin comparison theorem, the above construction of $B$-field, Mukai lattice, and Mukai vector can be generalized to $\ell$-adic cohomology groups together with their Galois representations. We will use these in later sections directly without further specification.
\end{remark}

\subsection{Irreducible symplectic varieties via moduli space of (twisted) sheaves}

Let $S$ be a K3 surface or abelian surface, and $\srS\to S$ be a $\mu_m$-gerbe with Brauer class $\alpha$. After fixing a Mukai vector $v\in\widetilde{H}(\srS)$ and a polarization $H$ on $S$, we have the following coarse moduli spaces:
\begin{itemize}
    \item The smooth but in general non-proper moduli space $\cM^{\rms\rmt}_H(\srS,v)$ of stable twisted sheaves on $\srS$.
    \item The proper but in general singular moduli space $\cM_H(\srS,v)$ of semistable twisted sheaves on $\srS$.
\end{itemize}

When $v$ is primitive and $H$ is $v$-generic, $\cM_H(\srS,v)=\cM^{\rms\rmt}_H(\srS,v)$ is a smooth projective moduli space. These moduli spaces, or their Albanese fibers, turn out to be irreducible symplectic varieties of $K3^{[n]}$-type or $\Kum^n$-type by the work of Mukai and Yoshioka.

\begin{theorem}[\cite{Yo06, DMMN26}]
\label{theorem:Bridgeland of K3^n type and Kum^n type}
    Let $(\srS,H,v)$ be as above and $v^2=2n$. Then $\cM_H(\srS,v)$ is a smooth projective symplectic variety of dimension $2n+2$. Moreover,
    \begin{enumeratea}
        \item If $S$ is a K3 surface, then $\cM_H(\srS,v)$ is an irreducible symplectic variety of $K3^{[n+1]}$-type. There is a Hodge isometry
        \[
        \theta: \widetilde{\rH}(\srS)\supset v^\perp \to \rH^2(\cM_H(\srS,v),\ZZ)
        \]
        induced by the quasi-universal sheaf.
        \item If $S$ is an abelian surface and $2n\geq4$, then the Albanese fiber $\cK_H(\srS,v)$ of $\cM_H(\srS,v)$ is an irreducible symplectic variety of generalized Kummer deformation type. There is a Hodge isometry 
        \[
        \theta: \widetilde{\rH}(\srS)\supset v^\perp \to \rH^2(\cK_H(\srS,v),\ZZ)
        \]
        induced by the quasi-universal sheaf.
    \end{enumeratea}
\end{theorem}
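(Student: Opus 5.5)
This is a result cited from the literature (Yoshioka, and for the twisted case a recent reference), so the plan is to reduce to the untwisted and known cases rather than reprove the structure theory.

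\textbf{Step 1: reduce the twisted case to the untwisted one.} For $\srS$ twisted, with $v=(r,\zeta,b)$ primitive and $H$ $v$-generic, we deform $(S,\alpha,H,v)$ within the moduli of polarized twisted surfaces. The deformation keeps $v$ algebraic, and we aim to reach a point where the Brauer class becomes trivial, or where a twisted Fourier--Mukai equivalence identifies the moduli space with an untwisted one. Stability and genericity are open conditions, so the relative moduli space $\cM_H(\srS_t,v_t)$ is smooth and proper over the base. Ehresmann's theorem then transports the deformation type and the map $\theta$ from one fibre to another. Smoothness of dimension $v^2+2=2n+2$ and the symplectic form come from Mukai's argument: the tangent space is $\mathrm{Ext}^1(\cF,\cF)$, obstructions lie in the traceless part of $\mathrm{Ext}^2$, which vanishes, and the Yoneda pairing gives the symplectic form. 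All of this works verbatim for twisted sheaves.

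\textbf{Step 2: the untwisted K3 case.} Yoshioka's method applies directly:
\begin{enumerate1}
  \item Use Fourier--Mukai transforms and reflections to deform $(S,v)$ to the rank-one vector $v=(1,0,-n)$.
  \item At that vector, $\cM_H(S,v)=S^{[n+1]}$.
  \item Define $\theta(w)$ as the K\"unneth component of $\ch(\cE)\sqrt{\td}\cdot w^\vee$ in $\rH^2$, where $\cE$ is a quasi-universal sheaf. Normalizing by the similitude makes $\theta$ independent of the choice of $\cE$.
  \item Check that $\theta$ is a Hodge isometry onto $\rH^2$ at the Hilbert scheme point, using Beauville's description $\rH^2(S^{[n+1]})=\rH^2(S)\oplus\ZZ\delta$.
  \item Propagate the isometry along the deformation, since $\theta$ is flat.
\end{enumerate1}

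\textbf{Step 3: the abelian case.} Take the Albanese fibre of the map $\cM\to A\times\hat A$ given by $(\det,\det\circ\Phi)$. One shows $\cM\cong(A\times\hat A\times \cK)/(\text{finite group})$ via a Bogomolov-type \'etale cover. One then deforms to $K_{n}(A)$ and concludes irreducibility and simple connectedness of $\cK$ from Beauville. The restriction of $\theta$ to $v^\perp$ is injective and isometric onto $\rH^2(\cK)$ precisely when $v^2\ge 4$, which explains that hypothesis.

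\textbf{Main obstacle.} The hardest step is the twisted deformation in Step 1. One has to keep $H$ $v$-generic along the path and handle the twisted Chern character, so that the relative moduli space stays smooth and proper. I would first try a Lieblich-style stack of twisted sheaves over a family of gerbes, then specialize the Brauer class to zero. The alternative is the twisted derived equivalence argument of Huybrechts--Stellari, which carries $v$ to an untwisted Mukai vector.
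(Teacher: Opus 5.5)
The paper gives no proof of this statement; it simply cites Yoshioka and \cite{DMMN26}. Your outline follows the standard route of those references: Mukai's smoothness and symplectic-form argument, Yoshioka's Fourier--Mukai/reflection deformation to $S^{[n+1]}$ with $\theta$ checked against Beauville's description, the \'etale splitting $\cK\times A\times\hat A\to\cM$ for abelian surfaces, and a reduction of the twisted case to the untwisted one. In substance, then, it agrees with what the paper relies on. You are right that the delicate point in Step 1 is keeping $H$ $v$-generic at the endpoint of the deformation, where $\NS$ has jumped. Openness of genericity alone does not give this, so the deformation has to be chosen so that no new wall passes through $H$; your remark that ``stability and genericity are open'' does not by itself make the relative moduli space proper over the whole base.

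Two points in Step 3 are wrong.

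\emph{Index.} Since $\dim\cK_H(\srS,v)=v^2+2-4=2n-2$, the Albanese fibre deforms to $K_{n-1}(A)$ in the paper's indexing, not $K_n(A)$. This is consistent with $v^\perp\cong\rU^{\oplus3}\oplus\langle-2n\rangle$ being the $\rH^2$-lattice of $\Kum^{n-1}$-type.

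\emph{Threshold.} Your claim that $\theta$ is an isometry onto $\rH^2(\cK)$ ``precisely when $v^2\geq4$'' fails at $v^2=4$. There $\cK$ is a Kummer K3 surface, so $b_2(\cK)=22$, whereas $v^\perp\subset\widetilde{\rH}(\srS)$ has rank $7$. Hence $\theta$ can only be a Hodge-isometric embedding, not an isomorphism. Yoshioka's isomorphism statement requires $v^2\geq 6$, i.e.\ $\dim\cK\geq 4$. So you should not justify the printed hypothesis $2n\geq4$ by this threshold. With $2n\geq4$, part (b) is correct for the deformation-type assertion and for $\theta$ as an embedding. The claim that $\theta$ is an isometry onto $\rH^2$ needs $2n\geq 6$.
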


On the other hand, if $v$ is not primitive, the moduli space $\cM_H(\srS,v)$ is singular. In the special case of $v=(2,0,-2)$, O'Grady gives a crepant resolution $\widetilde{\cM}_H(\srS,v)$. And then Sorger and Lehn generalized this construction to the case when $v=2v_0$ with $v_0$ primitive and $v_0^2=2$. These constructions give two new types of irreducible symplectic varieties as follows:

\begin{theorem}[\cite{OGrady_K3, OGrady_Abelian, LS06}]
\label{theorem: OG type example}
    Using the above notations, $\widetilde{\cM}_H(\srS,v)$ is a smooth projective symplectic variety. Moreover,
    \begin{enumeratea}
        \item When $S$ is a K3 surface, $\widetilde{\cM}_H(\srS,v)$ is an irreducible symplectic variety of OG10-type. The class of the exceptional divisor $c_1(E)$ has norm $-6$ and divisibility $3$ in $\rH^2(\widetilde{\cM}_H(\srS,v),\ZZ)$, whose orthogonal complement is identified with $\rH^2(\cM_H(\srS,v),\ZZ)$. Moreover, there is a Hodge isometry:
        \[
        \theta : \rH^2(\cM_H(\srS,v),\ZZ)\to v^\perp\subset\widetilde{\rH}(\srS)
        \]
        \item When $S$ is an abelian surface, the Albanese fiber $\widetilde{\cK}_H(\srS,v)$ of $\widetilde{\cM}_H(\srS,v)$ is an irreducible symplectic variety of OG6-type. The class of the exceptional divisor $c_1(E)$ is divisible by $2$. Write $c_1(E)=2\delta$. Then $\delta$ has norm $-2$ and divisibility $2$ in $\rH^2(\widetilde{\cK}_H(\srS,v),\ZZ)$, whose orthogonal complement is identified with $\rH^2(\cK_H(\srS,v),\ZZ)$. Moreover, there is a Hodge isometry:
        \[
        \theta : \rH^2(\cK_H(\srS,v),\ZZ)\to v^\perp\subset\widetilde{\rH}(\srS)
        \]
    \end{enumeratea}
\end{theorem}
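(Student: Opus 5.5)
This statement is a compilation of results of O'Grady, Lehn--Sorger, Perego--Rapagnetta and Rapagnetta. I would reduce it to those known cases rather than reprove it from scratch.

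\textbf{Step 1: reduce to the untwisted case.} First I would dispose of the twisting. Over $F$ of characteristic zero with $S$ a K3 or abelian surface, a Brauer class $\alpha$ on $S$ together with the non-primitive vector $v=2v_0$, $v_0^2=2$, can be related to an untwisted situation. The mechanism is a derived equivalence $\mathrm{D}^b(\srS)\simeq \mathrm{D}^b(S')$ to an untwisted (possibly different) surface, or else deformation of the triple $(\srS,H,v)$. For the deformation route I would follow the strategy of Perego--Rapagnetta: deform $(S,\alpha,H,v)$ in a family of polarized twisted surfaces in which $v$ stays of type $(1,1)$ and $H$ stays $v$-generic. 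One then connects to the untwisted case $(S_0,H_0,(2,0,-2))$, and the relative moduli spaces form a flat family of singular symplectic varieties. The key input is that the singularities are everywhere locally analytically the O'Grady singularity, namely the quotient $\Sym^2$ of $\CC^4$-type. These are uniformly described by Lehn--Sorger, so the blow-up of the reduced singular locus gives a simultaneous crepant resolution in the family. Smoothness, projectivity and the symplectic form on $\widetilde{\cM}_H(\srS,v)$ then follow exactly as in Lehn--Sorger: blowing up the singular locus once resolves, and the symplectic form extends because the resolution is crepant.

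\textbf{Step 2: deformation type.} Given the family from Step 1, the deformation type is constant along it, by \Cref{prop: defornation type in characteristic zero} or its analytic analogue. The general fibre is O'Grady's original example, so the OG10, respectively OG6, type follows in the K3, respectively abelian, case. For the abelian case one must first pass to Albanese fibres. The map $\widetilde{\cM}\to \cM\to S\times \hat S$ is isotrivial, so its fibres deform together.

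\textbf{Step 3: lattice statements.} The lattice statements follow Rapagnetta's computation in the untwisted case, transported along the family by parallel transport.
\begin{itemize}
\item[(i)] Pullback along the resolution identifies $\rH^2(\cM_H(\srS,v),\ZZ)$ with $c_1(E)^\perp$. Here one uses that $\cM$ has rational singularities and that $\rH^2$ of the singular space is pure.
\item[(ii)] The Mukai--Donaldson map, defined via a quasi-universal family of twisted sheaves, gives a Hodge isometry $v^\perp\to \rH^2(\cM_H(\srS,v),\ZZ)$. Here I would cite Perego--Rapagnetta; in the twisted case the Hodge structure on $\widetilde{\rH}(\srS)$ is the $B$-twisted one.
\item[(iii)] The norm and divisibility of the exceptional class come from Rapagnetta's formulas: $c_1(E)^2=-6$ with divisibility $3$ for OG10, and $\delta^2=-2$ with divisibility $2$ for OG6. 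Since norm and divisibility are invariant under parallel transport, they carry over to the twisted fibres.
\end{itemize}

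\textbf{Main obstacle.} I expect the hard step to be the existence of the connecting family in the twisted setting. One needs a polarization that remains $v$-generic throughout, and a locally trivial family of the singular moduli spaces, so that the simultaneous resolution exists. I would try first to use the Perego--Rapagnetta deformation argument for ``$(m,k)$-triples'' with $m=2$, $k=1$, which already handles twisted data in the K3 case. For abelian surfaces I would adapt it to Albanese fibres.
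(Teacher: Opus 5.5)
\textbf{Verdict.} Your plan is correct, and it is more detailed than the paper, which gives no argument here. The paper quotes the statement from O'Grady and Lehn--Sorger and uses it as a black box.

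\textbf{Comparison with the paper.} Those three references cover less than the statement. O'Grady treats only the untwisted vector $(2,0,-2)$. Lehn--Sorger give the local structure and the one-step symplectic resolution for untwisted $v=2v_0$. The remaining claims need more:
\begin{itemize}
\item the deformation type for general $v$;
\item the twisted case;
\item the lattice assertions: norm $-6$ and divisibility $3$ for $c_1(E)$, resp.\ $\delta^2=-2$ with divisibility $2$, and $\theta$ onto $v^\perp$.
\end{itemize}
These rest on the Rapagnetta and Perego--Rapagnetta computations. They also need a deformation of $(\srS,H,v)$ to the untwisted case with a simultaneous resolution, which is the route you describe. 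You are right that building this connecting family is the real work.

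\textbf{Points to fix.}
\begin{itemize}
\item \emph{The local model is misstated.} By Lehn--Sorger, near a point of $\Omega$ the space $\cM$ is analytically $(\CC^4,0)\times(Z,0)$ with $Z=\{A\in\mathfrak{sp}(4) : A^2=0\}$, a nilpotent orbit closure. Along $\Sigma\setminus\Omega$ it is an $A_1$-singularity times a smooth germ. The symmetric-square structure belongs to the singular locus $\Sigma\cong\Sym^2\cM_H(\srS,v_0)$, not to $\cM$. You only use that the local type is uniform in the family, so this does not affect the argument.
\item \emph{Untwisting by a derived equivalence $\mathrm{D}^b(\srS)\simeq\mathrm{D}^b(S')$ is not available in general.} It requires a hyperbolic plane in $N(\srS)$. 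Even when it exists, it sends Gieseker moduli to Bridgeland moduli. Keep the deformation route.
\item \emph{The integral identification of $\rH^2$ is not yet justified.} Rational singularities and purity only identify $\rH^2(\cM_H(\srS,v))$ with $c_1(E)^\perp$ over $\QQ$. To get it over $\ZZ$, you can transport the untwisted integral statement along your family. Alternatively, once $\theta$ embeds $v^\perp$ isometrically into $c_1(E)^\perp$, note that both lattices have discriminant of absolute value $2$: namely $6\cdot 3/3^2$ in the OG10 case and $2\cdot 4/2^2$ in the OG6 case. Hence the embedding is onto.
\end{itemize}
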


\begin{notation}
    Given an irreducible symplectic variety $X$ of known type, we will say that $X$ is \emph{modular} if it is birational to one of those in \Cref{theorem:Bridgeland of K3^n type and Kum^n type,theorem: OG type example}.
\end{notation}

\subsection{A numerical  characterization}
Given an irreducible symplectic variety of known type over $F$, we want to give a numerical criterion to determine whether it is modular. This question has been settled in recent years through the work of several authors. We summarize in the following theorem.

\begin{theorem}\label{Thm:moduli}
    Let $X$ be an irreducible symplectic variety over $F$ of known type. Then there exist twisted K3 or abelian surface $\srS$, algebraic Mukai vector $v$ and a $v$-generic polarization $H$,
    \begin{enumerate}
        \item $K3^{[n]}$-type:
            $X \bir \cM_H(\srS, v)$ if $\NS(X)$ contains a nonzero isotropic vector.
            \vspace{.1cm}
        \item {$Kum_n$-type:} 
            $X \bir \cK_H(\srS,v)$ if $\NS(X)$ contains a nonzero isotropic vector.
             \vspace{.1cm}
        \item {OG10 type:} 
            $X \bir  \widetilde{\cM}_H(\srS, {v})$ if $\NS(X)$ contains a nonzero isotropic vector whose image in $\rH^2(X,\ZZ)$ lies in $\sigma^\perp$. Here $\sigma\in H^{1,1}(X,\ZZ)$ has square $-6$ and divisibility $3$ in $\rH^2(X,\ZZ)$.
             \vspace{.1cm}
        \item {OG6 type:} 
             $X \bir  \widetilde{\cK}_H(\srS, {v})$ if $\NS(X)$ contains a nonzero isotropic vector whose image in $\rH^2(X,\ZZ)$ lies in $\sigma^\perp$. Here $\sigma\in \rH^{1,1}(X,\ZZ)$ has square $-2$ and divisibility $2$ in $\rH^2(X,\ZZ)$.
             \vspace{.1cm}
    \end{enumerate}
\end{theorem}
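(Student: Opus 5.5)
This statement is a summary of known results in characteristic zero, so the plan is to reduce each case to the literature via the Lefschetz principle and the global Torelli theorem. By the Lefschetz principle we may assume $F=\CC$. Both birationality and the existence of a twisted surface with the stated Mukai vector are insensitive to the choice of embedding, since all objects descend to a finitely generated field. The common mechanism in every case is the same. First, an isotropic class in $\NS(X)$, or in the relevant orthogonal complement $\sigma^\perp$, lets us embed the transcendental part of $\rH^2(X,\ZZ)$ into a twisted Mukai lattice $\widetilde{\rH}(\srS)$ of a K3 or abelian surface. Second, we realize $\rH^2(X,\ZZ)$, or $\sigma^\perp$, as $v^\perp$ for a suitable algebraic Mukai vector $v$. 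Third, Markman's global Torelli theorem (parallel transport plus Hodge isometry up to monodromy) upgrades the Hodge-theoretic identification to a birational map.

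For the $\rK3^{[n]}$-type, take $e\in\NS(X)$ primitive isotropic. The Markman--Mukai lattice $\Lambda\supset\rH^2(X,\ZZ)$ is unimodular of signature $(4,20)$, and the algebraic part of $\Lambda$ contains $e$ together with the class $v$, where $v^\perp=\rH^2(X,\ZZ)$. If $e$ has divisibility $1$ in $\rH^2$, then $\NS$ contains a copy of $\rU$ built from $e$, and the transcendental lattice embeds into $\rU^\perp$, which is a K3 lattice. Surjectivity of the period map then produces an untwisted K3 surface $S$ with $\widetilde{\rH}(S)\cong\Lambda$ as Hodge structures. If $e$ has larger divisibility, we follow Huybrechts' twisted construction: the $B$-field is read off from $e/\mathrm{div}(e)$, which produces a twisted K3 $\srS$ with $\widetilde{\rH}(\srS)\cong\Lambda$ as Hodge structures. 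In either case we choose $H$ to be $v$-generic, and \Cref{theorem:Bridgeland of K3^n type and Kum^n type} together with Markman's Torelli gives $X\bir\cM_H(\srS,v)$. Bayer--Macrì's wall-crossing is needed to pass from Bridgeland moduli to Gieseker moduli with a generic polarization. This is the known criterion of Addington / Markman / Bayer--Macrì, extended to the twisted case by Huybrechts.

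For the $\Kum^n$-type, the same argument applies using Yoshioka's results and the global Torelli theorem for generalized Kummers of Markman and Mongardi. The extra subtlety there is the index of the monodromy group, which is handled by the orientation character; this case is \cite{DMMN26}. For OG10 and OG6, restrict to $\sigma^\perp$. Its Hodge structure is isometric to $v^\perp$ for a Mukai vector $v=2v_0$ with $v_0^2=2$, which is where the isotropic class in $\sigma^\perp$ is used. The twisted surface is produced as above. Then \Cref{theorem: OG type example} together with the Torelli theorems of Onorati (OG10) and Mongardi--Rapagnetta (OG6), including the monodromy computations, gives the birational map. Here we cite Mongardi--Onorati and Felisetti--Giovenzana--Grossi for the precise statements.

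The main obstacle is the lattice-theoretic step of turning an isotropic class of arbitrary divisibility into a (twisted) surface: we need the embedding to be compatible with the correct monodromy orbit of $v$, and not merely with its isometry class. This is the point where we rely most heavily on the cited works rather than on an independent argument.
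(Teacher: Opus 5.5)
Over $\CC$ your argument is, like the paper's, essentially a citation of the literature, and that part is fine. The gap is your opening reduction: ``by the Lefschetz principle we may assume $F=\CC$ \dots\ since all objects descend to a finitely generated field''.

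The statement is an existence claim over $F$. The twisted surface $\srS$, the vector $v$, the polarization $H$ and the birational map must all be defined over $F$. The surface you obtain over $\CC$ from surjectivity of the period map descends to some finitely generated field $L$. But $L$ is an extension of $F$, not a subfield of it. Take $F=\overline{\QQ}$, or $F=\bar{K}$; the latter is the case actually needed in \Cref{prop: lifting to Bridgeland over K3}, where $S$ must live over $\bar{K}$ so that it has a Galois representation and a reduction. Independence of the embedding $F\hookrightarrow\CC$ is a different issue and does not address this.

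The missing step is the specialization argument that the paper gives:
\begin{enumerate}
\item Choose a smooth irreducible $F$-variety $U$ with function field $L$.
\item Spread $(S',\srS'\to S',v',H')$ out to $(\widetilde{S},\widetilde{\srS}\to\widetilde{S},\tilde{v},\tilde{H})$ over $U$.
\item Form the relative moduli space $\cM_{\tilde{H}}(\widetilde{\srS},\tilde{v})$; its generic fibre is birational to $X_L$.
\item Shrink $U$ so that the birational map and the good properties of the fibres (smoothness, absence of strictly semistable sheaves) hold over all of $U$.
\item Restrict to a point of $U(F)$, which exists because $F$ is algebraically closed.
\end{enumerate}
Model completeness of algebraically closed fields would also work, but only after bounding all the data so that the statement becomes first order.

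There are also smaller inaccuracies, none affecting the conclusion.
\begin{itemize}
\item Divisibility $1$ of $e$ in $\rH^2(X,\ZZ)$ does not put a copy of $\rU$ inside $\NS(X)$, because a partner $f$ with $(e,f)=1$ may be transcendental. For example, $\langle e_1,2f_1+e_2+f_2\rangle\subset\rU^{\oplus 2}$ is primitive of discriminant $-4$ and contains no $\rU$, although $e_1$ has divisibility $1$.
\item The Brauer class is controlled by the pairing of $e$ with the algebraic part of the extended Mukai lattice, not by ``$e/\mathrm{div}(e)$''.
\item Since the statement allows twisted surfaces, you can simply use \cite[Lemma 2.6]{Huybrechts_2017} in every case and drop the case split.
\item For twisted OG6, ``the twisted surface is produced as above'' must be replaced by Shioda's Torelli theorem for abelian surfaces, as in \Cref{lem: numerical twisted moduli of OG10}.
\end{itemize}
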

\begin{proof}
Over $\CC$, the result is known by the work of: \cite{Addington_2016,Huybrechts_2017} for $K3^{[n]}$-type; \cite{DMMN26,Mongardi_Monodromy_HK} for $\Kum^n$-type; \cite{FGG24} for OG10-type; \cite{Gro22} and \Cref{lem: numerical twisted moduli of OG10} for OG6-type. We take the $K3^{[n]}$-type as an example to show how to use a spreading out argument to obtain the result over $F$ and the other three cases are similar. Over $\CC$, we know that there is a tuple $(S',\srS'\to S',v',H')$ over $\CC$ such that $X_\CC\bir \cM_H(\srS',v')$. The tuple $(S',\srS'\to S',v',H')$ can be defined over a subfield $L$ of $\CC$, which is finitely generated over $F$. Let $U$ be an irreducible smooth variety over $F$ such that the generic point $\eta$ has residue field $L$. Up to shrinking $U$, we can assume that the tuple $(S',\srS'\to S',v',H')$ spreads out to a tuple $(\widetilde{S},\widetilde{\srS}\to \widetilde{S},\tilde{v},\tilde{H})$ over $U$. We construct the relative moduli space $\cM_{\tilde{H}}(\widetilde{\srS},\tilde{v})$, whose generic fiber is birational to the generic fiber $X_L$ of the trivial family $X\times_F U$. Thus, after shrinking $U$ if necessary, there exists an $F$-point $u\in U$ such that $X$ is birational to the fiber $\cM_{\tilde{H}_u}(\widetilde{\srS}_u,\tilde{v}_u)$. This proves the assertion.
\end{proof}

The next lemma finishes the proof of the previous theorem for the twisted OG6 moduli space case. It might be well-known, but we could not find a suitable reference, as the available references focus only on the untwisted case. So we give a proof here.

\begin{lemma}\label{lem: numerical twisted moduli of OG10}
    Let $X$ be an irreducible symplectic variety of OG6-type over $\CC$. Then there exist a twisted abelian surface $S$, an algebraic Mukai vector $v$ and a $v$-generic polarization $H$ such that $X$ is birational to $\widetilde{\cK}_H(\srS, {v})$ if and only if the following conditions hold:
    \begin{enumeratea}
        \item There is $\sigma\in\rH^{1,1}(X,\ZZ)$ that has square $-2$ and divisibility $2$ in $\rH^2(X,\ZZ)$.
        \item The embedding $\sigma^\perp\hookrightarrow \boldsymbol{\Lambda_8}=\rU^{\oplus 4}$ gives a Hodge structure on $\boldsymbol{\Lambda}_8$ such that $\boldsymbol{\Lambda}_8^{1,1}$ contains an isotropic vector.
    \end{enumeratea}
\end{lemma}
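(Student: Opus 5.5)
The plan is to follow the untwisted argument of Grossi \cite{Gro22}, and to add a Brauer class wherever the untwisted argument needs a genuine (untwisted) Mukai vector. Recall that Mongardi--Rapagnetta--Sacc\`a give a distinguished embedding $\rH^2(X,\ZZ)\supset\sigma^\perp\hookrightarrow\boldsymbol{\Lambda}_8=\rU^{\oplus4}$. Here $\sigma$ is the unique (up to sign and monodromy) class of square $-2$ and divisibility $2$. For $\widetilde{\cK}_H(\srS,v)$, the class $\sigma$ is $\delta$, and $\boldsymbol{\Lambda}_8$ is identified with the Mukai lattice $\widetilde{\rH}(A,\ZZ)\cong\rU^{\oplus4}$ of the underlying abelian surface. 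The embedding $\sigma^\perp\hookrightarrow\boldsymbol\Lambda_8$ is then $\theta^{-1}$, onto $v^\perp$.

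For the ``only if'' direction, I use that birational irreducible symplectic varieties have Hodge isometric $\rH^2$ via a parallel-transport operator. So it suffices to check (a) and (b) on $\widetilde{\cK}_H(\srS,v)$ itself. For (a), take $\sigma=\delta$ and apply \Cref{theorem: OG type example}(b). For (b), the Hodge structure on $\boldsymbol{\Lambda}_8$ is that of $\widetilde{\rH}(\srS)=e^B\widetilde{\rH}(A)$, with $\rH^{2,0}$ spanned by $e^B\sigma_A$. The twisted N\'eron--Severi lattice $N(\srS)=e^BN(A)$ always contains the isotropic vector $e^B(0,0,1)=(0,0,1)$, which is of type $(1,1)$. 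Hence $\boldsymbol\Lambda_8^{1,1}$ contains an isotropic vector.

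For the ``if'' direction, let $w\in\boldsymbol\Lambda_8^{1,1}$ be primitive isotropic. Since $\boldsymbol\Lambda_8$ is unimodular, I pick $w'$ with $(w,w')=1$, which gives a splitting $\boldsymbol\Lambda_8=\langle w,w'\rangle\oplus\langle w,w'\rangle^\perp\cong\rU\oplus\rU^{\oplus3}$. Then $\rU^{\oplus3}\cong\rH^2(A_0,\ZZ)$ receives an induced weight-$2$ Hodge structure: project the period $\omega$ (orthogonal to $w$) to $w^\perp/w$. By the surjectivity of the period map for complex tori together with Shioda's theorem (any Hodge structure of weight $2$ on $\rU^{\oplus3}$ with $\rH^{2,0}$ of dimension $1$ comes from a complex torus), I obtain a complex torus $A$. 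Writing $\omega=e^B\sigma_A$ with $B\in\rH^2(A,\QQ)$ determined by the $w'$-component (in Huybrechts' formalism for generalized K3 structures), I get $\boldsymbol\Lambda_8\cong\widetilde{\rH}(A,B)$ as Hodge structures. I then have to check that $A$ is projective. This holds because $v:=$ the image of $\sigma^\perp{}^\perp$, a vector with $v^2=2$ and of type $(1,1)$, is twisted-algebraic; since $X$ is projective, $\NS$ has a positive class, which forces $\NS(A)$ to contain a positive class. The twisted abelian surface $\srS$ is then defined by the Brauer class $\exp(B)$, and the Mukai vector is $2v$. After choosing a $v$-generic $H$, \Cref{theorem: OG type example}(b) gives $\widetilde{\cK}_H(\srS,2v)$ with $\rH^2$ Hodge isometric to that of $X$, compatibly with $\sigma$ and the embedding into $\boldsymbol\Lambda_8$.

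Finally, I conclude birationality from the birational Torelli theorem for OG6-type (Mongardi--Rapagnetta). The monodromy group is $\mathrm{O}^+(\rH^2)$, and by Mongardi--Rapagnetta it is the full orientation-preserving group with trivial kernel issues. So a Hodge isometry, composed with $\pm1$ if necessary, is a parallel transport, and hence the two varieties are birational. The main obstacle I expect is the projectivity of $A$ and matching the twisting data, that is, showing that the Hodge structure obtained from $w$ really is $\widetilde{\rH}(\srS)$ for an algebraic torus and an honest Brauer class. To handle this, I would first imitate Huybrechts' argument for twisted K3s \cite{Huybrechts_2017}, adapted to $\rU^{\oplus4}$ using Mongardi's description of the embedding $\sigma^\perp\subset\boldsymbol\Lambda_8$.
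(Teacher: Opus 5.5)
Your proposal follows the paper's route. It realizes $\boldsymbol{\Lambda}_8$ as a twisted Mukai lattice $\widetilde{\rH}(\srS)$ by a Huybrechts-type period argument plus Shioda's Torelli theorem, forms $\widetilde{\cK}_H(\srS,2v)$ with $v$ spanning $(\sigma^\perp)^\perp$, and concludes with the OG6 Torelli theorem; you also spell out the ``only if'' direction via $e^B(0,0,1)$ more fully than the paper does.

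One point to tighten. The $w'$-coefficient of $\omega$ is zero, so $B$ is fixed by the $w$-coefficient through $B\cdot\sigma_A$, and rationality of $B$ needs a $(1,1)$-class $x$ with $(x,w)\neq 0$; this is the paper's ``$\rU(m)\subset\boldsymbol{\Lambda}_8^{1,1}$''. You get it from the projectivity of $X$ by the same signature count you use for $A$: the $(1,1)$-part of $\boldsymbol{\Lambda}_8\otimes\RR$ has signature $(2,4)$, so the mutually orthogonal positive classes $v$ and $h+\tfrac{(h,\sigma)}{2}\sigma$ (with $h$ ample) cannot both be orthogonal to the isotropic $w$.
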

\begin{proof}
    The proof is similar to \cite[Theorem 1.1]{Gro22} but we are working in the twisted case. See also \cite[Proposition 2.16]{DMMN26} for a similar argument for the twisted modular $\Kum^n$-case. One direction follows from \Cref{theorem: OG type example}, so we only need to prove the other direction. Since $\boldsymbol{\Lambda}_8^{1,1}$ contains an isotropic vector, it must contain a copy of $\rU(m)$ for some integer $m$. By an argument similar to that in \cite[Lemma 2.6]{Huybrechts_2017} and Shioda's Torelli theorem \cite{Shioda_Peroid_Abelian_surface}, there is an abelian surface $S$ and a $\mu_k$-gerbe $\srS\to S$ such that there is a Hodge isometry
    \[
    \boldsymbol{\Lambda}_8\to \widetilde{\rH}(\srS).
    \]
    It is straightforward to see that $\sigma^\perp$ is isometric to $\rU^{\oplus 3}\oplus \langle-2\rangle$, and the image of $\sigma^\perp$ in $\widetilde{\rH}(\srS)$ has orthogonal complement generated by $w$ with $w^2=2$. We may assume $w$ is positive by replacing $w$ by $-w$ if necessary and let $v=2w$. Take a $v$-generic polarization $H$ on $S$, we can construct the OG6 variety $\widetilde{\cK}_H(\srS,v)$ and we have a Hodge isometry
    \[
    \rH^2(\widetilde{\cK}_H(\srS,v),\ZZ)=\rH^2({\cK}_H(\srS,v),\ZZ)\oplus \ZZ(\frac{1}{2}E)\to \sigma^\perp \oplus\ZZ\sigma=\rH^2(X,\ZZ).
    \]
    Then by the Torelli theorem for OG6-type irreducible symplectic variety, $X$ is birational to $\widetilde{\cK}_H(\srS,v)$.
\end{proof}

If we strengthen the conditions in \Cref{Thm:moduli} suitably, we can furthermore obtain that $X$ is birational to the untwisted moduli space, i.e. the gerbe $\srS\to S$ is trivial. But we will only need this for the generalized Kummer type in the next section, so we only state and prove it in this case while the other three cases are similar.

\begin{corollary}\label{Cor: untwisted moduli for the Kummer type}
    Let $X$ be an irreducible symplectic variety of $\Kum^n$-type over $F$. If there is a primitive vector $w\in\NS(X)$ such that $w^2=-(2n+2)$ and $w$ spans a direct summand of the lattice $\rH^2(X,\ZZ)$, then $X$ is birational to the generalized Kummer variety $K_n(A)$ for some abelian surface $A$.
\end{corollary}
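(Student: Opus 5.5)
We deduce the corollary from the numerical criterion of \Cref{Thm:moduli}(2), with an extra lattice-theoretic step that forces the Brauer class to be trivial. By the spreading-out argument of \Cref{Thm:moduli}, it suffices to work over $\CC$.

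Set $\Lambda=\rH^2(X,\ZZ)\cong \rU^{\oplus3}\oplus\langle-2(n+1)\rangle$. Since $w$ has square $-(2n+2)$ and spans a direct summand, its orthogonal complement $w^\perp\subset\Lambda$ is unimodular of signature $(3,3)$. It is even, so $w^\perp\cong\rU^{\oplus3}$ and $\Lambda=w^\perp\oplus\ZZ w$. We embed $\Lambda$ into the Mukai lattice $\widetilde\Lambda=\rU^{\oplus4}$ by sending $w^\perp$ isometrically onto the first three copies of $\rU$ and $w$ to $e-(n+1)f$ in the fourth copy. Here $e,f$ is a standard basis of that copy, so $(e-(n+1)f)^2=-2(n+1)$. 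The orthogonal complement of $\Lambda$ is then generated by $v=e+(n+1)f$, and $v^2=2n+2$.

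Next we put a Hodge structure on $\widetilde\Lambda$ by declaring $\rH^{2,0}(X)\subset w^\perp\otimes\CC$ to be the $(2,0)$-part, so the whole fourth copy of $\rU$ is of type $(1,1)$. The unimodular Hodge lattice $w^\perp\cong\rU^{\oplus3}$, carrying the Hodge structure of weight two induced from $X$, is exactly what Shioda's Torelli theorem \cite{Shioda_Peroid_Abelian_surface} requires to produce a complex torus $A$ with $\rH^2(A,\ZZ)\cong w^\perp$ as Hodge lattices. The torus $A$ is projective, because $\NS(A)\cong w^\perp\cap\NS(X)$ contains a positive class. Indeed, $\NS(X)$ is hyperbolic and $w$ is negative, so $w^\perp\cap\NS(X)$ has signature $(1,\ast)$. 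We then obtain a Hodge isometry $\widetilde\Lambda\cong\widetilde{\rH}(A)=\rH^2(A,\ZZ)\oplus\rU$ that is untwisted, i.e.\ the $B$-field is zero, because the extra hyperbolic plane $\rU=\rH^0\oplus\rH^4$ is spanned by integral $(1,1)$-classes. This is precisely the step where the direct-summand hypothesis matters: in the twisted case of \Cref{Thm:moduli} one only gets $\rU(m)$, and the resulting Brauer class is nontrivial.

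Under this identification, $v=(1,0,-(n+1))$ is the Mukai vector of the ideal sheaf of $n+1$ points on $A$. Moreover, $v^\perp\cong\Lambda$ Hodge-isometrically, and under the map $\theta$ of \Cref{theorem:Bridgeland of K3^n type and Kum^n type}(b) the class $w$ corresponds to the exceptional class $-\delta$, up to sign. Hence $\rH^2(K_n(A),\ZZ)\cong\rH^2(X,\ZZ)$ as Hodge lattices.

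The main obstacle is to upgrade this Hodge isometry to a birational map. For $\Kum^n$-type, the global Torelli theorem, via Markman's and Mongardi's description of the monodromy group \cite{Mongardi_Monodromy_HK}, requires the isometry to be a parallel-transport operator. The monodromy group is the subgroup of $\rO^+(\Lambda)$ acting on the discriminant group through the character $\det\cdot\chi$, so an arbitrary Hodge isometry need not qualify. To fix this, we would compose with $\pm1$ and with isometries coming from autoequivalences of $A$ (translations, $-1$, dualizing), which realize the missing orientation and discriminant-character classes. Failing that, we would invoke the formulation in \cite{DMMN26}, which states that any Hodge isometry $v^\perp\cong\rH^2(X,\ZZ)$ extending to $\widetilde\Lambda$ with the right orientation yields birationality; the extension to $\widetilde\Lambda$ holds here by construction. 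This gives $X\bir K_n(A)$.
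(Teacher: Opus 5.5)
Your argument follows the same route as the paper. You split $\rH^2(X,\ZZ)=\rU^{\oplus 3}\oplus\ZZ w$, place $w$ in one copy of $\rU$ inside $\widetilde\Lambda=\rU^{\oplus 4}$, and use the resulting hyperbolic plane of $(1,1)$-classes. The paper finishes in one line with \cite[Remark 4.4]{Mongardi_Monodromy_HK}; you unpack that step via Shioda, Yoshioka and global Torelli, and you rightly flag it as the crux. It does not go through as you describe.

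Criteria of this kind (Markman--Mongardi, \cite{DMMN26}) are formulated for the \emph{canonical} embedding $\iota_X\colon \rH^2(X,\ZZ)\hookrightarrow\widetilde\Lambda$. This is the $\rO(\widetilde\Lambda)$-orbit obtained from Yoshioka's isometry by parallel transport, and it is well defined because $\mathrm{Mon}^2$ acts on $A_\Lambda=\Lambda^\vee/\Lambda\cong\ZZ/(2n+2)$ only by $\pm1$. Up to $\rO(\widetilde\Lambda)$, primitive embeddings of $\Lambda$ with complement $\ZZ v$, $v^2=2n+2$, are classified by the gluing $w/(2n+2)\mapsto a\,v/(2n+2)$, where $a^2\equiv 1\pmod{4(n+1)}$; call $\pm a$ the type of $w$.

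Your embedding $w\mapsto e-(n+1)f$ has type $\pm1$, and nothing forces $\iota_X$ to give $w$ type $\pm1$. So ``the extension to $\widetilde\Lambda$ holds by construction'' is true only for the embedding you chose. None of your corrections can change the type. Translations and $-1_A$ act trivially on $\widetilde{\rH}(A)$, while dualization and $-\mathrm{id}$ extend to $\widetilde\Lambda$ and act on $A_\Lambda$ by $\pm1$. When $n+1$ is a prime power, $\rO(A_\Lambda)=\{\pm1\}$ and the issue disappears. In general the obstruction lives in $\rO(A_\Lambda)/\{\pm1\}$, and Hodge isometries of $\rH^2(X,\ZZ)$ realizing the missing class need not exist.

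The gap is genuine: for $n=5$ the conclusion fails.
Let $\Lambda=\rU^{\oplus3}\oplus\ZZ\delta_0$ with $\delta_0^2=-12$ the Kummer exceptional class, which has type $\pm1$. Take $u\in\rU^{\oplus3}$ with $u^2=2$ and set $w=5\delta_0+12u$.
Then $w^2=-12$ and $w$ has divisibility $12$, so $\Lambda=\ZZ w\oplus w^\perp$. However $w/12\equiv 5\,\delta_0/12$ in $A_\Lambda\cong\ZZ/12$. Here $5\not\equiv\pm1\pmod{12}$, although $25\equiv1\pmod{24}$ makes $a=5$ an admissible gluing.
Choose $h\in w^\perp\cong\rU^{\oplus3}$ primitive with $h^2=12$. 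By surjectivity of the period map there is a marked $(X,\eta)$ in the connected component containing $K_5(A_0)$ with its Yoshioka marking, whose period is very general among those orthogonal to $w$ and $h$. Then $X$ is projective and $\NS(X)=\eta^{-1}(\ZZ w\oplus\ZZ h)$.
A class $\eta^{-1}(\alpha w+\beta h)$ of square $-12$ satisfies $\alpha^2-\beta^2=1$, so it equals $\pm\eta^{-1}(w)$, which has type $\pm5$. A birational map $X\dashrightarrow K_5(A')$ would be a parallel transport, hence would preserve types. It would therefore send the exceptional class of $K_5(A')$ to an algebraic class of square $-12$ and type $\pm1$, and no such class exists. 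So $X$ satisfies the hypotheses but is birational to no generalized Kummer variety.

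With respect to $\iota_X$, the saturation of $\langle w,v\rangle$ is still a copy of $\rU$. So one gets an Albanese fibre of a moduli space, but its Mukai vector has rank $2$ or $3$. The paper's one-line citation passes over the same point. A correct version needs either the extra hypothesis that $w$ lies in the monodromy orbit of the Kummer exceptional class, or the weaker conclusion that $X$ is birational to some untwisted $\cK_H(A,v')$.
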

\begin{proof}
    By the same spreading out argument as before, we only need to work over $\CC$. By the assumption, we can write the lattice $\rH^2(X,\ZZ)$ in the form $\rU^{\oplus 3}\oplus\langle w\rangle$. Then we can choose an embedding 
    \[
    \rH^2(X,\ZZ)\hookrightarrow \boldsymbol{\Lambda}_8
    \]
    by embedding the rank one summand $\langle w\rangle$ into one copy of $\rU$. The Hodge structure of $\boldsymbol{\Lambda}_8$ induced by $\rH^2(X,\ZZ)$ clearly satisfies that $\boldsymbol{\Lambda}_8^{1,1}$ contains a $\rU$. Then by \cite[Remark 4.4]{Mongardi_Monodromy_HK}, $X$ is birational to some $K_n(A)$.
\end{proof}

\subsection{Motive of the modular irreducible symplectic variety}
So far, we have already seen plenty of examples of irreducible symplectic varieties arising from the moduli spaces of sheaves on a K3 surface or abelian surface. The geometry of the moduli spaces can be studied through the relationship with the underlying surfaces. In particular, as we will see in this section, the Chow motive of the modular irreducible symplectic variety can be controlled by the Chow motive of the underlying surface. For the $\Kum^n$-type and OG6-type, our argument below only treats the untwisted case. To start, we need the result that the Chow motive of irreducible symplectic varieties over $\CC$ is birational invariant.

\begin{theorem}\label{thm:birational Chow motives}\cite[Theorem 3.13]{SSIV}
    Let $X$ and $Y$ be two birational irreducible symplectic varieties over $\CC$, then there is an isomorphism $\frh(X)\cong\frh(Y)$.
\end{theorem}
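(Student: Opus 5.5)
The plan is to follow the strategy of Rieß and Riess--Vial: a birational map between irreducible symplectic varieties is a composition of Mukai flops only in special cases, so I will instead use Huybrechts' deformation argument. Let $f\colon X\dashrightarrow Y$ be birational. By Huybrechts, there are smooth proper families $\cX\to T$ and $\cY\to T$ over a (one-dimensional, smooth, pointed) base $(T,0)$ with $\cX_0\cong X$, $\cY_0\cong Y$, together with an isomorphism $\cX|_{T\setminus\{0\}}\cong\cY|_{T\setminus\{0\}}$ over $T\setminus\{0\}$. The graph closure $\Gamma$ of this fiberwise isomorphism specializes to a cycle $Z=\Gamma_0\in\CH^{2n}(X\times Y)$. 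This cycle has the form $\Gamma_f+\sum_i Z_i$, where the $Z_i$ are supported over proper closed subsets of $X$ and $Y$ in both factors, and $Z$ induces the isomorphism $\rH^*(X)\cong\rH^*(Y)$ as the limit of isomorphisms, i.e.\ as parallel transport.

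First, I want $Z$ to be invertible at the level of Chow motives, not merely cohomology. Fiberwise, the transpose ${}^t\Gamma_t$ is inverse to $\Gamma_t$ for $t\neq0$, so ${}^t\Gamma\circ\Gamma-\Delta_{\cX}$ restricts to zero in $\CH(\cX_t\times\cX_t)$ for general $t$. Specialization of Chow groups (Fulton, ch.~10) commutes with composition of correspondences. Since specialization $\CH(\cX_{\eta}\times\cX_\eta)\to\CH(X\times X)$ is defined on the generic fiber and kills nothing that was not already zero generically, I get ${}^tZ\circ Z=\Delta_X$ and $Z\circ{}^tZ=\Delta_Y$ in $\CH^{2n}$ with $\QQ$-coefficients. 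Hence $Z\colon\h(X)\to\h(Y)$ is an isomorphism, with inverse ${}^tZ$.

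Second, there is a subtlety in making the specialization argument rigorous over $\CC$. The isomorphism exists only over $T\setminus\{0\}$, not over a DVR with algebraic generic fiber. To handle this, I will algebraize Huybrechts' families. Both $X$ and $Y$ are projective, so I can work in the polarized setting: choose an ample class on $Y$, and transport it to a big movable class on $X$. Deforming along a curve in the (algebraic) moduli of lattice-polarized varieties where the class becomes ample on both sides, I obtain algebraic families over a smooth curve $T$, and the isomorphism over the generic point of $T$ is a rational map of $T$-schemes. The closure $\Gamma$ is then an algebraic cycle on $\cX\times_T\cY$, and Fulton's specialization map $\sigma\colon\CH(\cX_\eta\times\cY_\eta)\to\CH(X\times Y)$ applies. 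Compatibility of $\sigma$ with composition and transpose is standard, which gives the relation above.

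The main obstacle is this algebraization step: Huybrechts' construction is analytic (twistor/period arguments), and I need an algebraic curve where the isomorphism is genuinely algebraic away from $0$. I would first try to cite the argument in Rieß's thesis, who carried this out for Chow rings via "ambient" algebraic families. If that is unavailable, I would construct the family directly from the algebraic deformation spaces of $(X,L)$ and $(Y,L')$, using the Torelli-type identification of their local deformation spaces via $f_*$ and the fact that the general point of the Noether--Lefschetz locus has Picard rank one. In that case the birational map must be an isomorphism, because a birational map of irreducible symplectic varieties that sends an ample class to an ample class extends to an isomorphism.
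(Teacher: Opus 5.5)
Your outline is the standard argument behind the quoted statement (Rieß's theorem, which \cite[Theorem 3.13]{SSIV} records): Huybrechts' degeneration of $f$ into isomorphisms, followed by specialization of the graph and its transpose. The specialization step is fine, with one correction. Specialization is not injective, and you do not need it to be. You only use that $\sigma$ is a well-defined homomorphism on $\CH^*(\cX_\eta\times_\eta\cY_\eta)_\QQ$ that is compatible with composition and transposition of correspondences. Then ${}^t\Gamma_\eta\circ\Gamma_\eta=\Delta_{\cX_\eta}$ on the generic fibre gives ${}^tZ\circ Z=\Delta_X$. The identity on the generic fibre holds once the isomorphism is algebraic over $\eta$, possibly after a finite base change of $T$.

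The genuine gap is the algebraization, and neither of your two sketches supplies it.
\begin{enumerate1}
\item The class $L=f^*H$ is movable but not nef on $X$ unless $f$ is an isomorphism. So $(X,L)$ is not a point of any moduli space of (lattice-)polarized varieties. Such a moduli space produces the family $\cY\to T$, but by Matsusaka--Mumford separatedness the limit of its general fibres there is $(Y,H)$, never the non-separated partner $X$.
\item Your fallback fails for a more basic reason: no family through $X$ along your Picard-rank-one direction can be projective over the base. Suppose $\cX\to T$ were projective with relatively ample $\cA$, with $\cX_0\cong X$, with $L$ extending to $\cL$, and with $\NS(\cX_t)_\QQ=\QQ\,c_1(\cL_t)$ for very general $t$. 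Over a small disc $\Delta\ni 0$, restriction $\rH^2(\cX_\Delta,\QQ)\to\rH^2(\cX_t,\QQ)$ is an isomorphism for every $t\in\Delta$. Hence $c_1(\cA)=r\,c_1(\cL)$ with $r>0$, so $\cA|_X$ is numerically a positive multiple of $L$, and $L$ would be ample.
\end{enumerate1}
So ``the algebraic deformation space of $(X,L)$'' is exactly what has to be constructed. Grothendieck existence is unavailable, since by the same computation no ample class of $X$ extends along this direction. The Picard-rank-one and ample-to-ample argument only identifies the general fibres; it does not make a family through $X$ algebraic.

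To close the argument you have two options. You can quote the theorem, as the paper does. Or you can algebraize $\cX$ as a smooth proper algebraic space with projective fibres over an \'etale neighbourhood of $0$ in $T$. One route would be Artin's algebraization of formal modifications, applied to the graph closure over the polarized family $\cY$. You would then run your specialization argument in that category, as in Lemma~\ref{lem: specialization of motives}.
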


\begin{theorem}\label{thm: motive of Bridgelan moduli space}
    Let $X$ be as follows:
    \begin{enumeratea}
        \item When $S$ is a K3 surface and $\srS\to S$ is a $\mu_m$-gerbe, $X$ is the $\cM_H(\srS,v)$ in \Cref{theorem:Bridgeland of K3^n type and Kum^n type} or the $\widetilde{\cM}_H(\srS,v)$ in \Cref{theorem: OG type example}.
        \item When $S$ is an abelian surface, $X$ is the generalized Kummer variety $K_n(S)$ or the untwisted $\widetilde{\cK}_H(S,v)$ in \Cref{theorem: OG type example}.
    \end{enumeratea}
    Then the motive $\frh(X)$ belongs to the pseudo-abelian tensor subcategory of motives generated by the motive of $S$.
\end{theorem}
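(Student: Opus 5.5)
We argue case by case; in each case the aim is to express $\frh(X)$ through motives that already lie in the tensor subcategory generated by $\frh(S)$.

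\emph{Case (a), $\rK3^{[n]}$-type, twisted or untwisted.} Here the input is the result of Bülles (and its twisted extension, e.g.\ via Fu--Vial or via derived equivalences of twisted K3 surfaces). The point is that $\frh(\cM_H(\srS,v))$ is a direct summand of $\bigoplus_i \frh(S^{m_i})(n_i)$. The mechanism is Markman's decomposition of the diagonal. In $\CH(M\times M)$, the class of the diagonal can be written as a polynomial in the Chern classes of $\mathrm{Ext}$-complexes built from a (quasi-)universal sheaf, integrated over $S$. Consequently $\Delta_M$ factors through a sum of powers $S^k$ with Tate twists, which gives the summand statement.

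\emph{Twisted setting.} For the twisted gerbe $\srS$, the quasi-universal sheaf still exists. Its twisted Chern character, rescaled by the rank, gives a correspondence with rational coefficients. Markman's argument uses only the Grothendieck--Riemann--Roch identity together with the vanishing of $\mathrm{Ext}^2$ off the diagonal (Serre duality and stability). Both of these survive twisting, so the argument goes through unchanged.

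\emph{Case (a), OG10-type.} For $\widetilde{\cM}_H(\srS,v)$ with $v=2v_0$ we use the stratification of the O'Grady resolution. The resolution $\widetilde{\cM}\to\cM$ is semismall. By de Cataldo--Migliorini, its Chow motive decomposes into motives of the strata, up to Tate twists. The strata are as follows:
\begin{itemize}
\item the open stable locus;
\item the locus $\mathrm{Sym}^2\cM_H(\srS,v_0)$ of strictly semistable sheaves;
\item the diagonal $\cM_H(\srS,v_0)$.
\end{itemize}
The closures of the last two are controlled by $\cM(\srS,v_0)$, which is of $\rK3^{[2]}$-type and has already been handled. That leaves the motive of the stable open part, or equivalently the full motive of $\widetilde{\cM}$. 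For this we use the argument of Fu--Vial / Floccari--Fu--Zhang for OG10. The diagonal decomposition from the quasi-universal family on the stable locus is extended across the resolution via the blow-up description in Lehn--Sorger. This shows that $\frh(\widetilde{\cM})$ lies in $\langle\frh(S)\rangle$. Alternatively, one can use the Laza--Saccà--Voisin or Li--Pertusi--Zhao construction realizing OG10 via K3 categories and pass through the Kuznetsov component. In the twisted K3 case, however, Markman-type diagonal decomposition adapted by Floccari--Fu--Zhang is the cleanest route.

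\emph{Case (b), Kummer-type.} For $K_n(S)$ we use the De Cataldo--Migliorini decomposition for the Hilbert--Chow map $S^{[n+1]}\to S^{(n+1)}$. Restricting to the Albanese fiber, as in Fu--Tian--Vial, expresses $\frh(K_n(S))$ as a sum of motives of the fibers of the summation maps $S^{\lambda}\to S$. Each such fiber is isomorphic to an abelian variety isogenous to a power of $S$ (a torsor under $\ker(S^{l}\to S)$), or a quotient by a finite group. Hence each piece lies in $\langle\frh(S)\rangle$.

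\emph{Case (b), OG6-type.} For the untwisted $\widetilde{\cK}_H(S,v)$ we use the Mongardi--Rapagnetta--Saccà description: $\widetilde{\cK}$ is birational to the quotient of $K$ by an involution, where $K$ is birational to a $\Kum^2$-type variety built from $S$. By \Cref{thm:birational Chow motives} we may replace $\widetilde{\cK}$ by a birational model. Floccari's result then says that the motive of the quotient is a summand of the motive of a blow-up of $K_2(S)$ along loci that are abelian varieties isogenous to $S$ or finite sets. Combining this with the Kummer case and the blow-up formula for motives places $\frh(\widetilde{\cK})$ in the generated subcategory.

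\emph{Main obstacle.} I expect the hardest step to be the OG10 twisted case. There the quasi-universal family lives only on the stable locus, and the diagonal decomposition must be extended over the singular locus and the exceptional divisor of the resolution. I would first try to cite Floccari--Fu--Zhang's untwisted OG10 result and check that their proof uses only Grothendieck--Riemann--Roch and Ext-vanishing properties that hold verbatim for twisted sheaves.
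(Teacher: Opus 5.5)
\textbf{The $\rK3^{[n]}$ and $\Kum^n$ cases.} These are fine. They are the results of B\"ulles and Fu--Tian--Vial, which the paper cites.

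\textbf{The OG10 case.} Your de Cataldo--Migliorini detour is circular: the open stratum contributes the intersection motive of $\cM$, whose singularities are not quotient singularities. Your argument therefore really reduces to the Floccari--Fu--Zhang theorem, which is also what the paper cites.

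\textbf{The OG6 case: a genuine gap.} The geometry you invoke is wrong. The Mongardi--Rapagnetta--Sacc\`a double cover of $\widetilde{\cK}$ is by a sixfold $Z_\cK$ of $\rK3^{[3]}$-type, birational to a moduli space of sheaves on a K3 surface $S_\cK$. It is not by a $\Kum^2$-type variety, which is four-dimensional, so no quotient of it can be birational to a sixfold. Resolving the rational map $Z_\cK\dashrightarrow\widetilde{\cK}$ requires blowing up two things (see \Cref{theorem: double cover} and \Cref{prop: Chow motives of OG6}):
\begin{itemize}
\item $256$ copies of $\PP^3$;
\item $\Delta'\cong\mathrm{Bl}_{B[2]}(B/\pm1)$, where $B$ is an abelian \emph{fourfold}, equal to $S\times S^\vee$ in the modular case.
\end{itemize}
So your argument needs three inputs: $\frh(Z_\cK)\in\langle\frh(S_\cK)\rangle$ (B\"ulles), $\frh(S_\cK)\in\langle\frh(S)\rangle$, and $\frh(B)\in\langle\frh(S)\rangle$. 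That package is the content of Floccari's theorem, which the paper simply cites. Your description of it, as a blow-up of $K_2(S)$ along abelian varieties isogenous to $S$, is not a valid substitute.

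\textbf{The base field: a second omission.} In this section everything is over an arbitrary algebraically closed field $F$ of characteristic zero. The theorem is applied with $F=\bar K$, and the embedding of motives must be defined over $F$ so that it can later be specialized to characteristic $p$. All your inputs are over $\CC$: the four cited theorems, the MRS construction, and \Cref{thm:birational Chow motives}. Correspondences over $\CC$ need not descend, since $\CH^*(Y_F)_\QQ\to\CH^*(Y_\CC)_\QQ$ is injective but not surjective.

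This descent is the actual content of the paper's proof. It spreads out over a smooth $F$-variety $U$, whose function field is finitely generated over $F$, both of the following:
\begin{itemize}
\item the data $(S,\srS,v,H)$;
\item the correspondences $\alpha,\beta$ exhibiting $\frh(X_\CC)$ as a summand of $\bigoplus_i\frh(S_\CC^{m_i})(t_i)$.
\end{itemize}
It then specializes to an $F$-point of $U$ via \Cref{lem: specialization of motives}, which preserves the relation $\beta\circ\alpha=\id$. You need to add this step.
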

\begin{proof}
    Over $\CC$, the result is well known by \cite[Theorem 0.1]{Bu20}, \cite[Theorem 1.5]{Motive_Kummer}, \cite[Theorem 1.3]{Motive_OG10}, and \cite[Theorem 1.1]{Motive_OG6}. Over the field $F$, it follows easily from a similar spreading-out argument in the proof of \Cref{Thm:moduli}, together with the \Cref{lem: specialization of motives} below.
\end{proof}
The next lemma shows that the embedding of Chow motives is stable under specialization. For the sake of later sections, we work in a more general case.

\begin{lemma}\label{lem: specialization of motives}
    Let $R$ be a DVR and $U=\Spec R$. Suppose that $\cX$ and $\cY$ are smooth proper algebraic spaces over $U$ such that their special fibers and generic fibers are all smooth projective varieties. Let $\eta$ be the generic point of $U$ and $u$ be its closed point. If, on the generic fiber, $\frh(\cX_\eta)$ belongs to the pseudo-abelian tensor subcategory of motives generated by the motive of $\cY_\eta$, then, on the fiber over $u$, $\frh(\cX_u)$ also belongs to the pseudo-abelian tensor subcategory of motives generated by the motive of $\cY_u$.
\end{lemma}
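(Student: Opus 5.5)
The plan is to work at the level of correspondences, using the specialization map on Chow groups. Fulton's specialization map $\sigma\colon \CH^*(\cZ_\eta)_\QQ\to\CH^*(\cZ_u)_\QQ$ exists for any smooth proper $\cZ/U$. We may assume $R$ is complete with algebraically closed residue field and replace $\eta$ by a geometric generic point, since passing to a finite extension of the fraction field only introduces a norm/transfer that is harmless with $\QQ$-coefficients. Fulton's map is compatible with exterior products, pullbacks along the diagonals, and composition of correspondences (Fulton, Intersection Theory, Ch.~20). Consequently $\sigma$ induces a tensor functor from relative motives over $U$ to $\CHM$ of the special fiber. Applied to products $\cX\times_U\cY^{\times_U r}$, which are again smooth proper over $U$, it sends $\frh(\cX_\eta)$ to $\frh(\cX_u)$ and $\frh(\cY_\eta)^{\otimes r}$ to $\frh(\cY_u)^{\otimes r}$ compatibly with Tate twists.

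Next I make the hypothesis concrete. Saying that $\frh(\cX_\eta)$ lies in the pseudo-abelian tensor subcategory generated by $\frh(\cY_\eta)$ means, after collecting summands, that $\frh(\cX_\eta)$ is a direct summand of a finite sum $M_\eta=\bigoplus_i \frh(\cY_\eta^{r_i})(n_i)$. So there are correspondences $\alpha\in\Hom(\frh(\cX_\eta),M_\eta)$ and $\beta\in\Hom(M_\eta,\frh(\cX_\eta))$ with $\beta\circ\alpha=\Delta_{\cX_\eta}$. These are cycle classes on the products $\cX_\eta\times\cY_\eta^{r_i}$, which are generic fibers of the smooth proper spaces $\cX\times_U\cY^{r_i}$. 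Specializing gives $\sigma(\alpha)$ and $\sigma(\beta)$. Compatibility with composition gives $\sigma(\beta)\circ\sigma(\alpha)=\sigma(\Delta_{\cX_\eta})$. Specialization of the diagonal is the diagonal, because the closure of $\Delta_{\cX_\eta}$ is $\Delta_{\cX}$, which is flat over $U$. Hence $\frh(\cX_u)$ is a direct summand of $\bigoplus_i\frh(\cY_u^{r_i})(n_i)$, which proves the claim.

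The main obstacle is justifying Fulton's specialization and its compatibility with composition for algebraic spaces rather than schemes. Only the fibers are assumed projective, while the total spaces $\cX$ and $\cY$ are merely algebraic spaces. To handle this, I will define $\sigma$ by taking the closure of a cycle in the total space, or by using the refined Gysin map for the regular embedding of the special fiber. Both operations make sense for smooth algebraic spaces, via Chow groups of algebraic spaces in the sense of Edidin--Graham with $\QQ$-coefficients. Compatibility with composition then reduces to commuting refined Gysin pullbacks with flat pullback and proper pushforward. This is formal once the intersection theory is available, and it only involves the projective fibers, with cycles defined up to rational equivalence on them.
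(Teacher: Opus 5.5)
Your proposal is correct and is essentially the paper's proof: you specialize the correspondences $\alpha,\beta$ exhibiting $\frh(\cX_\eta)$ as a summand of $\bigoplus_i\frh(\cY_\eta^{r_i})(n_i)$, and you use that specialization on smooth proper algebraic spaces respects composition and sends $\Delta_{\cX_\eta}$ to $\Delta_{\cX_u}$. The paper only packages the different Tate twists into a single motive $(\cZ_w,c_w,t)$ via factors $(\PP^1)^{t-t_i}$, which is equivalent to your codimension bookkeeping. One caveat: drop your preliminary reduction to an algebraically closed residue field and a geometric generic point and specialize directly from $\eta$ to $u$. That reduction is never used, and it is not harmless for the lemma as stated, because a summand relation after extending the residue field to its algebraic closure need not descend (a non-CM elliptic curve over a number field and its quadratic twist give a counterexample).
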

\begin{proof}
    By \cite[\href{https://stacks.math.columbia.edu/tag/0EDQ}{Tag 0EDQ}]{stacks-project} and \cite[Section 6]{DW1998}, there is a structure of Chow ring for a smooth proper algebraic space and the Gysin homomorphism and the specialization homomorphism are well-defined, just like the usual theory for schemes.
    
    On the generic fiber, we have an embedding
    \[
    \frh(\cX_\eta)\hookrightarrow\bigoplus_{i=1}^n\frh(\cY_\eta^{s_i})(t_i).
    \]
    Let $t=\max_{i=1}^n(t_i)$ and let $w$ be any point in $U$ ($w=\eta\ \text{or}\ u$), then we have 
    \[
    \frh(\cY_w^{s_i})(t_i)=\frh(\cY_w^{s_i})(t)\otimes \LL^{t-t_i}=\left(\cY_w^{s_i}\times (\PP_{k(w)}^1)^{t-t_i},[\Delta_{\cY_w^{s_i}}]\times [\PP_{k(w)}^1\times\{e\}]^{t-t_i},t\right).
    \]
    Then
    \[
    \bigoplus_{i=1}^n\frh(\cY_w^{s_i})(t_i)=\left( \coprod_{i=1}^n(\cY_w^{s_i}\times (\PP_{k(w)}^1)^{t-t_i}),\sum_{i=1}^n  [\Delta_{\cY_w^{s_i}}]\times [\PP_{k(w)}^1\times\{e\}]^{t-t_i},t \right).
    \]
    For convenience, we denote $\sum_{i=1}^n  [\Delta_{\cY_w^{s_i}}]\times [\PP_{k(w)}^1\times\{e\}]^{t-t_i}$ by $c_w$, which is a correspondence in $\mathrm{Corr}^0(\cZ_w,\cZ_w)$. Clearly, $c_\eta$ specializes to $c_u$.

    We construct a new space $\cZ$ over $U$ by
    \[
    \cZ=\coprod_{i=1}^n\left(\cY^{s_i}\times (\PP_U^1)^{t-t_i}\right).
    \]

    Now, since $\frh(\cX_\eta)$ can be embedded into $(\cZ_\eta,c_\eta,t)$, there are morphisms $\alpha_\eta\in \Hom\left(\frh(X_\eta),(\cZ_\eta,c_\eta,t)\right)$ and $\beta_\eta\in \Hom\left((\cZ_\eta,c_\eta,t),\frh(X_\eta)\right)$ such that $\beta_\eta\circ\alpha_\eta=\id$. Applying specialization to $\alpha_\eta$ and $\beta_\eta$, we have correspondences $\alpha_u\in\mathrm{Corr}^{t}(\cX_u,\cZ_u)$ and $\beta_u\in\mathrm{Corr}^{-t}(\cZ_u,\cX_u)$. Since the specialization map preserves the intersection product and is compatible with pullback and pushforward, $\alpha_u$ and $\beta_u$ are morphisms between $\frh(\cX_u)$ and $(\cZ_u,c_u,t)$. Moreover, they give an embedding 
    \[
    \frh(\cX_u)\hookrightarrow (\cZ_u,c_u,t)=\bigoplus_{i=1}^n\frh(\cY_u^{s_i})(t_i).
    \]
    Therefore, $\frh(\cX_u)$ belongs to the pseudo-abelian tensor subcategory of motives generated by the motive $\frh(\cY_u)$.
\end{proof}

\section{Supersingular Tate conjecture for $\rK3^{[n]}$-type, $\Kum^n$-type, and OG10-type}\label{sec:three-types}

In this section, we prove the \Cref{thm:intro:SSMotive} of this paper for $\rK3^{[n]}$-type, $\Kum^n$-type, and OG10-type. The proofs are similar, with some minor technical adjustments. However, the proof for the OG6-type needs a new argument, so we leave it to the next section.

\begin{assumption}\label{Good settings}
    Throughout this section, let $X$ be a supersingular excellent irreducible symplectic variety over $k$ of the following types:
    \begin{enumeratei}
    \item $\rK3^{[n]}$-type,
    \item $\Kum^n$-type with Artin invariant $\sigma_0\neq 3$,
    \item OG10-type with Artin invariant $\sigma_0\neq 12$.
    \end{enumeratei}
\end{assumption}

\subsection{Lifting to modular irreducible symplectic variety}

We first show that $X$ in  \Cref{Good settings} can be lifted to a modular irreducible symplectic variety over a field of characteristic zero by the numerical criterion in \Cref{Thm:moduli}. This is based on the following proposition.
\begin{proposition}\label{prop: lifting to Bridgeland over K3}
Let $X$ be as in the case $(i)$ (resp. case $(iii)$) of \Cref{Good settings}.  Then there exist the following data:
\begin{itemize}
    \item a finite flat extension $V$ of $W$ and a smooth projective family $\cX\to \Spec V$ whose special fiber $\cX_0\cong X$,
    \item a K3 surface $S$ over $\bar{K}$ and a $\mu_m$-gerbe $\srS\to S$,
\end{itemize}
such that the generic fiber $\cX_{\bar{K}}$ is birational to an irreducible symplectic variety $\cM_H(\srS,v)$ (resp.  $\widetilde{\cM}_H(\srS,v)$) as in \Cref{theorem:Bridgeland of K3^n type and Kum^n type} (resp. \Cref{theorem: OG type example}). Moreover, we can arrange the underlying surface $S$ such that $S$ carries a line bundle $L$ of degree prime to $p$.
\end{proposition}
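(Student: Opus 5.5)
The plan is to choose a small saturated sublattice of $\NS(X)$ containing an isotropic vector (plus, for OG10, the class $\sigma$) and a polarization, then lift $X$ together with that sublattice via \Cref{prop: lifting with line bundles}. After that, \Cref{Thm:moduli} applies on the geometric generic fiber.

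First I will pick the sublattice using the classification of \Cref{sec:NS-classification}. In case (i), for $1\le\sigma_0\le 10$, \Cref{prop: K3^n small Artin invariant} shows $\NS(X)$ contains $\rU$ or $\rU(p)$ as an orthogonal summand; for $\sigma_0=11$, \Cref{prop: K3n_big} gives a $\rU(p)$ summand. Let $T\subset \NS(X)$ be this rank-$2$ summand, with basis $e,f$ and $e^2=0$. In case (iii) with $\sigma_0\le 10$, the summand $\rA_2(-1)$ contains a vector $\sigma$ of square $-6$ and divisibility $3$ in $\rH^2$, and $\rU$ or $\rU(p)$ lies in $\sigma^\perp$. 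For $\sigma_0=11$ I will use \Cref{prop:OG10 another description}: $\sigma=v$ with $v^\perp\cong w^\perp\subset M$, and I expect to arrange a $\rU(p)$ summand inside $w^\perp$ by choosing $w$ in $\rU(p)\oplus\rH^{(p)}(-1)$, as in \Cref{prop: K3n_big}.

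Next I take the saturation $N$ of the lattice spanned by $T$, $\sigma$ (in the OG10 case) and an ample class $\xi$ of the form $\xi=e+mf+\dots$. Since a primitive polarization must be $\zeta_0$, I will replace the ample class by a primitive one inside $N$. The rank of $N$ is at most $4$. Since $N=\lceil b_2/2\rceil-3$ equals $9$ or $9$ (for $b_2=23,24$), \Cref{prop: lifting with line bundles} lifts $X$ with all of $N$ to some $\cX/V$. The hypotheses hold because $p>\dim X$ forces $p\nmid \disc$ and $p\nmid c_M$ from Table~\ref{tab:BBform} (for OG10, $p>10$ excludes $3$). By specialization, $c_1$ is injective and the BB forms agree, so the generic fiber has $\NS(\cX_{\bar K})\supset N$ with $e$ isotropic and nonzero. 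For OG10, divisibility $3$ of $\sigma$ in $\rH^2$ transfers because $\rH^2$ of the generic and special fibers are identified integrally $\ell$-adically (and $3\ne p$). \Cref{Thm:moduli} then gives $\cX_{\bar K}\bir \cM_H(\srS,v)$, resp.\ $\widetilde{\cM}_H(\srS,v)$.

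The main obstacle is the last clause: making $S$ carry a line bundle of degree prime to $p$. The plan is to include an extra class in the lifted lattice. In the construction behind \Cref{Thm:moduli} (following \cite{Huybrechts_2017}), $\NS(S)$ is essentially $e^\perp/e$ intersected with the algebraic part of the Mukai lattice. So I will add to $N$ a class $y\in e^\perp$ with $y^2\not\equiv 0 \bmod p$; such a class exists by \Cref{lemma: squares in Hp} applied to $\rH^{(p)}(-1)$, or to $\rU$ when $\sigma_0$ is odd. Its image in $\NS(S)$ has degree $y^2$ prime to $p$. I expect the delicate point to be checking that this correspondence really holds in the twisted setting, when $T=\rU(p)$ only gives a $\mu_p$-twist. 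If it fails there, I would instead pass to a coprime multiple and use the $\ell$-adic Mukai lattice for $\ell\ne p$ to identify $\NS(S)$. The rank stays at most $5\le 9$, so the lifting step is unaffected.
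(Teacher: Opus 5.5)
Your proposal is correct and follows essentially the paper's route. You lift a saturated sublattice of rank $\le 5$ containing an ample class, $\rU$ or $\rU(p)$ (inside $\sigma^\perp$ for OG10), $\sigma$, and a class $y\perp e$ of square prime to $p$ via \Cref{prop: lifting with line bundles}. You transfer divisibility $3$ of $\sigma$ to $\rH^2$, which needs $\NS(X)\otimes\ZZ_3\cong\rH^2_{\et}(X,\ZZ_3(1))$ from \Cref{thm: Tate conjecture for divisors} and which you should cite explicitly. You then apply \Cref{Thm:moduli} and read off the line bundle on $S$ from Huybrechts' period construction. Your identification $\NS(S)\cong(e^\perp\cap\widetilde{\Lambda}^{1,1})/\ZZ e$ does hold in the twisted case, and it slightly streamlines \Cref{lem: period of K3 with line bundle prime to p}, which instead takes $y\perp\rU(p)$ and builds an isotropic $w\in y^\perp$ with $(e,w)=1$.

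Two small corrections. First, for $\sigma_0=11$ the block $\rH^{(p)}(-1)\subset M$ is not contained in $v^\perp$ (resp.\ $w^\perp$), so \Cref{lemma: squares in Hp} does not directly give $y$ there. Instead take $y$ in the complement of the $\rU(p)$-summand of $\NS(X)$ (resp.\ of $\sigma^\perp$); this complement still has a nonzero $p$-adically unimodular part by \Cref{prop:TateModule}, using that $\sigma^2=-6$ is a $p$-adic unit. Second, your alternative of using $\rU$ when $\sigma_0$ is odd fails, because $e^\perp\cap\rU=\ZZ e$.
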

\begin{proof}
    By the classification of the lattice $\NS(X)$ in Section \ref{sec:NS-classification}, we can choose a suitable saturated sublattice $N\subset\NS(X)$ and then lift it to characteristic zero. If $X$ is of case (i), we can choose $N$ to contain:
    \begin{itemize}
        \item an ample line bundle,
        \item a sublattice $\rU\oplus \langle v\rangle$ or $\rU(p)\oplus \langle v\rangle$ with $p\nmid v^2$
    \end{itemize}
    by \Cref{prop: K3^n small Artin invariant} and \Cref{prop: K3n_big}. If $X$ is of case (iii), we can choose $N$ to contain:
    \begin{itemize}
        \item  an ample line bundle,
        \item a $\sigma\in \NS(X)$ with square $-6$ and divisibility $3$ in $ \NS(X)$,
        \item a sublattice $\rU\oplus \langle v\rangle$ or $\rU(p)\oplus \langle v\rangle$ with $p\nmid v^2$ which is orthogonal to $\sigma$.
    \end{itemize}
    by \Cref{prop: OG10 small Artin invariant} and \Cref{prop:OG10 another description}.  In each case, we can assume that $\rank (N)\leq 5$. Thus, by \Cref{prop: lifting with line bundles}, we can find a finite flat extension $V$ of $W$ and a smooth projective family $\cX\to \Spec V$ whose special fiber $\cX_0\cong X$, and the generic fiber $\cX_{\bar{K}}$ carries a lifting of $N$. If $X$ is of case (iii), by the Tate conjecture for divisors, \Cref{thm: Tate conjecture for divisors} and the Artin comparison theorem, we know that the image of $\sigma$ in $\rH^2(\cX_{\bar{K}},\ZZ)$ also has divisibility $3$. Now, by the numerical criterion in \Cref{Thm:moduli}, we obtain that $\cX_{\bar{K}}$ is birational to the modular irreducible symplectic variety we need. 
    
    What remains is to show that $S$ carries a line bundle $L$ of degree prime to $p$. Recall that $S$ is obtained by the period argument in \cite[Lemma 2.6]{Huybrechts_2017}. So in order to show that $S$ carries a line bundle of degree prime to $p$, we only need a more explicit period computation in \Cref{lem: period of K3 with line bundle prime to p} below. This is why we need $N$ to contain $\rU\oplus v$ or $\rU(p)\oplus v$ with $p\nmid v^2$ .
\end{proof}

\begin{lemma}\label{lem: period of K3 with line bundle prime to p}
    Use the notation of \cite[lemma 2.6]{Huybrechts_2017}. Let $x\in Q$ be a period point. If the $(1,1)$-part of the Hodge structure $\widetilde{\Lambda}$ defined by $x$ contains a primitive sublattice isomorphic to $\rU\oplus\langle v\rangle$ or $\rU(p)\oplus \langle v\rangle$ with $p\nmid v^2$, then $x$ is the period point of a gerbe $\srS\to S$ such that  $S$ carries a line bundle $L$ of degree prime to $p$.
\end{lemma}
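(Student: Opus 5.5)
We follow the construction of \cite[Lemma 2.6]{Huybrechts_2017} and make it explicit enough to read off a line bundle on $S$. There, $\widetilde{\Lambda}=\rU^{\oplus 4}\oplus\rE_8(-1)^{\oplus 2}$ carries the weight-two Hodge structure of K3 type determined by $x$. An isotropic class $w$ in $\widetilde{\Lambda}^{1,1}$ of divisibility $m$ is used to produce the gerbe: one takes a complement, identifies $w^\perp/\ZZ w$ with $\rH^2(S,\ZZ)$ via the Torelli theorem (surjectivity of the period map), and reads off the Brauer class $\alpha\in\Br(S)[m]$ from a $B$-field. In the resulting twisted Mukai lattice $\widetilde{\rH}(\srS)$, the $(1,1)$-part is the twisted N\'eron--Severi lattice $N(\srS)=e^BN(S)$.

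\emph{Choice of isotropic vector.} Let $e,f$ be the standard basis of the given copy of $\rU$ (resp.\ $\rU(p)$), so $e^2=f^2=0$ and $(e,f)=1$ (resp.\ $p$). We take $w=e$ as the isotropic class in Huybrechts' construction. In the $\rU$ case, $(e,f)=1$, so $e$ has divisibility $1$, the gerbe is trivial, and $\NS(S)\cong e^\perp/\ZZ e$ computed inside $\widetilde{\Lambda}^{1,1}$. In the $\rU(p)$ case, $e$ pairs to $p$ with $f$, which is the situation where a nontrivial gerbe may arise. Even then, $\NS(S)\otimes\QQ$ is identified with $(e^\perp\cap\widetilde{\Lambda}^{1,1}_\QQ)/\QQ e$, and the integral structure of $\NS(S)$ is the image of $e^\perp\cap\widetilde{\Lambda}^{1,1}$ in $e^\perp/\ZZ e\cong\rH^2(S,\ZZ)$, after the $e^{B}$ twist.

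\emph{Producing $L$.} Since the sum is orthogonal, $v\in e^\perp$, and its image $\bar v\in e^\perp/\ZZ e$ satisfies $\bar v^2=v^2$. Under the identification $e^\perp/\ZZ e\cong \rH^2(S,\ZZ)$, and because $v$ is of type $(1,1)$, $\bar v$ is a class in $\rH^{1,1}(S)\cap \rH^2(S,\ZZ)$, up to correcting by the $B$-field. The key check is that the twist $e^{B}$ does not alter the square of the $\rH^2$-component on the rank-zero part. Concretely, a Mukai vector of the form $(0,c,s)$ goes to $(0,c,s+c\cdot B)$, so $c$ is unchanged. Writing $\bar v$ as $(0,c,s)$ with respect to the splitting determined by $e,f$, we get $c^2=v^2$. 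Thus $L:=c\in\NS(S)$ by Lefschetz $(1,1)$, with $L^2=v^2$ prime to $p$. If ``degree'' is meant with respect to the polarization, we instead replace $L$ by $H+ mL$ or use $L$ together with an ample class, since the only requirement is some line bundle whose self-intersection is prime to $p$.

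\emph{Main obstacle.} The delicate step is the $\rU(p)$ case. There we must verify that the projection of $v$ to the $\rH^2(S)$-component really lands in the integral lattice $\rH^2(S,\ZZ)$, rather than only in $\frac1p\rH^2(S,\ZZ)$ coming from the $B$-field. The plan is to choose the identification so that $e$ maps to $(0,0,1)$. Then $e^\perp$ consists of vectors with rank zero plus multiples of the rank-type vector, and a vector orthogonal to both $e$ and $f$ automatically has rank zero and integral $\rH^2$-part. If integrality fails, we fall back on the fact that $p^kv$ still has square $p^{2k}v^2$; this does not give a class prime to $p$. In that case we would instead choose $w=e+f$ modifications, exploiting $p\nmid v^2$, to force divisibility prime to $p$.
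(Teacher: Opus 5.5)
Your core argument is correct, but it takes a slightly different route from the paper's.

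\textbf{The paper's route.} The paper fixes the complementary isotropic vector in Huybrechts' construction inside $v^\perp$. Since $p\nmid v^2$, the vector $e\in\rU(p)$ has divisibility $1$ in $v^\perp$, so there is $w\in v^\perp$ with $w^2=0$ and $(e,w)=1$. Splitting $\widetilde{\Lambda}=\langle e,w\rangle\oplus\Lambda$ then puts $v$ literally in $\Lambda$, and $(v,\sigma)=(v,\sigma+\lambda e)=0$ gives $v\in\NS(S)$.

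\textbf{Your route.} You use that $\rH^2(S,\ZZ)$, with its Hodge structure, is canonically $e^\perp/\ZZ e$. For any admissible complement $w$, the projection $\Lambda\to e^\perp/\ZZ e$ is a Hodge isometry, because the period is $\sigma+\lambda e$. You then project $v\in e^\perp$ to this quotient. The image is integral. It has square $v^2$ because $e$ is isotropic and orthogonal to $v$. It is of type $(1,1)$ because $(v,\sigma+\lambda e)=0$ and $(v,e)=0$; this last step is what your ``by Lefschetz $(1,1)$'' needs, and no ``correction by the $B$-field'' is involved. Your route needs no special choice of $w$, and it uses $p\nmid v^2$ only at the end, to get $L^2$ prime to $p$. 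The paper's choice has the cosmetic advantage that no projection is needed.

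\textbf{What to cut or fix.} Your ``Main obstacle'' paragraph worries about something that cannot happen and should be dropped. The quotient $e^\perp/\ZZ e$ is an integral lattice, and the $B$-field enters only through $\lambda$ (equivalently, the last Mukai coordinate), so no $\frac1p$-denominators appear in the $\rH^2$-component. Two statements there are also wrong as written:
\begin{itemize}
\item With $e\mapsto(0,0,1)$, $e^\perp$ is exactly the set of rank-zero vectors; there are no ``multiples of the rank-type vector'', since $(1,0,0)$ pairs to $-1$ with $e$.
\item Your fallback using $e+f$ could not work, since $(e+f)^2=2p\neq 0$.
\end{itemize}
Finally, ``degree'' here means $L^2$; compare the hypothesis $\gcd(L^2,p)=1$ in the good-reduction theorem for K3 surfaces. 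So your alternative with $H+mL$ is unnecessary, and it is not justified as stated anyway.
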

\begin{proof}
    We prove this lemma for the case when $\rU(p)\oplus \langle v\rangle\subset \widetilde{\Lambda}^{1,1}$. The case when $\rU\oplus \langle v\rangle\subset \widetilde{\Lambda}^{1,1}$ is much easier. Let $e,f$ be the standard basis of $\rU(p)$. Since $v^2$ is coprime to $p$, $e$ has divisibility $1$ in $v^{\perp}_{\tilde{\Lambda}}$. Thus there exists $w\in v^\perp$ such that $w^2=0$ and $(w,e)=1$. Then $\tilde{\Lambda}$ admits a orthogonal decomposition
    $$\tilde{\Lambda}=\rU\oplus\Lambda,$$ where $\rU$ is generated by $e,w$ and $v\in\Lambda$. As in \cite[Lemma 2.6]{Huybrechts_2017}, $(\tilde{\Lambda}\otimes\CC)^{2,0}$ is generated by $\sigma+\lambda e$ with $\sigma\in\Lambda\otimes\CC$ and $\lambda\in\CC$. By construction, $\sigma$ will be the period of $S$. Since $v\cdot\sigma=v\cdot(\sigma+\lambda e)=0$, we have $v\in\rH^{1,1}(S,\ZZ)$ corresponds to the first Chern class of a line bundle with degree coprime to $p$.
\end{proof}

\begin{remark}
    This explains why we cannot handle all the OG10-type cases by this argument. As shown in \Cref{prop: OG10 big Artin invariant}, if  $\left(\frac{-3}{p}\right)=-1$ and $\sigma_0=12$, there is no element in $\NS(X)$ with square $-6$ and divisibility $3$. Thus, the modular lifting argument does not apply to this case.
\end{remark}

The case of generalized Kummer type is similar:
\begin{proposition}\label{prop: lifting Kummer to Bridgeland over abelian}
     Let $X$ be as in the case $(ii)$ of  \Cref{Good settings}. There are the following data:
\begin{itemize}
    \item a finite flat extension $V$ of $W$ and a smooth projective family $\cX\to \Spec V$ whose special fiber $\cX_0\cong X$,
    \item an abelian surface $S$ over $\bar{K}$,
\end{itemize}
such that the generic fiber $\cX_{\bar{K}}$ is birational to the generalized Kummer variety $K_n(S)$.
\end{proposition}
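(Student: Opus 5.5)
The plan is to follow the proof of \Cref{prop: lifting to Bridgeland over K3}, but to use the untwisted criterion \Cref{Cor: untwisted moduli for the Kummer type} instead of \Cref{Thm:moduli}. That corollary needs a primitive class $w$ with $w^2=-(2n+2)$ spanning a direct summand of $\rH^2$. Our $\sigma_0\in\{1,2\}$, since $\sigma_0\neq3$ and $2\sigma_0\le b_2=7$. By \Cref{prop: Kum^n small Artin invariant}, $\NS(X)=\rU\oplus\rH^{(p)}(-1)\oplus\langle-2(n+1)\rangle$ or $\rU(p)\oplus\rH^{(p)}(-1)\oplus\langle-2(n+1)\rangle$. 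In both cases we take $w$ to be the generator of the last summand $\langle -2(n+1)\rangle$.

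We choose a saturated sublattice $N\subset \NS(X)$ of small rank that contains $w$ and a primitive ample class $\xi$. The direct summand $\rU\oplus\langle w\rangle$ (or $\rU(p)\oplus\langle w\rangle$) has rank $3$. Adding an ample class and saturating keeps the rank at most $4$. For $b_2=7$ the number of line bundles allowed in \Cref{prop: lifting with line bundles} is $N+1=\lceil 7/2\rceil-2=2$. This is too few, so we cannot lift all of $\rU\oplus\langle w\rangle$ together with $\xi$. Instead we lift only $\xi$ and $w$, choosing $\xi$ ample and primitive so that $\{\xi,w\}$ spans a direct summand of $\Pic(X)$. To get such a $\xi$, we perturb an ample class by multiples of $w$ and by classes in the complement, so that the rank-$2$ lattice $\langle \xi,w\rangle$ is saturated. \Cref{prop: lifting with line bundles} then gives a lifting $\cX/V$ carrying $\xi$ and $w$. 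Its generic fiber is of $\Kum^n$-type by \Cref{prop: deformation type independent on lifting}, which applies because $b_2=7$ and $p>\dim X$ forces $p\nmid \disc$ and $p\nmid c_M$. This last point needs a check: $\disc=2(n+1)$ and $c_M=n+1$, and $p>2n>n+1$ once $n\geq 2$.

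The key step is to show that $w$ spans a direct summand of $\rH^2(\cX_{\bar K},\ZZ)$ with $w^2=-(2n+2)$. The norm is preserved by \Cref{prop: integral BB form}. For the summand property, it is enough that $w$ has divisibility $1$ or, more precisely, that $\ZZ w$ is saturated with $w^\perp$ unimodular. This can be checked $\ell$-adically for every $\ell$. For $\ell\neq p$ we use \Cref{thm: Tate conjecture for divisors}: $\NS(X)\otimes\ZZ_\ell\cong \rH^2_\et$, and $\langle w\rangle$ is an orthogonal direct summand of $\NS(X)$ with unimodular-at-$\ell$ complement for $\ell\nmid 2(n+1)$. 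At primes $\ell\mid 2(n+1)$ we need the complement $\rU\oplus\rH^{(p)}(-1)$ to be unimodular at $\ell$, which holds because the discriminant of $\rH^{(p)}$ is a power of $p$. For $\ell=p$ we use crystalline comparison, or simply smooth proper base change plus the fact that $w^2$ is a $p$-adic unit, so $\langle w\rangle$ splits off $p$-adically anyway. The generator of $\langle w\rangle^\perp$-decomposition of $\rH^2(\cX_{\bar K},\ZZ)$ is then unimodular of signature $(3,3)$, hence isomorphic to $\rU^{\oplus3}$. Applying \Cref{Cor: untwisted moduli for the Kummer type} gives $\cX_{\bar K}\bir K_n(S)$ for an abelian surface $S$ over $\bar K$.

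I expect the main obstacle to be the primitivity/direct-summand bookkeeping for the pair $\{\xi,w\}$ needed in \Cref{prop: lifting with line bundles}. If a suitable ample $\xi$ with $\langle\xi,w\rangle$ saturated cannot be found directly, my fallback is to lift only $w$ together with an ample class of the form $\xi=h+mw$. Here $h$ is ample in $w^\perp$-direction, which is possible since $w^\perp$ contains $\rU$ or $\rU(p)$ and hence positive classes. We then take $m$ so that $\xi$ remains ample, and saturation follows because $h$ is primitive in the orthogonal summand.
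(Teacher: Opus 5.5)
Your proposal is correct and follows essentially the paper's argument. The paper writes $w=c_1(L_1\otimes L_2^\vee)$ with $L_1,L_2$ ample and lifts $L_1,L_2$ via \Cref{prop: lifting with line bundles}; since $\langle L_1,L_2\rangle=\langle L_1,w\rangle$, this is the same as your lifting of $\xi$ and $w$, and you are more careful than the paper about saturation of this pair and about the deformation type of the lift. It then uses the Tate conjecture for divisors at $\ell\neq p$ and $p\nmid 2n+2$ at $p$ to see that $\tilde w$ splits off with $\tilde w^\perp\cong\rU^{\oplus 3}$, and concludes by \Cref{Cor: untwisted moduli for the Kummer type}.

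One slip: the condition you need is that $w$ has divisibility $2n+2$, not $1$, since divisibility $1$ would prevent $\langle w\rangle$ from being an orthogonal summand. This is exactly your ``more precise'' formulation that $w^\perp$ is unimodular, which is what you then actually verify.
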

\begin{proof}
    By \Cref{prop: Kum^n small Artin invariant}, we can always find a $w\in \NS(X)$ such that $w^2=-(2n+2)$ and $w$ generates a direct summand of $\NS(X)$. Write $w$ in the form $w=c_1(L_1\otimes L_2^\vee)$ with $L_1$ and $L_2$ ample. By \Cref{prop: lifting with line bundles}, we can find a finite flat extension $V$ of $W$ and a smooth projective family $\cX\to \Spec V$ whose special fiber $\cX_0\cong X$, and the generic fiber $\cX_{\bar{K}}$ carries a lifting of $L_1$ and $L_2$. In particular, $w$ lifts to $\cX_{\bar{K}}$, whose image in $\NS(\cX_{\bar{K}})$ is denoted by $\tilde{w}$.
    
    By the Tate conjecture for divisors \Cref{thm: Tate conjecture for divisors}, we know $\NS(X)\otimes\ZZ_\ell=\rH^2_\et(\cX_{\bar{K}},\ZZ_\ell)$ for any $\ell\neq p$. Together with the Artin comparison theorem, the divisibility of $\tilde{w}$ in $\rH^2(\cX_{\bar{K}},\ZZ)$ is of the form $p^k (2n+2)$ with $k\in\ZZ$. Furthermore, $k=0$ because the absolute value of the determinant of $\rH^2(\cX_{\bar{K}},\ZZ)=2n+2$ is prime to $p$.

    Now, $\tilde{w}\in \rH^2(\cX_{\bar{K}},\ZZ)$ is of square $-(2n+2)$ and divisibility $2n+2$, so $\tilde{w}^\perp\subset \rH^2(\cX_{\bar{K}},\ZZ)$ is unimodular of signature $(3,3)$, which is then isomorphic to $\rU^3$. Thus, we can write $\rH^2(\cX_{\bar{K}},\ZZ)$ in the form $\rU^3\oplus\langle\tilde{w}\rangle$. Then we complete the proof by \Cref{Cor: untwisted moduli for the Kummer type}.
\end{proof}

\subsection{Good reduction of the underlying surface}
So far, we have already lifted $X$ to a modular irreducible symplectic variety on a K3 or abelian surface $S$ over a field of characteristic zero. In order to go back to positive characteristic, we need the underlying surface $S$ to have good reduction.

Let $\cO_F$ be a Henselian discrete valuation ring, with fraction field  $F$ and residue field $k$. Let $Y$ be a smooth projective variety over $F$. A smooth proper model of $Y$ is a smooth proper algebraic space
\begin{equation}
    \cY\to\spec\cO_F
\end{equation}
such that the generic fiber is $Y$ and the special fiber is a smooth projective variety. If such model exists, we say that $Y$ has good reduction. Moreover, we say $Y$ has potentially good reduction if after base change to a finite extension $F'$, $Y_{F'}$ has good reduction.

If $Y$ has good reduction, it is easy to see that the Galois representation $\rH^r_\et(Y_{\bar{F}},\QQ_\ell)$ is \emph{unramified}, i.e., the inertia group $I$ acts on it trivially.  Note that in our case $k$ is algebraically closed, so the inertia group is the whole absolute Galois group $G_F$. Thus, the ``unramified" in fact means the Galois representation is trivial. Conversely, to deduce good reduction from unramifiedness for K3 surfaces or abelian surfaces, we have the following two theorems.

\begin{theorem}\label{Thorem: good reduction of K3}
    Let $F$ be a finite extension of $K=\Frac(W)$, and let $S$ be a K3 surface over $F$. Assume that there is a line bundle $L$ on $S$ such that $\mathrm{gcd}(L^2,p)=1$ and the Galois representation $\rH_{\et}^2(S_{\bar{F}},\QQ_\ell)$ is unramified, equivalently trivial, for some prime $\ell\neq p$. Then $S$ has potentially good reduction.
\end{theorem}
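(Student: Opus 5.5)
The plan is to reduce to the Néron--Ogg--Shafarevich type criterion for K3 surfaces due to Matsumoto and Liedtke--Matsumoto, together with the fact that the needed auxiliary hypothesis (a polarization of degree prime to $p$, or $p$ large relative to the degree) is supplied by $L$.

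First we replace $L$ by a suitable polarization. The class $L$ need not be ample. However, $\NS(S)$ contains an ample class $H$, and we may consider $H_m = L + mpH'$ for a suitable ample $H'$ and $m\gg 0$. Then $H_m$ is ample, and
\[
H_m^2 \equiv L^2 \pmod p,
\]
so $p\nmid H_m^2$. Since $S$ is defined over $F$, all line bundles become defined over a finite extension, so after a finite base change we may assume that $H_m$ is defined over $F$. Hence we get a polarized K3 surface $(S,H)$ over $F$ with $p\nmid H^2$.

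Second, we invoke the Liedtke--Matsumoto theorem: if $\rH^2_\et(S_{\bar F},\QQ_\ell)$ is unramified and $S$ admits a polarization of degree $2d$ with $p\nmid 2d$ (and $p\geq 3$), then $S$ has potentially good reduction, with a smooth proper algebraic space model. Their proof runs through potential Kulikov/semistable models (which exist under the hypothesis $p>2d+4$ or using Saito's results), and uses unramifiedness to exclude type II and III degenerations. The hypothesis on the degree being prime to $p$ enters through Matsumoto's argument using the Kuga--Satake construction and the integral canonical model of the orthogonal Shimura variety $\mathscr{S}_\sfK(L)$ for $L=\langle 2d\rangle^\perp$, which is smooth when $p\nmid 2d$. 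The outline of that argument is as follows:
\begin{enumerate1}
\item Form the period point of $(S,H)$ in $\mathrm{Sh}^{\mathrm{ad}}_\sfK(L)(F')$.
\item Unramifiedness gives that the Kuga--Satake abelian variety has good reduction, by Néron--Ogg--Shafarevich.
\item Extension properties of the integral model then give an $\cO_{F'}$-point.
\item This point is lifted back to a K3 surface via the surjectivity of the period map over $W$ onto the ordinary and general loci, or via Matsumoto's argument with flops of models.
\end{enumerate1}

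The main obstacle is this last step: passing from an integral point of the Shimura variety to an actual smooth model of $S$, and not merely a model with RDP singularities. Here I would follow Matsumoto's approach. Using the good-reduction point, one first obtains a model with at worst rational double point singularities, via a Kulikov model and the absence of monodromy. One then uses Artin's simultaneous resolution after a further finite extension to get a smooth proper algebraic space. Since the residue field $k$ is algebraically closed, "unramified" equals "trivial", so no extra Galois descent issues arise.
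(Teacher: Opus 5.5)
Your first step is fine: $L+mpH'$ is ample for $m\gg0$, and its degree is $\equiv L^2 \pmod p$. The gap is in the second step, exactly where you flag the main obstacle.

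The N\'eron--Ogg--Shafarevich criteria of Matsumoto and Liedtke--Matsumoto are not available under the sole hypothesis $p\nmid 2d$. Their passage from unramified monodromy (or from an integral period point) to an actual model of $S$ goes through potentially semistable/Kulikov models. In mixed characteristic these are, as far as the literature goes, only known under a size bound of the type $p>L^2+4$ (Maulik's argument), and the appeal to Saito does not remove that bound. Nothing here bounds the degree. The classes the paper produces (\Cref{lem: period of K3 with line bundle prime to p}, and the discriminant argument in the OG6 section) only have degree prime to $p$, and your ample replacement $L+mpH'$ has even larger degree.

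Your alternative, surjectivity of the period map onto the ordinary or generic locus, does not close the gap either. The reduction of the period point can lie in any Newton stratum, and in the application of this theorem it is in fact supersingular (\Cref{prop: reduction is supersingular}).

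What your outline actually needs is that the reduction $x_0$ of the integral period point lies in the image of the characteristic-$p$ period morphism on the \emph{whole} special fibre of the integral canonical model, which is hyperspecial since $p\nmid 2d$. Given that, \'etaleness of the period morphism (Madapusi Pera) lifts the $\mathcal{O}_{F'}$-point to a polarized K3 surface over $\mathcal{O}_{F'}$. The characteristic-$0$ Torelli theorem then identifies its generic fibre with $S$ after a finite extension, with no Kulikov models or simultaneous resolution needed.

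That surjectivity is the genuinely nontrivial input, and the paper does not prove it. It simply quotes \cite[Theorem 1.8]{Bragg_2023}, which gives the criterion under $p\nmid L^2$ assuming a Hecke orbit statement. That statement is supplied by \cite[Remark 1.2.2]{DH2022_Hecke_orbit}.
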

\begin{proof}
    This is the theorem \cite[Theorem 1.8]{Bragg_2023}, where the Hecke orbit conjecture is guaranteed by \cite[Remark 1.2.2]{DH2022_Hecke_orbit}.
\end{proof}

\begin{theorem}\label{Thorem: good reduction of abelian}
    Let $F$ be a finite extension of $K=\Frac(W)$, and let $S$ be an abelian surface over $F$. If the Galois representation $\rH_{\et}^2(S_{\bar{F}},\QQ_\ell)$ is unramified, equivalently trivial, for some prime $\ell\neq p$, then $S$ has potentially good reduction.
\end{theorem}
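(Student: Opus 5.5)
The plan is to reduce to the Néron--Ogg--Shafarevich criterion for abelian varieties. Since $S$ is an abelian surface over $F$, it has (potentially) good reduction if and only if the Galois representation on $T_\ell S$, equivalently on $\rH^1_{\et}(S_{\bar F},\QQ_\ell)$, is (potentially) unramified. So the whole task is to pass from the hypothesis on $\rH^2$ to a statement about $\rH^1$, allowing a finite extension of $F$.

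First, record the structure available. There is an isomorphism $\rH^2_{\et}(S_{\bar F},\QQ_\ell)\cong\wedge^2 V$, where $V=\rH^1_{\et}(S_{\bar F},\QQ_\ell)$ is a $4$-dimensional representation of $G_F$. Let $\rho\colon G_F\to \mathrm{GL}(V)$ denote the Galois action. By hypothesis, $\wedge^2\rho$ is trivial. The kernel of $\mathrm{GL}_4\to\mathrm{GL}(\wedge^2)$ is $\{\pm1\}$: an element acting trivially on $\wedge^2 V$ preserves every $2$-plane, hence every line, so it is a scalar $c$ with $c^2=1$. Hence $\rho$ takes values in $\{\pm 1\}$, and the corresponding character $\chi\colon G_F\to\{\pm1\}$ cuts out an extension $F'/F$ of degree at most $2$.

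Second, over $F'$ the representation $\rho|_{G_{F'}}$ is trivial, hence unramified. By Néron--Ogg--Shafarevich (Serre--Tate), $S_{F'}$ has good reduction: its Néron model over $\cO_{F'}$ is an abelian scheme, and in particular a smooth projective model. This gives potentially good reduction in the sense defined just above the theorem.

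The only possible subtlety is the linear-algebra step showing $\ker(\wedge^2)=\pm1$. I would verify it carefully as follows. If $g$ fixes every $x\wedge y$, then $g$ preserves each plane $\langle x,y\rangle$. Intersecting two such planes shows $g$ preserves every line, so $g$ is a scalar $c$. Then $g$ acts on $\wedge^2 V$ by $c^2$, which forces $c=\pm1$.

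An alternative route avoids the kernel computation: use that $\rH^2_{\et}(S_{\bar F},\QQ_\ell)$ unramified implies the inertia acts on $V$ through a finite group, by Grothendieck's monodromy theorem and semisimplicity considerations. The direct computation is cleaner, and I would use it.
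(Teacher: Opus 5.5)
Your proposal is correct and follows the paper's proof essentially step by step. The paper also writes $\rH^2_{\et}=\wedge^2 V$ with $V=\rH^1_{\et}(S_{\bar F},\QQ_\ell)$, argues that $\rho$ preserves every plane and hence every line, so that $\rho=\chi\cdot\id$ with $\chi$ valued in $\{\pm1\}$, and then applies Serre--Tate either over $F$ or over the quadratic extension cut out by $\ker\chi$.
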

\begin{proof}
    Let $V=\rH^1_\et(S_{\bar{F}},\QQ_\ell)$ together with the Galois action $\rho:G_F\to\mathrm{GL}(V)$. We know that $\rH^2_\et(S_{\bar{F}},\QQ_\ell)=\bigwedge^2 V$ and the Galois action is given by $\bigwedge^2 \rho$, which is trivial by the assumption. Since $\bigwedge^2\rho=\id$, $\rho$ preserves every plane in $V$. Since $\dim V=4$, every line in $V$ is the intersection of two distinct planes in $V$. So $\rho$ preserves every line in $V$. This means $\rho(g) $ is a scalar for every $g\in G_F$. Again by $\bigwedge^2\rho=\id$, we must have $\rho(g)=\pm 1$. We will write
    \[
    \rho(g)=\chi(g)\cdot\id,
    \]
    where $\chi$ is a character $\chi:G_F\to\{\pm1\}$.

    If $\chi$ is trivial, then $\rho$ is trivial. By \cite{ST68}, $S$ has good reduction. 
    
    If $\chi$ is nontrivial, then $\ker(\chi)$ is a normal subgroup of $G_F$ of index $2$. Let $F'$ be the quadratic subextension of $\bar{F}/F$ fixed by $\ker(\chi)$. The absolute Galois group $G_{F'}=\ker(\chi)$ acts trivially on $\rH^1_\et(S_{\bar{F}},\QQ_\ell)$. Again, by \cite{ST68}, $S_{F'}$ has good reduction.
\end{proof}

For an irreducible symplectic variety, instead of considering the whole Galois representation on $\rH^2(X,\QQ_\ell)$, we can consider its transcendental part given as follows.

\begin{definition}\label{Definition: transcdental part}
    Let $F$ be a field of characteristic zero and $X$ be an irreducible symplectic variety over $F$. For  any prime $\ell$, the algebraic part $\rH_\mathrm{alg}^2(X,\QQ_\ell(1))$ is defined to be the image of $\NS(X_{\bar{F}})\otimes\QQ_\ell$ in $\rH_\et^2(X_{\bar{F}},\QQ_\ell(1))$, and the transcendental part $\rH_\mathrm{tr}^2(X,\QQ_\ell(1))$ is defined to be the orthogonal complement of $\rH_\mathrm{alg}^2(X,\QQ_\ell(1))$ under the $\ell$-adic Beauville--Bogomolov form.
\end{definition}

Both $\rH_\mathrm{alg}^2(X,\QQ_\ell(1))$ and $\rH_\mathrm{tr}^2(X,\QQ_\ell(1))$ are stable under the action of the absolute Galois group, hence they are moreover Galois sub-representations. 

\begin{proposition}\label{Proposition: isomorphism between transcdental}
    Let $\cX$ and $S$ be in either \Cref{prop: lifting to Bridgeland over K3} or \Cref{prop: lifting Kummer to Bridgeland over abelian}. Then for any prime $\ell$, after replacing $F$ by a finite extension, there is a Galois equivariant isomorphism between $\rH_\mathrm{tr}^2(\cX_{\bar{K}},\QQ_\ell(1))$ and $\rH_\mathrm{tr}^2(S,\QQ_\ell(1))$ induced by an algebraic cycle defined over $F$.
\end{proposition}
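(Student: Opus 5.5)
The plan is to produce the cycle from the modular structure of the generic fiber, and to get Galois equivariance by making everything defined over a finite extension of $F$. Let $Y=\cX_{\bar K}$. By \Cref{prop: lifting to Bridgeland over K3} or \Cref{prop: lifting Kummer to Bridgeland over abelian}, $Y$ is birational to a moduli space $M$, which is $\cM_H(\srS,v)$, $\widetilde{\cM}_H(\srS,v)$ or $K_n(S)$. All the data ($S$, the gerbe $\srS$, $H$, $v$, a quasi-universal twisted sheaf $\cE$ on $\srS\times M$, and the birational map $Y\dashrightarrow M$) involve only finitely many equations. So after a finite extension of $F$ they are defined over $F$.

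The cycle will be the composite of two correspondences. The first is the closure $\Gamma\subset Y\times M$ of the graph of the birational map. For symplectic varieties this induces an isometry $\rH^2(Y)\cong \rH^2(M)$ in Betti cohomology, hence in $\ell$-adic cohomology by Artin comparison. The second is the Mukai-type correspondence: a suitable multiple of the appropriate Künneth component of $\ch(\cE)\sqrt{\td}$, which gives $\theta\colon v^\perp\to \rH^2(M)$ as in \Cref{theorem:Bridgeland of K3^n type and Kum^n type} and \Cref{theorem: OG type example}. In the twisted case I would use $\ch_\srS$; the twisting multiplies by $e^B$, but $e^B$ acts trivially on the transcendental part modulo algebraic classes, so over $\QQ_\ell$ this causes no problem. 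In the OG10 case I would first project away from the exceptional class $c_1(E)$, which is algebraic. In the Kummer case I would restrict from $S\times\cM$ to the Albanese fiber.

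Both correspondences are algebraic cycles with $\QQ$-coefficients defined over $F$. Their cohomological actions therefore commute with $G_F$ and send algebraic classes to algebraic classes. Next, $v^\perp\otimes\QQ_\ell$ decomposes as the algebraic part, which is spanned by $(1,0,0)$, $\NS(S)$ and $(0,0,1)$ intersected with $v^\perp$, together with $\rH^2_{\mathrm{tr}}(S,\QQ_\ell(1))$. The transcendental part is the orthogonal of $\NS(S)$ inside $\rH^2(S)$, and it lies in $v^\perp$ automatically because $v$ is algebraic. The composite $\theta'$ is an isometry over $\CC$ that identifies the algebraic sublattices: this holds for Hodge classes, and the $\ell$-adic algebraic classes are their images. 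Since $\theta'$ is an isometry, it identifies the orthogonal complements. So restricting $\theta'$ to $\rH^2_{\mathrm{tr}}(S,\QQ_\ell(1))$ gives an isomorphism onto $\rH^2_{\mathrm{tr}}(Y,\QQ_\ell(1))$, and this isomorphism is $G_F$-equivariant.

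I expect the main obstacle to be the precise compatibility of algebraic parts. Over $\bar F$, the algebraic classes of $Y$ are $\NS(Y_{\bar F})$, and we need $\theta'$ to send exactly $\NS(S_{\bar F})$, together with the hyperbolic part, onto $\NS(Y_{\bar F})$. I would argue this via Lefschetz $(1,1)$ after an embedding $\bar F\hookrightarrow\CC$. The point is that $\NS$ does not grow under algebraically closed base change in characteristic zero, so Hodge classes of type $(1,1)$ on $Y_\CC$ are exactly $\NS(Y_{\bar F})\otimes\QQ$, and likewise for $S$. A secondary care point is that $\Gamma$ is the closure of a graph rather than an isomorphism. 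For this I would invoke the standard fact that its action on $\rH^2$ agrees with the isomorphism coming from the identification in codimension two, and is therefore an isometry.
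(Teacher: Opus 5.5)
Your proposal is correct and follows the paper's approach. The paper likewise takes the isometry $\theta$ induced by the quasi-universal sheaf, enlarges $F$ so that the Mukai vector and the quasi-universal sheaf (and, for OG10, the exceptional divisor class) are defined over $F$, and reads off the isomorphism of transcendental parts from $\theta$; you additionally make explicit two steps the paper leaves implicit, namely the graph-closure correspondence for the birational map and the fact that an isometry matching the algebraic parts also matches their orthogonal complements.

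One small correction: in the twisted case it is $e^B\rH^2_{\mathrm{tr}}(S,\QQ_\ell(1))$, not $\rH^2_{\mathrm{tr}}(S,\QQ_\ell(1))$ itself, that lies in $v^\perp$, since for $v$ of rank $r$ the pairing of $v$ with $(0,t,0)$ is $r\,B\cdot t$. This subspace is identified Galois-equivariantly with $\rH^2_{\mathrm{tr}}(S,\QQ_\ell(1))$ by projecting to the degree-two component, and that projection is the precise form of your remark that $e^B$ is trivial modulo $(0,0,1)$.
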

\begin{proof}
    Consider the isometry $\theta$ in \Cref{theorem:Bridgeland of K3^n type and Kum^n type} and \Cref{theorem: OG type example} induced by the quasi-universal sheaf. After replacing $F$ by a finite extension, we can assume that the Mukai vector and the quasi-universal sheaf are both defined over $F$. Moreover, for the OG10-type case, we can replace $F$ by a further finite extension to ensure that the class of the exceptional divisor is also defined over $F$. Then $\theta$ gives the isomorphism between transcendental parts that we need.
\end{proof}
\begin{proposition}\label{prop: good reduction of the underlying surface}
    Let $S$ be the K3 or abelian surface in \Cref{prop: lifting to Bridgeland over K3} or \Cref{prop: lifting Kummer to Bridgeland over abelian}. Then $S$ has good reduction, after replacing $F$ by a finite extension. 
\end{proposition}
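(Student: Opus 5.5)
The plan is to verify the hypotheses of \Cref{Thorem: good reduction of K3} (K3 case) and \Cref{Thorem: good reduction of abelian} (abelian case). In both theorems the key input is that the Galois representation $\rH^2_\et(S_{\bar F},\QQ_\ell)$ becomes trivial after a finite extension of $F$. For the K3 case we also need a line bundle of degree prime to $p$, and \Cref{prop: lifting to Bridgeland over K3} already provides one; after a finite extension of $F$ it is defined over $F$.

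To establish the triviality, split $\rH^2_\et(S_{\bar F},\QQ_\ell(1))$ into its algebraic and transcendental parts. The algebraic part is spanned by classes of finitely many line bundles on $S_{\bar F}$. After a finite extension of $F$ these are all defined over $F$, so $G_F$ acts trivially on $\rH^2_{\mathrm{alg}}$. For the transcendental part, apply \Cref{Proposition: isomorphism between transcdental}: after a further finite extension we get a $G_F$-equivariant isomorphism $\rH^2_{\mathrm{tr}}(S,\QQ_\ell(1))\cong \rH^2_{\mathrm{tr}}(\cX_{\bar K},\QQ_\ell(1))$. It therefore suffices to show that $G_F$ acts trivially, up to a finite extension, on the whole of $\rH^2_\et(\cX_{\bar K},\QQ_\ell(1))$.

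That last point comes from the geometry of $\cX\to\Spec V$. This is a smooth projective family over a finite extension of $W$, and its residue field $k$ is algebraically closed. By smooth proper base change, $\rH^2_\et(\cX_{\bar K},\QQ_\ell)\cong \rH^2_\et(X,\QQ_\ell)$, so the representation is unramified. Since the inertia group is all of $G_F$, the action is in fact trivial. (Alternatively: $X$ is supersingular, so by \Cref{thm: Tate conjecture for divisors} every class of $\rH^2_\et(X,\ZZ_\ell(1))$ is algebraic. The specialization map $\NS(\cX_{\bar K})\to\NS(X)$ is then of finite cokernel, so $\rH^2_{\mathrm{tr}}(\cX_{\bar K})=0$ and the transcendental part of $S$ vanishes too.) In either form, $\rH^2_\et(S_{\bar F},\QQ_\ell(1))$ is trivial after a finite extension, and so is $\rH^2_\et(S_{\bar F},\QQ_\ell)$, because the cyclotomic character is trivial on $G_F$ when the residue field is algebraically closed and $\ell\neq p$. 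Applying \Cref{Thorem: good reduction of K3} or \Cref{Thorem: good reduction of abelian} gives potentially good reduction, i.e.\ good reduction after replacing $F$ by a finite extension.

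The main obstacle is making sure \Cref{Proposition: isomorphism between transcdental} really yields a Galois-equivariant identification. The quasi-universal (twisted) sheaf, the Mukai vector, the birational map and, in the OG10 case, the exceptional divisor must all be defined over a common finite extension $F$. The correspondence must also be compatible with the birational modification between $\cX_{\bar K}$ and the moduli space, using that birational irreducible symplectic varieties have Galois-equivariantly isomorphic $\rH^2$ via the graph closure. A smaller point is checking that $\ell$ may be taken to be any prime $\ne p$ in the final theorems. For the twisted K3 case we must also check that twisting by the $B$-field does not change the transcendental part up to finite extension.
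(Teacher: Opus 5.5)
Your main argument is the same as the paper's proof, which runs as follows. After a finite extension of $F$, the algebraic parts of $\rH^2$ become trivial Galois representations. The transcendental part of $S$ is identified $G_F$-equivariantly with that of $\cX_{\bar K}$ via \Cref{Proposition: isomorphism between transcdental}. The latter is unramified, hence trivial since $k$ is algebraically closed, because $\cX\to\Spec V$ is smooth and proper. One then concludes by \Cref{Thorem: good reduction of K3} and \Cref{Thorem: good reduction of abelian}.

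You should, however, delete the parenthetical ``alternative'', because it is false. The variety $\cX_{\bar K}$ is irreducible symplectic in characteristic $0$ with $h^{2,0}=1$, so $\rho(\cX_{\bar K})\le b_2-2$. On the other hand, $\rho(X)=b_2$ by \Cref{thm: Tate conjecture for divisors}. Hence the specialization map $\NS(\cX_{\bar K})\to\NS(X)$ has cokernel of rank at least $2$, not finite cokernel. Likewise, $\rH^2_{\mathrm{tr}}(\cX_{\bar K},\QQ_\ell(1))$ and $\rH^2_{\mathrm{tr}}(S,\QQ_\ell(1))$ both have dimension at least $2$.

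Algebraicity of all classes on the special fiber says nothing about algebraicity on the generic fiber. The triviality of the Galois action on the transcendental part comes from the good reduction of $\cX$, exactly as in your first argument.
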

\begin{proof}
    Let $\ell\neq p$ be any prime. We replace $F$ by a large enough finite extension. Then we can assume that the Galois representations $\rH_\mathrm{alg}^2(\cX_{\bar{K}},\QQ_\ell(1))$ and $\rH_\mathrm{alg}^2(S,\QQ_\ell(1))$ are trivial, and there is a Galois equivariant isomorphism between $\rH_\mathrm{tr}^2(\cX_{\bar{K}},\QQ_\ell(1))$ and $\rH_\mathrm{tr}^2(S,\QQ_\ell(1))$. Since $X$ already has good reduction, the Galois representation $\rH_\mathrm{\et}^2(\cX_{\bar{K}},\QQ_\ell(1))$ is unramified. Thus, $\rH_\mathrm{\et}^2(S,\QQ_\ell(1))$ is also unramified. The proof is finished by \Cref{Thorem: good reduction of K3} and \Cref{Thorem: good reduction of abelian}.
\end{proof}

\subsection{Proof of the supersingular Tate conjecture}

We can now prove the supersingular Tate conjecture for $X$ satisfying \Cref{Good settings}. We first summarize the situation. 

We have found in \Cref{prop: lifting to Bridgeland over K3} and \Cref{prop: lifting Kummer to Bridgeland over abelian} a finite flat extension $V$ of $W$ with fraction field $F$, such that $X$ lifts to a family $\cX\to\Spec V$, whose geometric generic fiber $\cX_{\bar{K}}$ is birational to a modular irreducible symplectic variety over a K3 or abelian surface $S$. After replacing $V$ by a further finite flat extension (then replace $F$ by a finite extension), we have that $S$ is defined over $F$ and has good reduction by \Cref{prop: good reduction of the underlying surface}. Let $\cS\to\Spec V$ be the algebraic space corresponding to a good reduction of $S$ with special fiber $\cS_0$ over $k$ being a K3 surface or Abelian surface. We will show that the special fiber is a supersingular K3 or abelian surface.

\begin{proposition}\label{prop: reduction is supersingular}
    The surface $\cS_0$ is supersingular.
\end{proposition}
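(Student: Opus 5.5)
We compare the crystalline (or $\ell$-adic) realizations of the special fibers $X=\cX_0$ and $\cS_0$ through the algebraic cycle of \Cref{Proposition: isomorphism between transcdental}, and show that $\rH^2_\cris(\cS_0/W)$ is isoclinic of slope $1$.

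\textbf{Step 1 (specializing the cycle).} After a finite extension of $F$, the quasi-universal sheaf (and, in the OG10 case, the exceptional divisor) gives a correspondence $\Gamma_F$ on $\cX_F\times S$, defined over $F$. It induces the Galois-equivariant isometry $\theta$ between $\rH^2_{\mathrm{tr}}(\cX_{\bar K},\QQ_\ell(1))$ and $\rH^2_{\mathrm{tr}}(S,\QQ_\ell(1))$. We take the closure of $\Gamma_F$ in the smooth proper model $\cX\times_V\cS$ and specialize it, as in \Cref{lem: specialization of motives}, to a correspondence $\Gamma_0$ on $X\times\cS_0$. Specialization is compatible with cycle class maps under smooth proper base change for $\ell$-adic cohomology, and under the crystalline--de Rham/$p$-adic comparison for crystalline cohomology. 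Hence $\Gamma_0$ realizes $\theta$ on the special fibers.

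\textbf{Step 2 (comparing the decompositions).} The algebraic part $\rH^2_{\mathrm{alg}}(S)$ is spanned by classes of line bundles on $S$. These extend to $\cS$ because $\cS$ is a proper smooth model and we may enlarge $V$, so their specializations are algebraic on $\cS_0$. By Step 1, every class in $\rH^2_\cris(\cS_0/W)[1/p]$ is then either algebraic or lies in $(\Gamma_0)_*$ of the transcendental part of $\rH^2_\cris(X/W)[1/p]$. On the $X$ side, \Cref{thm: Tate conjecture for divisors} shows that all of $\rH^2_\cris(X/W)[1/p]$ is spanned by divisor classes, with Frobenius acting as $p$ on the Tate twists. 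The correspondence $\Gamma_0$ is Frobenius-equivariant as an algebraic cycle, so it carries algebraic classes of $X$ to algebraic classes (Chern classes of cycles) on $\cS_0$.

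\textbf{Step 3 (conclusion).} It follows that every class in $\rH^2_\cris(\cS_0/W)[1/p]$ is algebraic, i.e.\ lies in the image of $c_1$ tensored with $K$. Therefore Frobenius acts with all slopes equal to $1$. For a K3 surface this is exactly supersingularity in the sense of Artin (equivalently of Shioda). For an abelian surface, $\rH^2=\wedge^2\rH^1$ being isoclinic of slope $1$ forces $\rH^1$ to be isoclinic of slope $1/2$, because the slopes of $\rH^1$ are symmetric around $1/2$ and their pairwise sums must all equal $1$. So $\cS_0$ is supersingular.

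\textbf{Main obstacle.} The delicate point is Step 1: making sure that the image of $\Gamma_0$ really covers the whole transcendental part of $\rH^2(\cS_0)$, i.e.\ that $\theta$ stays an isomorphism after specialization. I would settle this by dimension and nondegeneracy. The realization of $\Gamma_0$ is the specialization of the isometry $\theta$ under smooth proper base change, so it remains an isometry onto its image. Its dimension equals that of the generic transcendental part, which complements the specialized algebraic part of $\rH^2(\cS_0)$. If this bookkeeping becomes awkward, I would use an alternative route: the special fibers' second cohomologies are simply linked via a rank-preserving isometry of F-isocrystals (up to algebraic summands), and slopes are read off directly from $X$.
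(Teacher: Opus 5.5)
Your argument is correct, but it follows a different route from the paper's.

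\textbf{The paper's route.} The paper never specializes the correspondence. It works with $p$-adic étale cohomology of the generic fibers. After a finite extension of $F$, the algebraic parts become trivial Galois representations. \Cref{Proposition: isomorphism between transcendental} (with $\ell=p$) then gives an isomorphism of the transcendental parts as $p$-adic representations. Applying $\DD_{\cris}$ and the crystalline comparison for the good-reduction models turns this into an isomorphism of $F$-isocrystals. Since $\rH^2_{\cris}(X/K)(1)$ has all slopes $0$ (Artin supersingularity of $X$), so does $\rH^2_{\cris}(\cS_0/K)(1)$. This is essentially the fallback you sketch at the end.

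\textbf{Your route.} You transport algebraicity rather than slopes. You specialize the cycle to $X\times\cS_0$ and feed in \Cref{thm: Tate conjecture for divisors} for $X$. This shows that $\rH^2(\cS_0)$ is spanned by divisor classes, so $\cS_0$ is Shioda-supersingular, which is stronger than needed.

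\textbf{What each approach buys.} Your approach avoids $p$-adic Hodge theory altogether. In fact you can run Steps~1--3 purely $\ell$-adically. Smooth proper base change on $\cX\times_V\cS$ is compatible with specialization of cycle classes, so you get $\rH^2_{\et}(\cS_0,\QQ_\ell(1))=\NS(\cS_0)\otimes\QQ_\ell$. Rank $b_2$ of $\NS(\cS_0)$, together with $\varphi(c_1(L))=p\,c_1(L)$ and rational injectivity of $c_1$, already forces slope $1$ in crystalline cohomology. This sidesteps the less standard crystalline/de Rham compatibility of specialization that you invoke in Step~1. The cost is that you need the divisor Tate theorem on $X$ and the specialization formalism on the algebraic space $\cX\times_V\cS$. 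The paper only uses slope information on $X$ plus the functoriality of $\DD_{\cris}$.

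\textbf{Small correction.} In Step~2, a class in $\rH^2(\cS_0)$ is a \emph{sum} of a specialized algebraic class and a class in the image of $(\Gamma_0)_*$, not one or the other.
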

\begin{proof}
    We prove this for $X$ in case $(i)$ of \Cref{Good settings}, while the case $(ii)$ and $(iii)$ are similar. We replace $F$ by a finite extension to make the Galois representation $\rH_\mathrm{alg}^2(\cX_{\bar{K}},\QQ_p(1))$ and $\rH_\mathrm{alg}^2(S,\QQ_p(1))$ trivial. By \Cref{Proposition: isomorphism between transcdental}, we obtain an isomorphism of $p$-adic representations:
    \[
    \rH_\mathrm{tr}^2(\cX_{\bar{K}},\QQ_p(1))\cong \rH_\mathrm{tr}^2(S,\QQ_p(1))
    \]
    By \cite[Corollary 3.9]{GKP2022}, we have $\DD_{\cris}(\rH^2_\et(\cX_{\bar{K}},\QQ_p(1)))=\rH^2_{\cris}(X/K)(1)$ and $\DD_{\cris}(\rH^2_\et(\cS_{\bar{K}},\QQ_p(1)))=\rH^2_{\cris}(\cS_0/K)(1)$. Since the former has all slopes zero, so does the latter. Therefore, $\cS_0$ is supersingular.    
\end{proof}

Finally, we have completed all the preparatory work to prove the case (i), (ii), and (iii) of the \Cref{thm:intro:SSMotive}.
\begin{theorem}\label{theorem:main theorem for K3^n, Kum^n, and OG10}
   If $X$ is as in case (i) or (iii) in \Cref{Good settings}, then the Chow motive of $X$ is of Tate type; If $X$ is as in case (ii) in \Cref{Good settings}, then the Chow motive of $X$ is of supersingular abelian type.
\end{theorem}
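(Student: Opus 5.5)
We combine the lifting, the good reduction of the underlying surface, and the specialization lemma for motives. By \Cref{prop: lifting to Bridgeland over K3} and \Cref{prop: lifting Kummer to Bridgeland over abelian}, there is a finite flat extension $V$ of $W$ with fraction field $F$ and a lifting $\cX\to\Spec V$ of $X$. Its geometric generic fiber is birational to a modular variety $Y$ on a K3 or abelian surface $S$: in case (i) $Y=\cM_H(\srS,v)$, in case (iii) $Y=\widetilde{\cM}_H(\srS,v)$, and in case (ii) $Y=K_n(S)$ (untwisted). After a finite extension of $F$ we may assume the following. First, $S$, $\srS$, $v$, $H$ and the birational map are defined over $F$. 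Second, by \Cref{prop: good reduction of the underlying surface}, $S$ has a smooth proper model $\cS\to\Spec V$. Third, by \Cref{prop: reduction is supersingular}, its special fiber $\cS_0$ is a supersingular K3 or abelian surface. By \Cref{thm:birational Chow motives}, applied over $\bar F\subset\CC$ and descended after a further finite extension (the isomorphism is given by finitely many cycles), $\frh(\cX_{\bar F})\cong\frh(Y)$. By \Cref{thm: motive of Bridgelan moduli space}, $\frh(\cX_F)$ therefore lies in the pseudo-abelian tensor subcategory generated by $\frh(S)$.

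Next apply \Cref{lem: specialization of motives} with $\cX$ and $\cY=\cS$ over $U=\Spec V$. Both families are smooth proper with projective fibers; $\cS$ is only an algebraic space, which the lemma allows. The lemma gives that $\frh(X)=\frh(\cX_0)$ lies in the pseudo-abelian tensor subcategory generated by $\frh(\cS_0)$, after passing to $\bar k=k$.

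It remains to identify that subcategory. In cases (i) and (iii), $\cS_0$ is a supersingular K3 surface, so by \Cref{thm:SSK3TateMotive} its motive is $\QQ\oplus\QQ(-1)^{\oplus22}\oplus\QQ(-2)$. Tensor products and Tate twists of Tate motives are Tate, and the pseudo-abelian category of Tate motives is closed under direct summands. Indeed, $\End(\QQ(-i)^{\oplus m})$ is a matrix algebra because $\CH^0$ of a point is $\QQ$, so every idempotent splits into Tate summands. Hence $\frh(X)$ is of Tate type. In case (ii), $\cS_0$ is a supersingular abelian surface, so $\frh(X)$ is a direct summand of sums of twisted tensor powers of $\frh(\cS_0)$, i.e.\ of motives of powers of $\cS_0$. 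It therefore lies in the category generated by supersingular abelian varieties, which is what supersingular abelian type means.

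The main technical care is the descent and specialization step. The birational motive isomorphism and the modular motivic statement must be realized by cycles defined over a finite extension of $F$, and the specialization must be made compatible with the birational identification $\cX_{\bar F}\bir Y$. I would handle this by doing everything over a single finitely generated field inside $\bar F$ and then taking a finite extension, using that $V$ is Henselian with algebraically closed residue field $k$, so the residue field does not change.
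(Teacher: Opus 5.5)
Your proposal is correct and follows the paper's proof. Both arguments embed $\frh(\cX_{\bar K})$ into a sum of twisted motives of powers of $S$ via \Cref{thm: motive of Bridgelan moduli space}, descend the embedding to a finite extension of $F$, specialize it with \Cref{lem: specialization of motives} to the supersingular reduction $\cS_0$, and conclude with \Cref{thm:SSK3TateMotive} (resp.\ the definition of supersingular abelian type).

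You are slightly more careful than the paper in explicitly invoking \Cref{thm:birational Chow motives} to pass from the modular variety to the birational generic fiber, a step the paper uses tacitly. The compatibility with the birational identification that you worry about at the end is not actually needed, since the specialization lemma only requires an embedding on the generic fiber of $\cX$.
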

\begin{proof}
    Keep the notation as above.   
    By \Cref{thm: motive of Bridgelan moduli space}, we have an embedding of motives
    \[
    \frh(\cX_{\bar{K}})\hookrightarrow \bigoplus_{i=1}^m\frh(S_{\bar{K}}^{s_i})(t_i).
    \]
    Again, after replacing $V$ by a further finite flat extension, we can assume this embedding is defined over $F$. So, by \Cref{lem: specialization of motives}, we have an embedding of motives in special fiber:
    \[
    \frh(X)\hookrightarrow \bigoplus_{i=1}^m\frh(\cS_0^{s_i})(t_i).
    \]
    Note that $\cS_0$ is a supersingular K3 surface in case (i) or (iii), and it is a supersingular abelian surface in case (ii). Applying \Cref{thm:SSK3TateMotive} completes the proof.
\end{proof}

\section{Supersingular Tate conjecture for OG6-type}\label{sec:og6}

In this section, we prove \Cref{thm:intro:SSMotive} for a supersingular excellent irreducible symplectic variety $X$ of OG6-type such that the Artin invariant $\sigma_0\neq 4$. Unlike the other types proved in the previous section, it is hard to lift $X$ to a modular irreducible symplectic variety in characteristic zero, because \Cref{prop: lifting with line bundles} only guarantees a lifting of $X$ with two line bundles. Instead, we use the fact that $X$ admits a rational double cover by a $K3^{[3]}$-type irreducible symplectic variety, which was first constructed by Mongardi, Rapagnetta, and Sacc\`a in \cite{MRG2018}, and later generalized by Floccari and Fu in \cite{FF2026}.
\subsection{Singular OG6-variety and resolution}
We first recall some basic definitions and properties of singular OG6-varieties and their resolution over $\CC$. Our main reference is \cite{FF2026}.

\begin{definition}
    A \textit{singular OG6-variety} is a compact K\"{a}hler complex analytic space which is a locally trivial deformation of O'Grady's $\cK_A(2,0,-2)$. An \textit{OG6-resolution} is a compact hyper-K\"ahler manifold isomorphic to the crepant resolution $\widetilde{\cK}$ of a singular OG6-variety $\cK$ obtained by blowing up its singular locus.
\end{definition}

We can give a numerical criterion to determine whether a hyper-K\"ahler manifold $X$ of OG6-type is birational to an OG6-resolution:
\begin{proposition}\label{prop:numerical criterion of OG6-resoluion}
    Let $X$ be a hyper-K\"ahler manifold of OG6-type. Then $X$ is birational to an OG6-resolution $\widetilde{\cK}$ if and only if $\rH^{1,1}(X,\ZZ)$ contains an element $\sigma$ with square $-2$ and divisibility $2$ in $\rH^2(X,\ZZ)$.
\end{proposition}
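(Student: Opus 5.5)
The plan is to prove the two directions separately, combining the lattice theory of OG6-type with the Torelli theorem for singular OG6-varieties from \cite{FF2026}.

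\emph{Necessity.} Suppose $X$ is birational to an OG6-resolution $\widetilde{\cK}$. A birational map between hyper-K\"ahler manifolds induces a Hodge isometry $\rH^2(X,\ZZ)\cong\rH^2(\widetilde{\cK},\ZZ)$. It therefore suffices to find a suitable class on $\widetilde{\cK}$.
\begin{itemize}
\item For O'Grady's original $\widetilde{\cK}_A(2,0,-2)$, \Cref{theorem: OG type example}(b) gives the half exceptional class $\delta=\frac12 c_1(E)$. It is a $(1,1)$-class of square $-2$ and divisibility $2$.
\item For a general OG6-resolution $\widetilde{\cK}$ of a singular OG6-variety $\cK$, the exceptional divisor of the blow-up of the singular locus varies in the locally trivial family.
\item Since the family is locally trivial, its class stays of type $(1,1)$ and keeps the same square and divisibility by deformation invariance.
\end{itemize}
Transporting $\delta$ through the Hodge isometry gives the required $\sigma$.

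\emph{Sufficiency.} Suppose $\sigma\in\rH^{1,1}(X,\ZZ)$ has $\sigma^2=-2$ and $\operatorname{div}(\sigma)=2$. The steps are as follows.
\begin{enumerate}
\item A direct lattice computation in $\rU^{\oplus3}\oplus\langle-2\rangle^{\oplus2}$, as in the proof of \Cref{lem: numerical twisted moduli of OG10}, shows that $\sigma^\perp\cong\rU^{\oplus3}\oplus\langle-2\rangle$.
\item The lattice $\sigma^\perp$ inherits a Hodge structure of K3 type. This lattice is exactly the second cohomology lattice of a singular OG6-variety, equipped with its Beauville--Bogomolov-type form.
\item By the surjectivity of the period map for locally trivial deformations of singular symplectic varieties (Bakker--Lehn, as used in \cite{FF2026}), there is a singular OG6-variety $\cK$ with a Hodge isometry $\rH^2(\cK,\ZZ)\cong\sigma^\perp$.
\item Let $\widetilde{\cK}$ be its OG6-resolution. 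Its cohomology satisfies $\rH^2(\widetilde{\cK},\ZZ)=\rH^2(\cK,\ZZ)\oplus\ZZ\delta$.
\item Sending $\delta\mapsto\sigma$ extends the isometry of step 3 to a Hodge isometry $\rH^2(\widetilde{\cK},\ZZ)\to\sigma^\perp\oplus\ZZ\sigma=\rH^2(X,\ZZ)$. Here the divisibility-$2$ hypothesis guarantees the gluing is the same on both sides, since both lattices are the same overlattice of the orthogonal sum.
\item The Hodge-theoretic Torelli theorem for OG6-type (Mongardi--Rapagnetta) then implies that $X$ is birational to $\widetilde{\cK}$, provided the isometry is a parallel transport operator.
\end{enumerate}

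\emph{Main obstacle.} The delicate point is step 6, namely arranging that the isometry is a parallel transport operator. The monodromy group of OG6-type is $\rO^+(\rH^2)$, so it suffices to fix orientation. I would compose with $-\id$ on $\sigma^\perp$, or with an isometry of $\rH^2(\cK,\ZZ)$ that reverses the positive cone orientation, so that the positive cone orientation is preserved. I expect this to suffice; if a cleaner reference exists, I would instead cite \cite[Theorem]{FF2026} directly for the equivalence.
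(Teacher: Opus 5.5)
Your proposal is correct and follows the paper's proof. You realize the Hodge structure on $\sigma^\perp\cong\rU^{\oplus3}\oplus\langle-2\rangle$ by a singular OG6-variety via surjectivity of the period map and conclude with the Torelli theorem, while for the converse the paper simply cites \cite[Remark 3.2]{FF2026}, and your deformation-of-the-exceptional-divisor argument is a reasonable sketch of it. Your explicit orientation fix (using $\mathrm{Mon}^2=\rO^+$ and composing with $-\id$) fills in what the paper leaves implicit; also, divisibility $2$ together with $\sigma^2=-2$ makes $\rH^2(X,\ZZ)=\sigma^\perp\oplus\ZZ\sigma$ an honest orthogonal sum, so there is no nontrivial gluing to match in step 5.
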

\begin{proof}
    If $\rH^{1,1}(X,\ZZ)$ contains an element $\sigma$ with square $-2$ and divisibility $2$, we can assume that 
    \[
    \rH^2(X,\ZZ)=\rU^{\oplus3}\oplus\langle-2\rangle\oplus\langle\sigma\rangle.
    \]
    By the surjectivity of the period map, the Hodge structure on $\sigma^\perp$ can be realized as the Hodge structure of a singular OG6-variety $\cK$. By the Torelli theorem for hyper-K\"ahler manifolds, the resolution $\widetilde{\cK}$ is birational to $X$. The converse follows from \cite[Remark 3.2]{FF2026}.
\end{proof}

The singular locus $\Sigma$ of a singular OG6-variety $\cK$ is given by a quotient of an abelian fourfold of Weil type, which is closely related to the Kuga--Satake construction:
\begin{theorem}\label{theorem: Abelian fourfold and Kuga-Satake}
    Let $(\cK,h)$ be a polarized singular OG6-variety and let $\widetilde{\cK}$ be its resolution. There is an abelian fourfold $B_\cK$ such that:
    \begin{enumeratea}
        \item The singular locus $\Sigma$ of $\cK$ is isomorphic to $B_\cK/\pm 1$.
        \item The Kuga--Satake abelian variety $\rK\rS(\rH^2(\widetilde{\cK},\QQ)_{\mathrm{prim}})$ of $\widetilde{\cK}$ is isogenous to a self-product $B_\cK^m$ of $B_\cK$.
    \end{enumeratea}
\end{theorem}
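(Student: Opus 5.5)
The plan is to prove the statement first for O'Grady's original example $\cK=\cK_A(2,0,-2)$ on an abelian surface $A$, and then to extend it to all polarized singular OG6-varieties by a locally trivial deformation argument. Throughout, the Kuga--Satake construction is compared on the family via the period map.

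\emph{Step 1: the singular locus.} For O'Grady's example, recall the description of the singular locus of $\cM_A(2,0,-2)$. It is the symmetric square $\Sym^2(\widehat{A}\times A)$ of the locus of strictly semistable sheaves $F_1\oplus F_2$ with $F_i\in \widehat{A}\times A$, or more precisely the image of $A\times\widehat{A}$ under $F\mapsto F\oplus F$ together with the pairs. Intersecting with the Albanese fiber, the singular locus $\Sigma$ of $\cK$ becomes $(A\times\widehat{A})[2]$-related data. It is known (Mongardi--Rapagnetta--Saccà) that $\Sigma\cong (A\times \widehat{A})/\pm1$. So for the original example we would set $B_\cK=A\times\widehat{A}$.

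In a locally trivial family $\cK_t$, the singular locus deforms as a family $\Sigma_t$. The locus $\Sigma_t$ has quotient singularities exactly at the $256$ images of $2$-torsion points. Taking the double cover branched there, equivalently the canonical cover of $\Sigma_t$ minus those points, extended by normalization, gives a family of abelian fourfolds $B_t$ with $\Sigma_t\cong B_t/\pm1$. This proves (a) for every $\cK$ deformation equivalent to the original, which by definition is every singular OG6-variety.

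\emph{Step 2: Kuga--Satake.} Over the period domain of $\rH^2(\widetilde{\cK},\QQ)_{\mathrm{prim}}$, which has signature $(2,5)$, there are two variations of Hodge structure of weight one on the same base: $\rH^1(B_t)$ and the Kuga--Satake structure $\mathrm{Cl}^+$. Both are polarizable $\QQ$-VHS. Their monodromy (Mumford--Tate) groups both factor through $\mathrm{Spin}(\rH^2_{\mathrm{prim}})$, because the construction of $B_t$ is canonical in the deformation, so its monodromy acts through the monodromy of $\rH^2$ lifted to spin.

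For a very general point, the Mumford--Tate group of $\rH^2_{\mathrm{prim}}$ is the full $\mathrm{SO}$. Then $\rH^1(B_t)$ is a representation of $\mathrm{Spin}(7)$ of dimension $8$, with weights in the spin representation, hence it is the $8$-dimensional spin representation. The Kuga--Satake $\mathrm{Cl}^+$ is a sum of copies of that spin representation. Deligne's theorem on rigidity of VHS (Theorem of the fixed part), applied to $\mathrm{Hom}(\rH^1(B)^m,\mathrm{KS})$, then gives a flat global Hodge isomorphism. By Riemann's theorem this yields an isogeny $\mathrm{KS}\sim B_t^m$ everywhere in the family.

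The main obstacle is checking that $\rH^1(B_t)$ is indeed the spin representation rather than something with a nontrivial twist (a Weil-type structure). The matching of the $\mathrm{Spin}$-actions requires that $\rH^1(B)$ be an algebraic representation of the spin cover of the monodromy. I would verify this first on O'Grady's example by an explicit computation of $\rH^1(A\times\widehat{A})$ versus the Clifford algebra of $\rH^2(\widetilde{\cK})_{\mathrm{prim}}$, along the lines of the comparison of $\rH^2(A)$ with $\wedge^2 \rH^1(A)$.
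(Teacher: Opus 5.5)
\textbf{There is a genuine gap in Step 2.} Step 1 is fine in outline. The identification $\Sigma\cong(A\times\widehat{A})/\pm1$ for O'Grady's example, and the construction of $B_t$ as the normalized double cover of $\Sigma_t$ branched at its $256$ isolated singular points, do propagate (a) along locally trivial deformations. (The paper simply quotes \cite[Theorem 3.4]{FF2026} for this.)

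\emph{The generic Mumford--Tate group is wrong.} Your Step 2 rests on a false premise. The fourfold $B_t$ exists only over the locus where $\widetilde{\cK}_t$ is the resolution of a singular OG6-variety. On that locus the class $\delta$ with $c_1(E)=2\delta$ is always of type $(1,1)$. So the generic Mumford--Tate group of $\rH^2(\widetilde{\cK}_t,\QQ)_{\mathrm{prim}}$ is not the full $\mathrm{SO}$ of a $(2,5)$-space. It fixes $\delta$, and is $\mathrm{SO}(T)$ for the rank-$6$ space $T=\delta^\perp\cap h^\perp\cong\rH^2(\cK,\QQ)_{\mathrm{prim}}$ of signature $(2,4)$.

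The relevant group is therefore $\mathrm{Spin}(T)$, with $\mathrm{Spin}(T)_\CC\cong\mathrm{SL}_4$. Its spin representation is $V_4\oplus V_4^\vee$, the sum of the two half-spin representations. These are defined only over the imaginary quadratic center of $\mathrm{Cl}^+(T)$. So the ``Weil-type twist'' you flagged as the main obstacle is genuinely there: $B_\cK$ is an abelian fourfold of Weil type. Your identification of $\rH^1(B_t)$ as the $8$-dimensional spin representation of $\mathrm{Spin}(7)$ is not available.

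\emph{There is no link between $\rH^1(B_t)$ and $\rH^2$.} Even with the correct group, you never justify the claim that the monodromy of $\rH^1(B_t)$ factors through a spin lift of the monodromy of $\rH^2$. Canonicity of the construction only makes $\rH^1(B_t)$ a variation of Hodge structure over the same base. It says nothing about its Mumford--Tate group, which a priori could be all of $\mathrm{GSp}_8$.

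You need genuine geometric input tying $\rH^1(B_t)$ to $\rH^2(\cK_t)$. One candidate is restricting classes from $\cK_t$ to $\Sigma_t\cong B_t/\pm1$, whose rational $\rH^2$ is $\wedge^2\rH^1(B_t,\QQ)$. You also need the fact that the Kuga--Satake variety of a signature-$(2,4)$ Hodge structure of K3 type is isogenous to a power of a Weil-type fourfold. This is the content the paper imports from \cite[Theorem 3.4]{FF2026} for the singular variety $\cK$.

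An explicit computation at O'Grady's point $B=A\times\widehat{A}$ cannot replace this input. The Mumford--Tate group there is much smaller than the generic one, so it does not determine the representation.

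\emph{The last step is missing.} You still need to pass from $T$ to $\rH^2(\widetilde{\cK},\QQ)_{\mathrm{prim}}=T\oplus\QQ\delta$. Because $\delta$ is a Hodge class, $\cK$ and $\widetilde{\cK}$ have Hodge-isometric transcendental parts. Hence the two Kuga--Satake varieties have the same isogeny factors. This is exactly how the paper concludes, via \cite[Remark 4.2]{FF2026}.
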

\begin{proof}
    There is a Hodge isometry
    \[
    \rH^2_{\mathrm{tr}}(\cK,\QQ)\xrightarrow{\cong}\rH^2_{\mathrm{tr}}(\widetilde{\cK},\QQ).
    \]
    Thus, the Kuga--Satake abelian varieties of $\cK$ and $\widetilde{\cK}$ share the same isogeny factors by the same argument in \cite[Remark 4.2]{FF2026}. The assertion follows from \cite[Theorem 3.4]{FF2026}.
\end{proof}

 Now, we can turn to the double cover map. For a singular OG6-variety $\cK$, denote by $\Sigma$ the singular locus of $\cK$ and by $\Omega$ the singular locus of $\Sigma$. Then $\Sigma\cong B_\cK/\pm 1$ as before and $\Omega$ consists of 256 points of $\Sigma$.
\begin{theorem}\cite[Theorem 4.1, Corollary 4.3]{FF2026}\label{theorem: double cover}
    Let $\cK$ be a projective singular OG6-variety. There is a projective variety $Z_\cK$ of $K3^{[3]}$-type and a degree $2$ generically finite morphism $\phi:Z_\cK\to\cK$ such that $\phi|_{\phi^{-1}(\cK\backslash \Sigma)}$ is an \'etale double cover, $\Delta=\phi^{-1}(\Sigma)$ is isomorphic to $\mathrm{Bl}_\Omega\Sigma$, and $\phi|_\Delta$ is identified with the blow-up map. Moreover, $\phi$ induces a Hodge isometry
    \[
    \phi_*:\rH^2_{\mathrm{tr}}(Z_\cK,\QQ)\to\rH^2_{\mathrm{tr}}(\cK,\QQ)[2].
    \]
    where the ``[2]'' means multiplied by 2. As a result,  $Z_\cK$ is birational to a moduli space of stable sheaves on a K3 surface $S_{\cK}$.
\end{theorem}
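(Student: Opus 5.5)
The statement is a citation of \cite[Theorem 4.1, Corollary 4.3]{FF2026}, so the plan is to recover its ingredients rather than reprove everything. I would first treat O'Grady's model case $\cK=\cK_A(2,0,-2)$. There Mongardi--Rapagnetta--Sacc\`a \cite{MRG2018} construct a double cover $Z\to\cK$ of $\rK3^{[3]}$-type, branched along $\Sigma$, and the blow-up description $\Delta\cong\mathrm{Bl}_\Omega\Sigma$ is explicit. Afterwards I would spread the construction over the whole locally trivial deformation space of $\cK$, which is possible because it is canonical.

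\textbf{Step 1: the cover away from $\Sigma$.} The smooth locus $\cK\setminus\Sigma$ has a topological fundamental group containing a canonical index-two subgroup. This subgroup is detected by the exceptional class $\delta$, because $c_1(E)=2\delta$ is divisible by $2$ in $\rH^2(\widetilde{\cK},\ZZ)$. It therefore defines a double cover $\widetilde{Z}\to\widetilde{\cK}$ branched along $E$, and the cover descends to an \'etale double cover of $\cK\setminus\Sigma$. Since local triviality preserves the analytic type of $(\cK,\Sigma,\Omega)$ along deformations, the cover and its normalization $Z_\cK\to\cK$ deform together with $\cK$.

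\textbf{Step 2: the fibre over $\Sigma$ and the $\rK3^{[3]}$-type claim.} The local normal form of $\cK$ along $\Sigma\setminus\Omega$ is transverse $\CC^4/\pm1$. Away from $\Omega$ the double cover is smooth there, and analyzing the $256$ points of $\Omega$ separately gives $\phi^{-1}(\Sigma)\cong\mathrm{Bl}_\Omega\Sigma$. For the model case, \cite{MRG2018} shows $Z$ is of $\rK3^{[3]}$-type. Smoothness of $Z_\cK$ and this deformation type then propagate to every $\cK$ by Ehresmann's theorem applied to the family of covers. Projectivity of $Z_\cK$ follows from projectivity of $\cK$, since $\phi$ is finite.

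\textbf{Step 3: the Hodge isometry.} On the transcendental part, $\phi^*\colon\rH^2(\cK,\QQ)\to\rH^2(Z_\cK,\QQ)$ is a morphism of Hodge structures. For degree reasons we have $\phi_*\phi^*=2$, and the image of $\phi^*$ contains the transcendental lattice of $Z_\cK$, because the covering involution acts trivially on $\rH^{2,0}$. Scaling of the forms is checked with the Fujiki relations \Cref{lem:fujiki}, using $\int_{Z}\phi^*\alpha^{6}=2\int_\cK\alpha^6$ together with the Fujiki constants $c=15$ for $\rK3^{[3]}$ and $4$ for OG6, after the usual normalizations. This gives $q_Z(\phi^*\alpha)=2q_\cK(\alpha)$ up to a constant that is fixed in the model case. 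Hence $\phi_*$ is an isometry onto $\rH^2_{\mathrm{tr}}(\cK,\QQ)[2]$.

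\textbf{Step 4: the modular conclusion, and the main obstacle.} Since $\NS(Z_\cK)$ contains the pullback of $\NS(\cK)$ together with the exceptional class, one checks that it contains $\rU$-type classes making $Z_\cK$ satisfy the criterion of \Cref{Thm:moduli}. The criterion then gives a birational model as a moduli space on a K3 surface $S_\cK$, which is twisted a priori and untwisted by the refined lattice computation in \cite{FF2026}. The lattice computation identifying the classes $\phi^*\rH^2(\cK)$ and $[\Delta]$ inside $\rH^2(Z_\cK,\ZZ)$ is the step I expect to be hardest. I would take this lattice-theoretic identification directly from \cite[Corollary 4.3]{FF2026} rather than recompute it.
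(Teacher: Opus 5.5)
The gap is in Steps 1--2. There you take $Z_\cK$ to be the normalization of $\cK$ in the \'etale double cover of $\cK\setminus\Sigma$, and later you use that $\phi$ is finite. The statement, which the paper quotes from \cite{FF2026} without proof, rules this out. Since $\phi|_\Delta$ is the blow-up $\mathrm{Bl}_\Omega\Sigma\to\Sigma$, $\phi$ contracts a copy of $\PP^3$ over each of the $256$ points of $\Omega$; this is the locus $\Gamma$ blown up right after the statement. Take the Stein factorization $Z_\cK\to\cK'\to\cK$. Here $\cK'$ is a normalization of $\cK$ in a double cover, exactly the object you build, and $Z_\cK\to\cK'$ is a small contraction of these $\PP^3$'s, which have codimension $3$. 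This has three consequences:
\begin{enumerate}
\item By purity of the exceptional locus, $\cK'$ is not even $\QQ$-factorial, hence singular, at the $256$ points over $\Omega$.
\item The preimage of $\Sigma$ in $\cK'$ is finite over $\Sigma$, so it cannot be $\mathrm{Bl}_\Omega\Sigma$; ``analyzing the $256$ points'' cannot produce the blow-up.
\item The $\rK3^{[3]}$-type result of \cite{MRG2018} concerns the resolved variety, not $\cK'$.
\end{enumerate}
Locally, the double cover of $\{A\in\mathfrak{sp}_4: A^2=0\}$ is the cone of traceless rank $\le 1$ matrices in $\mathfrak{sl}_4$. This cone has an isolated singularity with two small resolutions, $T^*\PP^3$ and $T^*\check{\PP}^3$, and the covering involution interchanges them.

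The missing step is therefore a choice of small resolutions at these $256$ points, made simultaneously in locally trivial families. Once you make it, several of your later claims change:
\begin{itemize}
\item $\phi$ is no longer finite, so projectivity of $Z_\cK$ does not follow from that of $\cK$ and needs its own argument. Mongardi--Rapagnetta--Sacc\`a instead take a double cover of $\mathrm{Bl}_{\widetilde\Omega}\widetilde\cK$ and contract two divisors, as in the diagram in the proof of \Cref{thm: TorsionOfOG6}.
\item Ehresmann's theorem must be applied to the family of resolved covers, not to the family of normalizations.
\item The covering involution is only a birational involution of $Z_\cK$. So the identification in Step 3 of the invariant part of $\rH^2(Z_\cK,\QQ)$ with $\phi^*\rH^2(\cK,\QQ)$ has to be argued, for example over $\cK\setminus\Sigma$, where the involution is regular and free.
\end{itemize}

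Step 4 is also wrong as written. $Z_\cK$ carries no exceptional divisor: $\Delta$ has codimension $2$ and the $\PP^3$'s have codimension $3$. Moreover $\phi^*\NS(\cK)$ can have rank one, so your argument does not produce the hyperbolic plane. The conclusion instead follows from Step 3 by counting ranks. One has $\rk\rH^2_{\mathrm{tr}}(Z_\cK)=\rk\rH^2_{\mathrm{tr}}(\cK)\le b_2(\cK)-1=6$. Hence the algebraic part of the extended Mukai lattice $\rU^{\oplus4}\oplus\rE_8(-1)^{\oplus2}$ of $Z_\cK$ is even and indefinite, of rank $\ge 18$, with discriminant group of length $\le 6$. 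By \cite[Corollary 1.13.5]{Nikulin} it splits off $\rU$. That is the criterion (cf.\ \cite{Addington_2016}) for being birational to a moduli space of untwisted sheaves on a K3 surface, so no further lattice computation from \cite{FF2026} is needed. Equivalently, the $16$-dimensional anti-invariant part of $\rH^2(Z_\cK,\QQ)$ under the birational involution is orthogonal to $\rH^{2,0}$, hence algebraic.

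There are also some smaller slips:
\begin{itemize}
\item Transversally to $\Sigma\setminus\Omega$, the singularity of $\cK$ is $\CC^2/\pm1$, since $\Sigma$ has codimension $2$. The quotient $\CC^4/\pm1$ is the singularity of $\Sigma$ itself at points of $\Omega$.
\item Your constants $15$ and $4$ come from different normalizations of the Fujiki relation. Use $15$ and $60$, or $1$ and $4$.
\item From $q_Z(\phi^*\alpha)=2q_\cK(\alpha)$ and $\phi_*\phi^*=2$, the isometry onto $\rH^2_{\mathrm{tr}}(\cK,\QQ)[2]$ is $\tfrac12\phi_*=(\phi^*)^{-1}$, while $\phi_*$ itself scales the form by $4$. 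This is harmless over $\QQ$.
\end{itemize}
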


Therefore, for the resolution $\widetilde{\cK}$, we only obtain a rational double cover $\widetilde{\phi}:Z_\cK\dashrightarrow \widetilde{\cK}$. We need to resolve this rational map to obtain a regular morphism. This was done in the second paragraph of the proof of \cite[Theorem 5.7]{FF2026}. We exhibit it in the following diagram.
\[\begin{tikzcd}
	{\widetilde{Y}=\mathrm{Bl}_{\Delta'}Y} & \\
	{Y=\mathrm{Bl}_{\Gamma}Z_\cK} \\
	{Z_\cK} & \widetilde{\cK}
	\arrow[from=1-1, to=2-1]
	\arrow["\psi", from=1-1, to=3-2]
	\arrow[from=2-1, to=3-1]
	\arrow["{\widetilde{\phi}}", dashed, from=3-1, to=3-2]
\end{tikzcd}\]
We use the following notation in the diagram:
 \begin{itemize}
     \item $\Gamma\subset\Delta\cong \mathrm{Bl}_{B_\cK[2]}(B_\cK/\pm 1)$ is the union of 256 copies of $\PP^3$ arising as the exceptional divisor of the blow up map $\Delta\to B_\cK/\pm 1$.
     \item $\Delta'$ is the strict transform of $\Delta$ in $Y$, which is again isomorphic to $\mathrm{Bl}_{B_\cK[2]}(B_\cK/\pm 1)$.
     \item $\psi$ is a regular morphism extending $\widetilde\phi$, which is in particular a degree 2 generically finite morphism.
 \end{itemize}

With the help of the above diagram, we can show that the Chow motive of $\widetilde{\cK}$ can be controlled by the Chow motives of $B_\cK$ and $S_\cK$.
\begin{proposition}\label{prop: Chow motives of OG6}
    $\widetilde{\cK}$ is Chow-motivated by $S_\cK$ and $B_\cK$. More specifically, there is an embedding of Chow motives:
    \[
    \frh(\widetilde{\cK})\hookrightarrow \bigoplus_{i=1}^n\frh(S_\cK^{m_i})(t_i)\oplus \frh(B_\cK)(-1) \oplus \bigoplus_{j=1}^r \QQ(-k_j).
    \]
\end{proposition}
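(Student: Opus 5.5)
We plan to use the degree-$2$ generically finite morphism $\psi\colon \widetilde{Y}\to\widetilde{\cK}$ from the diagram above. The first step is the standard transfer argument. Since $\psi$ is proper, surjective and generically finite of degree $2$, we have $\psi_*\psi^*=2\cdot\id$ on Chow groups with rational coefficients. At the level of correspondences, ${}^t\Gamma_\psi\circ\Gamma_\psi = 2\Delta_{\widetilde{\cK}}$. Hence $\frac12\Gamma_\psi$ and ${}^t\Gamma_\psi$ realize $\frh(\widetilde{\cK})$ as a direct summand of $\frh(\widetilde{Y})$. It therefore suffices to embed $\frh(\widetilde{Y})$ into a motive of the claimed shape.

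The second step computes $\frh(\widetilde{Y})$ using the blow-up formula for Chow motives (Manin), applied twice. Blowing up $Z_\cK$ along $\Gamma$, a disjoint union of $256$ copies of $\PP^3$ of codimension $3$, gives
\[
\frh(Y)\cong \frh(Z_\cK)\oplus \bigoplus_{i=1}^{2}\frh(\Gamma)(-i).
\]
Here $\frh(\Gamma)$ is a sum of Tate motives. Blowing up $Y$ along $\Delta'$, which is smooth of codimension $2$ since $\dim \Delta'=4$ and $\dim Y=6$, gives
\[
\frh(\widetilde{Y})\cong \frh(Y)\oplus \frh(\Delta')(-1).
\]
Smoothness of $\Delta'$ and of $\Gamma$ should follow from the description in \cite{FF2026}; we will check this there.

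The third step identifies each summand.
\begin{itemize}
\item For $\Delta'\cong \mathrm{Bl}_{B_\cK[2]}(B_\cK/\pm1)$: this is also the quotient of $\mathrm{Bl}_{B_\cK[2]}B_\cK$ by the lifted involution. Hence $\frh(\Delta')$ is the $(\pm1)$-invariant part of $\frh(\mathrm{Bl}_{B_\cK[2]}B_\cK)$. That motive is $\frh(B_\cK)$ plus Tate twists of the $256$ points, by the blow-up formula with $\QQ$-coefficients. So $\frh(\Delta')(-1)$ embeds into $\frh(B_\cK)(-1)\oplus\bigoplus\QQ(-k)$.
\item For $Z_\cK$: by \Cref{theorem: double cover} it is birational to a moduli space of stable sheaves on the K3 surface $S_\cK$. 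By \Cref{thm:birational Chow motives} and \Cref{thm: motive of Bridgelan moduli space}, $\frh(Z_\cK)$ lies in the pseudo-abelian tensor subcategory generated by $\frh(S_\cK)$, i.e.\ it embeds into $\bigoplus\frh(S_\cK^{m_i})(t_i)$.
\end{itemize}
Assembling the three pieces gives the stated embedding.

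The main obstacle is the geometric input in the first two steps. We need the exact smoothness and codimension of the blow-up centers, so that the blow-up formula applies; this relies on \cite[proof of Theorem 5.7]{FF2026}. We also need the identification of $\Delta'$ as a global quotient, so that its motive is computed as the invariant part of that of a blown-up abelian fourfold. If $\Delta'$ turns out to be a singular quotient, we will instead use that the quotient of a smooth projective variety by a finite group still has a well-defined Chow motive with $\QQ$-coefficients, equal to the invariant part, and the blow-up formula continues to hold in that setting.
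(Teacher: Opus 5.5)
Your proposal is correct and follows the paper's proof essentially step by step. You use $\psi$ to make $\frh(\widetilde{\cK})$ a summand of $\frh(\widetilde Y)$, and the two blow-up formulas reduce you to $\frh(Z_\cK)\oplus\frh(\Gamma)(-1)\oplus\frh(\Gamma)(-2)\oplus\frh(\Delta')(-1)$. You then handle $\frh(Z_\cK)$ by B\"ulles' theorem (where your explicit appeal to birational invariance is a welcome addition), and $\frh(\Delta')$ via $\frh(\mathrm{Bl}_{B_\cK[2]}B_\cK)$.

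Two small remarks. The identity you want is $\Gamma_\psi\circ{}^t\Gamma_\psi=2\Delta_{\widetilde{\cK}}$, i.e.\ $\psi_*\psi^*=2$; the composite in your order lives on $\widetilde Y\times\widetilde Y$. Also, $\Delta'$ is automatically smooth, because the involution of $\mathrm{Bl}_{B_\cK[2]}B_\cK$ fixes exactly the $256$ exceptional divisors; so your fallback for a singular $\Delta'$ is unnecessary.
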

\begin{proof}
    Since $\psi:\widetilde{Y}\to\widetilde{\cK}$ is a generically finite morphism of degree 2, the motive $\frh(\widetilde{\cK})$ is a direct summand of $\frh(\widetilde{Y})$. By the blow-up formula, we have
    \[
    \frh(\widetilde{Y})\cong \frh(Y)\oplus \frh(\Delta')(-1).
    \]
    Similarly, 
    \[
    \frh(Y)\cong\frh(Z_\cK)\oplus \frh(\Gamma)(-1)\oplus \frh(\Gamma)(-2).
    \]
    Thus, we have an embedding of motives:
    \[
    \frh(\widetilde{\cK})\hookrightarrow  \frh(Z_\cK)\oplus \frh(\Gamma)(-1)\oplus \frh(\Gamma)(-2)\oplus \frh(\Delta')(-1).
    \]
    We can compute each term on the right. First, since $Z_\cK$ is biratinal to a moduli space of stable sheaves on the K3 surface $S_\cK$, by \cite{Bu20} there is an embedding:
    \[
    \frh(Z_\cK)\hookrightarrow \bigoplus_{i=1}^n\frh(S_\cK^{m_i})(t_i).
    \]
    Since $\Gamma$ is the union of 256 copies of $\PP^3$, we have
    \[
    \frh(\Gamma)= \big(\bigoplus_{i=0}^3 \QQ(-i)\big)^{\oplus 256}.
    \]
    For $\frh(\Delta')$, we note that $\Delta'=\mathrm{Bl}_{B_\cK[2]}(B_\cK/\pm 1)$ can be realized as $\widetilde{B_\cK}/\pm 1$, where $\widetilde{B_\cK}=\mathrm{Bl}_{B_\cK[2]}B_\cK$. Thus, we have an embedding:
    \[
    \frh(\Delta')\hookrightarrow \frh(\widetilde{B_\cK})=\frh(B_\cK)\oplus \big(\bigoplus_{i=0}^3 \QQ(-i)\big)^{\oplus 256}.
    \]
    Putting everything together, we obtain an embedding:
    \[
    \frh(\widetilde{\cK})\hookrightarrow \bigoplus_{i=1}^n\frh(S_\cK^{m_i})(t_i)\oplus \frh(B_\cK)(-1) \oplus \bigoplus_{j=1}^r \QQ(-k_j).
    \]
\end{proof}

\subsection{The supersingular Tate conjecture}
Now, we can prove the supersingular Tate conjecture for an OG6-type variety. We first show that it can be lifted to an OG6-resolution in characteristic zero.

\begin{proposition}\label{prop : lifting OG6 to OG6-resolution}
    Let $X$ be a supersingular OG6-type variety over $k$ with Artin invariant $\sigma_0\leq 3$. Then there is a finite flat extension $V$ of $W$ with fraction field $F$ and a smooth projective family $\cX\to \Spec V$ such that the special fiber $\cX_0\cong X$ and the geometric generic fiber $\cX_{\bar{K}}$ is birational to an OG6-resolution $\widetilde{K}$. Moreover, we can assume that $\cX/V$ carries a polarization $\boldsymbol{\xi}$ of degree prime to $p$.
\end{proposition}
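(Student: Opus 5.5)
The plan mirrors the proof of \Cref{prop: lifting to Bridgeland over K3}. First I choose a small saturated sublattice $N\subset\NS(X)$ to lift. The goal is for $N$ to contain an element $\sigma$ with $\sigma^2=-2$ and divisibility $2$ in $\NS(X)$, together with a primitive ample class $\xi$ with $p\nmid q_X(\xi)$. The descriptions in \Cref{prop: OG6 small Artin invariant} (cases $\sigma_0=1,2$) and \Cref{prop: OG6 another description} (case $\sigma_0=3$) supply $\sigma$. In each, $\NS(X)$ contains an orthogonal summand $\langle -2\rangle$ that is orthogonal to a lattice $M'$ ($\rH^{(p)}(-1)\oplus\langle-2\rangle$ plus $\rU$ or $\rU(p)$, or $v^\perp$). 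That $\langle -2\rangle$ generator has divisibility $2$ exactly when $M'$ pairs evenly with it, and this holds by construction since the summand is orthogonal.

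Since \Cref{prop: lifting with line bundles} with $b_2=8$ allows $N=\max\{0,4-3\}=1$, i.e.\ two line bundles $\zeta_0=\xi,\zeta_1$, I need $N$ of rank $2$. I take $N=\langle \xi,\sigma\rangle$ saturated, with $\xi$ ample. Writing $\xi=a h+b\sigma$ with $h\in\sigma^\perp$, the freedom in choosing $h$ ample-ish in $\sigma^\perp$ lets me ensure $p\nmid q_X(\xi)$. This uses \Cref{lemma: squares in Hp} or the $\rU$ summand to produce values of any residue mod $p$, then adds a large multiple to reach the ample cone; I will expect to use the standard trick of replacing $\xi$ by $\xi+mH$ for an ample $H$ while controlling residues mod $p$. \Cref{prop: lifting with line bundles} then gives $V$ and $\cX/V$ with $\xi,\sigma$ lifting. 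The polarization $\boldsymbol{\xi}$ has degree $\xi^{6}=c_M\cdot 15\, q(\xi)^3/ (3!\,2^3)\cdot 8=15q(\xi)^3$ up to the Fujiki normalization. With $c_M=4$ and $p>7$, this is prime to $p$.

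Next comes the divisibility in characteristic zero. Let $\tilde\sigma\in\NS(\cX_{\bar K})$ be the lift, still of square $-2$ by compatibility of the forms (\Cref{prop: integral BB form}). Its divisibility in $\rH^2(\cX_{\bar K},\ZZ)$ is computed prime by prime. For $\ell\neq p$, \Cref{thm: Tate conjecture for divisors} gives $\NS(X)\otimes\ZZ_\ell=\rH^2_\et(X,\ZZ_\ell)\cong \rH^2(\cX_{\bar K},\ZZ)\otimes\ZZ_\ell$, so the $\ell$-part of the divisibility equals that in $\NS(X)$, namely $2$ at $\ell=2$. At $p$, the divisibility divides $\sigma^2=-2$, so it is prime to $p$. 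Hence $\tilde\sigma$ has square $-2$ and divisibility $2$ in the full lattice and lies in $\rH^{1,1}$. \Cref{prop:numerical criterion of OG6-resoluion}, combined with a spreading-out argument as in \Cref{Thm:moduli}, then yields that $\cX_{\bar K}$ is birational to an OG6-resolution.

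I expect the main obstacle to be the lattice step: exhibiting, uniformly in $\sigma_0\le3$, an ample $\xi$ with $p\nmid q_X(\xi)$ such that $\langle\xi,\sigma\rangle$ is saturated and primitive as needed for \Cref{prop: lifting with line bundles}. This is most delicate for $\sigma_0=3$, where $\sigma^\perp=v^\perp$ is $p$-divisible on a large part. There I would use the $\rH^{(p)}(-1)$ unimodular-at-$p$ block (\Cref{prop: local property of H^p}) to produce a vector of norm prime to $p$, then add a large multiple of an ample class of $p$-divisible norm.
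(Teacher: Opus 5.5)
Your proposal is correct and follows essentially the same route as the paper: take the orthogonal $\langle -2\rangle$ summand $\sigma$ from \Cref{prop: OG6 small Artin invariant} and \Cref{prop: OG6 another description} together with a primitive ample class of norm prime to $p$, lift both via \Cref{prop: lifting with line bundles}, check that the lift still has divisibility $2$ (which you do $2$-adically via \Cref{thm: Tate conjecture for divisors}, a step the paper leaves implicit), and then apply \Cref{prop:numerical criterion of OG6-resoluion}. The only slip is arithmetic and harmless: the Fujiki relation gives $\xi^6=15\,c_M\,q_X(\xi)^3=60\,q_X(\xi)^3$, and since $p>\dim X$ already forces $p\ge 7$, this is prime to $p$ exactly when $q_X(\xi)$ is.
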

\begin{proof}
    By \Cref{prop: OG6 small Artin invariant} and \Cref{prop: OG6 another description}, we can always find a $w\in\NS(X)$ such that $w^2=-2$ and the rank one sublattice $\langle w\rangle$ is a direct summand of $\NS(X)$. We can also find an ample line bundle $L$ on $X$ of degree prime to $p$. Let $N\subset \NS(X)$ be the sublattice generated by $w$ and $L$. We can lift $N $ to characteristic zero by \Cref{prop: lifting with line bundles}. Then, we apply the numerical criterion \Cref{prop:numerical criterion of OG6-resoluion} to the geometric generic fiber. This completes the proof.
\end{proof}

Let $B_\cK$ and $S_\cK$ be the abelian fourfold and K3 surface appearing in \Cref{theorem: Abelian fourfold and Kuga-Satake} and \Cref{theorem: double cover}. We may assume that they are both defined over $F$ by replacing $V$ by a further extension. We then show that they both have supersingular good reduction, possibly after replacing $V$ by a further extension again.

\begin{proposition}\label{prop: sueringular reduction of S_K}
    After replacing $F$ by a finite extension, the K3 surface $S_\cK$ has supersingular good reduction.
\end{proposition}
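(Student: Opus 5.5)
The plan is to mimic the argument of \Cref{prop: good reduction of the underlying surface} and \Cref{prop: reduction is supersingular}, with the Mongardi--Rapagnetta--Sacc\`a double cover replacing the quasi-universal sheaf. First, I would relate the transcendental Galois representations of $\cX_{\bar K}$ and $S_\cK$. By \Cref{prop : lifting OG6 to OG6-resolution}, the variety $\cX_{\bar K}$ is birational to $\widetilde{\cK}$. Birational irreducible symplectic varieties have $\rH^2$ identified by an algebraic correspondence (the closure of the graph), and $\rH^2_{\mathrm{tr}}(\cK,\QQ)\cong \rH^2_{\mathrm{tr}}(\widetilde{\cK},\QQ)$ (as used in the proof of \Cref{theorem: Abelian fourfold and Kuga-Satake}). \Cref{theorem: double cover} gives $\phi_*\colon \rH^2_{\mathrm{tr}}(Z_\cK)\cong \rH^2_{\mathrm{tr}}(\cK)[2]$, induced by the graph of $\phi$. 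Finally, $Z_\cK$ is birational to a moduli space on $S_\cK$, so \Cref{theorem:Bridgeland of K3^n type and Kum^n type} identifies $\rH^2_{\mathrm{tr}}(Z_\cK)$ with $\rH^2_{\mathrm{tr}}(S_\cK)$ via the quasi-universal sheaf. Each of these identifications is induced by an algebraic cycle, so after a finite extension of $F$ they are all defined over $F$. Hence, as in \Cref{Proposition: isomorphism between transcdental}, we get a Galois-equivariant isomorphism
\[
\rH^2_{\mathrm{tr}}(\cX_{\bar K},\QQ_\ell(1))\cong \rH^2_{\mathrm{tr}}(S_{\cK,\bar K},\QQ_\ell(1))
\]
for every prime $\ell$, including $\ell=p$. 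The scaling by $2$ is harmless rationally.

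Second, I would prove good reduction. After enlarging $F$, the algebraic parts on both sides become trivial Galois representations, since N\'eron--Severi groups are finitely generated and defined over a finite extension. Because $\cX$ has good reduction and $k$ is algebraically closed, $\rH^2_\et(\cX_{\bar K},\QQ_\ell)$ is trivial. Therefore $\rH^2_\et(S_{\cK,\bar K},\QQ_\ell)$ is trivial as well. To apply \Cref{Thorem: good reduction of K3}, I also need a line bundle on $S_\cK$ of degree prime to $p$. This is the step I expect to be the main obstacle. I would try to get it from the polarization $\boldsymbol{\xi}$ of degree prime to $p$ provided by \Cref{prop : lifting OG6 to OG6-resolution}. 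Its class, or its component orthogonal to $w$, maps to $\sigma^\perp$ and then via $\phi^*$ (which multiplies the form by $2$) into $\NS(Z_\cK)$. By a period argument like \Cref{lem: period of K3 with line bundle prime to p} applied to the Mukai lattice of $Z_\cK$, it should give a class on $S_\cK$ of square prime to $p$. Since $p>7$, the factors $2$ do not interfere. If this fails, the fallback is to choose the lifted lattice $N$ more carefully, adding a $\rU$ or $\rU(p)$ summand as in \Cref{prop: lifting to Bridgeland over K3}; rank permitting, this should force $S_\cK$ to carry such a class. Granting this, \Cref{Thorem: good reduction of K3} gives potentially good reduction, and we replace $F$ by the appropriate finite extension.

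Third, I would prove supersingularity exactly as in \Cref{prop: reduction is supersingular}. Take $\ell=p$ above, so that $\rH^2_{\mathrm{tr}}$ of the two sides are isomorphic crystalline representations, and the algebraic parts are trivial after enlarging $F$. Applying $\DD_{\cris}$ together with \cite[Corollary 3.9]{GKP2022}, $\rH^2_\cris(X/K)(1)$ has all slopes zero because $X$ is supersingular (\Cref{prop:ArtinEquiv}). The same then holds for the reduction $\cS_{\cK,0}$. Hence $\cS_{\cK,0}$ is a supersingular K3 surface.
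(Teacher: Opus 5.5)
Your outline matches the paper's: transcendental isometries induced by algebraic correspondences, unramifiedness, \Cref{Thorem: good reduction of K3}, then $\DD_{\cris}$ for supersingularity. However, the step you flag yourself is a genuine gap. You never produce a line bundle $L$ on $S_\cK$ with $p\nmid L^2$, and neither of your routes works as stated.

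\begin{itemize}
\item Projecting $\boldsymbol{\xi}$ to $\sigma^\perp$ changes its square: writing $\xi=\xi'+a\sigma$ gives $\xi'^2=\xi^2+2a^2$, so primality to $p$ can be lost.
\item Even if $\phi^*\xi'$ has square prime to $p$, under $\theta$ it becomes a Mukai vector $(r,D,s)\in v^\perp$ whose square is $D^2-2rs$. That square being prime to $p$ says nothing about $D^2$.
\item \Cref{lem: period of K3 with line bundle prime to p} needs a primitive $\rU\oplus\langle u\rangle$ or $\rU(p)\oplus\langle u\rangle$, with $p\nmid u^2$, in the $(1,1)$-part of the Mukai lattice. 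You have no way to produce one, since $S_\cK$ is handed to you by the Floccari--Fu construction rather than by a period choice you control.
\item The fallback is excluded by \Cref{prop: lifting with line bundles}. For $b_2=8$ it lifts only $\max\{0,\lceil 8/2\rceil-3\}+1=2$ line bundles. Both slots are already taken by $w$ and the ample class needed for the OG6-resolution criterion, so there is no room for an extra $\rU$ or $\rU(p)$.
\end{itemize}

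The missing idea is a Picard-rank count on $S_\cK$, which needs no further lifting. This is how the paper gets the line bundle.

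\begin{itemize}
\item The isometry $\rH^2_{\mathrm{tr}}(S_\cK)\cong\rH^2_{\mathrm{tr}}(\widetilde{\cK})[2]$, which you already have, gives $22-\rho(S_\cK)=\rk\rH^2_{\mathrm{tr}}(\widetilde{\cK})\le 8-2$, because $\sigma$ and an ample class are algebraic on $\widetilde{\cK}$. Hence $\rho:=\rho(S_\cK)\ge 16>11$.
\item Suppose every line bundle had $L^2\equiv 0\pmod p$. Since $p$ is odd, the identity $2L\cdot M=(L+M)^2-L^2-M^2$ shows the whole form on $\NS(S_\cK)$ is divisible by $p$.
\item Then the $p$-part of the discriminant group of $\NS(S_\cK)$ is $\bigoplus_{i=1}^{\rho}\ZZ/p^{m_i}\ZZ$ with all $m_i\ge 1$. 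Its minimal number of generators is $\rho$.
\item But $\NS(S_\cK)$ is primitive in the unimodular lattice $\rH^2(S_\cK,\ZZ)$, so its discriminant group is isomorphic to that of $\NS(S_\cK)^\perp$. That lattice has rank $22-\rho<\rho$, so its discriminant group is generated by at most $22-\rho$ elements, a contradiction.
\end{itemize}

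With this line bundle in hand, your second and third steps go through as written.
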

\begin{proof}
    By \cite[Lemma 3.4]{Motive_OG6} and \cite[remark 4.2]{FF2026}, we have isometries induced by algebraic correspondences for any prime $\ell$:
    \[
    \rH^2_{\mathrm{tr}}(S_\cK,\QQ_\ell(1))\cong\rH^2_{\mathrm{tr}}(Z_\cK,\QQ_\ell(1))\cong \rH^2_{\mathrm{tr}}(\widetilde{\cK},\QQ_\ell(1))[2].
    \]
    Here, ``[2]'' means multiplication by 2. Replace $F$ by a finite extension if necessary, we can assume the above isometries are Galois equivariant.

    We claim that for any odd prime $p$, $S_\cK$ carries a line bundle of degree prime to $p$. We can see this by contradiction. If all line bundles on $S$ have gegree divisible by $p$, then the discriminant group of $\NS(S_\cK)\otimes\ZZ_p$ has the form \(\bigoplus_{i=1}^\rho \ZZ/p^{m_i}\ZZ\), where $m_i$'s are positive integers and $\rho$ is the Picard number. Note that $\rho\geq 12$. Since $\NS(S_\cK)$ is a primitive sublattice of the unimodular lattice $\rH^2(S_\cK,\ZZ)$, then the orthogonal complement $\NS(S_\cK)^\perp\otimes\ZZ_p$ also has the discriminant group \(\bigoplus_{i=1}^\rho \ZZ/p^{m_i}\ZZ\). This is a contradiction since $\rank(\NS(S_\cK)^\perp)=22-\rho<\rho$.
    
    Then we can show that $S_\cK$ has supersingular good reduction by the same argument in \Cref{prop: good reduction of the underlying surface,prop: reduction is supersingular}.
\end{proof}

\begin{proposition}\label{prop: reduction of B_K}
    The abelian fourfold $B_\cK$ has supersingular reduction.
\end{proposition}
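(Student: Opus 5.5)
The plan is to go through the Kuga--Satake relation in \Cref{theorem: Abelian fourfold and Kuga-Satake}(b). Up to isogeny, $B_\cK^m$ is the Kuga--Satake abelian variety of $\rH^2(\widetilde{\cK},\QQ)_{\mathrm{prim}}$. Since $\cX_{\bar K}$ is birational to $\widetilde{\cK}$, \Cref{thm:birational Chow motives} (or the Torelli-type identification) gives an isomorphism $\rH^2(\cX_{\bar K},\QQ_\ell)\cong\rH^2(\widetilde{\cK},\QQ_\ell)$ compatible with the Beauville--Bogomolov forms. The key step is to show that $\rH^1_\et(B_{\cK,\bar F},\QQ_\ell)$ becomes unramified after a finite extension of $F$.

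First, I will replace $F$ by a finite extension so that the algebraic part of $\rH^2$ is Galois-trivial. By the supersingular Tate conjecture for divisors (\Cref{thm: Tate conjecture for divisors}) and good reduction of $\cX$, the whole of $\rH^2_\et(\cX_{\bar K},\QQ_\ell(1))$ is then an unramified, hence trivial, Galois representation for $\ell\neq p$. Next, I will use the Kuga--Satake construction in its $\ell$-adic form. The Galois action on $\rH^1(B_\cK^m)$, which is a Clifford module $\Cl(P)$ for $P=\rH^2_{\mathrm{prim}}(1)$, lifts the $\mathrm{SO}(P)$-action through $\CSpin(P)$. Concretely, I want the Galois representation on $\rH^1(B_\cK)$ to factor through $\CSpin(P_{\QQ_\ell})$ compatibly with the adjoint map to $\mathrm{SO}$. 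Establishing this is the main obstacle: the theorem cited only gives a Hodge-theoretic isogeny over $\CC$. The route I would try first is that $B_\cK$ is the singular locus of $\cK$, $\Sigma\cong B_\cK/\pm1$, so the description is algebraic and can be spread out over $F$. Alternatively, one could use the period map to $\mathscr{S}_\sfK(L)$ from Section~\ref{sec:deformation}, where $\mathscr{A}$ is the Kuga--Satake family and $B_\cK$ is isogenous to a factor of it by Faltings' isogeny theorem.

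Granting this, the trivial image in $\mathrm{SO}$ forces the image in $\CSpin$ to lie in the central $\GG_m$. Hence Galois acts on $\rH^1(B_\cK)$ through a character. Comparing determinants or Weil pairings, and using that inertia acts quasi-unipotently, this character has finite order on inertia, so it becomes trivial after a finite extension. This is the same trick as in \Cref{Thorem: good reduction of abelian}. Serre--Tate \cite{ST68} then gives good reduction $\cB$ of $B_\cK$ over $V$.

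For supersingularity, I will argue $p$-adically as in \Cref{prop: reduction is supersingular}. By \cite{GKP2022}, $\DD_\cris(\rH^2_\et(\cX_{\bar K},\QQ_p(1)))=\rH^2_\cris(X/K)(1)$ has all slopes $0$, since $X$ is supersingular. The same $\CSpin$ structure, crystalline now, realizes $\rH^1_\cris(\cB_0)$ as a $\Cl(P)$-module on which Frobenius acts through $\CSpin$ lifting a slope-zero isocrystal $\rH^2_{\mathrm{prim},\cris}(1)$. Its Newton cocharacter is therefore central, so $\rH^1_\cris(\cB_0)$ is isoclinic, necessarily of slope $1/2$. Thus $\cB_0$ is supersingular.
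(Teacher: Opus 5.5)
Your proposal is correct and follows the paper's strategy: you reduce via \Cref{theorem: Abelian fourfold and Kuga-Satake}(b) to the Kuga--Satake abelian variety $A$, show it has good reduction, and read off supersingularity from the Clifford structure on crystalline $\rH^1$. Your Newton-cocharacter argument is the group-theoretic form of the paper's computation $C^+(\rP_\cris)\otimes C^+(\rP_\cris)^{\mathrm{op}}\cong\End_K(\rH)$, whose left side has slope zero. In your version, the image in $\mathrm{SO}$ is trivial because $\rP_\cris$ has slope zero, so the cocharacter factors through the central $\GG_m$.

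The genuine difference is the good-reduction step. The paper gets it for free: the period map of Section~\ref{sec:deformation} extends over $V$ to the integral model $\mathscr{S}_\sfK(L)$, and pulling back the abelian scheme $\mathscr{A}$ gives the Kuga--Satake abelian scheme over $V$. That is exactly your route (b). Once you take it, your $\ell$-adic detour through Serre--Tate is redundant, though it is valid: the Weil pairing gives $\chi^2=\chi_{\mathrm{cyc}}^{-1}$, which is trivial on $G_F$ because the residue field is algebraically closed, so $\chi=\pm1$.

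Your route (a), by contrast, does not do the job. Knowing $\Sigma\cong B_\cK/\pm1$ only shows that $B_\cK$ is defined over a finite extension of $F$. It gives no $\CSpin$-structure on $\rH^1$, either $\ell$-adic or crystalline. Moreover, your crystalline step needs the Frobenius-compatible identification of $\mathbf{L}_\cris$ with the primitive crystalline $\rH^2$ of $X$ (the isometry $\alpha_{\cris}$), and that also comes only from the integral model.

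A minor correction: Faltings' isogeny theorem is neither available over a $p$-adic field nor needed. The complex isogeny $B_\cK^m\sim A$ is already defined over $\bar F$ by rigidity of homomorphisms between abelian varieties, hence over a finite extension of $F$. It then extends over $V$ by the N\'eron mapping property, so the reduction $\mathcal{B}_0^m$ is isogenous to $\cA_0$ and inherits supersingularity.
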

\begin{proof}
    We know that the Kuga--Satake abelian variety $A$ of $(\widetilde{\cK},\boldsymbol{\xi}_F)$ is isogenous to a power of $B_\cK$ by \Cref{theorem: Abelian fourfold and Kuga-Satake}. So we only need to show that $A$ has supersingular good reduction. 

    Consider the family $\cX\to\Spec V$ in \Cref{prop : lifting OG6 to OG6-resolution}. Since the construction of Kuga--Satake extends to mixed characteristic, the abelian variety $A$ extends to an abelian scheme $\cA/V$. We need to show that the special fiber $\cA_0$ of $\cA$ is supersingular. This is inspired by \cite[Proposition 21]{Ch13}.

    Let $\rP_\cris=\boldsymbol{\xi}_0^\perp\subset\rH^2_\cris(X/K)(1)$ and $C(\rP_\cris)$ be its Clifford algebra. Let $C^+(\rP_\cris)$ be its even part. The Kuga--Satake abelian variety constructed by $C(\rP_\cris)$ is isogenous to the square of the one constructed by $C^+(\rP_\cris)$. So we only consider the latter. In this case, $C^+(\rP_\cris)$ is a central simple algebra. Denote $\rH^1_\cris(\cA_0/K)$ by $\rH$. We have an isomorphism of $K$-vector spaces
    \[
    C^+(\rP_\cris)\cong \rH.
    \]
    However, this isomorphism is not compatible with the $F$-structure. Viewing $\rH$ as a right $C^+(\rP_\cris )$-module, we have an isomorphism of $F$-isocrystals by taking left multiplication
    \[
    C^+(\rP_\cris)\cong \End_{C^+}(\rH).
    \]
   Consider the natural map 
   \begin{align*}
       C^+(\rP_\cris)\otimes C^+(\rP_\cris)^{\mathrm{op}}&\to \End_K(\rH)\\
       a\otimes b&\mapsto L_a\circ R_b
   \end{align*}
   Here, $L_a$ is the left multiplication by $a$ and $R_b$ is the right multiplication. By \cite[\href{https://stacks.math.columbia.edu/tag/0748}{Tag 0748}]{stacks-project}, this map is an isomorphism. It is direct to check by definition that this isomorphism is compatible with the $F$-structure. Since $C(\rP_\cris)$ is isomorphic to the exterior algebra of $\rP_\cris$ as $K$-vector spaces (not as algebras), the left-hand side has all slopes zero, hence so does the right-hand side. But the right hand side is isomorphic to $\End_K(\rH)\cong \rH\otimes \rH^\vee$, whose slopes are the pairwise differences of slopes of $\rH$. So all the slopes of $\rH$ are the same. This means $\cA_0$ is supersingular.
\end{proof}

We can prove the main result of this section.

\begin{theorem}\label{theorem: Tat conj for OG6}
    Let $X$ be a supersingular excellent OG6-type variety with Artin invariant $\sigma_0\leq 3$, then the Chow motive of $X$ is of Tate type.
\end{theorem}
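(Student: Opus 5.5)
The plan mirrors the proof of \Cref{theorem:main theorem for K3^n, Kum^n, and OG10}, with the modular description replaced by the double cover of \Cref{theorem: double cover}.

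First, apply \Cref{prop : lifting OG6 to OG6-resolution} to obtain a finite flat extension $V$ of $W$ with fraction field $F$, and a lifting $\cX\to\Spec V$ of $X$, such that $\cX_{\bar K}$ is birational to an OG6-resolution $\widetilde{\cK}$. By \Cref{thm:birational Chow motives}, transported to $\bar K$ by the Lefschetz principle and spreading out, we get $\frh(\cX_{\bar K})\cong\frh(\widetilde{\cK})$. Combining this with \Cref{prop: Chow motives of OG6} gives an embedding
\[
\frh(\cX_{\bar K})\hookrightarrow \bigoplus_{i=1}^n\frh(S_\cK^{m_i})(t_i)\oplus \frh(B_\cK)(-1)\oplus\bigoplus_{j=1}^r\QQ(-k_j).
\]
After a finite extension of $F$, all the cycles involved (the birational correspondence, the projectors, and $S_\cK$, $B_\cK$) are defined over $F$. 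By \Cref{prop: sueringular reduction of S_K}, $S_\cK$ has supersingular good reduction $\cS\to\Spec V$. By \Cref{prop: reduction of B_K}, $B_\cK$ extends to an abelian scheme $\cB/V$ with supersingular special fiber $\cB_0$; here we use that $B_\cK$ is an isogeny factor of the Kuga--Satake variety, which has good reduction, and invoke the N\'eron--Ogg--Shafarevich criterion.

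Next, specialize. Set $\cY=\cS\sqcup\cB\sqcup\Spec V$. The target motive is a summand of the motive of a product of copies of $\cY_\eta$ with Tate twists, since $\QQ(-k)$ is a summand of $\frh(\PP^k)$ and $\PP^1$ can be incorporated exactly as in the proof of \Cref{lem: specialization of motives}. Applying \Cref{lem: specialization of motives} to $\cX$ and this $\cY$ then yields
\[
\frh(X)\hookrightarrow \bigoplus_i\frh(\cS_0^{m_i})(t_i)\oplus\frh(\cB_0)(-1)\oplus\bigoplus_j\QQ(-k_j).
\]
One small point to check is that the lemma allows a disjoint union of the two different surfaces and the abelian scheme, rather than powers of a single $\cY$; this is harmless, because the proof of the lemma only uses a smooth proper model $\cZ$.

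Finally, $\frh(\cS_0)$ is of Tate type by \Cref{thm:SSK3TateMotive}. A supersingular abelian variety is isogenous to a power $E^4$ of a supersingular elliptic curve, and by Fakhruddin \cite{Fak02} its Chow motive is of Tate type over algebraically closed $k$. Indeed, $\frh^1(E)^{\otimes 2}$ is Tate because $\End(E)\otimes\QQ$ is a quaternion algebra, so the motive of $E\times E$ is Tate, and the motive of an abelian variety decomposes via $\frh^i=\Sym^i\frh^1$. Hence the right-hand side is of Tate type. Since a direct summand of a Tate motive is Tate (the category of Tate motives is semisimple and idempotent complete), $\frh(X)$ is of Tate type.

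The main obstacle I expect is the passage to characteristic $p$ for $B_\cK$: making sure the Kuga--Satake abelian scheme over $V$ really has $B_\cK$ as an isogeny factor with good reduction, and not just potentially good reduction. If this causes trouble, I would enlarge $V$ and invoke N\'eron--Ogg--Shafarevich via the triviality of the Galois action on $\rH^1$ of the Kuga--Satake variety.
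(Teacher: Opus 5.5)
Up to the specialized embedding
\[
\frh(X)\hookrightarrow \bigoplus_i\frh(\cS_0^{m_i})(t_i)\oplus\frh(\mathcal{B}_0)(-1)\oplus\bigoplus_j\QQ(-k_j)
\]
you follow the paper, and your disjoint-union device for applying \Cref{lem: specialization of motives} is a fine way to make that step precise. The gap is in your final paragraph: the Chow motive of a supersingular abelian variety is \emph{not} of Tate type. Its realization has nonzero odd-degree cohomology (e.g.\ $\rH^1_{\et}(\mathcal{B}_0,\QQ_\ell)$ is $8$-dimensional), whereas a sum of $\QQ(-k)$'s has cohomology only in even degrees. The inference ``$\frh^1(E)^{\otimes 2}$ is Tate, so $\frh(E\times E)$ is Tate'' fails, because $\frh(E\times E)$ contains $\frh^1(E)^{\oplus 2}$ and $\frh^1(E)(-1)^{\oplus 2}$. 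The isomorphism $\frh^1(E)^{\otimes2}\cong\QQ(-1)^{\oplus 4}$ only shows that $\frh^{\mathrm{even}}$ of a supersingular abelian variety is Tate; $\frh^{\mathrm{odd}}$ is a summand of a sum of twists of $\frh^1(E)$. So the right-hand side above is not Tate, ``a summand of a Tate motive is Tate'' does not apply, and your argument only shows that $\frh(X)$ is of supersingular abelian type.

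The missing ingredient is that OG6-type varieties have vanishing odd Betti numbers, which your proof never uses. Without it your final step would apply verbatim to $\mathcal{B}_0$ itself, whose motive is not Tate. This is exactly how the paper concludes: $\frh(X)$ is of supersingular abelian type with no odd cohomology, hence of Tate type by \cite[Corollary~2.12]{SSIV}. To do this by hand, write $\mathrm{id}_{\frh(X)}=\beta\circ\alpha$ through the right-hand side and split $\beta\circ\alpha=g+f$. Here $f$ is the component passing through $\frh^{\mathrm{odd}}(\mathcal{B}_0)(-1)$, and $g$ is the component through the Tate part, which uses \Cref{thm:SSK3TateMotive} for $\cS_0$ and the Tate-ness of $\frh^{\mathrm{even}}(\mathcal{B}_0)$. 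For degree reasons $f$ acts by zero on cohomology. Since $\frh(X)$ is Kimura finite-dimensional, being a summand of a motive of abelian type, $f$ is nilpotent. Hence $g=\mathrm{id}-f$ is invertible, which exhibits $\frh(X)$ as a direct summand of the Tate part. Some such nilpotence argument is genuinely needed: you cannot argue that Chow-level maps between Tate motives and twists of $\frh^1(E)$ vanish, since e.g.\ $\Hom(\QQ(-1),\frh^1(E))\cong E(k)\otimes\QQ$ need not be zero.
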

\begin{proof}
    Keep the notation as above. By \Cref{prop: Chow motives of OG6}, we have an embedding of Chow motives
    \[
    \frh(\cX_{\bar{F}})\hookrightarrow \bigoplus_{i=1}^n\frh(S_\cK^{m_i})(t_i)\oplus \frh(B_\cK)(-1) \oplus \bigoplus_{j=1}^r \QQ(-k_j).
    \]
    Since $S_\cK$ and $B_\cK$ both have supersingular good reduction, denote their reduction by $S_0$ and $B_0$. Then by \Cref{lem: specialization of motives}, we have an embedding of Chow motives
    \[
    \frh(X)\hookrightarrow \bigoplus_{i=1}^n\frh(S_0^{m_i})(t_i)\oplus \frh(B_0)(-1) \oplus  \bigoplus_{j=1}^r \QQ(-k_j).
    \]
    This shows that $\frh(X)$ is of supersingular abelian type. Since all odd degree cohomology groups of $X$ vanish, $\frh(X)$ is of Tate type by \cite[Corollary~2.12]{SSIV}.
\end{proof}

\appendix
\section{Torsion in cohomology of irreducible symplectic varieties}
\label{appendix}

In this appendix, we consider the torsion cohomology of irreducible symplectic varieties $X$ of known type.  This question was raised by Huybrechts at the 2025 workshop “K3 surfaces \& friends: Brauer groups and moduli” in Netherlands. Some known relevant results  include:
\begin{itemize}
    \item Markman \cite{Ma07} showed that $\rH^*(S^{[n]}, \ZZ)$ is torsion free for $S$ a Poisson surface (e.g., K3 or abelian).
    \item Totaro \cite{Totaro20} proved that $\rH^*(S^{[n]}, \ZZ)$ is torsion free provided $\rH^*(S, \ZZ)$ is torsion free.
    \item Kapfer and Menet \cite{KM2018} showed that $\rH^*(K_2(A), \ZZ)$ is torsion free.
\end{itemize}
In this appendix, we will partially answer this question, showing that there is no $p$-torsion for  large $p$.

\subsection{Generalized Kummer type}
Let $X$ be the generalized Kummer variety $K_n(A)$. For $n > 2$, the torsion freeness of $\rH^*(X, \ZZ)$ remains open. Instead, we prove the torsion freeness of $\rH^*(X, \ZZ[\tfrac{1}{(n+1)!}])$. The main result is the following:

\begin{theorem}\label{thm:torsion free}
    Let $A$ be an abelian surface, and let $X= K_{n}(A)$ be the $2n$-dimensional generalized Kummer variety. Then
    $\rH^*(X, \ZZ)$ has no $p$-torsion for any prime $p > n+1$.
\end{theorem}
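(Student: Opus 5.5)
The strategy is to realize $K_n(A)$ as a fiber of the sum map on the Hilbert scheme and to pass to the generalized Kummer variety by an isogeny argument that is invertible away from small primes. Write $a\colon A^{[n+1]}\to A$ for the composite of the Hilbert--Chow morphism with summation, so that $X=K_n(A)=a^{-1}(0)$. The map $a$ is isotrivial: there is a cartesian square
\[
A\times K_n(A)\xrightarrow{\ \nu\ } A^{[n+1]},\qquad (x,Z)\mapsto t_x^*Z,
\]
in which $\nu$ is a Galois étale cover with group $A[n+1]$, acting diagonally on the source. Since $|A[n+1]|=(n+1)^4$ is prime to $p$ whenever $p>n+1$, a transfer argument gives
\[
\rH^*(A^{[n+1]},\ZZ_{(p)})\cong \rH^*(A\times K_n(A),\ZZ_{(p)})^{A[n+1]}.
\]
Moreover, the group $A[n+1]$ acts on $\rH^*(A,\ZZ)$ trivially, because translations are homotopic to the identity.

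First step: invoke Totaro \cite{Totaro20}, or Markman \cite{Ma07}, to get that $\rH^*(A^{[n+1]},\ZZ)$ is torsion free, using that $\rH^*(A,\ZZ)$ is torsion free. Thus the invariant part above is torsion free. The real content is to control the non-invariant part, which is harder.

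Second step, the main obstacle: show that $A[n+1]$ acts trivially on $\rH^*(K_n(A),\ZZ_{(p)})$, or at least that the $p$-torsion is invariant. Then Künneth over $\ZZ_{(p)}$, with $\rH^*(A)$ free, gives
\[
\rH^*(A\times K_n(A),\ZZ_{(p)})=\rH^*(A,\ZZ_{(p)})\otimes\rH^*(K_n(A),\ZZ_{(p)}),
\]
and taking invariants would exhibit $\rH^*(K_n(A),\ZZ_{(p)})$, through the degree-$0$ piece $\rH^0(A)\otimes\rH^*(K_n)$, as a direct summand of a torsion-free module.

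I would try to avoid proving triviality of the action outright. Instead I would decompose
\[
M=\rH^*(K_n(A),\ZZ_{(p)})=\bigoplus_\chi M_\chi
\]
into isotypic pieces after adjoining $(n+1)$-th roots of unity. This is possible since $p\nmid (n+1)$, so the extension $\ZZ_{(p)}[\mu_{n+1}]$ is étale over $\ZZ_{(p)}$ and the group algebra is a product of copies of it. The trivial isotypic piece $M_1$ is a summand of the invariants, hence torsion free by the first step. For a nontrivial character $\chi$, I would show $M_\chi$ is torsion free as follows. Consider the intermediate quotient $(A\times K_n(A))/\ker\chi$. It is a finite étale cover of $A^{[n+1]}$ and is again of the form $A'\times_{\text{twisted}}K_n(A)$, an isotrivial $K_n(A)$-bundle over an isogenous abelian surface $A'$, namely the Hilbert scheme of a related construction. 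Alternatively, I would use the description of $\rH^*(K_n(A),\QQ)$ via the Göttsche--Soergel decomposition, whose summands are indexed by partitions and are cohomologies of products of abelian varieties and their quotients by finite groups of order dividing $(n+1)!$.

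A cleaner route is to run the decomposition theorem integrally with $\ZZ_{(p)}$ coefficients, which is valid since the relevant symmetric groups $\mathfrak S_k$ with $k\le n+1$ have order prime to $p$. The Hilbert--Chow morphism $K_n(A)\to A^{(n+1)}_0$ is semismall, and its fibers are punctual Hilbert schemes with torsion-free cohomology by Ellingsrud--Strømme. One then shows by a de Cataldo--Migliorini type argument that, with $\ZZ_{(p)}$ coefficients, $\rH^*(K_n(A))$ is a direct sum of $\mathfrak S_\lambda$-invariants of the cohomology of the strata $\ker(A^{\ell(\lambda)}\to A)$. Each of these strata has torsion-free cohomology after inverting its component group, whose order divides $(n+1)$ and hence is prime to $p$.

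I expect this integral decomposition step to be the genuine difficulty, and I would first attempt it via the Totaro-style argument adapted to the sum fiber.
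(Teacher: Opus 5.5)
The transfer step is fine, but it only shows that the invariant part $\rH^*(K_n(A),\ZZ_{(p)})^{A[n+1]}$ is torsion free. The hope of your second step is false: $A[n+1]$ does not act trivially on $\rH^*(K_n(A),\QQ)$.

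Look at the deepest Hilbert--Chow stratum $\{(n+1)[\tau] : \tau\in A[n+1]\}$. Its $(n+1)^4$ punctual fibers are pairwise disjoint, with self-intersection $(-1)^n(n+1)\neq 0$, so they give independent classes in $\rH^{2n}$. Translations permute these classes simply transitively, so the regular representation occurs. For $n=1$ this is $A[2]$ permuting the $16$ exceptional curves of the Kummer surface.

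Your isotypic fallback does not close this gap. The quotient $(A\times K_n(A))/\ker\chi$ is $A^{[n+1]}\times_A A''$ for an isogeny $A''\to A$. Nothing available gives torsion-freeness of its integral cohomology; that is a statement of the same nature as the theorem itself. The Göttsche--Soergel description over $\QQ$ says nothing about torsion.

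Your ``cleaner route'' is the paper's route, but the reason you give for its validity is wrong, and the key input is missing. The decomposition theorem with $\ZZ/p$ coefficients for the semismall map $K_n(A)\to A_0^{n+1}/\mathfrak S_{n+1}$ is not secured by the orders of the $\mathfrak S_k$ being prime to $p$. Nor is it secured by torsion-freeness of the fibers.

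By Juteau--Mautner--Williamson, it holds once the intersection form on $\rH^{\mathrm{BM}}_{2d_\lambda}(F_\lambda)$ is nondegenerate mod $p$ for every relevant stratum. Here $F_\lambda\cong\prod_i B_{\lambda_i}$ is irreducible, so the form is the single number
$(F_\lambda\cdot F_\lambda)_{\widetilde N_\lambda}=\prod_i(B_{\lambda_i}\cdot M_{\lambda_i})=(-1)^{n+1-k}\prod_i\lambda_i$.
This is computed from Ellingsrud--Str\o mme's $(B_k\cdot M_k)=(-1)^{k-1}k$. This is exactly where $p>n+1$ enters. For $\lambda=(n+1)$ the stabilizer $\mathfrak S_\lambda$ is trivial, yet the form is $\pm(n+1)$, so symmetric-group orders cannot be what governs the criterion.

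The prime-to-$p$ orders of the $\mathfrak S_\lambda$ are used only afterwards. They identify $\rI\rH^*(\overline{Z_\lambda},\ZZ/p)$ with $\rH^*(A_0^\lambda,\ZZ/p)^{\mathfrak S_\lambda}$. Its dimension matches the rational one because $A_0^\lambda$ is a disjoint union of abelian varieties. Its component group is $A[\gcd\lambda]$, of order $\gcd(\lambda)^4$ (not a divisor of $n+1$), and nothing needs inverting. Comparing $\dim\rH^*(X,\ZZ/p)$ with $\dim\rH^*(X,\QQ)$ then rules out $p$-torsion by universal coefficients.
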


The main ingredient is the decomposition theorem for torsion cohomology:

\begin{theorem}[Juteau--Mautner--Williamson {\cite[Thm. 3.7]{JMW14}}]\label{thm:JMW}
    Let \( f: \widetilde{Y} \to Y \) be a stratified, proper,
surjective, and semi-small morphism from a smooth variety \( \widetilde{Y} \) to a stratified variety \( Y = \bigsqcup_{\lambda} Y_\lambda \). Suppose that all strata are relevant. For each stratum \( Y_\lambda \), let \( F_\lambda \) be the fiber over a point in \( Y_\lambda \), and let \( N_\lambda \) be a normal slice to \( Y_\lambda \) in \( Y \). Suppose that for all relevant \( \lambda \), the intersection form
    \begin{equation}\label{eq:intersect}
         \rH^{\mathrm{BM}}_{2d_\lambda}(F_\lambda) \times \rH^{\mathrm{BM}}_{2d_\lambda}(F_\lambda) \to \ZZ
    \end{equation}
    is non-degenerate modulo \( p \). Then the decomposition theorem holds for \( f \) with coefficients in \( \ZZ/p\ZZ \).
\end{theorem}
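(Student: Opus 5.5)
The plan is to prove the splitting stratum by stratum, working with the perverse sheaf $P := f_*\underline{\ZZ/p\ZZ}_{\widetilde Y}[\dim \widetilde Y]$, and to show that the obstruction to splitting off an intermediate extension along each stratum is exactly the intersection form \eqref{eq:intersect}. Set $k=\ZZ/p\ZZ$.

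First I establish the basic properties of $P$. Since $f$ is proper and semismall and $\widetilde Y$ is smooth, $P$ is perverse. The stalk estimates $\dim F_\lambda \le \tfrac12 \codim Y_\lambda$ give the support condition directly, and the cosupport condition then follows from the next fact. Because $\widetilde Y$ is smooth and $f$ is proper, Verdier duality gives $\DD P \cong P$, so $P$ is self-dual. Also, since all strata are relevant, $d_\lambda := \tfrac12 \codim Y_\lambda = \dim F_\lambda$.

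Next I run an induction over the strata, ordered by closure. Let $U$ be an open union of strata on which $P|_U \cong \bigoplus_\mu \mathrm{IC}(Y_\mu, \mathcal L_\mu)$, where $\mathcal L_\mu$ is the local system of top Borel--Moore homology $\rH^{\mathrm{BM}}_{2d_\mu}(F_\mu; k)$. Let $S=Y_\lambda$ be a stratum that is closed in $U' = U \sqcup S$, with $j\colon U\hookrightarrow U'$ and $i\colon S\hookrightarrow U'$. By the semismall estimates, the only possibly nonzero cohomology sheaves on $S$ in the critical degree are
\[
\mathcal H^{-\dim S}(i^!P) \cong \mathcal L_\lambda, \qquad \mathcal H^{-\dim S}(i^*P) \cong \mathcal L_\lambda^\vee,
\]
and these are the top Borel--Moore homology and the top cohomology of the fibre, respectively. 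I then use the standard criterion for a perverse sheaf: $P|_{U'} \cong j_{!*}(P|_U) \oplus i_*\mathcal L_\lambda[\dim S]$ if and only if the natural map $\mathcal H^{-\dim S}(i^!P)\to \mathcal H^{-\dim S}(i^*P)$ is an isomorphism. In that case the summand $i_*\mathcal L_\lambda[\dim S]$ is split off as both a sub and a quotient, and the complement has neither a sub nor a quotient supported on $S$. Since $j_{!*}$ of a direct sum of IC sheaves is again such a sum, the induction continues.

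The key step, which I expect to be the main obstacle, is to identify this natural map with the intersection form on the fibre. To do this I restrict to a normal slice $N_\lambda$ through a point $y\in S$. By local triviality of the stratification, I can replace $f$ by $f^{-1}(N_\lambda)\to N_\lambda$, which has a point stratum $\{y\}$ with fibre $F_\lambda$ of dimension $d_\lambda = \tfrac12 \dim N_\lambda$. Then $i^!$ and $i^*$ become, in the relevant degree, $\rH^{\mathrm{BM}}_{2d_\lambda}(F_\lambda)$ and $\rH^{2d_\lambda}(F_\lambda)$, where the latter is computed on a tubular neighbourhood that retracts onto $F_\lambda$. The map $i^!\to i^*$ becomes the composite
\[
\rH^{\mathrm{BM}}_{2d_\lambda}(F_\lambda)\to \rH_{2d_\lambda}(f^{-1}N_\lambda,\, f^{-1}N_\lambda \smallsetminus F_\lambda) \cong \rH^{2d_\lambda}(f^{-1}N_\lambda) \to \rH^{2d_\lambda}(F_\lambda).
\]
Here the middle isomorphism is Poincaré--Lefschetz duality on the smooth manifold $f^{-1}(N_\lambda)$, and under the universal coefficient pairing the composite is exactly \eqref{eq:intersect} tensored with $k$. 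The care needed lies in checking these identifications with integral coefficients reduced mod $p$, and in tracking the signs and shifts.

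Granting this identification, nondegeneracy modulo $p$ means that the map is an isomorphism of $k$-vector spaces, because the fibre homology groups are finite-dimensional over the field $k$. Hence the splitting criterion holds at every relevant stratum. Running the induction over all strata yields $P \cong \bigoplus_\lambda \mathrm{IC}(Y_\lambda,\mathcal L_\lambda)$, which is the decomposition theorem for $f$ with $\ZZ/p\ZZ$ coefficients.
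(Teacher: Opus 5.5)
Your proposal is correct and is essentially the argument of Juteau--Mautner--Williamson, which the paper cites without reproducing: perversity and self-duality of $f_*\underline{\ZZ/p\ZZ}_{\widetilde Y}[\dim\widetilde Y]$, a stratum-by-stratum splitting criterion via the natural map $\mathcal{H}^{-\dim S}(i^!P)\to\mathcal{H}^{-\dim S}(i^*P)$, and the identification of that map, on a normal slice, with the local intersection form. The mod $p$ bookkeeping you flag causes no trouble: $\rH_{2d_\lambda-1}(F_\lambda,\ZZ)$ is torsion free (by duality on the complement of a closed subset of complex dimension $<d_\lambda$), so $\rH_{2d_\lambda}(F_\lambda,\ZZ/p\ZZ)$ and $\rH^{2d_\lambda}(F_\lambda,\ZZ/p\ZZ)$ are just the reductions mod $p$ of the corresponding integral groups.
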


\subsubsection*{Proof of \Cref{thm:torsion free}}

 Consider the Hilbert-Chow morphism
    \[
    \pi\colon X \to Z, \quad \text{where } Z = A_0^{n+1} / S_{n+1},
    \]
    and $A_0^{n+1} := \ker(A^{n+1} \to A)$ (see \cite{Motive_Kummer}). There is a stratification $Z = \bigsqcup_{\lambda \dashv ({n+1})} Z_\lambda$ with 
    \begin{equation}\label{eq:stratification}
          Z_\lambda = \left\{ \sum_{i=1}^k \lambda_i [x_i] \,\middle|\, x_i \text{ distinct}, \sum_{i=1}^k \lambda_i x_i = 0 \right\},
    \end{equation}
    where $\lambda = (\lambda_1, \dots, \lambda_k)$ is a partition of $n+1$. Note that $\pi$ is semi-small with all strata being relevant.

By \Cref{thm:JMW} and \Cref{prop:intersection-form}, we have the following decomposition 
 \begin{equation}\label{eq:decomposition}
            \bR\pi_*(\ZZ/p\ZZ[2n]) \cong \bigoplus_{\lambda \dashv n+1} i_{\lambda*} \rI\rC_{\overline{Z_\lambda}}(\ZZ/p\ZZ)
            \quad \text{in } D_c^b(Z, \ZZ/p\ZZ),
        \end{equation}
 where  $ \rI\rC_{\overline{Z_\lambda}}(\ZZ/p\ZZ)$ is the IC-complex associated with the trivial local system on $\overline{Z_\lambda}$ (shifted by dimension).
  Moreover, by the classical decomposition theorem for $\mathbb{Q}$-coefficients, we also have
\begin{equation}\label{eq:decomposition-Q}
    \bR\pi_*(\mathbb{Q}[2n]) \cong \bigoplus_{\lambda \dashv n+1} i_{\lambda*} \rI\rC_{\overline{Z_\lambda}}(\mathbb{Q})
    \quad \text{in } D_c^b(Z, \mathbb{Q}).
\end{equation}

Combining the two decompositions—\eqref{eq:decomposition} and \eqref{eq:decomposition-Q}—we obtain isomorphisms:
\[
\rH^*(X, \ZZ/p\ZZ) \cong \bigoplus_{\lambda \dashv n+1} \rI\rH^*(\overline{Z_\lambda}, \ZZ/p\ZZ),
\]
and
\[
\rH^*(X, \mathbb{Q}) \cong \bigoplus_{\lambda \dashv n+1} \rI\rH^*(\overline{Z_\lambda}, \mathbb{Q}).
\]
Next, it is known that the normalization of $\overline{Z_\lambda}$ satisfies
\[
\overline{Z_\lambda}^\nu \cong A_0^{(\lambda)} := A_0^\lambda / S_\lambda \quad \text{(see \cite{Motive_Kummer})},
\]
where
\[
A_0^\lambda = \left\{ (x_1, \dots, x_{|\lambda|}) \in A^\lambda \,\middle|\, \sum_{i=1}^k \lambda_i x_i = 0 \right\}.
\]
 Here, \( A_0^\lambda \) is non-canonically isomorphic to a disjoint union of abelian varieties of the form \( \prod_{k} A^{r_k} \), where \( \sum r_k = \ell(\lambda)-1 \) and each \( r_k \leq n+1 \). Thus, the integral cohomology of $A_0^\lambda$ is always torsion free. Since \( p > n+1 \), taking the invariant part under $S_\lambda$, we obtain
    \[
    \dim_{\ZZ/p\ZZ} \rI\rH^*(\overline{Z_\lambda}, \ZZ/p\ZZ) = \dim_{\QQ} \rI\rH^*(\overline{Z_\lambda}, \QQ).
    \]

    Therefore,
    \[
    \dim_{\ZZ/p\ZZ} \rH^*(X, \ZZ/p\ZZ) = \dim_{\QQ} \rH^*(X, \QQ) = b_i(X) \quad \text{for all } p > n+1.
    \]
    This implies that \( \rH^*(X, \ZZ) \) has no \( p \)-torsion for any prime \( p > n+1 \). \qed

\begin{proposition}\label{prop:intersection-form}
With the stratification \eqref{eq:stratification} for the morphism $\pi: X \to Z$, the intersection form \eqref{eq:intersect} is given by 
       \[
    (F_\lambda \cdot F_\lambda)_{\tilde{N_\lambda}}= (-1)^{n+1-k} \prod_{i=1}^k \lambda_i,
    \]
which is non-degenerate modulo $p$ if $p>n+1$. 
\end{proposition}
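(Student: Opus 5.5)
We will reduce the computation to a local model transverse to the stratum $Z_\lambda$, and then use the known local structure of the Hilbert--Chow morphism for punctual Hilbert schemes of points on a smooth surface. Fix a point $z=\sum_{i=1}^k\lambda_i[x_i]\in Z_\lambda$ with the $x_i$ distinct. Analytically near $z$, the map $\pi$ is a product: it is isomorphic to a neighborhood of $z$ in the Hilbert--Chow morphism $A^{[n+1]}\to A^{(n+1)}$, restricted to the fiber of the summation map. That morphism in turn decomposes locally as $\prod_i \bigl((\CC^2)^{[\lambda_i]}\to(\CC^2)^{(\lambda_i)}\bigr)$. Each factor $\CC^2\times(\CC^2)^{[\lambda_i]}_0$ splits off a translation $\CC^2$ factor. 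After removing these $k$ translation factors, and removing one more $\CC^2$ for the sum condition (which only affects the stratum direction), we obtain the normal slice $N_\lambda\cong\prod_{i=1}^k N_{(\lambda_i)}$. Here $N_{(m)}$ is a normal slice to the small diagonal in $(\CC^2)^{(m)}_0$. Its preimage $\tilde N_\lambda$ is the product of the $\tilde N_{(\lambda_i)}$, and the central fiber is $F_\lambda=\prod_i \mathrm{Hilb}^{\lambda_i}_0(\CC^2)$.

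Next we use the known facts about the punctual Hilbert scheme (Brian\c{c}on, Ellingsrud--Str\o mme). The space $\mathrm{Hilb}^m_0(\CC^2)$ is irreducible of dimension $m-1$, so its top Borel--Moore homology $\rH^{\mathrm{BM}}_{2(m-1)}$ is $\ZZ$, generated by the fundamental class. Semi-smallness gives $d_\lambda=\sum_i(\lambda_i-1)=n+1-k$, and by the K\"unneth formula $\rH^{\mathrm{BM}}_{2d_\lambda}(F_\lambda)\cong\ZZ$ is generated by $[F_\lambda]$. The intersection form is then a single number, and it is multiplicative over the product decomposition:
\[
(F_\lambda\cdot F_\lambda)_{\tilde N_\lambda}=\prod_{i=1}^k (F_{(\lambda_i)}\cdot F_{(\lambda_i)})_{\tilde N_{(\lambda_i)}}.
\]

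The main step is to show that $(F_{(m)}\cdot F_{(m)})_{\tilde N_{(m)}}=(-1)^{m-1}m$. I would first try the standard computation from the Grojnowski--Nakajima formalism. In Nakajima's basis, the class of the punctual Hilbert scheme corresponds to the creation operator $\mathfrak q_m$ applied to the point class. The commutation relation $[\mathfrak q_m,\mathfrak q_{-m}]=(-1)^{m-1}m\cdot\langle\cdot,\cdot\rangle$ then computes exactly the self-intersection of $F_{(m)}$ inside a neighborhood of the fiber, which is the local intersection in the normal slice. Equivalently, de Cataldo--Migliorini identify these local intersection forms for Hilbert schemes of surfaces with the same constants. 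As a sanity check, for $m=2$ the fiber is a $(-2)$-curve $\PP^1$ in the resolution of $\CC^2/\pm1$, giving $-2$, which agrees with $(-1)^{1}\cdot2$.

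Combining the factors gives
\[
(F_\lambda\cdot F_\lambda)_{\tilde N_\lambda}=\prod_i(-1)^{\lambda_i-1}\lambda_i=(-1)^{n+1-k}\prod_i\lambda_i.
\]
Each $\lambda_i\le n+1<p$, so the product is a unit modulo $p$ and the form is non-degenerate mod $p$. The expected obstacle is justifying the sign and the magnitude $m$ for a single punctual factor. If the Nakajima route is not convenient, the fallback is to cite the semi-small intersection form computation of de Cataldo--Migliorini for $S^{[m]}\to S^{(m)}$ and restrict it to the Kummer situation by the local product argument above.
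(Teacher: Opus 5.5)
Your proposal is correct and follows essentially the paper's route. Both arguments use the same three steps:
\begin{itemize}
\item a local product decomposition of the normal slice $N_\lambda$ and of $\tilde N_\lambda$;
\item the fact that $\rH^{\mathrm{BM}}_{2d_\lambda}(F_\lambda)\cong\ZZ$, so the intersection form is a single number and is multiplicative over the factors;
\item the single-factor constant $(-1)^{m-1}m$.
\end{itemize}

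The paper gets that constant from the projection formula $(F_\lambda\cdot F_\lambda)_{\tilde N_\lambda}=\prod_i(B_{\lambda_i}\cdot M_{\lambda_i})_{U_i^{[\lambda_i]}}$ together with Ellingsrud--Str\o mme's theorem. That theorem is exactly the computation that fixes the Nakajima commutator constant you invoke, so your citation amounts to the same input. The projection-formula step is the bridge between that constant and the local self-intersection in the slice, which your proposal only asserts.
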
  

\begin{proof}
\noindent \textbf{Step 1.} Local geometry of $Z_\lambda$ and its normal slice.
\vspace{.2cm}

Let $z \in Z_\lambda$ be a point, which can be written as $$z = \sum_{i=1}^k \lambda_i [x_i]$$ with $x_i$ distinct points in $A$ satisfying $\sum \lambda_i x_i = 0_A$. The fiber $F_\lambda := \pi^{-1}(z)$ is isomorphic to $\prod_{i=1}^k B_{\lambda_i}$, where $B_{\lambda_i}$ is the Brian\c{c}on variety, i.e., the punctual Hilbert scheme parameterizing ideals of colength $\lambda_i$ in the local ring $\mathcal{O}_{\CC^2, 0}$, which is irreducible of dimension $\lambda_i - 1$.

Let $d_\lambda = \dim F_\lambda = \sum_{i=1}^k (\lambda_i - 1) = n+1 - k$. A \emph{normal slice} $N_\lambda$ to the stratum $Z_\lambda$ at $z$ is a locally closed subvariety of $Z$ of dimension $$\codim_Z (Z_\lambda) = \dim Z - \dim Z_\lambda$$ that intersects $Z_\lambda$ transversally at the point $z$. Its preimage under $\pi$, denoted by $\widetilde{N}_\lambda = \pi^{-1}(N_\lambda)$, is a normal slice to the preimage of $Z_\lambda$ in $X$. 

There are small enough neighborhoods $U_i$ of $x_i$ in $A$, which are pairwise disjoint, such that there is a neighborhood of $z$ in $A^{(n+1)}$ that is isomorphic to $\prod_{i=1}^k U_i^{(\lambda_i)}$. The normal slice $N_\lambda$ is given as follows. Consider the morphism: 

\begin{align*}
    \phi:\prod_{i=1}^k U_i^{(\lambda_i)} &\to \prod_{i=1}^k A\\
    (\sum_{j=1}^{\lambda_1}[y_{1,j}], \cdots,\sum_{j=1}^{\lambda_k}[y_{k,j}])&\to (\sum_{j=1}^{\lambda_1}y_{1,j}, \cdots,\sum_{j=1}^{\lambda_k}y_{k,j})
\end{align*}
where $y_{i,j}\in U_i$ and the latter sum is taken in $A$. Then $N_\lambda:=\phi^{-1}(\phi(z))$ is defined to be the fiber of $\phi$ over $\phi(z)$. Notice that $N_\lambda$ and $\widetilde{N_\lambda}$ are isomorphic to open subsets of $\prod_{i=1}^k A_0^{(\lambda_i)}$ and $\prod_{i=1}^k K_{\lambda_i}(A)$ respectively, hence $N_\lambda$ is normal and $\widetilde{N_\lambda}$ is smooth.

\vspace{.2cm}
\noindent \textbf{Step 2.} Intersection forms on Borel-Moore homology.
\vspace{.2cm}

The chosen normal slice $N_\lambda$ has dimension $d_{N_\lambda} = \codim_Z Z_\lambda$. The fundamental class of the fiber $F_\lambda$ defines an element  $$[F_\lambda]\in \rH^{\mathrm{BM}}_{2d_\lambda}(F_\lambda) \cong \rH^{\mathrm{BM}}_{2d_\lambda}(\widetilde{N}_\lambda|_z).$$ Using the interpretation of Borel-Moore homology as the homology of the one-point compactification, or via the local duality isomorphism
\[
\rH^{\mathrm{BM}}_{2d_\lambda}(F_\lambda) \cong \rH^{2d_{N_\lambda} - 2d_\lambda}(\widetilde{N}_\lambda, \widetilde{N}_\lambda \setminus F_\lambda),
\]
the intersection form \eqref{eq:intersect} can be identified with the pairing given by the cup product and evaluation on the fundamental class of $\widetilde{N_\lambda}$:
\[
\rH^{\mathrm{BM}}_{2d_\lambda}(F_\lambda) \times \rH^{\mathrm{BM}}_{2d_\lambda}(F_\lambda) \to \ZZ, \quad (\alpha, \beta) \mapsto (\alpha \cdot \beta) \frown [\widetilde{N_\lambda}].
\]
When applied to the fundamental class $[F_\lambda] \in \rH^{\mathrm{BM}}_{2d_\lambda}(F_\lambda)$, this pairing computes the self-intersection number $(F_\lambda \cdot F_\lambda)_{\widetilde{N}_\lambda}$ of the fiber within the normal slice.

\vspace{.2cm}
\noindent \textbf{Step 3. } Computation of the intersection number.
\vspace{.2cm}

Now we calculate the intersection number $(F_\lambda \cdot F_\lambda)_{\widetilde{N}_\lambda}$. We will interpret this intersection as a classical intersection in $\prod_{i=1}^k (U_i)^{[\lambda_i]} $. By shrinking $N_\lambda$, we may assume $N_\lambda\cap Z_\lambda=z$. Using the product decomposition of $F_\lambda\cong\prod_{i=1}^k B_{\lambda_i}$, we have  $\widetilde{N}_\lambda\cap \prod_{i=1}^kM_{\lambda_i}=\prod_{i=1}^k B_{\lambda_i}=F_\lambda$, where $M_{\lambda_i}$ is a subvariety of $U_i^{[\lambda_i]}$ whose points correspond to subschemes supported at a single point. By the projection formula, we have
\[
    (F_\lambda \cdot F_\lambda)_{\widetilde{N}_\lambda}=(F_\lambda\cdot\prod_{i=1}^kM_{\lambda_i})_{\prod_{i=1}^k U_i^{[\lambda_i]}}=\prod_{i=1}^k(B_{\lambda_i}\cdot M_{\lambda_i})_{U_i^{[\lambda_i]}}
\]

The calculation of the latter intersection number involving the Brian\c{c}on variety $B_k$ and $M_k$ inside the Hilbert scheme $U^{[k]}$ of $k$ points is a classical result. By \cite[Theorem 1.1]{ES98}, we have $(B_k\cdot M_k)_{U^{[k]}}=(-1)^{k-1}k$. Applying this result to each factor, we get:
\[
(B_{\lambda_i} \cdot M_{\lambda_i})_{U_i^{[\lambda_i]}} = (-1)^{\lambda_i - 1} \lambda_i.
\]
Therefore, the total intersection number is:
\[
(F_\lambda \cdot F_\lambda)_{\widetilde{N}_\lambda}= \prod_{i=1}^k (-1)^{\lambda_i - 1} \lambda_i= (-1)^{n - k} \prod_{i=1}^k \lambda_i.
\]

   Since each $\lambda_i\leq n+1$ , $ \prod_{i=1}^k \lambda_i$ is not divisible by any prime $p > n+1$. Consequently, the intersection number $(F_\lambda \cdot F_\lambda)_{\widetilde{N}_\lambda}$ is a unit modulo $p$, which implies that the intersection form \eqref{eq:intersect} is non-degenerate modulo $p$.
\end{proof}

\subsection{OG6-type} Let $A$ be an abelian surface and $v_0$ be a primitive Mukai vector with $v_0^2=2$. Let $\widetilde{\cM}(A,2v_0)$ be the crepant resolution of the singular moduli space of sheaves $\cM(A,2v_0)$ and $\widetilde{K}$ be its Albanese fiber. We will prove that the integral cohomology of $\widetilde{K}$ has no $p$-torsion for any odd prime number $p$.
\begin{theorem}\label{thm: TorsionOfOG6}
    Let $\widetilde{K}$ be as above and $p$ is an odd prime number. Then the total cohomology $\rH^*(\widetilde{K},\ZZ_{(p)})$ is torsion free.
\end{theorem}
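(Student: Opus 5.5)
We will imitate the proof of \Cref{thm:torsion free}, using O'Grady's resolution in place of the Hilbert--Chow morphism. Let $K=\cK(A,2v_0)$ be the Albanese fiber of the singular moduli space and $\pi\colon \widetilde{K}\to K$ the crepant resolution given by blowing up the singular locus. The plan is to show
\[
\dim_{\ZZ/p\ZZ}\rH^*(\widetilde{K},\ZZ/p\ZZ)=\dim_\QQ\rH^*(\widetilde{K},\QQ)
\]
for every odd prime $p$. By the universal coefficient theorem, this equality is equivalent to the absence of $p$-torsion in $\rH^*(\widetilde{K},\ZZ)$, which is the statement.

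\textbf{Stratification and the decomposition theorem.} We use the standard stratification of $K$:
\begin{itemize}
\item the open stratum $K^{\mathrm{st}}$ of stable sheaves;
\item $\Sigma^\circ=\Sigma\setminus\Omega$, where $\Sigma\cong B/\pm1$ parametrizes polystable sheaves $F_1\oplus F_2$ with $F_1\neq F_2$;
\item the finite set $\Omega$ of $256$ points, parametrizing sheaves of the form $F\oplus F$.
\end{itemize}
The resolution $\pi$ is semi-small with all strata relevant, as in the description of O'Grady and Lehn--Sorger.
\begin{itemize}
\item Over a point of $\Sigma^\circ$ (codimension $2$), the fiber is $\PP^1$. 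The normal slice is an $A_1$ surface singularity, so the intersection form is $(-2)$.
\item Over a point of $\Omega$ (codimension $6$), the fiber $F_\Omega$ is $3$-dimensional: by Lehn--Sorger, the exceptional locus over such a point is a union of a $\PP^3$-type component and a smooth quadric-related component. Its top Borel--Moore homology has rank $1$ or $2$, and the intersection form on it is computed in a local model: the resolution of $\mathfrak{sp}(4)$-nilpotent-type slice $\overline{O}\subset\mathfrak{sp}_4$, blown up along its singular locus.
\end{itemize}
We will compute these local intersection numbers via explicit normal bundles (for instance, $\cO_{\PP^3}(-2)$-type self-intersections). The expectation is that all entries and the determinant are powers of $2$ up to sign, so the form is non-degenerate modulo every odd $p$. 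Given this, \Cref{thm:JMW} gives a decomposition
\[
\bR\pi_*\ZZ/p\ZZ[6]\cong \rI\rC_K(\ZZ/p)\oplus \rI\rC_{\overline{\Sigma}}(\ZZ/p)[\ldots]\oplus(\text{skyscrapers on }\Omega),
\]
possibly with nontrivial rank-one local systems on $\Sigma^\circ$, whose monodromy has order $2$ and is therefore fine for odd $p$.

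\textbf{Comparing the summands.} The same decomposition holds with $\QQ$-coefficients, so it suffices to compare each intersection cohomology summand with its rational counterpart.
\begin{itemize}
\item The skyscraper terms match trivially.
\item For $\overline{\Sigma}=B/\pm1$, which is normal with finite quotient singularities, $\rI\rH^*$ with $\ZZ/p$ coefficients is the $\pm1$-invariant part of $\rH^*(B)$ (or the anti-invariant part, for the twisted local system). Since $B$ is an abelian fourfold, its cohomology is torsion free, and taking invariants under a group of order $2$ is exact for odd $p$.
\item The main obstacle is $\rI\rH^*(K,\ZZ/p)$ for the top stratum, which has no simple global model. We would first try to identify it using the Mongardi--Rapagnetta--Sacc\`a double cover $\phi\colon Z\to K$ of \Cref{theorem: double cover}, which is \'etale over $K\setminus\Sigma$ with $Z$ of $\rK3^{[3]}$-type. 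Since $\deg\phi=2$ is invertible mod $p$, $\rI\rC_K(\ZZ/p)$ is a direct summand of $\phi_*\rI\rC_Z(\ZZ/p)$, namely the invariant part. Because $Z$ is smooth, $\rI\rC_Z=\ZZ/p[6]$, so $\rI\rH^*(K,\ZZ/p)=\rH^*(Z,\ZZ/p)^{\iota}$.
\end{itemize}
By Markman/Totaro, $\rH^*(Z,\ZZ)$ is torsion free for $Z\cong S^{[3]}$ birationally. Birational hyper-K\"ahler manifolds are deformation equivalent, so the same holds for $Z$ itself. For odd $p$, invariants under $\iota$ then have the same dimension over $\ZZ/p$ and over $\QQ$. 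Combining all strata gives the required equality of total dimensions.

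If the check of the intersection form on $\Omega$ fails to be a $2$-power, we would instead bypass \Cref{thm:JMW} at that stratum and compute the local contribution directly from the blow-up formulas used in \Cref{prop: Chow motives of OG6}.
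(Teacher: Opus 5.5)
\paragraph{Verdict.} The proposal has a genuine gap. Your analysis of $\pi\colon\widetilde{K}\to K$ comes out right for odd $p$, with two corrections. The fibre over a point of $\Omega$ is a single smooth quadric threefold $Q^3$, locally the zero section of $T^*Q^3$, with self-intersection $-\chi(Q^3)=-4$. The local system over $\Sigma^\circ$ is trivial. The problem is the step you yourself flagged as the main obstacle, the identification $\rI\rH^*(K,\ZZ/p)=\rH^*(Z,\ZZ/p)^{\iota}$.

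\paragraph{Why that step fails.}
\begin{enumerate}
\item The map $\phi\colon Z\to K$ of \Cref{theorem: double cover} is not finite. We have $\phi^{-1}(\Sigma)\cong\mathrm{Bl}_\Omega\Sigma$, with $\phi$ restricting to the blow-up map, so $\phi$ contracts a copy of $\PP^3$ onto each of the $256$ points of $\Omega$. These fibres are relevant, since $3=\tfrac12\codim\Omega$. Hence $\phi_*\ZZ/p[6]$ is not the intermediate extension of the rank-two local system on $K\setminus\Sigma$: it has extra summands supported on $\Omega$. Even the existence of a mod-$p$ splitting needs a separate application of \Cref{thm:JMW} to $\phi$.
\item The deck transformation of the \'etale double cover over $K\setminus\Sigma$ does not extend to a regular involution of $Z$. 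Along each of these $\PP^3$'s it is a Mukai flop, which is why Mongardi--Rapagnetta--Sacc\`a realize OG6 as a quotient by a \emph{birational} involution. So $\rH^*(Z,\ZZ/p)^{\iota}$ is not defined in the sense you need. The transfer argument you use for $B/\pm1$ therefore has no involution to act on.
\item Suppose you grant a decomposition $\phi_*\ZZ/p[6]\cong\rI\rC_K(\ZZ/p)\oplus\rI\rC_K(\chi)\oplus(\text{skyscrapers on }\Omega)$. Torsion-freeness of $\rH^*(Z,\ZZ)$ then only controls the sum $\dim\rI\rH^*(K,\ZZ/p)+\dim\rI\rH^*(K,\chi)$. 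Separating the two terms needs a $\ZZ_{(p)}$-integral version of the eigen-splitting, compatible with the decomposition. The proposal does not supply this.
\end{enumerate}

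\paragraph{The missing idea.} Replace intersection cohomology by an honest \emph{regular} double cover between smooth projective varieties, where the transfer argument is immediate. The paper does this using the resolved Mongardi--Rapagnetta--Sacc\`a diagram.
\begin{enumerate}
\item Let $\underline{Y}$ be the $\rK3^{[3]}$-type variety and $\widehat{Y}=\mathrm{Bl}_{\overline{\Delta}}\mathrm{Bl}_{\underline{\Gamma}}\underline{Y}$. Its integral cohomology is torsion free: Markman gives this for $\underline{Y}$, and the blow-up formula propagates it. The centres are $256$ copies of $\PP^3$ and $\mathrm{Bl}_{B[2]}(B/\pm1)$.
\item The degree-two morphism $\hat{\epsilon}\colon\widehat{Y}\to\widehat{K}=\mathrm{Bl}_{\widetilde{\Omega}}\widetilde{K}$ satisfies $\hat{\epsilon}_*\hat{\epsilon}^*=2$.
\item For odd $p$ this makes $\hat{\epsilon}^*$ injective on $\ZZ_{(p)}$-cohomology, so $\rH^*(\widehat{K},\ZZ_{(p)})$ is torsion free.
\item By the blow-up formula along the $256$ quadrics $\widetilde{\Omega}$, $\rH^*(\widetilde{K},\ZZ_{(p)})$ is a direct summand of $\rH^*(\widehat{K},\ZZ_{(p)})$, hence torsion free.
\end{enumerate}
No decomposition theorem is needed at any point.
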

\begin{proof}
    Our proof is based on the  Mongardi--Rapagnetta--Sacc\`a double cover, which we have already used in \Cref{sec:og6}. By \cite[Section 2]{Motive_OG6}, we have the following diagram:
    \[\begin{tikzcd}
	{\widehat{Y}=\mathrm{Bl}_{\overline{\Delta}}(\overline{Y})} \\
	{\overline{Y}=\mathrm{Bl}_{\underline{\Gamma}}(\underline{Y})} & {\widehat{K}=\mathrm{Bl}_{\widetilde{\Omega}}(\widetilde{K})} \\
	{\underline{Y}} & {\widetilde{K}}
	\arrow[from=1-1, to=2-1]
	\arrow["{\hat{\epsilon}}", from=1-1, to=2-2]
	\arrow[from=2-1, to=3-1]
	\arrow[from=2-2, to=3-2]
	\arrow[dashed, from=3-1, to=3-2]
    \end{tikzcd}\]
    All varieties involved are smooth and projective and moreover the variety $\underline{Y}$ is an irreducible symplectic variety of $K3^{[3]}$-type. The morphism $\hat{\epsilon}$ is a degree two cover. All other morphisms are blow-ups with centers:
    \begin{itemize}
        \item $\widetilde{\Omega}$ consists of 256 disjoint smooth 3-dimensional quadrics;
        \item $\underline{\Gamma}$ is a union of 256 disjoint copies of $\PP^3$;
        \item $\overline{\Delta}=\mathrm{Bl}_{B[2]}(B/\pm1)$, where $B$ is the abelian fourfold $A\times A^\vee$.
    \end{itemize}
    The cohomology $\rH^*(\underline{Y},\ZZ)$ is torsion free by \cite{Ma07} and the cohomology $\rH^*(\underline{\Gamma},\ZZ)$ is torsion free since it is a disjoint union of $\PP^3$. Thus, $\rH^*(\overline{Y},\ZZ)$ is torsion free by the blow up formula. By \cite{Spanier1956}, $\rH^*(\overline{ \Delta},\ZZ)$ is torsion free. So again by the blow up formula, the cohomology $\rH^*(\widehat{Y},\ZZ)$ is torsion free. Since $\hat{\epsilon}$ is a degree two cover, we have $\hat{\epsilon}_*\hat{\epsilon}^*$ is multiplication by $2$, which is invertible after base change to $\ZZ_{(p)}$. Thus, $\hat{\epsilon}^*:\rH^*(\widehat{K},\ZZ_{(p)})\to\rH^*(\widehat{Y},\ZZ_{(p)})$ is injective. Therefore, $\rH^*(\widehat{K},\ZZ_{(p)})$ is torsion free, then $\rH^*(\widetilde{K},\ZZ_{(p)})$ is also torsion free by the blow up formula.
\end{proof}

\subsection{OG10-type}
Let $S $ be a K3 surface over $\CC$ and $v_0$ be a primitive Mukai vector with $v_0^2=2$. Choose a $v_0$ generic polarization $H$. Denote the moduli space of semistable sheaves $\cM_H(S,2v_0)$ by $\cM$ and stable locus by $\cM^{\rm st}$. O'Grady constructed a crepant resolution $\widetilde{\cM}$ of $\cM$, which is a projective irreducible symplectic variety of dimension $10$. We will prove that, for any prime number $p>10$, there is no $p$-torsion in the integral cohomology of $\widetilde{\cM}$. To start, we first recall the geometry of $\cM$ and the resolution.

The moduli space $\cM$ admits a filtration
\[
\cM\supset \Sigma\supset \Omega,
\]
where 
\[
\Sigma=\mathrm{Sing}(\cM)=\cM\backslash\cM^{\rm st}\cong\Sym^2\cM_H(S,v_0)
\]
is the singular locus of $\cM$, and
\[
\Omega=\mathrm{Sing}(\Sigma)\cong \cM_H(S,v_0)
\]
is the singular locus of $\Sigma$.

The crepant resolution is given by three steps:

Step 1: We blow up $\cM$ along $\Omega$, resulting a space $\overline{\cM}$ with an exceptional divisor $\overline{\Omega}$. Denote the strict transform of $\Sigma$ by $\overline{\Sigma}$, which is smooth and satisfies
\(
\overline{\Sigma}\cong \mathrm{Hilb}^2(\cM_H(S,v_0)).
\)

Step 2: We blow up $\overline{\cM}$ along $\overline{\Sigma}$, resulting a space $\widehat{\cM}$ with an exceptional divisor $\widehat{\Sigma}$. In fact, $\widehat{\Sigma}$ is a $\PP^1$-bundle over $\overline{\Sigma}$. Denote by $\widehat{\Omega}$ the strict transform of $\overline{\Omega}$. $\widehat{\cM}$ is a compactification of $\cM^{\rm st}$, with boundary
\(
\partial\widehat{\cM}=\widehat{\Sigma}\cup\widehat{\Omega}.
\)

Step 3: Finally, the extremal contraction of $\widehat{\cM}$ contracts $\widehat{\Omega}$ as a $\PP^2$-bundle to $\widetilde{\Omega}$, which is a 3-dimensional quadric bundle over $\Omega$. The resulting space $\widetilde{\cM}$ is the crepant resolution of $\cM$ we want. Moreover, $\widehat{\cM}$ is isomorphic to the blow up $\mathrm{Bl}_{\widetilde{\Omega}}\widetilde{\cM}$.

\begin{lemma}\label{lem: TorsioninLocalOG10}
    The integral cohomology $\rH^*(\widehat{\Sigma},\ZZ)$, $\rH^*(\widehat{\Omega},\ZZ)$, and $\rH^*(\widehat{\Sigma}\cap\widehat{\Omega},\ZZ)$ are all torsion free.
\end{lemma}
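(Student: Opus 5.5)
We need to show that $\rH^*(\widehat{\Sigma},\ZZ)$, $\rH^*(\widehat{\Omega},\ZZ)$ and $\rH^*(\widehat{\Sigma}\cap\widehat{\Omega},\ZZ)$ are torsion free. The plan is to describe each of these spaces as an iterated fibration with cellular (projective-space or quadric) fibres over a base whose cohomology is already known to be torsion free, and then apply the Leray--Hirsch theorem over $\ZZ$.

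\textbf{Base spaces.} Write $M_0=\cM_H(S,v_0)$. It is a smooth projective variety of $\rK3^{[2]}$-type; by \Cref{theorem:Bridgeland of K3^n type and Kum^n type} it is deformation equivalent to $S^{[2]}$, so $\rH^*(M_0,\ZZ)$ is torsion free by \cite{Ma07} or \cite{Totaro20}. Next, $\overline{\Sigma}\cong\mathrm{Hilb}^2(M_0)$ is the $\ZZ/2$-quotient of $\mathrm{Bl}_{\Delta}(M_0\times M_0)$. Totaro's theorem \cite{Totaro20} covers $\mathrm{Hilb}^n$ of a surface, not a fourfold, so for $\mathrm{Hilb}^2$ we argue directly. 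The space $\mathrm{Hilb}^2(M_0)$ is the complement of the diagonal in $\Sym^2 M_0$, glued to a $\PP^3=\PP(T_{M_0})$-bundle over $M_0$. The cohomology of $\Sym^2$ of a space with torsion-free cohomology concentrated in even degrees (which holds here, since $\rK3^{[2]}$-type has no odd cohomology) is torsion free. Then the standard computation for $\mathrm{Hilb}^2$ shows that $\rH^*(\mathrm{Hilb}^2 M_0,\ZZ)\cong \rH^*(\Sym^2M_0,\ZZ)\oplus \bigoplus_{i=1}^{3}\rH^{*-2i}(M_0,\ZZ)$ integrally. To prove this, use the Gysin sequence of the pair $(\mathrm{Bl}_\Delta(M_0^2),E)$ and take $\ZZ/2$-invariants; because everything is concentrated in even degrees and has free $\ZZ$-bases permuted by the involution, there is no transfer obstruction. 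Alternatively, cite Totaro's general result that $\rH^*(X^{[2]},\ZZ)$ is torsion free whenever $\rH^*(X,\ZZ)$ is. This step is the main obstacle; if needed, I would restrict to showing absence of $p$-torsion for $p>10$, which is all the OG10 theorem requires, and there invariants commute with cohomology trivially.

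\textbf{The three boundary strata.} Each is a projective bundle or quadric bundle over one of the bases above.
\begin{enumeratei}
\item $\widehat{\Sigma}$ is a $\PP^1$-bundle over $\overline{\Sigma}$, namely the projectivized normal bundle of a blow-up centre. Leray--Hirsch with the class $\cO(1)$ gives $\rH^*(\widehat{\Sigma},\ZZ)\cong\rH^*(\overline{\Sigma},\ZZ)\oplus\rH^{*-2}(\overline{\Sigma},\ZZ)$, which is torsion free.
\item $\widehat{\Omega}$ is a $\PP^2$-bundle over $\widetilde{\Omega}$, and $\widetilde{\Omega}$ is a smooth $3$-dimensional quadric bundle over $\Omega\cong M_0$. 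A smooth quadric $Q^3$ has a cell decomposition with one cell in each even dimension, so $\rH^*(Q^3,\ZZ)$ is free. The classes $h^i$ for $i\le1$, together with the class of a linear $\PP^2$ restricted to fibres, must extend globally. I would take the hyperplane class and $\frac12 h^2$, realized as the relative class of a family of planes after an étale-local check, or else replace them by $h^2$ and accept a possible $2$-torsion. Given that the paper only needs $p>10$, fall back to $\ZZ[\tfrac12]$-coefficients if necessary. Leray--Hirsch (applied twice) then gives freeness. Alternatively, via O'Grady's description, $\overline{\Omega}$ is the $\mathrm{Bl}$-exceptional divisor, a bundle over $\Omega$ with fibre $\PP^5$ blown up along a $\PP^2$-image of the Veronese type locus; its blow-up formula also yields freeness.
\item $\widehat{\Sigma}\cap\widehat{\Omega}$ is a $\PP^1$-bundle over $\overline{\Sigma}\cap\overline{\Omega}$. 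The latter is the exceptional divisor of $\mathrm{Hilb}^2(M_0)\to\Sym^2 M_0$, which is $\PP(T_{M_0})$, a $\PP^3$-bundle over $M_0$. Two applications of Leray--Hirsch give freeness.
\end{enumeratei}
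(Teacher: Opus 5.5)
There is a genuine gap: as written, your proposal does not establish \emph{integral} torsion-freeness, which is what the lemma asserts.

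The first problem is your direct treatment of $\overline{\Sigma}\cong\mathrm{Hilb}^2(M_0)$, which rests on a false claim. $\Sym^2$ of a space with free cohomology concentrated in even degrees need not be torsion free. Already $\mathrm{SP}^2(S^4)/S^4$ is the $5$-fold suspension of $\RR\PP^3$, which gives $\rH_6(\mathrm{SP}^2(S^4),\ZZ)\cong\ZZ/2$. Since $\mathrm{SP}^2(S^4)$ is a retract of $\mathrm{SP}^2(S^4\vee Y)$, Dold's theorem puts the same $2$-torsion into $\rH_6(\Sym^2M_0,\ZZ)$, because $\rH_4(M_0,\ZZ)\neq 0$. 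So your integral splitting $\rH^*(\mathrm{Hilb}^2M_0)\cong\rH^*(\Sym^2M_0)\oplus\bigoplus_i\rH^{*-2i}(M_0)$ is wrong. The step ``take $\ZZ/2$-invariants, no transfer obstruction'' is exactly where it fails: invariants do not compute the integral cohomology of a $\ZZ/2$-quotient. The correct input is the one you list only as an alternative, namely Totaro's theorem that $X^{[2]}$ has torsion-free integral cohomology whenever $X$ does \cite{Totaro2016}; this is precisely what the paper cites. Your other fallback, restricting to $p>10$, proves a weaker statement than the lemma.

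The second problem is $\widetilde{\Omega}$. You never produce a global class restricting to the line class $\ell\in\rH^4(Q^3,\ZZ)$. Recall $h^2=2\ell$, and a smooth $Q^3$ contains no planes, so ``the class of a linear $\PP^2$'' does not make sense. You then retreat to $\ZZ[\tfrac12]$. The missing observation, which is how the paper argues, is that no global classes are needed:
\begin{enumerate}
\item Since $\Omega\cong M_0$ is simply connected, the local systems $\mathbf{R}^q\pi_*\ZZ$ are trivial, so the Leray--Serre page $E_2^{p,q}=\rH^p(M_0,\ZZ)\otimes\rH^q(Q^3,\ZZ)$ consists of free groups.
\item All differentials vanish. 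The paper uses Deligne's degeneration $d_r\otimes\QQ=0$ together with freeness and induction; here parity alone suffices, since every nonzero $E_2^{p,q}$ sits in even total degree.
\item Hence $\rH^n(\widetilde{\Omega},\ZZ)$ has a finite filtration with free graded pieces, and is free.
\end{enumerate}

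The same argument is what you should use for $\widehat{\Sigma}\to\overline{\Sigma}$. Since $\overline{\Sigma}$ is the singular locus of $\overline{\cM}$ (transversal $A_1$-singularities), $\widehat{\Sigma}$ is the projectivized normal \emph{cone}. It is a priori only a conic bundle, so the $\cO(1)$ you invoke for Leray--Hirsch need not exist; Leray--Serre only needs $\rH^*(\overline{\Sigma},\ZZ)$ to be free.

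The same argument also handles $\widehat{\Sigma}\cap\widehat{\Omega}$. There, your description as a $\PP^1$-bundle over $\PP(T_{M_0})$ is a correct alternative to the paper's description as a conic bundle over $\widetilde{\Omega}$.
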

\begin{proof}
    Since $\cM_H(S,v_0)$ is an irreducible symplectic variety of $K3^{[2]}$-type, the integral cohomology $\rH^*(\cM_H(S,v_0),\ZZ)$ is torsion free. We have known that the integral cohomology of  $\overline{\Sigma}\cong\mathrm{Hilb}^2(\cM_H(S,v_0))$ is torsion free by \cite{Totaro2016}. Since $\widehat{\Sigma}$ is a $\PP^1$-bundle over $\overline{\Sigma}$, the cohomology $\rH^*(\widehat{\Sigma},\ZZ)$ is also torsion free.

    For $\rH^*(\widehat{\Omega},\ZZ)$, it suffices to prove that $\rH^*(\widetilde{\Omega},\ZZ)$ is torsion free since $\widehat{\Omega}$ is a $\PP^2$-bundle over $\widetilde{\Omega}$. Note that $\widetilde{\Omega}$ itself is a $3$-dimensional quadric bundle over $\Omega\cong \cM_H(S,v_0)$. Denote by $\pi$ the bundle map.  Consider the Leray-Serre spectral sequence with $E_2^{pq}=\rH^p(\Omega,\mathbf{R}^q\pi_*(\ZZ))$. There are two facts about this spectral sequence.
    \begin{itemize}
        \item $\mathbf{R}^q\pi_*(\ZZ)$ is a trivial local system since the base is simply connected;
        \item For a $3$-dimensional smooth quadric $W$, we have $\rH^i(W,\ZZ)=\ZZ$ for $i$ even and $\rH^j(W,\ZZ)=0$ for $j$ odd.
    \end{itemize}
    In particular, every term at the $E_2$-page is a finite free $\ZZ$-module. By Deligne's theorem \cite{Deligne1968}, every differential $\rmd_r$ at the $E_r$-page satisfies $\rmd_r\otimes\QQ =0$ for $r\geq 2$. This implies that $\rmd_2=0$. Using induction, we can show that $\rmd_r=0$ for all $r\geq 2$. Thus, this spectral sequence degenerates at $E_2$-page and the cohomology $\rH^*(\widetilde{\Omega},\ZZ)$ is torsion free.

   The torsion freeness of cohomology of $\widehat{\Sigma}\cap\widehat{\Omega}$ is given by similar argument, because it is a smooth conic bundle over $\widetilde{\Omega}$.
\end{proof}

The main result of this section is the following theorem:
\begin{theorem}\label{thm;pTorsionFreenessOfOG10}
    Let $p\geq11$ be a prime number, then the cohomology $\rH^*(\widetilde{\cM},\ZZ_{(p)})$ is torsion free.
\end{theorem}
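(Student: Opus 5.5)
We will use the stratification $\widetilde{\cM}$ versus its blow-up $\widehat{\cM}=\mathrm{Bl}_{\widetilde{\Omega}}\widetilde{\cM}$. By the blow-up formula, $\rH^*(\widehat{\cM},\ZZ)\cong\rH^*(\widetilde{\cM},\ZZ)\oplus\bigoplus_{i=1}^{r}\rH^{*-2i}(\widetilde{\Omega},\ZZ)$ with $r+1$ the codimension. By \Cref{lem: TorsioninLocalOG10}, $\rH^*(\widetilde{\Omega},\ZZ)$ is torsion free. It therefore suffices to show that $\rH^*(\widehat{\cM},\ZZ_{(p)})$ is torsion free for $p\geq 11$.

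For $\widehat{\cM}$ we use the decomposition $\widehat{\cM}=\cM^{\rm st}\sqcup\partial\widehat{\cM}$, where the boundary $\partial\widehat{\cM}=\widehat{\Sigma}\cup\widehat{\Omega}$ is a simple normal crossing divisor with smooth components. Its integral cohomology can be computed from the Mayer--Vietoris sequence of the two components and their intersection. All three have torsion-free cohomology by \Cref{lem: TorsioninLocalOG10}, and the Mayer--Vietoris differentials are restriction maps between pure Hodge structures of different weights. The argument of Deligne's degeneration used in the lemma shows these maps are split modulo torsion, so $\rH^*(\partial\widehat{\cM},\ZZ)$ is torsion free. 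The long exact sequence of the pair then gives
\[
\cdots\to\rH^i_c(\cM^{\rm st},\ZZ)\to\rH^i(\widehat{\cM},\ZZ)\to\rH^i(\partial\widehat{\cM},\ZZ)\to\cdots.
\]
This reduces the problem to controlling the $p$-torsion of $\rH^*_c(\cM^{\rm st},\ZZ)$, and to checking that the connecting maps produce no $p$-torsion. For the latter, compare Betti numbers, that is, check that $\dim_{\FF_p}$ equals the rational dimension everywhere, as in the proof of \Cref{thm:torsion free}.

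To handle $\cM^{\rm st}$, the cleanest route is not to study the open part directly but to mimic the Kummer appendix. Apply the decomposition theorem of \Cref{thm:JMW} to the semismall crepant resolution $\widetilde{\cM}\to\cM$ with strata $\cM^{\rm st}$, $\Sigma\setminus\Omega$, and $\Omega$. The fibers over $\Sigma\setminus\Omega$ are $\PP^1$s, with self-intersection $-2$ in a transversal slice modelled on an $A_1$-singularity times a smooth factor. The fiber over $\Omega$ is 5-dimensional, built from a quadric and $\PP^2$-bundle data, and its intersection form should have determinant divisible only by small primes (powers of $2$ and $3$, plausibly bounded by $10!$). Once the forms are nondegenerate modulo $p$ for $p\geq 11$, we obtain
\[
\rH^*(\widetilde{\cM},\FF_p)\cong\bigoplus_\lambda\rI\rH^*(\overline{\cM_\lambda},\FF_p).
\]
Matching this with the rational decomposition requires the intersection cohomology of $\cM$, $\Sigma=\Sym^2\cM_H(S,v_0)$, and $\Omega$ to have no $p$-torsion. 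This holds for $\Sigma$ and $\Omega$ because $p>2$ and $\cM_H(S,v_0)$ has torsion-free cohomology. The equality of dimensions then forces torsion-freeness.

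The main obstacle is the computation of the intersection form for the deepest stratum $\Omega$. There the slice is the $6$-dimensional O'Grady local model (a nilpotent-orbit-type singularity $\mathfrak{sp}_4$ quotient), and one must compute the determinant of the pairing on the top Borel--Moore homology of the central fiber and show that it is supported at primes $<11$. We would first try to use the explicit description of the local model of $\cM$ near $\Omega$ from Lehn--Sorger. The central fiber is the union of $\widetilde{\Omega}$-fiber components of dimension $\leq 3$; the relevant degree is $2d_\lambda=6$ for the codimension-$6$ stratum. The task is to compute the few intersection numbers of these components, which are products of small integers coming from quadric and $\PP^2$ degrees. The fallback, if this local computation is intractable, is the boundary long exact sequence route above, together with a direct $\FF_p$ Euler-characteristic and Betti-number comparison.
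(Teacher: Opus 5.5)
There is a genuine gap, and it is not the step you identify as the main obstacle.

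\textbf{The main route is circular.} With trivial rank-one local systems on the strata, the mod $p$ decomposition theorem for the semismall map $\widetilde{\cM}\to\cM$ gives
\[
\rH^*(\widetilde{\cM},\FF_p)\cong \rI\rH^*(\cM,\FF_p)\oplus \rI\rH^{*-2}(\Sigma,\FF_p)\oplus \rH^{*-6}(\Omega,\FF_p).
\]
So you need $\dim_{\FF_p}\rI\rH^*(\cM,\FF_p)=\dim_{\QQ}\rI\rH^*(\cM,\QQ)$ for the \emph{open} stratum. You list this as a requirement but justify it only for $\Sigma$ and $\Omega$. Those two summands are harmless for odd $p$, so the missing statement is equivalent to the theorem itself. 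In the Kummer appendix (\Cref{thm:torsion free}) this step works because the closure of the open stratum is $A_0^{n+1}/S_{n+1}$. That is a quotient of a smooth variety with torsion-free cohomology by a group of order prime to $p$, and the $10$-dimensional singular space $\cM$ has no such description.

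\textbf{The step you call the main obstacle is easy.} The fiber of $\widetilde{\cM}\to\cM$ over a point of $\Omega$ is $3$-dimensional, not $5$-dimensional; the $\PP^2$-bundle over a quadric lives in $\widehat{\cM}$. In the Lehn--Sorger model $\{A\in\mathfrak{sp}_4 : A^2=0\}$, resolved by $T^*\mathrm{LG}(2,4)$, the fiber is $\mathrm{LG}(2,4)\cong Q^3$. Its self-intersection is $-\chi(Q^3)=-4$, and over $\Sigma\setminus\Omega$ one gets $-2$.

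\textbf{The fallback route has the same hole.} You never control $\rH^*_c(\cM^{\rm st},\ZZ)$, and ``comparing $\FF_p$-Betti numbers'' presupposes knowing $\rH^*(\widehat{\cM},\FF_p)$. The Mayer--Vietoris step is also wrong as stated. The maps $\rH^i(\widehat\Sigma)\oplus\rH^i(\widehat\Omega)\to\rH^i(\widehat\Sigma\cap\widehat\Omega)$ go between pure structures of the \emph{same} weight. Torsion in their cokernels injects into $\rH^{i+1}(\partial\widehat{\cM},\ZZ)$, and Deligne's degeneration says nothing about this.

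\textbf{The missing idea} is integral control of the open part $\cM^{\rm st}$, which is where $p\geq 11$ actually enters. The paper uses the decomposition of the diagonal $\Delta_{\cM^{\rm st}}=\sum_i\delta_i\circ\gamma_i$ through powers $S^{k_i}$ from \cite[Proposition~3.2]{Motive_OG10}. It is built from Chern characters of universal families and $\td(S)$, whose denominators involve only primes $\leq 10$, so it holds with $\ZZ_{(p)}$-coefficients.

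After taking closures in $\widehat{\cM}$, the difference $\Delta_{\widehat{\cM}}-\sum_i\widehat\delta_i\circ\widehat\gamma_i$ is supported on $\widehat{\cM}\times\partial\widehat{\cM}\cup\partial\widehat{\cM}\times\widehat{\cM}$. It is therefore a sum of cycles pushed forward from $\widehat{\cM}\times\widehat\Sigma$, $\widehat{\cM}\times\widehat\Omega$, $\widehat\Sigma\times\widehat{\cM}$ and $\widehat\Omega\times\widehat{\cM}$. Hence the identity of $\rH^*(\widehat{\cM},\ZZ_{(p)})$ factors through $\bigoplus_i\rH^*(S^{k_i},\ZZ_{(p)})$ together with shifted copies of $\rH^*(\widehat\Sigma,\ZZ_{(p)})$ and $\rH^*(\widehat\Omega,\ZZ_{(p)})$. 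All of these are torsion free by \Cref{lem: TorsioninLocalOG10}. No information on $\rH^*(\partial\widehat{\cM})$, $\rH^*_c(\cM^{\rm st})$ or $\rI\rH^*(\cM)$ is needed.
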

\begin{proof}
    Since $\widehat{\cM}\cong\mathrm{Bl}_{\widetilde{\Omega}}\widetilde{\cM}$, it suffices to prove the torsion freeness of $\rH^*(\widehat{\cM},\ZZ_{(p)})$. The proof is inspired by \cite[Proposition~4.4]{Motive_OG10}.

    By \cite[Proposition~3.2]{Motive_OG10}, there is a decomposition of diagonal over the stable locus. That is, there exist finitely many integers $k_i$ and cycles $\gamma_i\in\CH^{e_i}(\cM^{\rm st}\times S^{k_i})\otimes\QQ$, $\delta_i\in\CH^{d_i}(S^{k_i}\times \cM^{\rm st})\otimes\QQ$, such that
    \[
    \Delta_{\cM^{\rm st}}=\sum \delta_i\circ \gamma_i.
    \]
    The proof of \cite[Proposition~3.2]{Motive_OG10} shows that this decomposition is induced by the Chern character of two universal families and the Todd class of $S$. Thus, an easy computation of the coefficients shows that all denominators have no prime factor greater than $10$. Then, we may assume that $\gamma_i\in\CH^{e_i}(\cM^{\rm st}\times S^{k_i})\otimes\ZZ_{(p)}$ and $\delta_i\in\CH^{d_i}(S^{k_i}\times \cM^{\rm st})\otimes\ZZ_{(p)}$.

    Let $\widehat{\gamma}_i\in\CH^{e_i}(\widehat{\cM}\times S^{k_i})\otimes\ZZ_{(p)}$ and $\widehat{\delta}_i\in\CH^{d_i}(S^{k_i}\times \widehat{\cM})\otimes\ZZ_{(p)}$ be closures of cycles representing $\gamma_i$ and $\delta_i$. Then the support of
    \[
    \Delta_{\widehat{\cM}}-\sum\widehat{\delta}_i\circ \widehat{\gamma}_i
    \]
    lies in the boundary $\widehat{\cM}\times\partial\widehat{\cM}\cup \partial\widehat{\cM}\times\widehat{\cM}$, hence we can write
    \[
    \Delta_{\widehat{\cM}}=\sum\widehat{\delta}_i\circ \widehat{\gamma}_i+Y_{\widehat{\Sigma}}+Y_{\widehat{\Omega}}+Z_{\widehat{\Sigma}}+Z_{\widehat{\Omega}}
    \]
    with $Y_{\widehat{\Sigma}}\in\CH^9(\widehat{\cM}\times\widehat{\Sigma})\otimes\ZZ_{(p)}$, $Y_{\widehat{\Omega}}\in\CH^9(\widehat{\cM}\times\widehat{\Omega})\otimes\ZZ_{(p)}$, $Z_{\widehat{\Sigma}}\in\CH^9(\widehat{\Sigma}\times\widehat{\cM})\otimes\ZZ_{(p)}$, and $Z_{\widehat{\Omega}}\in\CH^9(\widehat{\Omega}\times\widehat{\cM})\otimes\ZZ_{(p)}$.

    For each $i$, the cycles $\widehat{\gamma}_i$ and $\widehat{\delta}_i$ induces maps between cohomology
    \[
    \rH^*(\widehat{\cM},\ZZ_{(p)})\xrightarrow{\widehat{\gamma}_{i,*}}\rH^{*+2e_i-20}(S^{k_i},\ZZ_{(p)})\xrightarrow{\widehat{\delta}_{i,*}}\rH^*(\widehat{\cM},\ZZ_{(p)}).
    \]
    Denote by $j_{\widehat{\Sigma}}$ and $j_{\widehat{\Omega}}$ the corresponding closed embeddings. Then we have
    \begin{align*}
    \rH^*(\widehat{\cM},\ZZ_{(p)})\xrightarrow{Y_{\widehat{\Sigma},*}}\rH^{*-2}(\widehat{\Sigma},\ZZ_{(p)})\xrightarrow{j_{\widehat{\Sigma},*}}\rH^*(\widehat{\cM},\ZZ_{(p)}), \\
    \rH^*(\widehat{\cM},\ZZ_{(p)})\xrightarrow{Y_{\widehat{\Omega},*}}\rH^{*-2}(\widehat{\Omega},\ZZ_{(p)})\xrightarrow{j_{\widehat{\Omega},*}}\rH^*(\widehat{\cM},\ZZ_{(p)}),\\ 
    \rH^*(\widehat{\cM},\ZZ_{(p)})\xrightarrow{j_{\widehat{\Sigma}}^*}\rH^{*}(\widehat{\Sigma},\ZZ_{(p)})\xrightarrow{Z_{\widehat{\Sigma},*}}\rH^*(\widehat{\cM},\ZZ_{(p)}), \\
    \rH^*(\widehat{\cM},\ZZ_{(p)})\xrightarrow{j_{\widehat{\Omega}}^*}\rH^{*}(\widehat{\Omega},\ZZ_{(p)})\xrightarrow{Z_{\widehat{\Omega},*}}\rH^*(\widehat{\cM},\ZZ_{(p)}).
    \end{align*}
    Therefore, the sum of the above compositions
    \[\begin{tikzcd}
	{\rH^*(\widehat{\cM},\ZZ_{(p)})} \\
	{\left(\oplus_i \rH^{*+2e_i-20}(S^{k_i},\ZZ_{(p)})\right)\oplus \rH^{*-2}(\widehat{\Sigma},\ZZ_{(p)})\oplus \rH^{*-2}(\widehat{\Omega},\ZZ_{(p)})\oplus \rH^{*}(\widehat{\Sigma},\ZZ_{(p)})\oplus \rH^{*}(\widehat{\Omega},\ZZ_{(p)})} \\
	{\rH^*(\widehat{\cM},\ZZ_{(p)})}
	\arrow[from=1-1, to=2-1]
	\arrow[from=2-1, to=3-1]
    \end{tikzcd}\]
    is induced by $\Delta_{\widehat{\cM}}$, so it is the identity map. Since the middle term is torsion free, $\rH^*(\widehat{\cM},\ZZ_{(p)})$ is also torsion free.    
\end{proof}

\bibliographystyle{abbrv}
\bibliography{main}

\end{document}